\theoremstyle{plain}
\newtheorem{theorem}{Theorem}[section]
\newtheorem{lemma}[theorem]{Lemma}
\newtheorem{prop}[theorem]{Proposition}
\newtheorem{corollary}[theorem]{Corollary}
\theoremstyle{definition}
\newtheorem{remark}[theorem]{Remark}
\newtheorem{definition}[theorem]{Definition}
\newtheorem{assumption}[theorem]{Assumption}
\numberwithin{equation}{section}
\def\be{\begin{equation}}
\def\ee{\end{equation}}
\newcommand{\g}[0]{\textmd{g}}
\newcommand{\dif}{\mathrm{d}}
\newcommand{\s}{\mathcal S}
\begin{document}

\title[Asymptotic Expansions of Solutions near Isolated Singular Points]
{Asymptotic Expansions of Solutions\\ of the Yamabe Equation and the $\sigma_k$-Yamabe Equation\\ 
near Isolated Singular Points}
\author[Qing Han]{Qing Han}
\address{Department of Mathematics\\
University of Notre Dame\\
Notre Dame, IN 46556} \email{qhan@nd.edu}
\author[Xiaoxiao Li]{Xiaoxiao Li}
\address{Department of Mathematics\\
University of Notre Dame\\
Notre Dame, IN 46556}
\email{Xiaoxiao.Li.244@nd.edu}
\author[Yichao Li]{Yichao Li}
\address{Department of Mathematics\\
University of Notre Dame\\
Notre Dame, IN 46556} \email{yli20@nd.edu}

\begin{abstract}
We study asymptotic behaviors of positive solutions to the Yamabe equation and 
the $\sigma_k$-Yamabe equation near isolated singular points 
and establish expansions up to arbitrary orders. 
Such results generalize an earlier pioneering work by 
Caffarelli,  Gidas,  and Spruck, and a work by Korevaar, Mazzeo, Pacard, and Schoen, 
on the Yamabe equation, and a work by Han,  Li, and Teixeira on the $\sigma_k$-Yamabe equation. 
The study is based on  a combination of  classification of global singular solutions 
and an analysis of linearized operators at these global singular solutions. Such linearized equations 
are uniformly elliptic near singular points for $1\le k\le n/2$ and become degenerate 
for $n/2<k\le n$. In a significant portion of the paper, we establish a degree 1 expansion for the $\sigma_k$-Yamabe equation
for $n/2<k<n$, 
generalizing a similar result for $k=1$ by Korevaar, Mazzeo, Pacard, and Schoen and for $2\le k\le n/2$ by Han,  Li, and Teixeira. 
\end{abstract}

\maketitle

\section{Introduction}\label{sec-intro}


In a pioneering paper \cite{CaffarelliGS1989}, Caffarelli,  Gidas,  and Spruck 
studied the Yamabe equation of the form 
\begin{equation}\label{eq-Yamabe} 
-\Delta u=\frac14n(n-2)u^{\frac{n+2}{n-2}}\quad\text{in }B_1\setminus\{0\},\end{equation}
and proved that positive singular solutions of \eqref{eq-Yamabe} in $B_1\setminus\{0\}$
are asymptotic to radial singular solutions of \eqref{eq-Yamabe} in $\mathbb R^n\setminus\{0\}$. 
In \cite{KorevaarMPS1999}, Korevaar, Mazzeo, Pacard, and Schoen 
studied refined asymptotics and expanded such solutions to the next order. 
Geometrically, for any positive solution $u$ of the equation \eqref{eq-Yamabe}, 
the corresponding conformal metric 
$$g=u^{\frac4{n-2}}|dx|^2$$
has a constant  scalar curvature $R_g=n(n-1)$.

To state these results for the Yamabe equation in consistence with similar results for the $\sigma_k$-Yamabe equation, 
we 
express the equation \eqref{eq-Yamabe}
on the cylinder $\mathbb R_+\times\mathbb S^{n-1}$. 
Introduce the cylindrical coordinates $(t,\theta)\in \mathbb R_+\times \mathbb S^{n-1}$ by
\begin{equation}\label{eq-cylinder} t=-\ln|x|,\quad \theta=\frac{x}{|x|}.\end{equation}
Set
\begin{equation}\label{eq-def-U}v(t, \theta)=|x|^{\frac{n-2}2}u(x).\end{equation}
A straightforward calculation  transforms the equation 
\eqref{eq-Yamabe} to 
\begin{equation}\label{eq-U-intro}
v_{tt}+\Delta_{\theta}v-\frac14(n-2)^2v+\frac14n(n-2)v^{\frac{n+2}{n-2}}=0
\quad\text{in }\mathbb R_+\times \mathbb S^{n-1}.\end{equation}

In the following, we always consider positive solutions $u$ of \eqref{eq-Yamabe} in $B_1\setminus \{0\}$, 
with a nonremovable singularity at the origin. With $v$ given by \eqref{eq-def-U}, studying 
behaviors of $u$ as $x\to 0$ is equivalent to studying $v$ as $t\to\infty$. 
For convenience, we shall say that $v$ has a nonremovable singularity at infinity. 
A radial solution of \eqref{eq-Yamabe}, a solution in terms of $|x|$, induces a solution of \eqref{eq-U-intro} 
in terms of $t$ only. For convenience again, we refer to this solution as a radial solution of \eqref{eq-U-intro}. 

In terms of $v$, Caffarelli,  Gidas,  and Spruck \cite{CaffarelliGS1989} proved the following result
by a  ``measure theoretic" version of the moving plane technique. 
\smallskip 

\noindent
{\bf Theorem A} (\cite{CaffarelliGS1989}). 
{\it For $n\ge 3$, let $v$ be a positive solution of \eqref{eq-U-intro} in $\mathbb R_+\times\mathbb S^{n-1}$, 
with a nonremovable singularity at infinity. Then, 
there exists a
radial solution $\xi(t)$ of \eqref{eq-U-intro} such that 
\begin{equation}\label{eq-estimate-u-0}
v(t,\theta)-\xi(t)\to 0\quad\text{as }t\to\infty.\end{equation} 
Moreover, $\xi$ is a positive periodic function on $\mathbb R$. } 

\smallskip

Subsequent to \cite{CaffarelliGS1989}, there have
been many results related to the theme of Theorem A. 
In one direction, the estimate \eqref{eq-estimate-u-0} was refined to expansions of higher orders
for the Yamabe equation. 
In another direction, the estimate \eqref{eq-estimate-u-0} was established for other types of the 
equations.

Korevaar, Mazzeo, Pacard, and Schoen \cite{KorevaarMPS1999} extended the expansion \eqref{eq-estimate-u-0}
after the order $\xi$ by a combination of rescaling
analysis, classification of global singular solutions as in \cite{CaffarelliGS1989},
and analysis of linearized operators at these global singular solutions.


\smallskip 

\noindent
{\bf Theorem B} (\cite{KorevaarMPS1999}).
{\it For $n\ge 3$, let $v$ be a positive solution of \eqref{eq-U-intro} 
in $\mathbb R_+\times\mathbb S^{n-1}$, 
with a nonremovable singularity at infinity, 
and let $\xi(t)$ be the 
radial solution of \eqref{eq-U-intro} 
satisfying \eqref{eq-estimate-u-0}. Then, there exists a
spherical harmonic $Y$ of degree 1 such that,  for any $(t, \theta)\in \mathbb R_+\times\mathbb S^{n-1}$, 
\begin{equation}\label{eq-estimate-u-1}
\Big|v(t,\theta)-\xi(t)-e^{-t}\big[-\xi'(t)+\frac12(n-2)\xi(t)\big]Y(\theta)\Big|\le Ce^{-\beta t},\end{equation} 
where 
$\beta\in (1,2]$ and $C$ are positive constants. 
}

\smallskip


The study of singular solutions of the Yamabe equation is related
to the characterization of the size of the limit set of the image domain in
$\mathbb S^n$ of the developing map of a locally conformally flat $n$-manifold. 
Schoen and Yau \cite{SchoenYau1988} proved that, for a complete conformal
metric $g$ in a domain 
$\Omega\subset\mathbb S^n$ with the scalar curvature having a positive lower
bound, the Hausdorff dimension of $\partial\Omega$ has to be $\le (n - 2)/2$. 
Schoen \cite{Schoen1988} constructed complete conformal metrics on $\mathbb S^n\setminus\Lambda$
when $\Lambda$ is either a finite discrete set on $\mathbb S^n$ containing at least two points or a set arising as the
limit set of a Kleinian group action. 
Mazzeo and Pacard \cite{MazzeoP1999} constructed complete conformal metrics 
on $\mathbb S^n\setminus\Lambda$ with scalar curvature 1
if $\Lambda$ consists of a finite number of disjoint smooth closed submanifolds of
dimension $\le (n - 2)/2$. 


Theorem B can be viewed as the expansion up to order 1 for positive solutions of \eqref{eq-U-intro}, 
as $t\to\infty$. 

The first objective of this paper is to study expansions of these solutions
up to  arbitrary orders. To this end, we prove the following result. 

\begin{theorem}\label{thrm-Asymptotic-uk}
Let $v$ be a positive solution of \eqref{eq-U-intro} in $\mathbb R_+\times\mathbb S^{n-1}$,
with a nonremovable singularity at infinity,
and $\xi$ be a positive periodic solution of \eqref{eq-U-intro} satisfying \eqref{eq-estimate-u-0}.
Then, there exists a 
positive sequence $\{\mu_i\}_{i\ge 1}$, strictly increasing and divergent to $\infty$,  such that, 
for any positive integer $m$ and any $(t, \theta)\in \mathbb R_+\times\mathbb S^{n-1}$, 
\begin{equation}\label{eq-estimate-u-k}
\Big|v(t,\theta)-\xi(t)-\sum_{i=1}^m\sum_{j=0}^{i-1}c_{ij}(t, \theta)t^je^{-\mu_it}\Big|
\le Ct^me^{-\mu_{m+1} t},\end{equation}
where $C$ is a positive constant depending on $\xi$ and $m$, 
and  
$c_{ij}$ is a bounded smooth function on $\mathbb R_+\times\mathbb S^{n-1}$, for each $i, j$ in the  summation. 
Moreover, $\mu_1=1$. 
\end{theorem}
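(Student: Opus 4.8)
The plan is to prove Theorem \ref{thrm-Asymptotic-uk} by an iteration (bootstrap) argument, building the expansion one order at a time and using the linearized operator at the radial singular solution $\xi$ as the engine at each step. The starting point is Theorem B, which furnishes the order-$1$ term $c_{10}(t,\theta)e^{-t}$ with $c_{10}(t,\theta)=\big[-\xi'(t)+\tfrac12(n-2)\xi(t)\big]Y(\theta)$, so $\mu_1=1$. Assume inductively that for some $m\ge 1$ we have found $\mu_1<\dots<\mu_m$ and functions $c_{ij}$ so that the remainder $w_m:=v-\xi-\sum_{i=1}^m\sum_{j=0}^{i-1}c_{ij}t^je^{-\mu_it}$ satisfies $|w_m|\le C t^{m-1}e^{-\mu_m t}$ (a slightly weaker bound than the final estimate, which we will upgrade at the very end, or simply carry the sharp bound through the induction). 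Plug $v=\xi+(\text{expansion})+w_m$ into \eqref{eq-U-intro}. Because $\xi$ solves the equation, the linear part acting on the expansion plus $w_m$ is balanced against the nonlinear remainder; expanding the nonlinearity $v^{\frac{n+2}{n-2}}$ around $\xi$ by Taylor's formula produces a linear term $\tfrac14 n(n+2)\xi^{\frac4{n-2}}\cdot(\text{stuff})$ plus a quadratic-and-higher error that is controlled by the square of the size of $(\text{expansion})-\xi+w_m$, hence decays at least like $e^{-2\mu_1 t}=e^{-2t}$ times lower-order pieces, and in general like $e^{-(\mu_i+\mu_j)t}$ times polynomials from cross terms. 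Collecting all these forcing terms, $w_m$ solves a linear equation
\begin{equation}\label{eq-linearized-bootstrap}
\partial_t^2 w_m+\Delta_\theta w_m-\tfrac14(n-2)^2 w_m+\tfrac14 n(n+2)\xi^{\frac4{n-2}}w_m=F_m(t,\theta),
\end{equation}
where $F_m$ is a finite sum of terms of the form $q(t,\theta)t^\ell e^{-\nu t}$ with $\nu>\mu_m$, each $q$ bounded and smooth, $\nu$ running over the additive semigroup generated by $\{\mu_1,\dots,\mu_m\}$ and by the "new" indicial exponents, plus a genuinely error term of size $O(t^{?}e^{-\mu_{m+1}'t})$ with $\mu_{m+1}'>\mu_m$.

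The heart of the matter is to invert the linearized operator $L_\xi:=\partial_t^2+\Delta_\theta-\tfrac14(n-2)^2+\tfrac14 n(n+2)\xi^{\frac4{n-2}}$ on functions of the indicated type. I would decompose in spherical harmonics $w_m=\sum_{l\ge 0}w_{m,l}(t)Y_l(\theta)$, reducing \eqref{eq-linearized-bootstrap} to a sequence of ODEs $w_{m,l}''+\big(\tfrac14 n(n+2)\xi^{\frac4{n-2}}-\tfrac14(n-2)^2-l(l+n-2)\big)w_{m,l}=F_{m,l}(t)$. Since $\xi$ is periodic, these are Hill-type equations; Floquet theory gives a basis of solutions of the homogeneous equation of the form $e^{\gamma t}p(t)$ with $p$ periodic (or $e^{\gamma t}(p(t)+t\,q(t))$ in resonant cases), where the characteristic exponents $\gamma$ depend on $l$ and on $\xi$. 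The set $\{\mu_i\}$ in the theorem is precisely the set of positive numbers appearing (with multiplicities generating the $t^j$ factors) among: these Floquet exponents for all $l$, and all finite sums of them — this is the "additive semigroup of indicial roots" structure familiar from Mazzeo's edge calculus and from \cite{KorevaarMPS1999}. For a forcing term $t^\ell e^{-\nu t}$, the method of variation of parameters (or an explicit Green's function built from the Floquet solutions) produces a particular solution of the form $t^{\ell'}e^{-\nu t}\times(\text{periodic})$ unless $\nu$ coincides with one of the Floquet exponents, in which case the degree of the polynomial increases by one — this is exactly the source of the $t^j e^{-\mu_i t}$ form in \eqref{eq-estimate-u-k}. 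The key analytic point is that one must always choose the particular solution that \emph{decays} (projecting off the growing Floquet modes); the obstruction/uniqueness is handled by the a priori decay $|w_m|\le Ct^{m-1}e^{-\mu_m t}$ already known, which forces the coefficients of the growing and borderline-nondecaying homogeneous solutions to vanish, except possibly for a finite-dimensional space of "geometric" solutions (the degree-$1$ spherical harmonic mode at exponent $1$, translations of $\xi$, etc.), whose contribution is absorbed into the next $c_{ij}$.

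Concretely the induction step runs: (i) compute $F_m$ and read off its exponents $\nu$, letting $\mu_{m+1}$ be the smallest of these that exceeds $\mu_m$; (ii) for each $\nu\le\mu_{m+1}$ appearing in $F_m$, solve $L_\xi \tilde w=(\text{the }\nu\text{-part of }F_m)$ mode-by-mode via Floquet/variation of parameters to get a solution $\sum_j \tilde c_{j}(t,\theta)t^j e^{-\nu t}$ with bounded smooth periodic-in-$t$ coefficients $\tilde c_j$ (with the polynomial degree never exceeding the running index, which one checks by tracking resonances); (iii) set the new partial sum to include these terms, so that $w_{m+1}:=v-\xi-\sum_{i=1}^{m+1}\sum_{j=0}^{i-1}c_{ij}t^je^{-\mu_it}$ now solves $L_\xi w_{m+1}=F_{m+1}$ with $F_{m+1}=O(t^{m}e^{-\mu_{m+2}'t})$; (iv) prove the a priori bound $|w_{m+1}|\le C t^{m}e^{-\mu_{m+1}t}$ — wait, one needs $|w_{m+1}|\le Ct^m e^{-\mu_{m+2}t}$ or at least $o(t^{m-1}e^{-\mu_{m+1}t})$ to restart — by a barrier/maximum-principle argument for \eqref{eq-linearized-bootstrap} on the half-cylinder (using that the potential $\tfrac14 n(n+2)\xi^{\frac4{n-2}}-\tfrac14(n-2)^2$ plus the spherical Laplacian eigenvalue is, for the relevant modes, such that $L_\xi$ satisfies a maximum principle after conjugating by $e^{\mu t}$ for suitable $\mu$), combined with the decay of $F_{m+1}$ and the known decay of $w_m$. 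Standard elliptic (Schauder) estimates on the cylinder then give the smoothness and boundedness of the coefficients and the bound \eqref{eq-estimate-u-k}.

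The main obstacle is step (iv) together with the bookkeeping of resonances in step (ii): one must show that at each order the homogeneous solutions of $L_\xi$ that \emph{fail} to decay faster than the target rate do not actually occur in $w_m$ — i.e.\ that the only surviving modes are the ones we explicitly peel off — and that the polynomial weights $t^j$ stay within the stated range $0\le j\le i-1$ even after infinitely many potential resonances in the semigroup $\langle\mu_1,\mu_2,\dots\rangle$. This requires a careful Floquet-theoretic analysis of $L_\xi$ for every spherical mode $l$ (in particular ruling out or controlling the indicial roots with $\mathrm{Re}\,\gamma=0$, which would create genuine logarithmic-type growth), and a uniform-in-$m$ maximum principle on $\mathbb R_+\times\mathbb S^{n-1}$ for the conjugated operator. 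I expect the $l=0$ mode (where translation invariance $t\mapsto t+c$ of the radial equation produces the homogeneous solution $\xi'$, and the period parameter produces a second, possibly linearly growing, solution) and the $l=1$ mode (the source of the $\mu_1=1$ term and of the conformal/translation geometric kernel) to need the most delicate separate treatment, exactly as in \cite{KorevaarMPS1999}; the higher modes decay strictly and cause no trouble.
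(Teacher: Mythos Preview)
Your proposal is correct and follows essentially the same approach as the paper: linearize at $\xi$, decompose in spherical harmonics, use Floquet theory to identify the exponents $\rho_i$ and build the index set as the additive semigroup they generate, solve each ODE by variation of parameters (with an extra power of $t$ at resonances), and close the iteration via a comparison/maximum-principle estimate for the tail. The paper packages your step (iv) as a single lemma (Lemma~\ref{lemma-nonhomogeneous-linearized-eq}) that treats finitely many low modes by ODE theory and the high-mode remainder by an $L^2$ comparison argument, and it makes explicit the fact (your implicit bookkeeping ingredient) that products of spherical harmonics are finite sums of spherical harmonics (Lemma~\ref{lemma-SphericalHarmonics}), which keeps each $c_{ij}$ a finite spherical-harmonic sum.
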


In the following, we will refer to $\xi$ and $\{\mu_i\}$ as the {\it leading term} and the {\it index set} 
in the expansion of $v$, respectively. 
As Theorem \ref{thrm-Asymptotic-uk} demonstrates, 
the index set determines the decay rate of $v(t,\theta)-\xi(t)$ in the following pattern: 
\begin{align*}
e^{-\mu_1 t},\, te^{-\mu_2 t},\, e^{-\mu_2t},\,\cdots,\, e^{-\mu_{m-1}t},\,
t^{m-1}e^{-\mu_mt},\,\cdots,\, e^{-\mu_mt},\, \cdots.\end{align*}
We will define the index set in Section \ref{sec-Yamabe}. 

We need to emphasize that the index set $\{\mu_i\}$ is determined by the leading term $\xi$. 
Solutions of  \eqref{eq-U-intro} with different leading terms have different index sets. 
This is sharply different from many other similar types of estimates, where the index sets 
are determined by underlying equations, independent of specific solutions.  
The coefficients $c_{ij}$ are determined by the leading term $\xi$, up to the kernels of some 
linear equations also determined by $\xi$. Each $c_{ij}$ is a finite sum of ``separable forms" in the following sense. 
Let $\{\lambda_i\}$ be the sequence of eigenvalues of $-\Delta_{\theta}$ on $\mathbb S^{n-1}$, 
arranged in an increasing order
with $\lambda_i\to\infty$ as $i\to\infty$, and let  $\{X_i\}$ be 
a sequence of the corresponding normalized eigenfunctions of 
$-\Delta_\theta$ on $L^2(\mathbb S^{n-1})$. 
Then,  
\begin{equation}\label{eq-coefficients} 
c_{ij}(t, \theta)=\sum_{l=0}^{M_{ij}}a_{ijl}(t)X_{l}(\theta),\end{equation}
where $M_{ij}$ is a nonnegative integer depending only on $\xi$, $n$, $i$, and $j$, 
and $a_{ijl}$ is a smooth periodic function. The period of $a_{ijl}$ is the same  as that of $\xi$ if $\xi$ is a nonconstant 
periodic solution, and is $2\pi/\sqrt{n-2}$ if $\xi$ is a constant solution. 
In the proof of Theorem \ref{thrm-Asymptotic-uk}, we will 
construct the summation part in \eqref{eq-estimate-u-k}, or $c_{ij}$ in \eqref{eq-coefficients} specifically, 
in a rather mechanical way. 
It has two sources, the kernel of the linearized equation and the nonlinearity. 

In some special cases, powers  of $t$ are absent in the summation, and 
\eqref{eq-estimate-u-k} has the form 
\begin{equation}\label{eq-estimate-u-k-special}\Big|
v(t,\theta)-\xi(t)-\sum_{i=1}^mc_{i}(t, \theta)e^{-\mu_it}\Big|
\le Ce^{-\mu_{m+1} t},\end{equation}
where 
$c_{i}$ has the form as in \eqref{eq-coefficients}.

\smallskip

There have been many results related to the theme of estimates 
\eqref{eq-estimate-u-0} and \eqref{eq-estimate-u-1} for other equations. 
Han, Li, and Teixeira \cite{HanLi2010} studied 
the $\sigma_k$-Yamabe equation near isolated singularities and derived similar estimates for 
its solutions. 
Caffarelli, Jin, Sire, and Xiong \cite{CaffarelliJSX2014} studied 
fractional semi-linear elliptic
equations with isolated singularities. We now turn our attention to the  $\sigma_k$-Yamabe equation, 
a family of conformally invariant equations which
include \eqref{eq-Yamabe}. 

Let $g$ be a metric in the punctured ball $B_1\backslash \{0\}\subset\mathbb{R}^n$. 
The Weyl-Schouten tensor $A_{g}$ of $g$ is given by
\begin{equation*}
A_{g}=\frac{1}{n-2}\Big\{Ric_{g}-\frac{{R}_{g}}{2(n-1)}g\Big\}, 
\end{equation*}
where $Ric_g$ and $R_g$ denote the Ricci and scalar curvature of $g$, respectively. 
Denote by $\sigma_k(\g^{-1}\circ A_{g})$ the $k$-th elementary symmetric function 
 of the eigenvalues of $A_g$ with respect to $g$. 
We consider the equation 
\begin{equation}\label{eq-k-Yamabe-g}
\sigma_k(g^{-1}\circ A_g)=c_k\quad\text{in }B_1\setminus\{0\},
\end{equation}
for some positive constant $c_k$. 
In the following, we always choose 
\begin{equation}\label{eq-choice-c}c_k=2^{-k}\Big(\,\begin{matrix}n\\k\end{matrix}\,\Big).\end{equation}
For $k=1$,  \eqref{eq-k-Yamabe-g} reduces to 
$R_g=n(n-1).$

We assume that 
$g$ is conformal to the Euclidean metric; namely, for some positive smooth function $u$ in $B_1\backslash \{0\}$, 
\begin{equation*}
g=u^{\frac{4}{n-2}}(x)|dx|^2.
\end{equation*}
In terms of $u$, the equation \eqref{eq-k-Yamabe-g} has the form
\begin{equation}\label{eq-k-Yamabe-u}
\sigma_k\left(-(n-2)u\nabla^2u+n\nabla u\otimes\nabla u-|\nabla u|^2\mathrm{Id}\right)
={2^{-k}}{(n-2)^{2k}c_k}u^{\frac{2kn}{n-2}}.
\end{equation}
We always assume that $u$ has a nonremovable singularity at $x=0$. 

In the cylindrical coordinates $(t, \theta)$ introduced in \eqref{eq-cylinder}, we write 
\begin{equation*}g=e^{-2w(t,\theta)}(dt^2+d\theta^2).\end{equation*} 
Then, 
\begin{equation}\label{eq-change-u-w}|x|^{\frac{n-2}2}u(x)=e^{-\frac{n-2}{2}w(t,\theta)}.\end{equation}
For convenience, we write $g_0=dt^2+d\theta^2$. Then,  the equation \eqref{eq-k-Yamabe-u} reduces to 
\begin{equation}\label{eq-k-Yamabe-w}
\sigma_k\Big(g_0^{-1}\circ\Big\{ A_{g_0}+\nabla^2w+\nabla w\otimes \nabla w
-\frac12|\nabla w|^2g_0\Big\}\Big)=c_ke^{-2kw}.
\end{equation}
We point out that $w$ in \eqref{eq-change-u-w} is different from $v$ in \eqref{eq-def-U}.


We say a solution to \eqref{eq-k-Yamabe-u} or \eqref{eq-k-Yamabe-w} is in the 
$\Gamma_k^+$ class if its associated Weyl-Schouten tensor is in $\Gamma_k^+$. 
For a positive solution $u$ of $\Gamma_k^+$, this means that the matrix 
$\big(-(n-2)u\nabla^2u+n\nabla u\otimes\nabla u-|\nabla u|^2\mathrm{Id}\big)$
belongs to $\Gamma_k^+$. 

Based on earlier results by Li \cite{Li2006}, 
Han,  Li, and Teixeira \cite{HanLi2010} proved the following results for 
the $\sigma_k$-Yamabe equation \eqref{eq-k-Yamabe-w}, 
similar to Theorem A and Theorem B for the Yamabe equation \eqref{eq-U-intro}. 


\smallskip 

\noindent
{\bf Theorem C} (\cite{HanLi2010}). 
{\it For $n\ge 3$ and $2\le k\le n$, 
let $w(t,\theta)$ be a smooth solution of \eqref{eq-k-Yamabe-w} on $\mathbb R_+\times \mathbb S^{n-1}$ 
in the $\Gamma_k^+$ class, with a nonremovable singularity at infinity. Then, there exists a
radial solution $\xi(t)$ of \eqref{eq-k-Yamabe-w} on $\mathbb R\times \mathbb S^{n-1}$ 
in the $\Gamma_k^+$ class such that, for any $t>1$, 
\begin{equation}\label{eq-approximation-0}
|w(t,\theta)-\xi(t)|\le Ce^{-\alpha t},\end{equation} 
where $\alpha$ and $C$ are  positive constants. 
}

\smallskip 


If $k\le n/2$, the constant $\alpha$ in \eqref{eq-approximation-0} can be chosen to be 1, and
the estimate \eqref{eq-approximation-0} can be improved. 

\smallskip 

\noindent
{\bf Theorem D} (\cite{HanLi2010}).
{\it For $n\ge 3$ and $2\le k\le n/2$, 
let $w(t,\theta)$ be a smooth solution of \eqref{eq-k-Yamabe-w} on $\mathbb R_+\times \mathbb S^{n-1}$ 
in the $\Gamma_k^+$ class, with a nonremovable singularity at infinity, and let $\xi(t)$ be the 
radial solution of \eqref{eq-k-Yamabe-w} on $\mathbb R\times \mathbb S^{n-1}$ 
in the $\Gamma_k^+$ class for which \eqref{eq-approximation-0} holds. Then, there exists a
spherical harmonic $Y$ of degree 1 such that,  for any $t>1$, 
\begin{equation}\label{eq-approximation-1-small-k}
|w(t,\theta)-\xi(t)-e^{-t}\big(1+\xi'(t)\big)Y(\theta)|\le Ce^{-\beta t},\end{equation} 
where $\beta\in (1,2]$ and $C$ are positive constants. 
}

\smallskip 



Refer to \cite{HanLi2010} and references there for more information concerning the $\sigma_k$-Yamabe equation, 
and in particular to \cite{Chang-Hang-Yang2004}, \cite{Gonzalez2005}, 
and \cite{Guan-Lin-Wang2005} for the size of the singular sets, and to \cite{Mazzieri-Ndiaye}  and \cite{Mazzieri-Segatti2012}
for the existence of solutions with isolated singularity.  

In the second part of this paper, we study further asymptotic expansions of solutions $w$ of \eqref{eq-k-Yamabe-w}. 
Our first task is to investigate whether Theorem D holds for $n/2<k\le n$. 

For the case $k>n/2$,  
Gursky and Viacolvsky \cite{Gursky-Viacolvsky2006} and Li \cite{Li2006} proved that 
$\alpha$ in \eqref{eq-approximation-0} can be chosen as $\alpha=2-n/k$. This is the starting point of our study. 
We will prove that \eqref{eq-approximation-1-small-k}
indeed holds for solutions of \eqref{eq-k-Yamabe-w} for the case $n/2<k<n$.

\begin{theorem}\label{thrm-approximation-order-1}
For $n\ge 3$ and $n/2<k< n$, 
let $w(t,\theta)$ be a smooth solution of \eqref{eq-k-Yamabe-w} on $\mathbb R_+\times \mathbb S^{n-1}$ 
in the $\Gamma_k^+$ class, with a nonremovable singularity at infinity,  and let $\xi(t)$ be the 
radial solution of \eqref{eq-k-Yamabe-w} on $\mathbb R\times \mathbb S^{n-1}$ 
in the $\Gamma_k^+$ class for which \eqref{eq-approximation-0} holds. Then, there exists a
spherical harmonic $Y$ of degree 1 such that,  for any $t>1$, 
\begin{equation}\label{eq-approximation-1-large-k}
|w(t,\theta)-\xi(t)-e^{-t}\big(1+\xi'(t)\big)Y(\theta)|\le Ce^{-\beta t},\end{equation} 
where $\beta\in (1,2)$ and $C>0$ are positive constants. 
\end{theorem}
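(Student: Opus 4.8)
The plan is to mirror the strategy of Korevaar--Mazzeo--Pacard--Schoen (Theorem B) and of Han--Li--Teixeira (Theorem D), adapting the linear analysis to the degenerate regime $n/2 < k < n$. The starting point is Theorem C together with the improved decay $\alpha = 2 - n/k \in (0,1)$ of Gursky--Viaclovsky and Li: we know $|w(t,\theta) - \xi(t)| \le C e^{-\alpha t}$ with $\alpha < 1$, and we must upgrade this to the degree-$1$ expansion with a genuine exponential gain past $e^{-t}$. Write $\phi = w - \xi$. The first step is to derive the linearized equation satisfied by $\phi$: substituting $w = \xi + \phi$ into \eqref{eq-k-Yamabe-w} and expanding $\sigma_k$ to first order about the radial solution $\xi$, one obtains $\mathcal{L}_\xi \phi = Q(\phi, \nabla\phi, \nabla^2\phi)$, where $\mathcal{L}_\xi$ is a second-order linear operator with $t$-dependent coefficients (built from the linearization of $\sigma_k$ at the diagonal matrix associated to $\xi$) and $Q$ collects the quadratic-and-higher remainder. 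The crucial structural point — and this is where $k > n/2$ enters — is that $\mathcal{L}_\xi$ is \emph{degenerate elliptic} rather than uniformly elliptic; one must check that, although the coefficient of $\Delta_\theta$ may degenerate, the operator still controls $\partial_t^2$ and the zero-order term, so that a Fourier decomposition in $\theta$ is available and each mode satisfies an ODE (in $t$) with bounded periodic coefficients.

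The second step is the spectral/ODE analysis of $\mathcal{L}_\xi$ mode by mode. Expand $\phi(t,\theta) = \sum_\ell \phi_\ell(t) X_\ell(\theta)$ in eigenfunctions of $-\Delta_\theta$ with eigenvalues $\lambda_\ell$ ($\lambda_0 = 0$, $\lambda_1 = \cdots = \lambda_n = n-1$, $\lambda_{n+1} \ge 2n$, etc.). For each $\ell$, $\phi_\ell$ solves a linear second-order ODE $a(t)\phi_\ell'' + b(t)\phi_\ell' + (c(t) - \lambda_\ell d(t))\phi_\ell = (\text{remainder})_\ell$ with periodic $a,b,c,d$ (period that of $\xi$, or $2\pi/\sqrt{n-2}$ in the constant case). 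By Floquet theory the homogeneous equation has two independent solutions with characteristic exponents $\pm\gamma_\ell$ (or a $t e^{\gamma t}$ resonance), and one must identify these exponents. For $\ell = 0$ the linearized operator annihilates $\xi'$ (translation invariance), forcing the exponent $0$ to occur, so the zero mode $\phi_0$ decays at the rate dictated by the next exponent or by the nonlinear remainder. For $\ell \in \{1,\dots,n\}$, exactly as in Theorem D, the geometry forces one characteristic exponent to equal $-1$ with the explicit Jacobi-field solution $e^{-t}(1 + \xi'(t))$ — this comes from differentiating the family of singular solutions under conformal motions (translations of $\mathbb{R}^n$ fixing the singularity structure), precisely the mechanism that produces the $Y$-term in \eqref{eq-approximation-1-large-k}. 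For $\ell \ge n+1$ one shows the indicial roots have real part $\le -\beta$ for some $\beta \in (1,2)$ (using that $\lambda_\ell \ge 2n$ pushes the roots down), so these high modes decay faster than $e^{-\beta t}$.

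The third step is the bootstrap. Starting from $|\phi| \le C e^{-\alpha t}$ with $\alpha < 1$, the nonlinear remainder $Q$ is quadratic, hence $O(e^{-2\alpha t})$; feeding this into the mode-by-mode ODEs via a variation-of-parameters / barrier argument and using the indicial-root information, one improves the decay of $\phi_0$ and of $\phi_\ell$ for $\ell \ge n+1$ past $e^{-t}$, and isolates in $\phi_\ell$ for $1 \le \ell \le n$ the single component along $e^{-t}(1+\xi'(t))$; the coefficient of that component (over $\ell = 1,\dots,n$) assembles into the degree-$1$ spherical harmonic $Y(\theta)$. Iterating the bootstrap finitely many times (each step roughly doubling the exponent until it exceeds $1$) closes the gap, and a standard Schauder/interior-estimate argument promotes the pointwise bound on $\phi - e^{-t}(1+\xi')Y$ to the stated estimate \eqref{eq-approximation-1-large-k} uniformly for $t > 1$.

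The main obstacle, and the point requiring the most care, is the degeneracy of $\mathcal{L}_\xi$ when $k > n/2$: in the uniformly elliptic range $k \le n/2$ (Theorem D) one has clean a priori estimates and a clean Fejér/Floquet spectral picture, but here the coefficient $d(t)$ multiplying $\lambda_\ell$ can vanish or change sign along $\xi$, so one must verify that the indicial roots are still well-defined and correctly placed, that no anomalous resonances appear between the degenerate directions and the translation kernel, and that the variation-of-parameters estimates survive despite the loss of ellipticity. Controlling $\beta$ to lie strictly below $2$ (rather than $\le 2$ as in the non-degenerate case) is the technical signature of this difficulty, and pinning down exactly which indicial root governs the second term is where the argument must be most careful.
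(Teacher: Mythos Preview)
Your outline has two genuine gaps, the first structural and the second fatal to the bootstrap.

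\textbf{The coefficients are not periodic.} For $n/2<k<n$ the radial solution $\xi$ is \emph{not} periodic: by Lemma~\ref{lemma-xi-t-large-k} it satisfies $\xi(t)=t+a_0+O(e^{-\rho_0 t})$ with $\rho_0=2-n/k$, so $\xi_t\to 1$ and the coefficients $a(t),b(t),c(t)$ of $\mathcal L_\xi$ converge to constants as $t\to\infty$. Floquet theory is therefore not the relevant tool; the paper instead compares each projected operator $L_i$ to its constant-coefficient limit $L_i^\infty$ (see \eqref{eq-linearization-j-infty}) and reads off the indicial roots $\rho_i,\tau_i$ from the characteristic equation \eqref{eq-rho-j}--\eqref{eq-beta-j}. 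Your identification of the degree-$1$ Jacobi field $e^{-t}(1+\xi_t)$ is correct, but it is obtained from this limiting analysis, not from Floquet exponents.

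\textbf{The bootstrap is stuck at the starting decay.} You write that the remainder $Q$ is quadratic and hence $O(e^{-2\alpha t})$. This is exactly where the degeneracy bites. After normalizing the linearization (dividing by $(1-\xi_t^2)^{k-1}$), the nonlinear remainder $\mathcal R(\varphi)$ contains terms of the form $(1-\xi_t^2)^{1-l}P_l(\varphi)$ for $l=2,\dots,k$ (see \eqref{eq-nonlinear-operaor-form}). Since $1-\xi_t^2\sim e^{-\rho_0 t}$, the factor $(1-\xi_t^2)^{-1}$ \emph{grows} like $e^{\rho_0 t}$. With the initial bound $|\varphi|\le Ce^{-\rho_0 t}$ from Gursky--Viaclovsky/Li, the $l=2$ term gives only
\[
(1-\xi_t^2)^{-1}P_2(\varphi)=O\bigl(e^{\rho_0 t}\cdot e^{-2\rho_0 t}\bigr)=O(e^{-\rho_0 t}),
\]
i.e.\ no gain at all, and iterating gets you nowhere. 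The paper states this obstruction explicitly at the start of Section~\ref{sec-proof-main}.

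The paper breaks the deadlock by an entirely different mechanism: a Pohozaev-type first integral for the spherical average $\gamma(t)=\fint w(t,\theta)\,d\theta$ (Lemma~\ref{first integral for average}), which after algebraic manipulation yields $|\gamma-\xi|\le Ce^{-(\rho_0+\varepsilon)t}$ and hence $|w-\xi|\le Ce^{-(\rho_0+\varepsilon)t}$ for some $\varepsilon>0$ (Lemma~\ref{lemma-Improved-decay-estimate}). Only after this initial $\varepsilon$-improvement does the bootstrap gain: now $(1-\xi_t^2)^{-1}P_2(\varphi)=O(e^{-(\rho_0+2\varepsilon)t})$, and finitely many iterations of Lemma~\ref{lemma-nonhomogeneous-linearized-eq} carry the decay past $e^{-t}$ and isolate the degree-$1$ term. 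Your proposal is missing this Pohozaev step, without which the argument cannot start.
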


A significant portion of the paper will be devoted to the proof of Theorem \ref{thrm-approximation-order-1}. 
We now compare \eqref{eq-approximation-1-small-k} for $2\le k\le n/2$ and \eqref{eq-approximation-1-large-k} for 
$n/2<k<n$. Although \eqref{eq-approximation-1-small-k} and \eqref{eq-approximation-1-large-k} 
have the same form, there are significant differences  caused by different behaviors of radial solutions $\xi$. 
Refer to \cite{HanLi2010} for  properties of radial solutions.
For $2\le k< n/2$, radial solutions $\xi$ with a nonremovable singularity at infinity are always bounded.  However, 
for $n/2<k<n$, radial solutions $\xi$ are unbounded for $t>0$ and grow at the rate of $t$ as $t\to \infty$. 
In fact, $\xi$ has an asymptotic expansion in the following form 
\begin{equation}\label{eq-expansion-xi}
\xi(t)=t+a_0+a_1e^{-(2-\frac{n}{k})t}+\cdots.\end{equation}
See Lemma \ref{lemma-expansion-xi-t} for a complete description of the expansion for radial solutions. 
By substituting \eqref{eq-expansion-xi} in \eqref{eq-approximation-1-large-k}, we note 
that in the expansion of $w$ there are finitely many terms between  
$t$ and $e^{-t}$, decaying 
exponentially at orders given by integer multiples of $2-{n}/{k}$, up to 1. There is a similar pattern for $k=n/2$. 

Theorem D and Theorem \ref{thrm-approximation-order-1} are viewed as the expansion up to order 1 
for the $\sigma_k$-Yamabe equation. 
Based on these results, we can establish the expansion up to arbitrary order. 

\begin{theorem}\label{thrm-approximation-order-arbitrary}
For $n\ge 3$ and $2\le k< n$, 
let $w(t,\theta)$ be a smooth solution of \eqref{eq-k-Yamabe-w} on $\mathbb R_+\times \mathbb S^{n-1}$ 
in the $\Gamma_k^+$ class,  with a nonremovable singularity at infinity, and let $\xi(t)$ be the 
radial solution of \eqref{eq-k-Yamabe-w} on $\mathbb R\times \mathbb S^{n-1}$ 
in the $\Gamma_k^+$ class for which \eqref{eq-approximation-0} holds. 
Then, there exists a 
positive sequence $\{\mu_i\}_{i\ge 1}$, strictly increasing and divergent to $\infty$,  such that, 
for any positive integer $m$ and any $(t, \theta)\in \mathbb R_+\times\mathbb S^{n-1}$, 
\begin{equation}\label{eq-expansion-sigma-k-arbitrary}
\Big|w(t,\theta)-\xi(t)-\sum_{i=1}^m\sum_{j=0}^{i-1}c_{ij}(t, \theta)t^je^{-\mu_it}\Big|
\le Ct^me^{-\mu_{m+1} t},\end{equation}
where $C$ is a positive constant depending on $\xi$, $n$, $k$, and $m$, 
and 
$c_{ij}$ is a bounded smooth function in $\mathbb R_+\times\mathbb S^{n-1}$, 
for each $i, j$ in the  summation. 
Moreover, $\mu_1=1$. 
\end{theorem}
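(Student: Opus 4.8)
\textbf{Proof proposal for Theorem \ref{thrm-approximation-order-arbitrary}.}
The plan is to run an inductive bootstrap on the order of expansion, using Theorem D (for $2\le k\le n/2$) and Theorem \ref{thrm-approximation-order-1} (for $n/2<k<n$) as the base case $m=1$, together with the structure of the linearized operator of \eqref{eq-k-Yamabe-w} at the radial solution $\xi$. Write $w=\xi+\phi$ and expand the fully nonlinear operator $\sigma_k(g_0^{-1}\circ(A_{g_0}+\nabla^2 w+\nabla w\otimes\nabla w-\frac12|\nabla w|^2 g_0))-c_ke^{-2kw}$ about $\xi$. Since $\xi$ lies in $\Gamma_k^+$ and (for $k\le n/2$) is bounded, or (for $k>n/2$) grows linearly with the precise expansion \eqref{eq-expansion-xi}, the linearization $L_\xi$ is a second-order operator on $\mathbb R_+\times\mathbb S^{n-1}$ with coefficients that are smooth, periodic (or asymptotically periodic modulo the $t$-growth) functions of $t$. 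Decomposing into spherical harmonics $X_l(\theta)$, $L_\xi$ splits as a family of ODE operators $L_{\xi,l}$ in $t$, and by Floquet–Lyapunov theory each $L_{\xi,l}\psi=0$ has solutions of the form $p(t)t^je^{\nu t}$ with $\nu$ determined by $\xi$, $n$, $k$, and $\lambda_l$. Collecting all such exponents that produce decay gives the index set $\{\mu_i\}$; one checks $\mu_1=1$ using that the degree-1 harmonic contributes the exponent $1$ (this is exactly what Theorem D and Theorem \ref{thrm-approximation-order-1} encode).

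The inductive step is the standard ``peeling'' argument. Suppose
\begin{equation*}
\Big|w-\xi-\sum_{i=1}^{m}\sum_{j=0}^{i-1}c_{ij}(t,\theta)t^je^{-\mu_i t}\Big|\le Ct^{m}e^{-\mu_{m+1}t}
\end{equation*}
together with matching derivative estimates (obtained from interior Schauder estimates applied on unit cylinders, using the uniform ellipticity coming from $\xi\in\Gamma_k^+$ when $k\le n/2$, and the weighted ellipticity — degenerating like $e^{-2(1-n/(2k))t}$ but still controllable — when $k>n/2$). Denote the approximate solution by $w_m$ and set $\phi_m=w-w_m$. Plugging $w_m$ into the equation produces an error $E_m:=\mathcal N(w_m)$ which, by construction of the $c_{ij}$, decays strictly faster than $t^{m}e^{-\mu_{m+1}t}$; and $\phi_m$ satisfies $L_\xi\phi_m=-E_m+Q(\phi_m)$ where $Q$ is the quadratic-and-higher remainder, bounded by $C\phi_m^2$ in the relevant norms, hence of order $t^{2m}e^{-2\mu_{m+1}t}=o(t^{m}e^{-\mu_{m+1}t})$. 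One then solves $L_\xi\psi_{m+1}=-E_m+Q(\phi_m)$ by the harmonic-by-harmonic ODE theory: for each $l$, invert $L_{\xi,l}$ against the right-hand side, choosing the particular solution whose growth is dictated by the forcing; the homogeneous kernel elements with the next exponent $\mu_{m+1}$ are absorbed into the new coefficients $c_{m+1,j}$ (this is the ``kernel of the linearized equation'' source), while the response to the forcing gives the ``nonlinearity'' source. Because the forcing has only finitely many spherical modes at each order (the nonlinearity $\sigma_k$ is polynomial in $w$ and its derivatives, so products of finitely many $X_l$'s expand into finitely many $X_l$'s), the sum over $l$ in \eqref{eq-coefficients} is finite, i.e. $M_{ij}<\infty$. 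This yields $w_{m+1}=w_m+\sum_{j}c_{m+1,j}t^je^{-\mu_{m+1}t}$ and, after re-running the base-type estimate for the equation satisfied by $w-w_{m+1}$ with the improved exponent $\mu_{m+2}$, closes the induction.

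The main obstacle is the degenerate case $n/2<k<n$: there the linearized operator $L_\xi$ is not uniformly elliptic — its ellipticity degenerates as $t\to\infty$ at the rate governed by $2-n/k$, because $\xi'(t)\to 1$ and the coefficient of $\nabla^2\phi$ in $\sigma_k$'s linearization involves $\sigma_{k-1}$ of a matrix whose eigenvalues drift toward the boundary of $\Gamma_k^+$. Consequently the ODE analysis of $L_{\xi,l}$ must be done in weighted spaces adapted to this degeneration, and the exponents $\nu$ are no longer eigenvalues of a genuinely periodic problem but of a problem with the asymptotically-periodic-plus-linear structure of \eqref{eq-expansion-xi}; one must show these exponents still form a discrete set accumulating only at $+\infty$ in the forcing direction, so that $\{\mu_i\}$ is well-defined, strictly increasing, and divergent. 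This requires a careful Fuchsian/indicial-equation analysis at $t=\infty$ for each mode $l$, replacing the Floquet theory valid for $k\le n/2$; I would isolate this as the technical heart of the argument and handle it by writing the ODE in the variable $s=e^{-(2-n/k)t}$, where it becomes a regular-singular system at $s=0$, and reading off the indicial roots. The remaining estimates — Schauder bounds, the quadratic remainder control, and the bookkeeping of finitely many harmonics — are then routine, following the pattern already established for the Yamabe equation in Theorem \ref{thrm-Asymptotic-uk} and for $2\le k\le n/2$ in \cite{HanLi2010}.
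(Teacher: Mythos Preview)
Your overall architecture---linearize at $\xi$, decompose into spherical harmonics, analyze $\mathrm{Ker}(L_{\xi,l})$ to build the index set, then bootstrap---matches the paper's. Two points deserve sharper treatment.

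\textbf{The spherical-harmonic closure is not as cheap as you claim.} You write that the forcing has only finitely many modes ``because the nonlinearity $\sigma_k$ is polynomial in $w$ and its derivatives, so products of finitely many $X_l$'s expand into finitely many $X_l$'s.'' Products of spherical harmonics do decompose finitely, but the remainder $\mathcal R(\varphi)$ involves \emph{second covariant $\theta$-derivatives}: the polynomials $P_l(\varphi)$ are built from $\varphi_{ab}$, $\xi_t\varphi_a$, and $\varphi_a\varphi_b$ (see \eqref{eq-quadratic-sigma k}). A single entry $\nabla^2_\theta X_l$ is not a finite sum of spherical harmonics, so you cannot just invoke the product rule. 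What saves the day is that the specific scalar combinations appearing in $\sigma_k$---traces of products of Hessians, and contractions like $\nabla^2\varphi^{(1)}\circ\cdots\circ\nabla^2\varphi^{(k-2)}(\nabla\varphi^{(k-1)},\nabla\varphi^{(k)})$---\emph{do} land in $\mathcal S_{2ld}$ when $\varphi\in\mathcal S_d$. The paper proves this as Lemma~\ref{lemma-Pk-spherical} via an induction that repeatedly trades third covariant derivatives for curvature terms using the Ricci identity on $S^{n-1}$; this occupies all of Appendix~\ref{sec-appendix1}. Without it, your step ``the sum over $l$ is finite'' is unjustified.

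\textbf{For $n/2<k<n$, both the kernel analysis and the remainder bound need revision.} You propose a Fuchsian change of variable $s=e^{-(2-n/k)t}$ to extract indicial roots. The paper takes a more direct route: since $\xi_t\to1$ and $e^{-n\xi}\to0$, the coefficients of $L_i$ in \eqref{eq-linearization-j} converge to constants, yielding a limiting operator $L_i^\infty$ in \eqref{eq-linearization-j-infty} whose kernel is spanned by $e^{-\rho_i t}$ and $e^{\tau_i t}$ with $\rho_i,\tau_i$ given explicitly by \eqref{eq-rho-j}--\eqref{eq-beta-j}. Lemma~\ref{lemma-kernel-L-j-large-k} then shows the actual kernel elements are these exponentials times functions in the class $\mathcal E$; no regular-singular-point machinery is required. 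Separately, your assertion that the remainder $Q(\phi_m)$ is ``bounded by $C\phi_m^2$'' is false as written in this range: by \eqref{eq-nonlinear-operaor-form} the remainder contains $(1-\xi_t^2)^{1-l}P_l(\varphi)$ with $(1-\xi_t^2)^{1-l}\sim e^{(l-1)\rho_0 t}$ growing. The correct bound is $|\mathcal R(\varphi)|\le Ce^{-(\rho_0+2\varepsilon)t}$ once $|\varphi|\le Ce^{-(\rho_0+\varepsilon)t}$, which is exactly why the paper first upgrades the decay from $e^{-\rho_0 t}$ to $e^{-(\rho_0+\varepsilon)t}$ via the Pohozaev-type identity in Lemma~\ref{first integral for average} and Lemma~\ref{lemma-Improved-decay-estimate} before beginning the bootstrap. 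Your inductive step will not close without building this growth-versus-decay balance into the estimate.
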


We will define the index set $\{\mu_i\}_{i\ge 1}$ in Section \ref{sec-proof-main}, 
and demonstrate that it is determined by the radial solution $\xi$ for 
$2\le k\le n/2$ and is a fixed sequence for $n/2< k<n$. In fact, we can express the index set 
explicitly in terms of only $n$ and $k$ in the latter case. 
Moreover, each $c_{ij}(t, \theta)$  has the form as in \eqref{eq-coefficients}, 
where $a_{ijl}$ is a smooth periodic function, with the same period as $\xi$, 
for $2\le k<n/2$, and is constant for $n/2\le k<n$. 

We now briefly discuss the arrangement of this paper. 
In Section \ref{sec-Yamabe}, we prove Theorem \ref{thrm-Asymptotic-uk} for the Yamabe equation. 
In Section \ref{sec-identities}, 
we rewrite the $\sigma_k$-Yamabe equation \eqref{eq-k-Yamabe-w} 
as a linear form equal to a nonlinear form. 
The precise expression of the nonlinear form is needed later on. 
In Section \ref{sec-Radial-Solutions}, 
we discuss radial solutions and establish their asymptotic behaviors near infinity. 
In Section \ref{sec-Linearized-Equations}, 
we study the kernels of the linearized equations. 
In Section \ref{sec-proof-main}, we study asymptotic behaviors of solutions of the 
$\sigma_k$-Yamabe equation and prove Theorem \ref{thrm-approximation-order-1}
and Theorem \ref{thrm-approximation-order-arbitrary}. 
There are two appendices. 
In Appendix \ref{sec-Linear-Equations}, 
we discuss asymptotic expansions for solutions of linear equations, both ODEs and PDEs. 
In Appendix \ref{sec-appendix1}, we establish several technical identities concerning 
spherical harmonics. 

\smallskip 
{\bf Acknowledgement:} The first author acknowledges the support by the NSF
grant DMS-1404596, 
and the second and third authors acknowledge the support by the NSF grant DMS-1569162.
The authors would like to thank Zhengchao Han and Yanyan Li for helpful discussions.

\section{Asymptotic Behaviors for the Yamabe Equation}\label{sec-Yamabe}

In this section, we study solutions of the Yamabe equation and prove Theorem \ref{thrm-Asymptotic-uk}. 
We choose to present and discuss the main expansion result for the Yamabe equation firstly and separately, mainly 
due to the simple structure of the Yamaba equation, although it is a special case of the more general 
$\sigma_k$-Yamabe equation.

Following \cite{KorevaarMPS1999}, we study the Yamabe equation  in cylindrical coordinates as in \eqref{eq-U-intro}, 
which we record here as follows: 
\begin{equation}\label{eq-U}
v_{tt}+\Delta_{\theta}v-\frac14(n-2)^2v+\frac14n(n-2)v^{\frac{n+2}{n-2}}=0.\end{equation}
For radial solutions $\xi(t)$, \eqref{eq-U} reduces to 
\begin{equation}\label{eq-psi}
\xi''-\frac14(n-2)^2\xi+\frac14n(n-2)\xi^{\frac{n+2}{n-2}}=0\quad\text{on }\mathbb R.\end{equation}

In the following, we fix a positive solution $v$ of \eqref{eq-U}  on $\mathbb R_+\times \mathbb S^{n-1}$, 
with nonremovable singularity at infinity, 
and fix a positive periodic solution 
$\xi$ of \eqref{eq-psi}. Set 
\begin{equation}\label{eq-Asymptotic-Uk-1}\varphi(t,\theta)=v(t,\theta)-\xi(t).\end{equation}
A straightforward calculation, with the help of \eqref{eq-U} and \eqref{eq-psi},  yields 
\begin{equation}\label{eq-Asymptotic-Uk-2}\mathcal L\varphi=\mathcal F(\varphi),\end{equation}
where 
\begin{equation}\label{eq-U2-01}
\mathcal L \varphi=
\varphi_{tt}+\Delta_{\theta}\varphi-\frac14(n-2)^2\varphi+\frac14n(n+2)\xi^{\frac{4}{n-2}}\varphi,\end{equation}
and 
\begin{equation}\label{eq-Asymptotic-Uk-3}\mathcal F(\varphi)=-\frac14n(n-2)
\Big[\big(\xi+\varphi\big)^{\frac{n+2}{n-2}}-\xi^{\frac{n+2}{n-2}}
-\frac{n+2}{n-2}\xi^{\frac{4}{n-2}}\varphi\Big].\end{equation}
The linear operator $\mathcal L$ in \eqref{eq-U2-01} is simply the linearized operator of \eqref{eq-U} at $\xi$.  
This operator has periodic coefficients and  hence may be studied by
classical Floquet theoretic methods. Our main interest is its kernel. 

We now project the operator $\mathcal L$ to spherical harmonics. 
Let $\{\lambda_i\}$ be the sequence of eigenvalues of $-\Delta_{\theta}$ on $\mathbb S^{n-1}$, 
arranged in an increasing order
with $\lambda_i\to\infty$ as $i\to\infty$, and let  $\{X_i\}$ be 
a sequence of the corresponding normalized eigenfunctions of 
$-\Delta_\theta$ on $L^2(\mathbb S^{n-1})$; namely, $-\Delta_{\theta}X_i=\lambda_iX_i$ for each $i\ge 0$. 
Note that 
$$\lambda_0=0,\quad\lambda_1=\cdots=\lambda_n=1,\quad\lambda_{n+1}=2n,\quad\cdots,$$ 
and that each $X_i$ is a spherical harmonic of certain degree. In the following, 
we fix such a sequence $\{X_i\}$, which forms an orthonormal basis in $L^2(\mathbb S^{n-1})$.  
Refer to Appendix \ref{sec-Linear-Equations} for details.

For a fixed $i\ge 0$ and any $\psi=\psi(t)\in C^2(\mathbb R_+)$, we write 
\begin{equation}\label{eq-U2-01a}\mathcal L(\psi X_i)=(L_i\psi)X_i.\end{equation} 
By $-\Delta_\theta X_i=\lambda_iX_i$, we have 
\begin{equation}\label{eq-U2-01b}
L_i\psi=\psi_{tt}+\Big(\frac14n(n+2)\xi^{\frac{4}{n-2}}-\frac14(n-2)^2-\lambda_i\Big)\psi.\end{equation}
We now analyze the kernel of $L_i$ for each $i=0, 1, \cdots$. 

We first recall the classification of solutions of \eqref{eq-psi}.  
Let $\xi$ be a positive solution of \eqref{eq-psi}, with nonremovable singularity at infinity.  
Then, $\xi$ is either a positive constant or a nonconstant periodic smooth function. 
The constant solution can be found easily from \eqref{eq-psi} and is unique.  
(Refer to \cite{CaffarelliGS1989} for details.)

\begin{lemma}\label{lemma-Asymptotics-U1a}
Let $\xi$ be the positive constant solution of \eqref{eq-psi}. 

$\mathrm{(i)}$  For $i=0$, $\mathrm{Ker}(L_0)$ has a basis $\cos(\sqrt{n-2}t)$ and $\sin(\sqrt{n-2}t)$. 

$\mathrm{(ii)}$ There exists an increasing sequence of positive constants $\{\rho_i\}_{i\ge 1}$, 
divergent to $\infty$, such that  
for any $i\ge 1$, 
$\mathrm{Ker}(L_i)$ has a basis  
$e^{-\rho_i t}$ and $e^{\rho_i t}$. Moreover, $\rho_1=\cdots=\rho_n=1$. 
\end{lemma}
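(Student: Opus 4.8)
The plan is to compute the constant solution explicitly, substitute it into the formula \eqref{eq-U2-01b} for $L_i$, and then read off the kernel of each resulting constant-coefficient ODE. First I would solve \eqref{eq-psi} for a positive constant $\xi\equiv c$: the equation reduces to $-\tfrac14(n-2)^2c+\tfrac14n(n-2)c^{\frac{n+2}{n-2}}=0$, which gives $c^{\frac{4}{n-2}}=\frac{n-2}{n}$, hence $\xi$ is uniquely determined. Plugging $\xi^{\frac{4}{n-2}}=\frac{n-2}{n}$ into \eqref{eq-U2-01b}, the potential term collapses: $\tfrac14n(n+2)\cdot\frac{n-2}{n}-\tfrac14(n-2)^2=\tfrac14(n-2)(n+2)-\tfrac14(n-2)^2=\tfrac14(n-2)\cdot 4=n-2$. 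Therefore $L_i\psi=\psi_{tt}+(n-2-\lambda_i)\psi$, a constant-coefficient second-order ODE whose behavior is dictated entirely by the sign of $n-2-\lambda_i$.

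For part (i), when $i=0$ we have $\lambda_0=0$, so $L_0\psi=\psi_{tt}+(n-2)\psi$; since $n\ge 3$, $n-2>0$ and the characteristic roots are $\pm i\sqrt{n-2}$, giving the basis $\cos(\sqrt{n-2}\,t)$, $\sin(\sqrt{n-2}\,t)$ of $\mathrm{Ker}(L_0)$. For part (ii), I would set $\rho_i:=\sqrt{\lambda_i-(n-2)}$ whenever $\lambda_i>n-2$; then the characteristic equation $r^2=\lambda_i-(n-2)$ has real roots $\pm\rho_i$ and $\mathrm{Ker}(L_i)$ has basis $e^{-\rho_i t}$, $e^{\rho_i t}$. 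Since $\{\lambda_i\}$ is increasing and divergent, once $\lambda_i>n-2$ the $\rho_i$ are increasing and divergent as well; the claim $\rho_1=\cdots=\rho_n=1$ follows from $\lambda_1=\cdots=\lambda_n=1$, which would give $\lambda_i-(n-2)=1-(n-2)=3-n$ — and here I notice the sign only works out correctly if one is careful about which eigenvalue exceeds $n-2$.

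The one genuine subtlety — and the step I would flag as the main point requiring care rather than a real obstacle — is reconciling the indexing: the statement asserts $\rho_1=1$, but with $\lambda_1=1$ one gets $\lambda_1-(n-2)=3-n\le 0$ for $n\ge 3$, so $e^{\pm t}$ cannot be the kernel of $L_1$ as written from \eqref{eq-U2-01b}. The resolution is that the relevant linearized operator for the statement is not $L_i$ acting on $v$ directly but the operator obtained after the substitution $v=|x|^{\frac{n-2}{2}}u$ has already shifted spectral parameters, or equivalently that the indices must be read against the eigenvalue list shifted so that degree-$d$ spherical harmonics contribute $\lambda = d(d+n-2)$ with the degree-$1$ space giving $\lambda = n-1$, not $1$; then $\lambda_i-(n-2)=1$ for the degree-$1$ block, yielding $\rho=1$ with the correct multiplicity $n$. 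I would therefore open part (ii) by fixing this normalization (using the eigenvalue formula $\lambda = d(d+n-2)$ recalled in Appendix \ref{sec-Linear-Equations}), define $\rho_i=\sqrt{\lambda_i-(n-2)}$ for those $i$ with $\lambda_i>n-2$, verify $\lambda_{\text{(degree 1)}}-(n-2)=1$, and then the monotonicity, divergence, and the identity $\rho_1=\cdots=\rho_n=1$ are immediate, completing the proof.
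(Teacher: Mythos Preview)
Your computation is correct and is exactly the argument one would give here: the constant solution satisfies $\xi^{4/(n-2)}=(n-2)/n$, the potential collapses to $n-2$, and $L_i\psi=\psi_{tt}+(n-2-\lambda_i)\psi$ is a constant-coefficient ODE whose kernel is read off from the sign of $n-2-\lambda_i$. The paper itself does not spell out a proof of this lemma; it simply refers to \cite{KorevaarMPS1999}, \cite{MazzeoP1999}, \cite{MazzeoDU1996}, and \cite{HanLi2010}, so your direct verification is the expected route.

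The one place to clean up is your ``subtlety'' paragraph. There is no spectral shift coming from the substitution $v=|x|^{(n-2)/2}u$; the operator $L_i$ in \eqref{eq-U2-01b} is already the correct projected operator, and the $\lambda_i$ there are genuinely the eigenvalues of $-\Delta_\theta$ on $\mathbb S^{n-1}$. What tripped you up is simply that the line $\lambda_1=\cdots=\lambda_n=1$ displayed in Section~\ref{sec-Yamabe} is a typo: the degree-$1$ eigenvalue on $\mathbb S^{n-1}$ is $n-1$, as the paper itself states correctly in Appendix~\ref{sec-Linear-Equations} (just after \eqref{eq-Laplacian-eigenpair}). With $\lambda_1=\cdots=\lambda_n=n-1$ one gets $\lambda_i-(n-2)=1$ and hence $\rho_1=\cdots=\rho_n=1$ with no further adjustment. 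So drop the first ``resolution'' you offer and keep only the second: set $\rho_i=\sqrt{\lambda_i-(n-2)}$ with $\lambda_i=d(d+n-2)$ for the degree-$d$ block, note $\lambda_i\ge n-1>n-2$ for all $i\ge1$, and the monotonicity, divergence, and $\rho_1=\cdots=\rho_n=1$ follow immediately.
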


\begin{lemma}\label{lemma-Asymptotics-U2a}
Let $\xi$ be a positive nonconstant periodic solution of \eqref{eq-psi}. 

$\mathrm{(i)}$  For $i=0$, $\mathrm{Ker}(L_0)$ has a basis  
$p_0^+$ and $atp_0^++p_0^-$, for some smooth periodic functions $p_0^+$ and $p_0^-$ on $\mathbb R$,  
and some constant $a$. 

$\mathrm{(ii)}$ There exists an increasing sequence of positive constants $\{\rho_i\}_{i\ge 1}$, 
divergent to $\infty$, such that  
for any $i\ge 1$, 
$\mathrm{Ker}(L_i)$ has a basis 
$e^{-\rho_i t}p_i^+$ and $e^{\rho_i t}p_i^-$, for some smooth periodic functions 
$p_i^+$ and $p_i^-$  on $\mathbb R$. Moreover, $\rho_1=\cdots=\rho_n=1$. 

In addition, all periodic functions in $\mathrm{(i)}$ and $\mathrm{(ii)}$ have the same period as $\xi$.
\end{lemma}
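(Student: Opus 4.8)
The plan is to treat each $L_i$ in \eqref{eq-U2-01b} as a second-order linear ODE on $\mathbb R$ whose single coefficient is periodic of period $T$, the minimal period of $\xi$, and to analyze it by Floquet theory (as set up in Appendix~\ref{sec-Linear-Equations}). Since \eqref{eq-U2-01b} has no first-order term, the Wronskian of any two solutions is constant, so the monodromy map $M_i$ on the two-dimensional solution space lies in $SL(2,\mathbb R)$ and its Floquet multipliers form a reciprocal pair $\rho,\rho^{-1}$.

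For part $\mathrm{(i)}$, differentiating \eqref{eq-psi} gives $L_0\xi'=0$, and since $\xi$ is nonconstant and $T$-periodic, $\xi'$ is a nontrivial $T$-periodic element of $\mathrm{Ker}(L_0)$; hence $+1$ is a Floquet multiplier of $M_0$ and therefore both multipliers equal $+1$, i.e.\ $(M_0-I)^2=0$. If $M_0=I$, a second independent solution is again $T$-periodic, the degenerate case $a=0$; otherwise $M_0=I+N$ with $N\neq0$, $N^2=0$, so a second solution $y_0$ obeys $y_0(t+T)=y_0(t)+aT\,\xi'(t)$ for some $a\neq0$, whence $p_0^-:=y_0-at\,\xi'$ is $T$-periodic and $\{\,p_0^+:=\xi',\ at\,p_0^++p_0^-\,\}$ is the claimed basis. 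Which alternative occurs reflects whether the period of the radial solutions is stationary in the Delaunay parameter, and both are permitted by the statement.

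For part $\mathrm{(ii)}$, note that $L_i\psi=0$ is the spectral equation $H_\xi\psi=-\lambda_i\psi$ for the Hill operator $H_\xi=-\partial_t^2+\big(\tfrac14(n-2)^2-\tfrac14 n(n+2)\xi^{\frac4{n-2}}\big)$, with $T$-periodic bounded potential. For the value of $\lambda_i$ attached to degree-$1$ spherical harmonics I would first exhibit the two explicit solutions $\psi^{\pm}=e^{\mp t}\big(\tfrac{n-2}{2}\xi\mp\xi'\big)$; a direct check using \eqref{eq-psi} and its derivative $\xi'''=\big(\tfrac14(n-2)^2-\tfrac14 n(n+2)\xi^{\frac4{n-2}}\big)\xi'$ shows $L_i\psi^{\pm}=0$ (this is essentially the degree-$1$ computation of \cite{KorevaarMPS1999}). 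These are linearly independent Bloch solutions with multipliers $e^{\mp T}$, so $\rho_i=1$ for $i=1,\dots,n$ and $\mathrm{Ker}(L_i)$ has the basis $\{e^{-t}p_i^+,\,e^{t}p_i^-\}$ with $p_i^{\pm}=\tfrac{n-2}{2}\xi\mp\xi'$ of period $T$. That these modulating factors are genuinely bounded — in fact positive — rests on the pointwise bound $|\xi'|<\tfrac{n-2}{2}\xi$: by the classification of radial solutions in \cite{CaffarelliGS1989} one has $\xi<1$, and the first integral of \eqref{eq-psi} can be put in the form $(\xi')^2=\tfrac14(n-2)^2\big(\xi^2-\xi^{\frac{2n}{n-2}}\big)+2H$ with $H<0$ for a nonconstant periodic $\xi$ (evaluate at the minimum of $\xi$ and use $\xi<1$), which yields $(\xi')^2<\tfrac14(n-2)^2\xi^2$.

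It then remains to handle $i$ with $\lambda_i>\lambda_1$ and to verify the monotonicity and divergence of $\{\rho_i\}$. A strictly positive solution of $H_\xi\psi=-\lambda_1\psi$ with non-periodic Bloch factor forces $-\lambda_1<\inf\mathrm{spec}(H_\xi)$, hence $-\lambda_i<-\lambda_1<\inf\mathrm{spec}(H_\xi)$ for every $i>n$. Below the spectrum the Floquet discriminant $\Delta(E)=\mathrm{tr}\,M_i$ satisfies $\Delta(E)>2$ and is strictly decreasing in $E$, so the larger multiplier $\rho=\tfrac12\big(\Delta+\sqrt{\Delta^2-4}\big)$ is real, positive and strictly increasing as $E=-\lambda_i$ decreases; setting $\rho_i=T^{-1}\log\rho$ makes $\{\rho_i\}$ strictly increasing for $i>n$, and comparison of $L_i\psi=0$ with a constant-coefficient equation (using that the potential is bounded) gives $\rho_i\to\infty$. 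Since all multipliers are positive reals, each Bloch solution has the form $e^{\pm\rho_i t}p_i^{\pm}$ with $p_i^{\pm}$ genuinely $T$-periodic — no antiperiodic doubling — and the same remark applies to every periodic function in $\mathrm{(i)}$ and $\mathrm{(ii)}$, giving the final assertion. The one non-mechanical ingredient, and the step I expect to be the main obstacle, is the strict inequality $-\lambda_1<\inf\mathrm{spec}(H_\xi)$ — equivalently the bound $|\xi'|<\tfrac{n-2}{2}\xi$ — since it simultaneously pins $\rho_1=\dots=\rho_n=1$ and keeps every Floquet multiplier off the unit circle for all $i\ge1$; once it is in hand, the behavior of the discriminant below the spectrum makes the rest routine.
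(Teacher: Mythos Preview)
The paper does not actually prove this lemma; it simply refers the reader to \cite{KorevaarMPS1999}, \cite{MazzeoP1999}, \cite{MazzeoDU1996}, and \cite{HanLi2010}. Your proposal supplies precisely the Floquet-theoretic argument found in those references, and it is correct. The key steps --- $\xi'$ as a periodic element of $\mathrm{Ker}(L_0)$, the explicit Bloch solutions $e^{\mp t}\big(\tfrac{n-2}{2}\xi\mp\xi'\big)$ for the degree-$1$ modes, the positivity $|\xi'|<\tfrac{n-2}{2}\xi$ via the first integral with negative Hamiltonian, and the monotonicity of the Floquet exponent below the bottom of the Hill spectrum --- are exactly the ingredients used in \cite{KorevaarMPS1999} and \cite{MazzeoDU1996}.

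Two minor remarks. First, your phrase ``strictly increasing for $i>n$'' should be softened: the $\lambda_i$ still repeat within each eigenspace of $-\Delta_\theta$, so what you actually prove (and what the lemma asserts) is that $\rho_i$ is a strictly increasing function of $\lambda_i$, hence $\{\rho_i\}$ is nondecreasing and divergent. Second, the strict monotonicity of the discriminant $\Delta(E)$ on $(-\infty,\inf\mathrm{spec}\,H_\xi)$ is indeed a classical fact (see e.g.\ Coddington--Levinson \cite{CoddingtonL1955} or any standard reference on Hill's equation), but since it is the linchpin of your argument for $i>n$ you might cite it explicitly rather than treat it as folklore.
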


Refer to \cite{KorevaarMPS1999}, \cite{MazzeoP1999},  and \cite{MazzeoDU1996} for details, or 
to \cite{HanLi2010} for a more general setting. 

We now make an important remark concerning the sequence $\{L_i\}$.

\begin{remark}\label{remark-application-linear-L-Yamabe} 
Let $\{L_i\}_{i\ge 0}$ be given by \eqref{eq-U2-01b}. 
By Lemma \ref{lemma-Asymptotics-U1a} and Lemma \ref{lemma-Asymptotics-U2a}, the sequence $\{L_i\}_{i\ge 0}$
satisfies Assumption \ref{assum-kernel-linear-equation} in Appendix \ref{sec-Linear-Equations}. As a consequence, 
Lemma \ref{lemma-nonhomogeneous-linearized-eq} is applicable to the operator $\mathcal L$
given by \eqref{eq-U2-01}.
\end{remark} 

To proceed, we describe our strategy of proving Theorem \ref{thrm-Asymptotic-uk}. 
Let $v$ be a positive solution of \eqref{eq-U}  on $\mathbb R_+\times \mathbb S^{n-1}$, 
with nonremovable singularity at infinity, 
and $\xi$ be a positive periodic solution of \eqref{eq-psi}. The function $\varphi$ in 
\eqref{eq-Asymptotic-Uk-1} satisfies \eqref{eq-Asymptotic-Uk-2}, i.e., $\mathcal L\varphi=\mathcal R(\varphi)$. 
According to Remark \ref{remark-application-linear-L-Yamabe}, we can apply Lemma 
\ref{lemma-nonhomogeneous-linearized-eq} to the linear operator $\mathcal L$. 
Since $\mathcal R(\varphi)$ is nonlinear in $\varphi$, we will apply Lemma 
\ref{lemma-nonhomogeneous-linearized-eq} successively. In each step, we aim to 
get a decay estimate of $\mathcal R(\varphi)$, with a decay rate better than that of $\varphi$. 
Then, we can subtract expressions with the lower decay rates generated by Lemma 
\ref{lemma-nonhomogeneous-linearized-eq} to improve the decay rate of $\varphi$.
To carry out this process, we need to make two preparations. 

As the first preparation, we introduce the index set. 
Let $\xi$ be a positive periodic solution of \eqref{eq-psi} 
and $\{\rho_i\}_{i\ge 1}$ be the sequence of positive constants as in 
Lemma \ref{lemma-Asymptotics-U1a} and Lemma \ref{lemma-Asymptotics-U2a}. 
We note that $\{\rho_i\}_{i\ge 1}$ is increasing 
and divergent to infinity, with $\rho_1=\cdots=\rho_n=1$.  
We denote by $\mathbb Z_+$ the collection of nonnegative integers.

Define the {\it index set} $\mathcal I$ by  
\begin{align}\label{eq-def-index}
\mathcal I=\Big\{\sum_{i\ge 1} m_i\rho_i;\, m_i\in \mathbb Z_+\text{ with finitely many }m_i>0\Big\}.
\end{align}
In other words, $\mathcal I$ is the collection of linear combinations of finitely many $\rho_1, \rho_2, \cdots$ 
with positive integer coefficients. 
It is possible that some $\rho_i$ can be written 
as a linear combination of some of $\rho_1, \cdots, \rho_{i-1}$ with positive integer coefficients, 
whose sum is at least two. 

We now explain the construction of the index set $\mathcal I$ 
by examining the equation $\mathcal L\varphi=\mathcal R(\varphi)$ in 
\eqref{eq-Asymptotic-Uk-2}. 
First for each $i\ge 1$, the exponential decay solution 
of $L_i\psi=0$ on $\mathbb R_+$ contributes 
a decay order $\rho_i$ for $\varphi$. Next, the nonlinear expression $\mathcal R(\varphi)$ 
of $\varphi$ adds 
linear combinations of finitely many  of $\{\rho_i\}_{i\ge 1}$ with positive integer coefficients 
to the collection of decay orders. 
This is the index set $\mathcal I$ defined in \eqref{eq-def-index}. 

For the second preparation, we note that the nonlinear term $\mathcal R(\varphi)$ will produce 
products of spherical harmonics. 
We  need the following result. 

\begin{lemma}\label{lemma-SphericalHarmonics} Let $Y_k$ and $Y_l$ be spherical harmonics of 
degree $k$ and $l$, respectively. Then, 
$$Y_kY_l=\sum_{i=0}^{k+l}Z_i,$$
where $Z_i$ is some spherical harmonic of degree $i$, for $i=0,1, \cdots, k+l$. 
\end{lemma}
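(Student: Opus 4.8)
The plan is to lift the identity from the sphere to homogeneous polynomials on $\mathbb{R}^n$ and then invoke the classical harmonic decomposition of polynomials. Recall that a spherical harmonic of degree $k$ on $\mathbb{S}^{n-1}$ is, by definition, the restriction to $\mathbb{S}^{n-1}$ of a homogeneous harmonic polynomial of degree $k$ on $\mathbb{R}^n$. So I would begin by writing $Y_k=P_k|_{\mathbb{S}^{n-1}}$ and $Y_l=P_l|_{\mathbb{S}^{n-1}}$ for homogeneous harmonic polynomials $P_k$ and $P_l$ of degrees $k$ and $l$. Their product $P:=P_kP_l$ is a homogeneous polynomial of degree $k+l$ on $\mathbb{R}^n$ — in general no longer harmonic — and, since $Y_k$ and $Y_l$ are the restrictions of $P_k$ and $P_l$, we have $Y_kY_l=P|_{\mathbb{S}^{n-1}}$.

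Next I would recall the standard fact that every homogeneous polynomial $P$ of degree $d$ on $\mathbb{R}^n$ admits a decomposition
$$P=H_d+|x|^2H_{d-2}+|x|^4H_{d-4}+\cdots,$$
where each $H_{d-2j}$ is a homogeneous harmonic polynomial of degree $d-2j$ and the sum terminates once the degree would become negative. The cleanest route is the orthogonal splitting $\mathcal{P}_d=\mathcal{H}_d\oplus|x|^2\mathcal{P}_{d-2}$, where $\mathcal{P}_d$ is the space of homogeneous degree-$d$ polynomials and $\mathcal{H}_d\subset\mathcal{P}_d$ the harmonic ones: equipping $\mathcal{P}_d$ with the Fischer inner product, under which the Laplacian $\Delta\colon\mathcal{P}_d\to\mathcal{P}_{d-2}$ is the adjoint of multiplication by $|x|^2$, one has $\mathcal{H}_d=\ker\Delta=(|x|^2\mathcal{P}_{d-2})^{\perp}$, which yields the asserted direct sum. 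Iterating this splitting on $\mathcal{P}_{d-2}$, $\mathcal{P}_{d-4}$, and so on produces the displayed decomposition of $P$.

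Applying this with $d=k+l$ to $P=P_kP_l$ and restricting to $|x|=1$, I obtain
$$Y_kY_l=P\big|_{\mathbb{S}^{n-1}}=\sum_{j\ge 0}H_{k+l-2j}\big|_{\mathbb{S}^{n-1}},$$
and each $H_{k+l-2j}|_{\mathbb{S}^{n-1}}$ is a spherical harmonic of degree $k+l-2j$. Setting $Z_i:=H_i|_{\mathbb{S}^{n-1}}$ when $i\equiv k+l\pmod 2$ and $Z_i:=0$ otherwise gives $Y_kY_l=\sum_{i=0}^{k+l}Z_i$ with $Z_i$ a spherical harmonic of degree $i$, which is exactly the claim.

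There is no real obstacle here: the entire content is the classical harmonic decomposition of homogeneous polynomials, and the only point requiring a word of care is the parity observation — namely that only degrees congruent to $k+l$ modulo $2$ actually occur — which is harmless for the statement as phrased, since the remaining $Z_i$ are simply taken to be zero. If a self-contained argument is preferred over citing the decomposition, one can instead prove $\mathcal{P}_d=\mathcal{H}_d\oplus|x|^2\mathcal{P}_{d-2}$ by showing $\Delta\colon\mathcal{P}_d\to\mathcal{P}_{d-2}$ is surjective together with $\mathcal{H}_d\cap|x|^2\mathcal{P}_{d-2}=\{0\}$, but the Fischer inner product argument above is the most economical.
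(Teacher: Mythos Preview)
Your proof is correct and follows essentially the same approach as the paper: lift to homogeneous harmonic polynomials on $\mathbb{R}^n$, apply the classical decomposition of the degree $k+l$ homogeneous polynomial $P_kP_l$ into $\sum_j |x|^{2j}H_{k+l-2j}$ with $H_i$ harmonic, and restrict to the unit sphere. The paper simply cites Stein--Weiss for the decomposition rather than sketching the Fischer inner product argument, and does not bother to comment on the parity issue, but the substance is identical.
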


\begin{proof} We adopt polar coordinates $(r, \theta)$ in $\mathbb R^n$. Then, $u_k(x)=r^kY_k(\theta)$ and 
$u_l(x)=r^lY_l(\theta)$ are homogeneous harmonic polynomials of degrees $k$ and $l$, respectively. Hence, 
$u_ku_l$ is a homogeneous polynomial of degree $k+l$. By a well-known decomposition result for 
homogeneous polynomials (\cite{Stein1971}), we have 
$$u_k(x)u_l(x)=v_{k+l}(x)+|x|^2v_{k+l-2}(x)+\cdots+|x|^{k+l-\tau}v_{\tau}(x),$$
where $\tau=1$ if $k+l$ is odd and $\tau=0$ if $k+l$ is even, and $v_i$ is a homogeneous harmonic polynomial 
of degree $i$, for $i=k+l, k+l-2, \cdots, \tau$. We obtain the desired result by restricting  
the above identity to the unit sphere. 
\end{proof} 


\smallskip

We are ready to prove Theorem \ref{thrm-Asymptotic-uk}. 

\begin{proof}[Proof of Theorem \ref{thrm-Asymptotic-uk}] Throughout the proof, we adopt the following notation: 
$f=O(h)$ if $|f|\le Ch$, for some positive constant $C$. 
All estimates in the following hold for 
any $t>1$ and any $\theta\in \mathbb S^{n-1}$. 

Let $\{X_i\}$ be an orthonormal basis of $L^2(\mathbb S^{n-1})$, 
formed by eigenfunctions of $-\Delta_\theta$, and $\{\lambda_i\}$ be the sequence of corresponding 
eigenvalues, arranged in an increasing order. Then, each $X_i$ is a spherical harmonic, and 
deg$(X_i)\le $ deg$(X_j)$ for any $i\le j$. In particular, $X_0$ is a constant, and 
$X_1, \cdots, X_n$ are spherical harmonics of degree 1.

Let 
$\mathcal L$ be the linearization at $\xi$ given by \eqref{eq-U2-01}, and $L_i$ be the projection of 
$\mathcal L$ given by \eqref{eq-U2-01b}. In the following, we assume $\xi$ is a nonconstant 
periodic solution. The proof below can be modified easily for the case that $\xi$ is a constant. 

According to 
Lemma \ref{lemma-Asymptotics-U2a}, 
there is an exponentially decaying solution in $\mathrm{Ker}(L_i)$ for $i\ge 1$, i.e., 
\begin{equation}\label{eq-decaying-solution-i}\psi_i^+(t)=e^{-\rho_i t}p_i^+(t).\end{equation} 
Let $\mathcal I$ be the index set defined in \eqref{eq-def-index}.


Set $\varphi$ as in \eqref{eq-Asymptotic-Uk-1}, i.e., 
\begin{equation*}
\varphi(t,\theta)=v(t,\theta)-\xi(t).\end{equation*}
By \eqref{eq-estimate-u-1}, 
we have 
\begin{equation}\label{eq-Asymptotic-Uk-0}
\varphi(t,\theta)=O(e^{-t}).\end{equation}
This is our starting estimate. Note that $\varphi$ satisfies \eqref{eq-Asymptotic-Uk-2}, 
with $\mathcal L$ and $\mathcal R(\varphi)$ given by 
\eqref{eq-U2-01} and \eqref{eq-Asymptotic-Uk-3}, respectively. 
In particular, we have  
\begin{equation}\label{eq-Asymptotic-Uk-03a}|\mathcal L\varphi|=|\mathcal F(\varphi)|\le C\varphi^2.\end{equation}
If $|\varphi|<\xi$, then 
$$\mathcal F(\varphi)=\sum_{i=2}^\infty c_i\varphi^i,$$
where $c_i$ is a smooth periodic function on $\mathbb R$, for each $i\ge 2$. We point out that we write 
the infinite sum just for convenience. We do not need the convergence of 
the infinite series and we always expand up to finite orders.

We now decompose the index set $\mathcal I$. Set 
$$\mathcal I_\rho=\{\rho_j:\, j\ge 1\},$$ 
and 
$$\mathcal I_{\widetilde \rho}=\Big\{\sum_{i=1}^kn_i\rho_i:\, n_i\in \mathbb Z_+, \sum_{i=1}^kn_i\ge 2\Big\}.$$
We assume $\mathcal I_{\widetilde \rho}$ is given by a strictly increasing sequence 
$\{\widetilde\rho_i\}_{i\ge 1}$, with 
$\widetilde \rho_1=2$.

We first consider the case that 
\begin{equation}\label{eq-special}\mathcal I_\rho\cap \mathcal I_{\widetilde \rho}=\emptyset.\end{equation}
In other words, no $\rho_i$ can be written 
as a linear combination of some of $\rho_1, \cdots, \rho_{i-1}$ with positive integer coefficients, except a single 
$\rho_{i'}$ which is equal to $\rho_i$. 
In this case, we arrange $\mathcal I$ as follows: 
\begin{equation}\label{eq-arrangement}
1=\rho_1\le\cdots\le\rho_{k_1}<\widetilde \rho_1<\cdots<\widetilde\rho_{l_1}<
\rho_{k_1+1}\le\cdots\le\rho_{k_2}<\widetilde \rho_{l_1+1}<\cdots.\end{equation}
For each $\widetilde \rho_i$, by the definition of $\mathcal I_{\widetilde \rho}$, 
we consider 
nonnegative integers $n_1, \cdots, n_{k_1}$ such that 
\begin{equation}\label{eq-requirement-m}
n_1+\cdots+n_{k_1}\ge 2, \quad n_1\rho_1+\cdots+n_{k_1}\rho_{k_1}= \widetilde \rho_i.\end{equation}
There are only finitely many collections of 
nonnegative integers $n_1$, $\cdots$, $n_{k_1}$ satisfying \eqref{eq-requirement-m}. 
Set 
\begin{align*}
\widetilde K_i&=\max\{n_1+2n_2+\cdots+k_1n_{k_1}:\\
&\qquad \qquad n_1, \cdots, n_{k_1}\text{ are nonnegative integers 
satisfying \eqref{eq-requirement-m}}\},
\end{align*}
and 
\begin{align}\label{eq-def-M}\widetilde M_i=\max\{m:\, \mathrm{deg}(X_m)\le \widetilde K_i\}.\end{align}

By \eqref{eq-Asymptotic-Uk-0}, we have 
\begin{equation}\label{eq-Asymptotic-Uk-02}\varphi=O(e^{-\rho_1 t}),\end{equation}
and then, by \eqref{eq-Asymptotic-Uk-03a},  
\begin{equation}\label{eq-Asymptotic-Uk-03}\mathcal L\varphi=O(e^{-2\rho_1t})
=O(e^{-\widetilde \rho_1 t}).\end{equation}
We divide the proof for the case \eqref{eq-special} into several steps.

{\it Step 1.} Note $\rho_{k_1}<\widetilde \rho_1=2\rho_1$. We claim that there exists an $\eta_1$ such that 
$$\varphi=\eta_1+O(e^{-\widetilde \rho_1 t}).$$
In fact, by \eqref{eq-Asymptotic-Uk-03}
and  Lemma \ref{lemma-nonhomogeneous-linearized-eq}(ii), we can take 
\begin{equation}\label{eq-Asymptotic-Uk-11}\eta_1(t,\theta)=\sum_{i=1}^{k_1}c_i(t)X_i(\theta)e^{-\rho_it},\end{equation}
where $c_i$ is a smooth periodic function. In the present case, the function $\psi_i^+$ 
in Lemma \ref{lemma-nonhomogeneous-linearized-eq} is given by  \eqref{eq-decaying-solution-i}. 
Set 
\begin{equation}\label{eq-Asymptotic-Uk-12}
\varphi_1=\varphi-\eta_1.\end{equation}
Then, $\mathcal L\eta_1=0$, $\mathcal L\varphi_1=\mathcal F(\varphi)$,  and 
\begin{equation}\label{eq-Asymptotic-Uk-13}\varphi_1= O(e^{-\widetilde \rho_1 t}).\end{equation}
Note that \eqref{eq-Asymptotic-Uk-13} improves \eqref{eq-Asymptotic-Uk-02}. 

{\it Step 2.} We claim there exists an $\widetilde \eta_1$ such that, with  
\begin{equation}\label{eq-Asymptotic-Uk-21}
\widetilde \varphi_1=\varphi_1-\widetilde \eta_1=\varphi-\eta_1-\widetilde \eta_1,\end{equation}
we have 
\begin{equation}\label{eq-Asymptotic-Uk-23}\mathcal L\widetilde \varphi_1= O(e^{-\widetilde \rho_{l_1+1}t}).\end{equation}
We will prove that $\widetilde \eta_1$ has the form 
\begin{equation}\label{eq-Asymptotic-Uk-22}
\widetilde\eta_{1}(t,\theta)=\sum_{i=1}^{l_1}
\Big\{\sum_{m=0}^{\widetilde M_i} 
c_{im}(t)X_{m}(\theta)\Big\}e^{-\widetilde \rho_it},
\end{equation}
where $\widetilde M_i$ is defined in \eqref{eq-def-M}, 
and $c_{im}$ is a smooth periodic function. 
Note that \eqref{eq-Asymptotic-Uk-23} improves \eqref{eq-Asymptotic-Uk-03}. 

To prove this, we take 
some function $\widetilde\eta_1$ 
to be determined, 
and then set $\widetilde \varphi_1$ by \eqref{eq-Asymptotic-Uk-21}. 
Then, 
\begin{equation}\label{eq-Asymptotic-Uk-21a}
\mathcal L\widetilde \varphi_1=\mathcal F(\varphi)-\mathcal L\widetilde\eta_1.\end{equation}
Note $3\rho_1\in \mathcal I_{\widetilde \rho}$. We discuss this step in several cases.

{\it Case 1. We assume $\rho_{k_1+1}<3\rho_1$.} Then, $\widetilde \rho_{l_1}<\rho_{k_1+1}<3\rho_1$
and $\widetilde \rho_{l_1+1}\le 3\rho_1$. We now analyze $\mathcal F(\varphi)$ in \eqref{eq-Asymptotic-Uk-21a}. 
Note 
$$\mathcal F(\varphi)=\mathcal F(\varphi_1+\eta_1)=\sum_{i=2}^\infty c_i(\varphi_1+\eta_1)^i.$$
It is worth mentioning again that we write the infinite sum just for convenience
and we always expand up to finite orders. 
For terms involving $\varphi_1$, we have, by \eqref{eq-Asymptotic-Uk-13}, 
$$\varphi_1^2\le Ce^{-4\rho_1t},\quad |\varphi_1\eta_1|\le Ce^{-3\rho_1t}.$$
Note that $\eta_1$ is given by \eqref{eq-Asymptotic-Uk-11}. 
We write 
$$\sum_{i=2}^\infty c_i\eta_1^i=\sum_{n_1+\cdots+n_{k_1}\ge 2}a_{n_1\cdots n_{k_1}}(t)
e^{-(n_1\rho_1+\cdots+n_{k_1}\rho_{k_1})t}X_1^{n_1}\cdots X_{k_1}^{n_{k_1}},$$
where $n_1, \cdots, n_{k_1}$ are nonnegative integers, and $a_{n_1\cdots n_{k_1}}$ 
is a smooth periodic function. By the definition of $\mathcal I_{\widetilde \rho}$, 
$n_1\rho_1+\cdots+n_{k_1}\rho_{k_1}$ is some 
$\widetilde \rho_i$. 
Hence, by Lemma \ref{lemma-SphericalHarmonics}, 
\begin{equation}\label{eq-property-I}
\sum_{i=2}^\infty c_i\eta_1^i=\sum_{i=1}^{\infty}
\Big\{\sum_{m=0}^{\widetilde M_i}
a_{im}(t)X_{m}(\theta)\Big\}e^{-\widetilde \rho_it},\end{equation}
where $a_{im}$ is a smooth periodic function. 
We now take the finite sum up to $l_1$ in the right-hand side and denote it by 
$I_1$, i.e., 
\begin{equation}\label{eq-definitioni-I}I_1=
\sum_{i=1}^{l_1}
\Big\{\sum_{m=0}^{\widetilde K_i}
a_{im}(t)X_{m}(\theta)\Big\}e^{-\widetilde \rho_it}.\end{equation}
Then, 
$$\mathcal F(\varphi)=I_1+O(e^{-\widetilde \rho_{l_1+1}t}),$$
and hence, by \eqref{eq-Asymptotic-Uk-21a}, 
$$\mathcal L\widetilde \varphi_1=\mathcal L\widetilde\eta_1-I_1+O(e^{-\widetilde \rho_{l_1+1}t}).$$
Consider $\widetilde\eta_1$ in the form 
$$\widetilde\eta_1(t,\theta)=\sum_{i=1}^{l_1}\sum_{m=0}^{\widetilde M_i}
\widetilde\eta_{im}(t)X_{m}(\theta).$$
To solve $\mathcal L\widetilde\eta_1=I_1$, we consider, for each $1\le i\le l_1$ and $0\le m\le \widetilde M_i$, 
\begin{equation}\label{eq-solving-linear1}L_m\widetilde\eta_{im}=a_{im}(t)e^{-\widetilde \rho_it}.\end{equation}
Since $\rho_m\neq\widetilde\rho_i$ for any $m$ and $i$, by Lemma \ref{lemma-decay-particular-solution-ODE}
and Remark \ref{remark-ODE-period}, we have a 
solution 
\begin{equation}\label{eq-solving-linear2}\widetilde\eta_{im}(t)=c_{im}(t)e^{-\widetilde \rho_it},
\end{equation} for some smooth periodic function 
$c_{im}$. In conclusion, we obtain a function $\widetilde \eta_1$ in the form \eqref{eq-Asymptotic-Uk-22}, 
and $\widetilde \varphi_1$ defined by \eqref{eq-Asymptotic-Uk-21} satisfies \eqref{eq-Asymptotic-Uk-23}. 
By \eqref{eq-Asymptotic-Uk-13} and \eqref{eq-Asymptotic-Uk-22}, we have 
\begin{equation}\label{eq-Asymptotic-Uk-23a}
\widetilde \varphi_1=O(e^{-\widetilde\rho_1t}).
\end{equation}

{\it Case 2:  We now assume $\rho_{k_1+1}>3\rho_1$.} Then, $\widetilde \rho_{l_1}\ge 3\rho_1$. 

Let $n_1$ be the largest integer such that $\widetilde \rho_{n_1}<3\rho_1$. Then, $\widetilde \rho_{n_1+1}=3\rho_1$. 
We can repeat the argument in Case 1 with $n_1$ replacing $l_1$. 
In defining $I_1$ in \eqref{eq-definitioni-I}, the summation is from $i=1$ to $n_1$. 
Similarly for $\widetilde \eta_1$ in \eqref{eq-Asymptotic-Uk-22}, we define
\begin{equation}\label{eq-Asymptotic-Uk-24}\widetilde\eta_{11}(t,\theta)=
\sum_{i=1}^{n_1}\Big\{\sum_{m=0}^{\widetilde M_i}
c_{im}(t)X_{m}(\theta)\Big\}e^{-\widetilde \rho_it},\end{equation}
for appropriate smooth periodic functions $c_{im}$, and then set
\begin{equation}\label{eq-Asymptotic-Uk-25}
\widetilde \varphi_{11}=\varphi_1-\widetilde\eta_{11}.
\end{equation}
A similar arguments yields
\begin{equation}\label{eq-Asymptotic-Uk-26}\mathcal L\widetilde \varphi_{11}
=O(e^{-\widetilde \rho_{n_1+1}t})= O(e^{-3\rho_1t}).\end{equation}
Moreover, by \eqref{eq-Asymptotic-Uk-13} and \eqref{eq-Asymptotic-Uk-24}, 
$$\widetilde \varphi_{11}= O(e^{-\widetilde \rho_1t})= O(e^{-2\rho_1t}).$$
We point out there is no $\rho_i$ between $\widetilde \rho_1$ and 
$\widetilde\rho_{n_1+1}$. 
Hence, by Lemma \ref{lemma-nonhomogeneous-linearized-eq}(ii), we have 
\begin{equation}\label{eq-Asymptotic-Uk-27}\widetilde \varphi_{11}=O(e^{-3\rho_1t}).\end{equation}
Note that \eqref{eq-Asymptotic-Uk-27} improves \eqref{eq-Asymptotic-Uk-23a} and hence \eqref{eq-Asymptotic-Uk-13}. 

Now, we are in a similar situation as at the beginning of Step 2, 
with $\widetilde\rho_{n_1+1}=3\rho_1$ replacing $\widetilde \rho_1=2\rho_1$. 
If $\rho_{k_1+1}<4\rho_1$, we proceed as in Case 1. If $\rho_{k_1+1}>4\rho_1$, we proceed as at the beginning 
of Case 2 by taking the largest integer $n_2$ such that $\widetilde \rho_{n_2}<4\rho_1$. 
After finitely many steps, we reach $\widetilde \rho_{l_1}$. 

In summary, we have $\widetilde\eta_1$ as in \eqref{eq-Asymptotic-Uk-22} and, 
by defining 
$\widetilde \varphi_1$ by \eqref{eq-Asymptotic-Uk-21}, 
we conclude \eqref{eq-Asymptotic-Uk-23}, 
as well as  \eqref{eq-Asymptotic-Uk-23a}. This finishes the discussion of Step 2. 

{\it Step 3.} Now we are in the same situation as in Step 1, 
with $\widetilde\rho_{l_1+1}$ replacing $\widetilde \rho_1$. We repeat the argument there with 
$k_1+1$, $k_2$ and $l_1+1$ replacing $1$, $k_1$ and $1$, respectively. 
Note $\rho_{k_2}<\widetilde \rho_{l_1+1}$. 
By \eqref{eq-Asymptotic-Uk-23} 
and Lemma \ref{lemma-nonhomogeneous-linearized-eq}(ii), we obtain 
$$\widetilde \varphi_1(t,\theta)=\sum_{i=k_1+1}^{k_2}c_i(t)X_i(\theta)e^{-\rho_i t}+O(e^{-\widetilde \rho_{l_1+1} t}),$$
where $c_i$ is a smooth periodic function, for $i=k_1+1, \cdots, k_2$.  
By \eqref{eq-Asymptotic-Uk-23a}, there is no need to adjust by terms involving $e^{-\rho_i t}$
corresponding to $i=1, \cdots, k_1$. 
Set 
\begin{equation}\label{eq-Asymptotic-Uk-31}\eta_2(t, \theta)=\sum_{i=k_1+1}^{k_2}c_i(t)X_i(\theta)e^{-\rho_i t},\end{equation}
and 
\begin{equation}\label{eq-Asymptotic-Uk-32}
\varphi_2=\widetilde \varphi_1-\eta_2.\end{equation}
Then, $\mathcal L\eta_2=0$, 
$\varphi_2=\varphi-\eta_1-\widetilde \eta_1-\eta_2,$  and 
\begin{equation}\label{eq-Asymptotic-Uk-33}\varphi_2=O(e^{-\widetilde \rho_{l_1+1} t}).\end{equation}

{\it Step 4.} The discussion is similar to Step 2. For some $\widetilde \eta_2$ to be determined, set 
\begin{equation}\label{eq-Asymptotic-Uk-41}
\widetilde \varphi_2=\varphi_2-\widetilde \eta_2.\end{equation}
Then, 
$$\mathcal L\widetilde \varphi_2=\mathcal F(\varphi)-\mathcal L\widetilde\eta_1-\mathcal L\widetilde\eta_2.$$
Note
$$\mathcal F(\varphi)=\mathcal F(\varphi_2+\eta_1+\widetilde\eta_1+\eta_2)
=\sum_{i=2}^\infty c_i(\varphi_2+\eta_1+\widetilde\eta_1+\eta_2)^i.$$
As in Step 2, we need to analyze 
$$\sum_{i=2}^\infty c_i(\eta_1+\widetilde\eta_1+\eta_2)^i.$$
In Step 2, by choosing $\widetilde\eta_1$ as in \eqref{eq-Asymptotic-Uk-22} appropriately, 
we use $\mathcal L\widetilde\eta_1$ to cancel the terms $e^{-\widetilde \rho_it}$ 
in $\mathcal F(\varphi)$, for $i=1, \cdots, l_1$. 
Proceeding similarly, we can find $\widetilde \eta_2$ in the form 
\begin{equation}\label{eq-Asymptotic-Uk-42}
\widetilde\eta_2(t,\theta)=\sum_{i=l_1+1}^{l_2}
\Big\{\sum_{m=0}^{\widetilde M_i} 
c_{im}(t)X_{m}(\theta)\Big\}e^{-\widetilde \rho_it}
\end{equation}
to cancel the terms $e^{-\widetilde \rho_it}$ in $\mathcal F(\varphi)$, for $i=l_1+1, \cdots, l_2$.
By defining 
$\widetilde \varphi_2$ by \eqref{eq-Asymptotic-Uk-41}, 
we conclude 
\begin{equation}\label{eq-Asymptotic-Uk-43}\mathcal L\widetilde \varphi_2=O(e^{-\widetilde \rho_{l_2+1}t}).\end{equation}

We can continue these steps indefinitely and hence finish the proof for the case \eqref{eq-special}. 

Next, we consider the general case; namely, some $\rho_i$ can be written 
as a linear combination of some of $\rho_1, \cdots, \rho_{i-1}$ with positive integer coefficients. 
We will modify discussion above to treat the general case. Whenever some $\rho_i$ coincides 
some $\widetilde \rho_{i'}$, an extra power of $t$ appears when solving $L_i\phi_i=a_i$,
according to Lemma \ref{lemma-decay-particular-solution-ODE},  and such 
a power of $t$ will generate more powers of $t$ upon iteration. 

For an illustration, we consider $\rho_{k_1}=\widetilde \rho_1$ instead of the strict inequality in 
\eqref{eq-arrangement}. This is the first time that some $\rho_i$ may coincide
some $\widetilde \rho_{i'}$.

We set $\varphi$ as in \eqref{eq-Asymptotic-Uk-1}. Then, we have \eqref{eq-Asymptotic-Uk-02}
and \eqref{eq-Asymptotic-Uk-03}, i.e., 
\begin{equation*}\label{eq-Asymptotic-Uk-02z}\varphi=O(e^{-\rho_1 t}),\end{equation*}
and 
\begin{equation*}\label{eq-Asymptotic-Uk-03z}\mathcal L\varphi=O(e^{-2\rho_1t})
=O(e^{-\widetilde \rho_1 t}).\end{equation*}
We proceed similarly as in Step 1. 
Take $k_*\in \{1, \cdots, k_1-1\}$ such that 
$$\rho_{k_*}<\rho_{k_*+1}=\cdots=\rho_{k_1}=\widetilde \rho_1=2\rho_1.$$ By 
Lemma \ref{lemma-nonhomogeneous-linearized-eq}(ii), we obtain 
$$\varphi(t, \theta)=\sum_{i=1}^{k_*}c_i(t)X_i(\theta)e^{-\rho_i t}+O(te^{-\widetilde \rho_1 t}),$$
where $c_i$ is a smooth periodic function, for $i=1, \cdots, k_*$. Instead of \eqref{eq-Asymptotic-Uk-11}, we define 
\begin{equation}\label{eq-Asymptotic-Uk-11z}\eta_1(t, \theta)=\sum_{i=1}^{k_*}c_i(t)X_i(\theta)e^{-\rho_i t},\end{equation}
and then define $\varphi_1$ as in 
\eqref{eq-Asymptotic-Uk-12}. Then, 
\begin{equation}\label{eq-Asymptotic-Uk-13z}\varphi_1=O(te^{-\widetilde \rho_1 t}).\end{equation}
Next, we proceed similarly as in Step 2. In the discussion of Case 1 in Step 2, we need to solve 
\eqref{eq-solving-linear1}, for each $1\le i\le l_1$ and $0\le m\le \widetilde{M}_i$. 
If $\rho_m\neq\widetilde \rho_i$, then $\widetilde\eta_{im}(t)$ is still given by 
\eqref{eq-solving-linear2}. If $\rho_m=\widetilde \rho_i$, then 
$\widetilde\eta_{im}$ has the form 
\begin{equation}\label{eq-solving-linear2z}
\widetilde\eta_{im}(t)=c_{i1m}(t)te^{-\widetilde \rho_it}+c_{i0m}(t)e^{-\widetilde \rho_it},
\end{equation}
where $c_{i1m}, c_{i0m}$ are smooth periodic functions. 
In particular, this is the case if $i=1$ and $m=k_1$. 
Then, by defining  $\widetilde\eta_1$ by \eqref{eq-Asymptotic-Uk-22}, with new  $\widetilde\eta_{im}(t)$ given by 
\eqref{eq-solving-linear2z}, 
and defining $\widetilde \varphi_1$ by \eqref{eq-Asymptotic-Uk-21}, we have \eqref{eq-Asymptotic-Uk-23}. 
We can modify the rest of the proof similarly. \end{proof} 

Denote the index set $\mathcal I$ by
a strictly increasing sequence $\{\mu_i\}_{i\ge 1}$ of positive constants. 
Here, we disregard the multiplicity. Obviously, $\mu_1=\rho_1=1$ and 
$\mu_2=\min\{2\rho_1, \rho_{n+1}\}$. 

According to the proof of Theorem \ref{thrm-Asymptotic-uk}, we can write the summation in 
\eqref{eq-estimate-u-k} in the following form: 
for any positive integer $m$ and any $(t, \theta)\in (1,\infty)\times\mathbb S^{n-1}$, 
\begin{equation}\label{eq-approximation1}\phi_m(t,\theta)
=\sum_{\rho_i\le \mu_m}c_i(t)X_i(\theta)e^{-\rho_it},\end{equation}
where $c_{i}$ is a smooth periodic function, and
\begin{equation}\label{eq-approximation2}
\widetilde\phi_m(t,\theta)=\sum_{\widetilde\rho_i\le \mu_m}\sum_{j=0}^{i-1}
\Big\{\sum_{l=0}^{\widetilde{M}_i}
c_{ijl}(t)X_{l}(\theta)\Big\}t^je^{-\widetilde\rho_it},\end{equation}
where 
$c_{ijl}$ is a smooth periodic function. 
We note that 
$\phi_m$
is a solution of $\mathcal L\phi_m=0$ and that $\widetilde\phi_m$ arises 
from the nonlinearity in the equation \eqref{eq-U}. 
In the special case $\mathcal I_\rho\cap\mathcal I_{\widetilde\rho}=\emptyset$, ${\widetilde\phi}_m$ has the form 
$$
\widetilde\phi_m(t,\theta)=\sum_{\widetilde\rho_i\le \mu_m}
\Big\{\sum_{l=0}^{\widetilde{M}_i}
c_{il}(t)X_{l}(\theta)\Big\}e^{-\widetilde\rho_it}.$$
If $\xi$ is a nonconstant 
periodic solution, then the period of $c_i$ in \eqref{eq-approximation1} and $c_{ijl}$ in \eqref{eq-approximation2} 
is the same  as that of $\xi$. If $\xi$ is a constant, 
then $c_i$ in \eqref{eq-approximation1} is constant and the period of $c_{ijl}$ in \eqref{eq-approximation2} 
is $2\pi/\sqrt{n-2}$.

\section{Some Identities}\label{sec-identities} 

Starting from this section, we study the $\sigma_k$-Yamabe equation. 
In this section, we establish some useful identities. 

Denote by $\theta=(\theta_2, \cdots, \theta_n)$ normal local coordinates on the sphere $\mathbb S^{n-1}$, 
and by $t$ the coordinate on $\mathbb R$. For convenience, we 
also write $t=\theta_1$. Let $w=w(t,\theta)$ be an at least $C^2$-function and consider the 
$n\times n$ matrix $\Lambda=\Lambda(w)$ given by 
\begin{align}\label{eq-matrix-Gamma-phi}
\begin{split}
\Lambda_{11}&={w}_{tt}-\frac12(1-w_t^2)-\frac{1}{2}|\nabla_{\theta}{w}|^2,\\
\Lambda_{ii}&={w}_{ii}+w_i^2+\frac12(1-w_t^2)-\frac{1}{2}|\nabla_{\theta}{w}|^2\quad\text{for } 2\le i\le n,\\
\Lambda_{ab}&={w}_{ab}+{w}_a{w}_b\quad\text{for } 1\le a\neq b\le n.
\end{split}
\end{align}

Consider two functions $\xi=\xi(t)$ and $\varphi=\varphi(t,\theta)$, at least $C^2$ in their arguments, and 
set
\begin{equation}\label{eq-sigma-k-w}w=\xi+\varphi.\end{equation}
We view $w$ as a perturbation of $\xi$ and now expand $\sigma_k(\Lambda(w)).$ 
In fact, we always have 
\begin{equation}\label{eq-sigma-expansion-012}
\sigma_{k}(\Lambda(\xi+\varphi))=\sigma_{k}(\Lambda(\xi))+L_\xi\varphi+R_\xi(\varphi),\end{equation}
where $L_\xi$ is the linearization of $\sigma_k(\Lambda(w))$ at $\xi$, and $R_\xi$ is the higher order term. 

For $\xi=\xi(t)$, $\Lambda(\xi)$ is a diagonal matrix. 
By \eqref{eq-matrix-Gamma-phi} (with $w$ replaced by $\xi$), we have 
$$\sigma_{k}(\Lambda(\xi))=2^{1-k}\Big(\,\begin{matrix} n\\ k\end{matrix}\,\Big) (1-\xi_t^2)^{k-1}
\Big(\frac{k}{n}\xi_{tt}+\frac{n-2k}{2n}(1-\xi_t^2)\Big).$$
Then, the equation \eqref{eq-k-Yamabe-w} for $\xi$ has the following form: 
\begin{equation}\label{eq-k-Yamabe-radial-equiv-z}
\xi_{tt}+\left(\frac{n}{2k}-1\right)(1-\xi_t^2)-\frac{n}{2k}(1-\xi_t^2)^{1-k}e^{-2k\xi}=0.
\end{equation}
Let $\xi$ be a  solution of \eqref{eq-k-Yamabe-radial-equiv-z} on $\mathbb R$ 
in the $\Gamma_k^+$ class.
According to \cite{ChangHanYang2005}, we have $|\xi_t|<1$ on $\mathbb R$ and, for some nonnegative constant $h$, 
\begin{equation}\label{eq-first-integral-z}
e^{(2k-n)\xi}(1-\xi_t^2)^k-e^{-n\xi}=h.
\end{equation}
In the following, we always assume $h>0$. 

The linearized operator $L_\xi$ was computed in \cite{HanLi2010}. 
We next provide a computation leading to 
expressions of both $L_\xi$ and $R_\xi$. 

\begin{lemma}\label{lemma-linearized-operator} 
Let $\xi$ be a solution of \eqref{eq-k-Yamabe-radial-equiv-z} on $\mathbb R$ 
in the $\Gamma_k^+$ class, satisfying \eqref{eq-first-integral-z}  for some $h>0$. Then, 
\begin{equation}\label{eq-linearization-sigma k}
    L_\xi\varphi=2^{1-k}\Big(\,\begin{matrix}n-1\\k-1\end{matrix}\,\Big)(1-\xi_t^2(t))^{k-1}
[\varphi_{tt}+a(t)\Delta_\theta\varphi+b(t)\varphi_t],
\end{equation}
and 
\begin{equation}\label{eq-quadratic-sigma k}
R_\xi(\varphi)=(1-\xi_t^2)^{k-1}Q_2(\varphi)
+\sum_{l=2}^{k}(1-\xi_t^2)^{k-l}P_{l}(\varphi),\end{equation}
where 
\begin{align}\label{eq-expressions-bc}\begin{split}
a(t)&=\frac{1}{n-1}\Big[\frac{n}{k}-1+\frac{n(k-1)}{k}\cdot\frac{e^{-n\xi}}{e^{-n\xi}+h}\Big],\\
b(t)&=\xi_t\Big[2-\frac{n}{k}-\frac{n(k-1)}{k}\cdot\frac{e^{-n\xi}}{e^{-n\xi}+h}\Big],
\end{split}\end{align}
and $Q_2(\varphi)$ is a quadratic polynomial of $\varphi_a$, 
and $P_l(\varphi)$ is a homogeneous polynomial of degree $l$ in terms of $\varphi_{ab}$, $\xi_t\varphi_{a}$, 
and $\varphi_a\varphi_b$, for $l=2, \cdots, k$. 
\end{lemma}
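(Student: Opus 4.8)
The plan is to expand $\sigma_k(\Lambda(\xi+\varphi))$ directly from the definition of $\sigma_k$ as a sum of principal $k\times k$ minors, exploiting the fact that $\Lambda(\xi)$ is diagonal. First I would record the diagonal entries of $\Lambda(\xi)$: writing $\alpha = \Lambda_{11}(\xi) = \xi_{tt} - \tfrac12(1-\xi_t^2)$ and $\beta = \Lambda_{ii}(\xi) = \tfrac12(1-\xi_t^2)$ for $2\le i\le n$ (these follow from \eqref{eq-matrix-Gamma-phi} with $w=\xi$ and $\nabla_\theta\xi = 0$), so that $\Lambda(\xi) = \mathrm{diag}(\alpha, \beta, \dots, \beta)$. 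Then I would decompose $\Lambda(\xi+\varphi) = \Lambda(\xi) + B(\varphi)$, where $B(\varphi)$ collects all $\varphi$-dependent terms; from \eqref{eq-matrix-Gamma-phi} one reads off that $B$ splits as $B = B^{(1)}(\varphi) + B^{(2)}(\varphi)$ with $B^{(1)}$ linear in the second derivatives $\varphi_{ab}$ and first derivatives $\xi_t\varphi_a$ (and the $-\tfrac12(-2\xi_t\varphi_t) = \xi_t\varphi_t$ type cross terms), and $B^{(2)}$ the genuinely quadratic-and-higher part built from $\varphi_a\varphi_b$.

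Next, the key algebraic step: $\sigma_k(D + B) = \sum_{j=0}^k \sigma_{k-j}^{(j)}$, where $\sigma_{k-j}^{(j)}$ is the sum over all ways of choosing $j$ rows/columns from $B$ and the complementary $k-j$ from the diagonal $D$; because $D$ is diagonal, the $D$-part of each minor is just a product of $\alpha$'s and $\beta$'s. The $j=0$ term gives $\sigma_k(\Lambda(\xi))$, and I would check it against the stated formula $2^{1-k}\binom{n}{k}(1-\xi_t^2)^{k-1}\big(\tfrac kn \xi_{tt} + \tfrac{n-2k}{2n}(1-\xi_t^2)\big)$ by a direct binomial count of how many minors contain the $(1,1)$ slot versus not. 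The $j=1$ term is linear in $B$, and since only diagonal entries of $B$ couple to $\mathrm{tr}$-type contributions while off-diagonal entries of $B$ contribute only at $j\ge 2$ (a minor with exactly one row from $B$ off the diagonal vanishes), the linearization $L_\xi\varphi$ comes entirely from $\varphi_{tt}$, $\varphi_{ii}$, and $\xi_t\varphi_t$. Collecting: the coefficient of $\varphi_{tt}$ is the number of $(k{-}1)$-subsets of $\{2,\dots,n\}$ times $\beta^{k-1}$, i.e. $\binom{n-1}{k-1}\beta^{k-1}$; the coefficient of $\sum_{i\ge 2}\varphi_{ii} = \Delta_\theta\varphi$ (up to lower order curvature terms on $\mathbb S^{n-1}$, which I would absorb carefully) is $\binom{n-2}{k-2}\alpha\beta^{k-2} + \binom{n-2}{k-1}\beta^{k-1}$; and the $\varphi_t$ coefficient comes from $\partial_t\beta$ and $\partial_t\alpha$ contributions. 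I would then substitute the first integral \eqref{eq-first-integral-z} to rewrite $\alpha$ in terms of $e^{-n\xi}/(e^{-n\xi}+h)$ — indeed \eqref{eq-k-Yamabe-radial-equiv-z} and \eqref{eq-first-integral-z} give $\xi_{tt}$ and hence $\alpha$ algebraically in terms of $\xi_t$ and $e^{-2k\xi}$ — which is exactly what produces the coefficients $a(t)$ and $b(t)$ in \eqref{eq-expressions-bc} after dividing out the common factor $2^{1-k}\binom{n-1}{k-1}(1-\xi_t^2)^{k-1}$.

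For the remainder $R_\xi(\varphi)$, the terms $\sigma_{k-j}^{(j)}$ with $j\ge 2$ each carry a minor with at least two rows from $B$; each such row contributes either a $\varphi_{ab}$, a $\xi_t\varphi_a$, or a $\varphi_a\varphi_b$ factor, and the complementary $k-j$ diagonal factors are products of $\alpha$ and $\beta$, each of which (via the first integral again) is a smooth function times a power of $(1-\xi_t^2)$. Tracking the power of $(1-\xi_t^2)$ that survives: the purely second-order/first-order part with $j=2$ and no $\beta$-cancellation gives the $(1-\xi_t^2)^{k-1}Q_2(\varphi)$ term, while a minor using $l$ rows from $B$ and correspondingly fewer $\beta$ factors produces $(1-\xi_t^2)^{k-l}P_l(\varphi)$ with $P_l$ homogeneous of degree $l$ in the listed monomials $\varphi_{ab}$, $\xi_t\varphi_a$, $\varphi_a\varphi_b$ — which is precisely \eqref{eq-quadratic-sigma k}. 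I expect the main obstacle to be bookkeeping: correctly handling the curvature of $\mathbb S^{n-1}$ when passing from $\sum_{i\ge2}\varphi_{ii}$ in normal coordinates to $\Delta_\theta\varphi$ (the extra zeroth-order terms must either cancel or be shown to be subsumed), and keeping the binomial coefficients straight so that the ratios collapse cleanly to $a(t)$ and $b(t)$. I would cross-check the final $L_\xi$ against the expression already computed in \cite{HanLi2010} as a sanity check, and verify the $k=1$ and $\xi_t\equiv 0$ special cases by hand.
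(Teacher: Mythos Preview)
Your approach is correct in outline and reaches the same conclusion, but the organization differs from the paper's. The paper does \emph{not} Taylor-expand $\sigma_k(\Lambda(\xi)+B)$ by degree in $B$; instead it first separates the principal $k\times k$ minors of $\Lambda(\xi+\varphi)$ according to whether they contain the $(1,1)$ slot, obtaining
\[
\sigma_k(\Lambda)=\Lambda_{11}\,\sigma_{k-1}(\bar\Lambda)+\sigma_k(\bar\Lambda)+\sum_{l=2}^{k}(1-\xi_t^2)^{k-l}P_l,
\]
where $\bar\Lambda$ is the lower-right $(n{-}1)\times(n{-}1)$ block. Since every diagonal entry of $\bar\Lambda$ equals $\tfrac12(1-\xi_t^2)$ plus a $\varphi$-perturbation, the expansions of $\sigma_{k-1}(\bar\Lambda)$ and $\sigma_k(\bar\Lambda)$ in powers of $(1-\xi_t^2)$ are immediate, and the substitution of \eqref{eq-k-Yamabe-radial-equiv-z} and \eqref{eq-first-integral-z} for $\xi_{tt}$ and $(1-\xi_t^2)^k$ then gives $a(t),b(t)$ with minimal bookkeeping. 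Your expansion by ``number of $B$-rows'' is equally valid and perhaps more conceptually standard; the paper's route is simply better adapted to the goal of isolating powers of $(1-\xi_t^2)$, which is the organizing quantity for $R_\xi(\varphi)$.

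One small slip to flag: you attribute the $(1-\xi_t^2)^{k-1}Q_2(\varphi)$ term to the level $j=2$ of your multilinear expansion, but in fact it arises at level $j=1$. The point is that the diagonal entries of $B$ already contain the quadratic pieces $\varphi_i^2-\tfrac12|\nabla_\theta\varphi|^2\pm\tfrac12\varphi_t^2$ from $B^{(2)}$, and the $j=1$ term $\mathrm{tr}\big(T_{k-1}(\Lambda(\xi))\,B\big)$ applied to these produces $(1-\xi_t^2)^{k-1}$ times a quadratic polynomial in $\varphi_a$, which is exactly $Q_2$. At level $j=2$ you have only $k-2$ diagonal factors, hence a prefactor of order $(1-\xi_t^2)^{k-2}$, which feeds into $P_2$ rather than $Q_2$. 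This does not affect the validity of your argument, only the attribution; but since the precise form \eqref{eq-quadratic-sigma k} of $R_\xi(\varphi)$ is used later in the paper (notably the distinction between $(1-\xi_t^2)^{k-1}Q_2$ and $(1-\xi_t^2)^{k-l}P_l$ matters when $1-\xi_t^2$ decays), it is worth getting this right.
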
 

\begin{proof} Throughout the proof, $Q_2(\varphi)$ and $P_l(\varphi)$ 
are always given as in the statement of the lemma. 
They can change from line to line. For simplicity, we write 
$Q_2$ and $P_l$ instead. 
By \eqref{eq-matrix-Gamma-phi}, the components of $\Lambda=\Lambda(\xi+\varphi)$ are given by  
\begin{align}\label{eq-expression-w-phi}
\begin{split}
\Lambda_{11}&=\xi_{tt}-\frac{1}{2}(1-\xi_t^2)+\varphi_{tt}
+\xi_t\varphi_t-\frac{1}{2}|\nabla_{\theta}\varphi|^2+\frac{1}{2}\varphi_t^2,\\
\Lambda_{1i}&=\varphi_{ti}+(\xi_t+\varphi_t)\varphi_i\quad\text{for } 2\le i\le n,\\
{\Lambda}_{ii}&=\frac12(1-\xi_t^2)+\varphi_{ii}-\xi_t\varphi_t+\varphi_i^2
-\frac{1}{2}|\nabla_{\theta}\varphi|^2-\frac{1}{2}\varphi_t^2\quad\text{for } 2\le i\le n,\\
{\Lambda}_{ij}&=\varphi_{ij}+\varphi_i\varphi_j\quad\text{for } 2\le i\neq j\le n.
\end{split}
\end{align}
Let $\bar\Lambda$ be the $(n-1)\times(n-1)$ matrix obtained by deleting the first row and the 
first column from the matrix $\Lambda$. 

Recall that  $\sigma_k(\Lambda)$ is the sum of $k\times k$ minors of $\Lambda$. 
Such minors can be arranged into two groups, depending on whether they include $\Lambda_{11}$, 
the $(1,1)$ component of $\Lambda$. For those not including $\Lambda_{11}$, the corresponding 
summation yields $\sigma_k(\bar\Lambda)$. 
We now consider  minors including $\Lambda_{11}$. For an illustration, we consider the 
$k\times k$ matrix consisting the first $k$ rows and the first $k$ columns of the matrix $\Lambda$. 
We can expand its determinant according to the first row and express it as the sum of $k$ terms. 
We keep the term containing $\Lambda_{11}$ and note that the rest 
terms can be written as 
$$\sum_{l=2}^{k}(1-\xi_t^2)^{k-l}P_{l}.$$
Therefore, 
\begin{equation}\label{eq-sigma-k-decomposition}
\sigma_k(\Lambda)=\Lambda_{11}\sigma_{k-1}(\bar\Lambda)
+\sigma_{k}(\bar\Lambda)+\sum_{l=2}^{k}(1-\xi_t^2)^{k-l}P_{l}.\end{equation}

By the definition of $\bar\Lambda$, it is clear that 
\begin{equation}\label{eq-sigma k of bar(Lambda)-phi}
\begin{split}
\sigma_{k-1}(\bar\Lambda)&=\Big(\begin{matrix}n-1\\k-1\end{matrix}\Big)(\frac{1}{2}(1-\xi_t^2))^{k-1}\\
        &\qquad+\Big(\begin{matrix}n-2\\k-2\end{matrix}\Big)(\frac{1}{2}(1-\xi_t^2))^{k-2}
        (\Delta_{\theta}\varphi-(n-1)\xi_t\varphi_t)\\
        &\qquad+(1-\xi_t^2)^{k-2}Q_2
        +\sum_{l=2}^{k-1}(1-\xi_t^2)^{k-1-l}P_{l}.
    \end{split}
\end{equation}
We point out that, for  terms involving $(1-\xi_t^2)^{k-2}$, 
we single out those whose coefficients are linear in derivatives of $\varphi$. 
A similar expression holds for $\sigma_{k}(\bar\Lambda)$. 
We now substitute \eqref{eq-sigma k of bar(Lambda)-phi} and the corresponding expression 
for $\sigma_{k}(\bar\Lambda)$, as well as the formula for $\Lambda_{11}$ given by \eqref{eq-expression-w-phi}, 
into \eqref{eq-sigma-k-decomposition}. Then, we obtain the desired result by a straightforward computation.
During the computation, we need to substitute $\xi_{tt}$ and $(1-\xi_t^2)^k$ 
by \eqref{eq-k-Yamabe-radial-equiv-z} and \eqref{eq-first-integral-z}, respectively.
\end{proof} 

Later on, we need the precise form of $R_\xi(\varphi)$ as in \eqref{eq-quadratic-sigma k}. 
A part of above computations will be needed in the proof of Lemma \ref{first integral for average}.

Let $\xi$ be a solution of \eqref{eq-k-Yamabe-radial-equiv-z}, 
satisfying \eqref{eq-first-integral-z}  for some $h>0$, 
and let $w$ be a solution of \eqref{eq-k-Yamabe-w}. Introduce $\varphi$ as in \eqref{eq-sigma-k-w}, i.e., $\varphi=w-\xi$. 
By Lemma \ref{lemma-linearized-operator}, we 
write the difference of \eqref{eq-k-Yamabe-w} and \eqref{eq-k-Yamabe-radial-equiv-z} as 
$$L_{\xi}\varphi+2^{1-k}k\left(\begin{matrix}n\\k\end{matrix}\right) e^{-2k\xi}\varphi=
2^{-k}\left(\begin{matrix}n\\k\end{matrix}\right) e^{-2k\xi}(e^{-2k\varphi}-1+2k\varphi)+R_\xi(\varphi).$$
By dividing $2^{1-k}\Big(\,\begin{matrix}n-1\\k-1\end{matrix}\,\Big)(1-\xi_t^2(t))^{k-1}$ and 
substituting $(1-\xi_t^2(t))^{k}$ by \eqref{eq-first-integral-z}, we can rewrite this identity 
as 
\begin{equation}\label{eq-nonlinear-linear-form}\mathcal L\varphi=\mathcal R(\varphi),\end{equation}
where 
\begin{equation}\label{eq-linear-operator-form}\mathcal L\varphi=\varphi_{tt}+a\Delta_\theta\varphi+b\varphi_t
+\frac{ne^{-n\xi}}{e^{-n\xi}+h}(1-\xi_t^2)\varphi,\end{equation}
and 
\begin{equation}\label{eq-nonlinear-operaor-form}\mathcal R(\varphi) 
= \frac{ne^{-n\xi}}{2k(e^{-n\xi}+h)}(1-\xi_t^2)(e^{-2k\varphi}-1+2k\varphi)
+Q_2(\varphi)+\sum_{l=2}^{k}(1-\xi_t^2)^{1-l}P_{l}(\varphi).
\end{equation}
Here, $a$, $b$, $Q_2$, and $P_l$ are as in Lemma \ref{lemma-linearized-operator}.

\section{Radial Solutions}\label{sec-Radial-Solutions}

In this section, we discuss some important properties of radial solutions of \eqref{eq-k-Yamabe-w}. 

The equation \eqref{eq-k-Yamabe-w} can be simplified significantly if $w$ does not depend on $\theta$, in which case
the corresponding $u$ is a radial solution of \eqref{eq-k-Yamabe-u}. Set $\xi(t)=w(t,\theta)$. Then, 
\begin{equation}\label{eq-k-Yamabe-radial-equiv}
\xi_{tt}+\left(\frac{n}{2k}-1\right)(1-\xi_t^2)-\frac{n}{2k}(1-\xi_t^2)^{1-k}e^{-2k\xi}=0.
\end{equation}
Let $\xi$ be a  solution of \eqref{eq-k-Yamabe-radial-equiv} on $\mathbb R$ 
in the $\Gamma_k^+$ class, with a nonremovable singularity at infinity.
According to \cite{ChangHanYang2005}, we have $|\xi_t|<1$ on $\mathbb R$ and, for some constant $h>0$, 
\begin{equation}\label{eq-first-integral}
e^{(2k-n)\xi}(1-\xi_t^2)^k-e^{-n\xi}=h.
\end{equation}
The identity \eqref{eq-first-integral} is referred to as the  first integral of $\xi$. 
We point out that \eqref{eq-k-Yamabe-radial-equiv} and \eqref{eq-first-integral} 
are simply \eqref{eq-k-Yamabe-radial-equiv-z} and \eqref{eq-first-integral-z}.


\begin{lemma}\label{lemma-xi-t-small-k}
For $n\ge 3$ and $2\le k<n/2$,  let $\xi$ be a
solution of \eqref{eq-k-Yamabe-radial-equiv} on $\mathbb R$ 
in the $\Gamma_k^+$ class, satisfying \eqref{eq-first-integral} for some $h>0$. 
Then, $\xi$ is periodic on $\mathbb R$. 
\end{lemma}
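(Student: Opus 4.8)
\textbf{Proof proposal for Lemma \ref{lemma-xi-t-small-k}.}
The plan is to analyze the first-order ODE system obtained from the first integral \eqref{eq-first-integral} together with \eqref{eq-k-Yamabe-radial-equiv}, viewing $(\xi,\xi_t)$ as a trajectory in the phase plane, and to show that for $2\le k<n/2$ every orbit in the $\Gamma_k^+$ class with $h>0$ is a closed curve. First I would set $p=\xi_t$ and rewrite \eqref{eq-first-integral} as
\begin{equation*}
(1-p^2)^k = e^{-n\xi}\big(h\, e^{(n-2k)\xi}+1\big),
\end{equation*}
which exhibits $p^2$ as an explicit function of $\xi$ along the trajectory; call the right-hand side $G(\xi)$. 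Since $|p|<1$ in the $\Gamma_k^+$ class, we need $0<G(\xi)\le 1$, and the orbit lives on the level set $\{(\xi,p): (1-p^2)^k=G(\xi)\}$. The key point is to understand the range of $\xi$ for which $0<G(\xi)\le 1$: I would compute $G'$ and show, using $2k<n$ and $h>0$, that $G$ is strictly convex-like with a single minimum, with $G(\xi)>1$ for $\xi$ large positive and $\xi$ large negative, so that $\{G\le 1\}$ is a bounded interval $[\xi_-,\xi_+]$ with $G(\xi_\pm)=1$, i.e. $p=0$ at both endpoints.

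Next I would argue that the level set is a single smooth closed curve: on the open interval $(\xi_-,\xi_+)$ we have $G<1$, hence $p^2=1-G^{1/k}>0$, giving two branches $p=\pm(1-G^{1/k})^{1/k\cdot}$... more precisely $p=\pm\sqrt{1-G(\xi)^{1/k}}$, joined at the two turning points $\xi_\pm$ where $p=0$. To conclude periodicity I would check that the turning points are nondegenerate, i.e. that near $\xi_\pm$ the curve is a smooth arc crossing the $\xi$-axis transversally in the $(\xi,p)$-plane after the usual time-reparametrization; equivalently, that $\xi_{tt}\ne 0$ there. This follows from \eqref{eq-k-Yamabe-radial-equiv} evaluated at $p=0$: there $\xi_{tt}=-\big(\tfrac{n}{2k}-1\big)+\tfrac{n}{2k}e^{-2k\xi}$, and one checks this is nonzero at $\xi=\xi_\pm$ (it has opposite signs at the two endpoints, consistent with a max and a min of $\xi$). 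A standard phase-plane / energy argument — the trajectory cannot stop in finite time at a nondegenerate turning point and cannot escape the compact level set — then shows the solution oscillates between $\xi_-$ and $\xi_+$, and computing the finite quarter-period integral
\begin{equation*}
T=\int_{\xi_-}^{\xi_+}\frac{d\xi}{\sqrt{1-G(\xi)^{1/k}}}<\infty
\end{equation*}
(convergence at the endpoints from the square-root vanishing, which is integrable) yields that $\xi$ is periodic with period $2T$.

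The main obstacle I expect is the monotonicity/convexity analysis of $G(\xi)=e^{-n\xi}+h\,e^{(n-2k)\xi}$: one must pin down that $\{G\le 1\}$ is a nonempty bounded interval and that this forces exactly two simple turning points, rather than, say, the trajectory drifting to $\xi\to\pm\infty$ (which is what happens when $k\ge n/2$, since then the sign of the exponent $n-2k$ changes and $G$ is monotone, so one endpoint escapes to infinity — this is precisely why the hypothesis $2\le k<n/2$ is needed and why radial solutions are unbounded in the complementary range, cf. \eqref{eq-expansion-xi}). The secondary technical point is ruling out degenerate turning points and justifying the integrability of the period integral at $\xi_\pm$; both are handled by the explicit formula for $\xi_{tt}$ at $p=0$ and a first-order Taylor expansion of $1-G(\xi)^{1/k}$ near the simple zeros of $G-1$. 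Much of this is already available in \cite{ChangHanYang2005} and \cite{HanLi2010}, so I would cite those for the qualitative phase-plane picture and only supply the short computation showing $\{G\le 1\}$ is a bounded interval under $2k<n$, $h>0$.
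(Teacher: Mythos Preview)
Your approach matches what the paper does: the paper simply cites \cite{ChangHanYang2005} for the proof, and the phase-plane/level-set analysis you sketch is precisely the argument carried out there. One correction: your displayed formula for $G$ is miscomputed. From \eqref{eq-first-integral} one gets
\[
(1-p^2)^k \;=\; e^{(n-2k)\xi}\bigl(h+e^{-n\xi}\bigr)\;=\; h\,e^{(n-2k)\xi}+e^{-2k\xi},
\]
not $e^{-n\xi}\bigl(h\,e^{(n-2k)\xi}+1\bigr)$ as you first wrote, nor $e^{-n\xi}+h\,e^{(n-2k)\xi}$ as you wrote later. This slip is harmless for the qualitative argument---under $2k<n$ the two exponents $n-2k>0$ and $-2k<0$ still have opposite signs, so $G\to+\infty$ at both ends of $\mathbb R$ and $\{G\le 1\}$ is a bounded interval---but you should fix the algebra before citing the rest from \cite{ChangHanYang2005}. (Also, the integral you wrote is a half-period, not a quarter-period; your conclusion that the period is $2T$ is correct.)
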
 

Refer to \cite{ChangHanYang2005} for a proof. As for $k=1$, there is a constant solution given by 
$$\xi=-\frac{1}{2k}\log\big(1-\frac{2k}n\big).$$

\begin{lemma}\label{lemma-xi-t-large-k}
For $n\ge 3$ and $n/2\le k\le n$,  let $\xi$ be a
solution of \eqref{eq-k-Yamabe-radial-equiv} on $\mathbb R$ 
in the $\Gamma_k^+$ class, satisfying \eqref{eq-first-integral} for some $h>0$. 

$\mathrm{(i)}$ If $k=n/2$, then $h<1$ and there exists a constant $a_0$ such that, for $t\ge 0$, 
\begin{equation}\label{eq-estimate-xi-t-middle-k}
|\xi(t)-\sqrt{1-\sqrt[k]{h}}t-a_0|+|\xi_t(t)-\sqrt{1-\sqrt[k]{h}}|\leq Ce^{-n\sqrt{1-\sqrt[k]{h}}t},
\end{equation}
where $C$ is a positive constant. 

$\mathrm{(ii)}$ If $n/2<k\le n$, then there exists a constant $a_0$ such that, for $t\ge 0$, 
\begin{equation}\label{eq-estimate-xi-t-large-k}
|\xi(t)-t-a_0|+|\xi_t(t)-1|\leq Ce^{-(2-\frac nk)t},
\end{equation}
where $C$ is a positive constant. 
\end{lemma}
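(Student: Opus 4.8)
\textbf{Proof proposal for Lemma~\ref{lemma-xi-t-large-k}.}
The plan is to analyze the ODE \eqref{eq-k-Yamabe-radial-equiv} directly via its first integral \eqref{eq-first-integral}, which reduces the second-order equation to a first-order autonomous one. Writing \eqref{eq-first-integral} as $(1-\xi_t^2)^k = (e^{-n\xi}+h)e^{(n-2k)\xi}$, and using $n/2\le k$ so that $n-2k\le 0$, I would first extract the sign information: since $\xi$ has a nonremovable singularity at infinity, $\xi\to\infty$ as $t\to\infty$, and from \eqref{eq-k-Yamabe-radial-equiv} together with $|\xi_t|<1$ one sees $\xi_t>0$ for $t$ large (otherwise the $(n/2k-1)(1-\xi_t^2)$ and $-(n/2k)(1-\xi_t^2)^{1-k}e^{-2k\xi}$ terms force $\xi_{tt}$ to have a definite sign leading to a contradiction with periodicity being excluded). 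So asymptotically $\xi_t = \sqrt{1-(e^{-n\xi}+h)e^{(n-2k)\xi}}$, a separable first-order ODE.

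Next I would pass to the limit. As $\xi\to\infty$: if $k>n/2$, then $e^{(n-2k)\xi}\to 0$, so $1-\xi_t^2\to 0$ and $\xi_t\to 1$; if $k=n/2$, then $e^{(n-2k)\xi}=1$ and $1-\xi_t^2\to h$, so $\xi_t\to\sqrt{1-h}$, which in particular forces $h<1$ (since $|\xi_t|<1$ must be sustainable, and $\xi_t$ cannot reach $1$). This gives the leading-order behavior $\xi(t)\sim t$ (resp. $\sqrt{1-\sqrt[k]{h}}\,t$). To upgrade to the sharp exponential remainder, set $\eta(t)=\xi(t)-t-a_0$ in the case $k>n/2$ (with $a_0$ to be chosen), or the analogous shift in the case $k=n/2$, and linearize the separable ODE around the limiting slope. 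Concretely, in the case $k>n/2$, $1-\xi_t^2 = (e^{-n\xi}+h)e^{(n-2k)\xi}$; since $\xi\sim t$ the right-hand side is $\asymp e^{(n-2k)t}$, hence $1-\xi_t \asymp e^{(n-2k)t}$ (as $1+\xi_t\to 2$), i.e. $(t+a_0+\eta)' - 1 = \xi_t-1 = O(e^{(n-2k)t})$ — wait, $n-2k<0$ so write the exponent as $-(2k-n)t$; since $2k-n>0$ and for $k<n$ we need to compare with $2-n/k$: note $2k-n = k(2-n/k)$, and after dividing the first-order equation through one checks the natural decay rate governing $\eta$ is exactly $2-n/k$. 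Integrating $\eta' = O(e^{-(2-n/k)t})$ from $t$ to $\infty$ (the integral converges since $2-n/k>0$ for $k<n$) yields a finite limit, which defines $a_0$, and gives $|\eta(t)|\le Ce^{-(2-n/k)t}$ and simultaneously $|\xi_t-1|\le Ce^{-(2-n/k)t}$. The case $k=n/2$ is parallel: there $2-n/k=0$ but $\sqrt[k]{h}<1$, and linearizing $\xi_t = \sqrt{1-\sqrt[k]{h}\,(1+h^{-1}e^{-n\xi}+\cdots)}$ around $\sqrt{1-\sqrt[k]{h}}$ produces the rate $n\sqrt{1-\sqrt[k]{h}}$ in \eqref{eq-estimate-xi-t-middle-k}, coming from the $e^{-n\xi}\sim e^{-n\sqrt{1-\sqrt[k]{h}}\,t}$ perturbation and the chain rule.

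The main obstacle I anticipate is \emph{not} the asymptotic ODE analysis, which is routine once the first integral is in hand, but rather justifying at the outset that $\xi_t\to$ the correct limiting value rather than, say, oscillating or tending to $-1$; this requires ruling out the ``collapsing'' branch of \eqref{eq-first-integral} and using the $\Gamma_k^+$ condition plus nonremovability of the singularity to pin down the monotonicity of $\xi$ for large $t$. A clean way to handle this is to treat \eqref{eq-first-integral} as defining a phase-plane curve in the $(\xi,\xi_t)$-plane and observe that on it, for $k\ge n/2$, the only trajectory with $\xi\to\infty$ has $\xi_t$ approaching the stated limit monotonically; the $\Gamma_k^+$ hypothesis guarantees we are on the physically relevant component where $1-\xi_t^2>0$. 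Once that qualitative picture is established, the quantitative estimates \eqref{eq-estimate-xi-t-middle-k} and \eqref{eq-estimate-xi-t-large-k} follow by the integration argument above. For fuller details on the phase-plane classification I would cite \cite{ChangHanYang2005}, from which \eqref{eq-first-integral} and $|\xi_t|<1$ are already borrowed.
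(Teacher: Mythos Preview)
Your approach is correct and coincides with the paper's: the paper does not give an independent proof but simply cites \cite{ChangHanYang2005} for the estimate on $\xi$ and then remarks that the estimate on $\xi_t$ follows from the first integral \eqref{eq-first-integral}, which is exactly the two-step plan you outline (phase-plane classification for the qualitative picture, then the first integral for the sharp rate). One small slip: in the $k=n/2$ case you write $1-\xi_t^2\to h$ and $\xi_t\to\sqrt{1-h}$, but from $(1-\xi_t^2)^k=h+e^{-n\xi}$ the limit is $1-\xi_t^2\to h^{1/k}$, giving $\xi_t\to\sqrt{1-\sqrt[k]{h}}$ as in \eqref{eq-estimate-xi-t-middle-k} (you use the correct expression later, so this is just a typo).
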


The estimates of $\xi$ were proved in \cite{ChangHanYang2005}, 
and then the estimates of $\xi_t$ follow easily from \eqref{eq-first-integral}. 

We note that radial solutions $\xi$ with a nonremovable singularity at infinity 
behave differently for $k< n/2$, $k=n/2$, and $k>n/2$. 
According to Lemma \ref{lemma-xi-t-small-k} and Lemma \ref{lemma-xi-t-large-k}, 
$\xi$ is bounded if $2\le k< n/2$, $\xi-\beta t$ is bounded for some constant $\beta\in (0,1)$ if 
$k=n/2$, and $\xi-t$ is bounded if $n/2<k\le n$. 

\smallskip 
Next, we expand $\xi$ up to arbitrary orders for  $n/2\le k\le n$. 
To this end, we introduce an index set. Denote by $\mathbb Z_+$ the collection of 
nonnegative integers. Set, for $k=n/2$,  
\begin{equation}\label{eq-def-index-0-k-middle}
\mathcal I_0=\big\{i\sqrt{1-\sqrt[k]{h}}:\, i\in \mathbb Z_+\big\},\end{equation} 
and, 
for $n/2<k\le n$,  
\begin{equation}\label{eq-def-index-0-k-large}
\mathcal I_0=\big\{i\Big(2-\frac nk\Big)+nj:\, i, j \in \mathbb Z_+\big\}.
\end{equation}
We denote  $\mathcal I_0$ by a strictly increasing sequence $\{\nu_i\}_{i=0}^\infty$. 

We now examine the case $n/2<k\le n$. 
Set \begin{equation}\label{eq-definition-rho}\rho_0=2-\frac nk.\end{equation}
Then, $0<\rho_0\le 1$. 
Let $l$ be the  integer such that $(l-1)\rho_0<n\le l\rho_0$. Then, $l\ge 3$, since $\rho_0\le 1$ and $n\ge 3$,
and $l\rho_0<n+\rho_0\le (l+1)\rho_0$. Hence, 
$$\nu_i=i\rho_0\quad\text{for }i=0, 1, \cdots, l.$$ 
If $n=l\rho_0$, then 
$$\nu_i=i\rho_0\quad\text{for any }i\ge 0.$$ 
If $n<l\rho_0$, then 
$$\nu_{l+1}=n+\rho_0,\quad \nu_{l+2}=(l+1)\rho_0, \quad \cdots.$$
We note that $n=l\rho_0$ for some $l$ if and only if $n/\rho_0$ is an integer. 
We now present several simple examples. 
For $k=n$, we have $\rho_0=1$ and hence $n/\rho_0$ is always an integer. For $k=n-1$, we have 
$\rho_0=(n-2)/(n-1)$ and hence $n/\rho_0=n(n-1)/(n-2)$, which is 6, 6, $20/3$, for $n=3, 4, 5$, respectively. 

\smallskip


We next prove an expansion of solutions of \eqref{eq-k-Yamabe-radial-equiv}. 
We will need Lemma \ref{lemma-decay-particular-solution-ODE} and 
Corollary \ref{cor-decay--sol-Ljw=f-ODE} in Appendix \ref{sec-Linear-Equations}.

\begin{lemma}\label{lemma-expansion-xi-t}
For $n\ge 3$ and $n/2\le k\le n$,  let $\xi$ be a
solution of \eqref{eq-k-Yamabe-radial-equiv} on $\mathbb R$ 
in the $\Gamma_k^+$ class, satisfying \eqref{eq-first-integral} for some $h>0$. Then, there 
exists a sequence of constants $\{a_i\}$ such that, for any $t\ge 1$ and any $m\ge 1$, 

$\mathrm{(i)}$ if 
$k=n/2$, then 
\begin{equation}\label{eq-expnsion-xi-t-k-middle}
\Big|\xi(t)-\sqrt{1-\sqrt[k]{h}}t- \sum_{i=0}^m a_ie^{-\nu_it}\Big|
+\Big|\xi_t(t)-\sqrt{1-\sqrt[k]{h}}+ \sum_{i=1}^m a_i\nu_ie^{-\nu_it}\Big|\le Ce^{-\nu_{m+1}t},
\end{equation}
where $C$ is a positive constant;

$\mathrm{(ii)}$ if
$n/2<k\le n$, then 
\begin{equation}\label{eq-expnsion-xi-t}
\Big|\xi(t)-t- \sum_{i=0}^m a_ie^{-\nu_it}\Big|
+\Big|\xi_t(t)-1+ \sum_{i=1}^m a_i\nu_ie^{-\nu_it}\Big|\le Ce^{-\nu_{m+1}t},
\end{equation}
where $C$ is a positive constant. 
\end{lemma}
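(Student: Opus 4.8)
The plan is to derive the expansion of $\xi$ by combining the first integral \eqref{eq-first-integral} with an iterative bootstrap that peels off one exponential order at a time. Since $\xi_{tt}$ can be recovered algebraically from $\xi$ and $\xi_t$ via \eqref{eq-k-Yamabe-radial-equiv}, and $\xi_t$ is determined up to sign from $\xi$ via \eqref{eq-first-integral}, I would first introduce the remainder $\psi(t)=\xi(t)-\beta t - a_0$, where $\beta=\sqrt{1-\sqrt[k]{h}}$ in case (i) and $\beta=1$ in case (ii), and $a_0$ is the constant furnished by Lemma \ref{lemma-xi-t-large-k}. By that lemma, $|\psi(t)|+|\psi_t(t)|\le Ce^{-\nu_1 t}$ is the base estimate, with $\nu_1=\rho_0=2-n/k$ in case (ii) (and $\nu_1=\beta$ in case (i)).

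Next I would linearize. Plugging $\xi=\beta t + a_0 + \psi$ into \eqref{eq-k-Yamabe-radial-equiv} and Taylor-expanding the smooth functions $(1-\xi_t^2)^{1-k}$ and $e^{-2k\xi}$ about the background $\xi=\beta t + a_0$, one obtains an ODE of the form
\begin{equation*}
\psi_{tt} + c_1 \psi_t + c_2 \psi = N(\psi,\psi_t),
\end{equation*}
where $c_1,c_2$ are constants (computable from $\beta$, $n$, $k$, $h$) and $N$ collects the nonlinear terms together with the explicit forcing coming from the $e^{-2k\xi}=e^{-2k(\beta t+a_0)}e^{-2k\psi}$ factor; note $e^{-2k\beta t}$ decays at rate $2k\beta$, which in case (ii) equals $n$, matching the appearance of the integer-$j$ part of the index set \eqref{eq-def-index-0-k-large}. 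I would check that the characteristic roots of the homogeneous part are $0$ and $-n$ in case (ii) (this is forced by \eqref{eq-estimate-xi-t-large-k}, since $\psi\to 0$ and the only admissible decaying homogeneous solution is $e^{-nt}$, the other being the constant absorbed into $a_0$), and similarly $0$ and $-n\beta$ in case (i). Then $\mathcal I_0$ as defined in \eqref{eq-def-index-0-k-middle}--\eqref{eq-def-index-0-k-large} is precisely the additive semigroup generated by the forcing rate and the homogeneous decay rate, which is exactly the set of rates producible by iterating: a term of order $e^{-\nu_i t}$ fed through the nonlinearity $N$ produces terms at sums of such orders, and a forcing at rate $\mu$ produces a particular solution at rate $\mu$ by Lemma \ref{lemma-decay-particular-solution-ODE} (with Remark \ref{remark-ODE-period}), since $\mu\ne 0,-n$ (or the relevant resonance) — here no powers of $t$ arise because the index set avoids the characteristic roots, which I would verify by noting $n/\rho_0$ integer or not only affects whether $\nu$'s coincide, not whether they hit $\{0,n\}$.

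With this structure in place, the argument is an induction on $m$: assuming $|\xi - \beta t - \sum_{i=0}^{m}a_i e^{-\nu_i t}| + |\xi_t - \beta + \sum_{i=1}^m a_i\nu_i e^{-\nu_i t}|\le Ce^{-\nu_{m+1}t}$, I substitute this refined approximation into $N$, collect all terms at rate exactly $\nu_{m+1}$ (a finite sum of products of lower-order exponentials plus the explicit forcing, which is a constant times $e^{-\nu_{m+1}t}$ after expanding), solve $L\eta_{m+1}=(\text{that forcing})$ by Corollary \ref{cor-decay--sol-Ljw=f-ODE} to get $\eta_{m+1}=a_{m+1}e^{-\nu_{m+1}t}$, and set the new remainder $\psi_{m+1}=\psi_m - \eta_{m+1}$, which then satisfies $|\psi_{m+1}|+|(\psi_{m+1})_t|\le Ce^{-\nu_{m+2}t}$ by the same linear ODE estimate applied to the improved forcing. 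The estimate on $\xi_t$ at each stage is read off from differentiating the ansatz and using \eqref{eq-first-integral} to control $\xi_t$ in terms of $\xi$, which is why the two bounds in \eqref{eq-expnsion-xi-t-k-middle} and \eqref{eq-expnsion-xi-t} come as a package.

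The main obstacle I anticipate is purely bookkeeping rather than conceptual: verifying that every rate produced by the nonlinear interaction and by the explicit $e^{-2k(\beta t + a_0)}$ forcing genuinely lies in $\mathcal I_0$, and — crucially — that none of these rates equals a characteristic root of the linearized operator, so that Lemma \ref{lemma-decay-particular-solution-ODE} supplies a particular solution with no logarithmic (i.e.\ polynomial-in-$t$) factor. For case (ii), since the characteristic roots are $0$ and $-n$ and $\mathcal I_0$ consists of positive reals of the form $i\rho_0 + nj$, one must only check $i\rho_0+nj\ne n$ with $(i,j)$ in the relevant range, i.e.\ $i\rho_0\ne n(1-j)$; for $j\ge 1$ this is automatic (left side positive, right side $\le 0$), and for $j=0$ it is the statement that $n/\rho_0$, if an integer, gives a term at rate exactly $n$ — but $\nu_{l+1}=n+\rho_0$ and the term at rate $n$ is $e^{-nt}$, which is the legitimate decaying homogeneous solution, so it is absorbed as the coefficient $a_l$ (when $n=l\rho_0$) rather than forcing a resonance. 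Making this dichotomy airtight, and matching it to the two cases "$n/\rho_0$ integer" versus "not" described before the lemma statement, is the delicate point; the rest is routine Taylor expansion and repeated application of the appendix lemmas.
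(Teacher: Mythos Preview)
Your overall strategy—linearize the radial ODE about the background $\beta t+a_0$ and bootstrap one exponential order at a time using the appendix ODE lemmas—is exactly what the paper does. But your execution of the linearization in case (ii) contains concrete errors that would derail the iteration.

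First, you cannot Taylor-expand $(1-\xi_t^2)^{1-k}$ about the background $\xi_t=\beta=1$: the base value $1-\beta^2$ vanishes and the exponent $1-k$ is negative, so the expansion is singular. The paper avoids this by first combining \eqref{eq-k-Yamabe-radial-equiv} with the first integral \eqref{eq-first-integral} to eliminate the singular factor, obtaining the regular form
\[
\xi''=\tfrac12(1-\xi'^2)\Big(\rho_0+\tfrac{n}{k}\cdot\frac{e^{-n\xi}}{e^{-n\xi}+h}\Big),
\]
and only then substitutes $\xi=t+a_0+\psi$.

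Second, once this is done, the linearized operator is $L_0\psi=\psi''+\rho_0\psi'$ with characteristic roots $0$ and $-\rho_0$, not $0$ and $-n$. (Note that the base estimate \eqref{eq-estimate-xi-t-large-k} gives decay at rate $\rho_0=2-n/k$, not $n$, so your heuristic for reading off the roots was based on a misreading.) Consequently the coefficient $a_1$ multiplies the homogeneous solution $e^{-\rho_0 t}$, and the paper in fact pins it down explicitly from \eqref{eq-first-integral} as $a_1=\tfrac{k}{2(2k-n)}h^{1/k}e^{-(2k-n)a_0}\neq 0$.

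Third, your claim that $e^{-2k\beta t}$ decays at rate $n$ in case (ii) is false: $2k\beta=2k\neq n$. The rate $n$ in the index set \eqref{eq-def-index-0-k-large} enters instead through the factor $e^{-n\xi}/(e^{-n\xi}+h)$ in the rewritten equation above; the nonlinear term $-\tfrac12\rho_0\psi'^2$ supplies the multiples of $\rho_0$.

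Finally, with the correct roots your resonance worry evaporates: every forcing rate lies in $\{i\rho_0+nj:\ i\ge 2\text{ or }j\ge 1\}\subset(\rho_0,\infty)$, so Lemma~\ref{lemma-decay-particular-solution-ODE} is always applied with $\gamma>\rho_0$ and $\gamma\neq\rho_0$, and no powers of $t$ ever appear—consistent with the statement of the lemma.
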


\begin{proof} We will discuss (ii) only. Let $\rho_0$ be as given in \eqref{eq-definition-rho}. 
Set, for $\psi=\psi(t)$,  
\begin{equation}\label{eq-definition-L}
L_0\psi=\psi''+\rho_0 \psi'.\end{equation} 
Note that $L_0\psi=0$ has two linearly independent solutions $1$ and $e^{-\rho_0 t}$. 
Hence, Assumption \ref{assum-kernel-linear-equation-ODE} is valid for $L_0$ with 
$\psi^+=e^{-\rho_0 t}$ and $\psi^-=1$, 
and therefore, Lemma \ref{lemma-decay-particular-solution-ODE}
and Corollary \ref{cor-decay--sol-Ljw=f-ODE} are applicable. 
More generally, 
we have, for each $j$,  
\begin{equation}\label{eq-identity}
L_0(e^{-j\rho_0 t})=j(j-1)\rho_0^2e^{-j\rho_0 t}.\end{equation}

By \eqref{eq-k-Yamabe-radial-equiv} and \eqref{eq-first-integral}, we have 
\begin{equation}\label{eq-expression-xi-tt}
\xi''=\frac12(1-\xi'^2)\Big(\rho_0+\frac{n}{k}\cdot \frac{e^{-n\xi}}{e^{-n\xi}+h}\Big).\end{equation}
In the following, all estimates hold for $t\ge 1$. 
With $a_0$ as in Lemma \ref{lemma-xi-t-large-k}, set 
\begin{equation}\label{eq-definition-phi}
\psi=\xi-t-a_0.\end{equation} 
By Lemma \ref{lemma-xi-t-large-k}, we have
\begin{equation}\label{eq-estimate-phi-and-derivative-t}
|\psi(t)|+|\psi'(t)|\leq Ce^{-\rho_0 t}.
\end{equation}
By substituting $\xi=\psi+t+a_0$ in \eqref{eq-expression-xi-tt} and a straightforward calculation, we have 
\begin{equation}\label{eq-expression-phi-tt}L_0\psi=F,\end{equation}
where 
\begin{equation}\label{eq-expression-F} 
F=-\frac12\rho_0\psi'^2-\frac{n}{2k}(2\psi'+\psi'^2)\frac{e^{-n\psi -nt-na_0}}{h+e^{-n\psi-nt-na_0}}.
\end{equation}

By \eqref{eq-estimate-phi-and-derivative-t}, the second term in $F$ has an order $e^{-(n+\rho_0)t}$, and hence
$$|F|\le Ce^{-2\rho_0 t}.$$
Since $\psi$ is a solution of \eqref{eq-expression-phi-tt} with $\psi\to0$ as $t\to\infty$, by 
Corollary \ref{cor-decay--sol-Ljw=f-ODE} with $\gamma=2\rho_0$, there is a constant $a_1$ such that 
$$|\psi(t)-a_1e^{-\rho_0 t}|\le Ce^{-2\rho_0 t}.$$
By \eqref{eq-first-integral}, we obtain
\begin{equation}\label{eq-nonzero-a1} a_1=\frac{k}{2k-n}\frac{\sqrt[k]{h}}{2}e^{-(2k-n)a_0},\end{equation}
and hence $a_1\neq 0$. 
Set 
$$\psi_1=\psi-a_1e^{-\rho_0 t}.$$ Then, 
$$L_0\psi_1=F,$$ 
with 
$$|\psi_1(t)|+|F(t)|\le Ce^{-2\rho_0 t}.$$ 
The interior gradient estimates imply 
$$|\psi_1'(t)|\le Ce^{-2\rho_0 t}.$$
We now expand the first term of $F$. With $\psi=a_1e^{-\rho_0 t}+\psi_1$, we have 
$$\psi'^2=(-a_1\rho_0 e^{-\rho_0 t}+\psi_1')^2=a_1^2\rho_0^2 e^{-2\rho_0 t}-2a_1\rho_0 e^{-\rho_0 t}\psi_1'
+\psi_1'^2.$$
Then, 
$$|\psi'^2-a_1^2\rho_0^2 e^{-2\rho_0 t}|\le Ce^{-3\rho_0 t},$$ 
and hence
$$\Big|F+\frac12a_1^2\rho_0^3e^{-2\rho_0 t}\Big|\le Ce^{-3\rho_0 t}.$$
By \eqref{eq-identity} with $j=2$, we have 
$$L_0(\psi_1-a_2e^{-2\rho_0 t})=F-2a_2\rho_0^2e^{-2\rho_0 t}.$$
We now require $-2a_2\rho_0^2=a_1^2\rho_0^3/2$, and hence $a_2=a_1^2\rho_0/4$. Set 
$$\psi_2=\psi_1-a_2e^{-2\rho_0 t},$$ and 
$$F_2=F+ \frac12a_1^2\rho_0^3e^{-2\rho_0 t}.$$ 
Then, 
$$L_0\psi_2=F_2,$$ 
with 
$$|F_2(t)|\le Ce^{-3\rho_0 t}.$$ 
By Corollary \ref{cor-decay--sol-Ljw=f-ODE} with $\gamma=3\rho_0$, we obtain 
$$|\psi_2(t)|\le Ce^{-3\rho_0 t}.$$ 
The interior gradient estimates imply 
$$|\psi_2'(t)|\le C^{-3\rho_0 t}.$$

Note that the second term in $F$ given by \eqref{eq-expression-F} has an order $e^{-(n+\rho_0)t}$. 
Let $l$ be the largest integer such that $l\rho_0<n+\rho_0$. 
By proceeding inductively, we define, for appropriate constants 
$a_1, \cdots, a_l$, 
$$\psi_l=\psi-\sum_{i=1}^la_ie^{-i\rho_0 t},$$ 
such that 
$$L_0\psi_l=F_l,$$
where $\psi_l$ and $F_l$ satisfy 
$$|\psi_l(t)|+|F_l(t)|\le Ce^{-t\min\{n+\rho_0, (l+1)\rho_0\}}.$$

If $(l+1)\rho_0>n+\rho_0$, then
$$|\psi_l(t)|+|F_l(t)|\le Ce^{-(n+\rho_0)t}.$$
The term with the lowest decay rate $e^{-(n+\rho_0)t}$ in $F_l$ 
comes only from the second term of $F$. By proceeding similarly, 
there exists a constant $a_{l+1}$ such that 
$$|\psi_l(t)-a_{l+1}e^{-(n+\rho_0)t}|\le Ce^{-(l+1)\rho_0 t}.$$

If $(l+1)\rho_0=n+\rho_0$, then
$$|\psi_l(t)|+|F_l(t)|\le Ce^{-(l+1)\rho_0 t}.$$
The term with the lowest decay rate $e^{-(l+1)\rho_0 t}$ in $F_l$ 
comes from both terms of $F$. By proceeding similarly, 
there exists a constant $a_{l+1}$ such that 
$$|\psi_l-a_{l+1}e^{-(l+1)\rho_0 t}|\le Ce^{-(l+2)\rho_0 t}.$$

In both cases, we can continue indefinitely. 
\end{proof}

\section{Linearized Equations}\label{sec-Linearized-Equations}

In the following, we always assume that 
$\xi$ is a  solution of \eqref{eq-k-Yamabe-radial-equiv} on $\mathbb R$ 
in the $\Gamma_k^+$ class, 
satisfying \eqref{eq-first-integral} for some $h>0$. 
Let $\mathcal L$ be the linear operator given by \eqref{eq-linear-operator-form}, i.e., 
\begin{equation*}
\mathcal L\varphi=\varphi_{tt}+a\Delta_\theta\varphi+b\varphi_t
+c\varphi,\end{equation*}
where $a$ and $b$ are as in \eqref{eq-expressions-bc}, and 
$$c=\frac{ne^{-n\xi}}{e^{-n\xi}+h}(1-\xi_t^2).$$


We now project the operator $\mathcal L$ to spherical harmonics. 
Let $\{\lambda_i\}$ be the sequence of eigenvalues of $-\Delta_{\theta}$ on $\mathbb S^{n-1}$, 
arranged in an increasing order
with $\lambda_i\to\infty$ as $i\to\infty$, and let  $\{X_i\}$ be 
a sequence of the corresponding normalized eigenfunctions of 
$-\Delta_\theta$ on $L^2(\mathbb S^{n-1})$; namely, $-\Delta_{\theta}X_i=\lambda_iX_i$ for each $i\ge 0$. 
Note that each $X_i$ is a spherical harmonic of certain degree. In the following, 
we fix such a sequence $\{X_i\}$, which forms an orthonormal basis in $L^2(\mathbb S^{n-1})$.  
Refer to Appendix \ref{sec-Linear-Equations} for details.

For a fixed $i\ge 0$ and any $\psi=\psi(t)\in C^2(\mathbb R_+)$, we write 
$$\mathcal L(\psi X_i)=L_i(\psi)X_i.$$ 
By $-\Delta_\theta X_i=\lambda_iX_i$, we have 
\begin{equation}\label{eq-linearization-j}
L_i(\psi)=\psi_{tt}+p(t)\psi_t+q_i(t)\psi,
\end{equation}
where $p=b$ and $q_i=c-a\lambda_i$, or specifically 
\begin{align}\label{eq-expression-p}
p=\Big[2-\frac{n}{k}-\frac{n(k-1)}{k}\cdot\frac{e^{-n\xi}}{e^{-n\xi}+h}\Big]\xi_t,\end{align}
and 
\begin{align}\label{eq-expression-q}
q_i=-\frac{(n-k)\lambda_i}{k(n-1)}-\frac{n(k-1)\lambda_i}{k(n-1)}
\frac{e^{-n\xi}}{e^{-n\xi}+h}+\frac{ne^{-n\xi}}{e^{-n\xi}+h}(1-\xi_t^2).
\end{align}
In this section, we will characterize $\mathrm{Ker}(L_i)$, for each $i\geq 0$. 
We always denote by $W(\psi_i^+, \psi_i^-)$
the Wronskian of a basis $\psi_i^+, \psi_i^-$ of $\mathrm{Ker}(L_i)$.


\begin{lemma}\label{lemma-kernel-L-j-small-k1} 
Suppose $2\le k<n/2$ and $\xi$ is a constant.  

$\mathrm{(i)}$ For $i=0$, $\mathrm{Ker}(L_0)$ has a basis $\cos(\sqrt{n-2k}t)$ and $\sin(\sqrt{n-2k}t)$.

$\mathrm{(ii)}$  There exists an increasing sequence of positive constants $\{\rho_i\}_{i\ge 1}$, 
divergent to $\infty$, such that  
for any $i\ge 1$, 
$\mathrm{Ker}(L_i)$ has a basis 
$\psi_i^+=e^{-\rho_i t}$ and $\psi_i^-=e^{\rho_i t}$. Moreover, $\rho_1=\cdots=\rho_n=1$. 
\end{lemma}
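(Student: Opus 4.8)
\textbf{Proof proposal for Lemma \ref{lemma-kernel-L-j-small-k1}.}

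When $\xi$ is the constant solution, the first integral \eqref{eq-first-integral} forces $\xi_t\equiv 0$, so $p=b\equiv 0$ by \eqref{eq-expression-p}, and $e^{-n\xi}/(e^{-n\xi}+h)$ is a constant. My plan is to compute this constant explicitly and reduce each $L_i$ to a constant-coefficient operator $L_i\psi=\psi''+q_i\psi$. From \eqref{eq-k-Yamabe-radial-equiv} with $\xi_t\equiv 0$ one gets $e^{-2k\xi}=1-2k/n$, and a short computation then gives $e^{-n\xi}=h\cdot\frac{?}{?}$; substituting into \eqref{eq-expression-q} and simplifying should yield $q_0=-(n-2k)$ and $q_i=-(n-2k)-c_0\lambda_i$ for a positive constant $c_0$, where $\lambda_i$ are the eigenvalues of $-\Delta_\theta$. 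The main computational point is to verify that the coefficient $c_0$ multiplying $\lambda_i$ is strictly positive, so that $q_i$ is strictly negative and strictly decreasing in $i$; this is where one must be careful with the algebra relating $h$, $\xi$, and the structural constants.

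Given $q_0=-(n-2k)<0$ (using $k<n/2$), the equation $L_0\psi=\psi''-(n-2k)\psi=0$ has the oscillatory basis $\cos(\sqrt{n-2k}\,t)$ and $\sin(\sqrt{n-2k}\,t)$; this is part (i). For part (ii), setting $\rho_i=\sqrt{-q_i}=\sqrt{(n-2k)+c_0\lambda_i}$ for $i\ge 1$, the equation $L_i\psi=\psi''-\rho_i^2\psi=0$ has basis $\psi_i^+=e^{-\rho_i t}$, $\psi_i^-=e^{\rho_i t}$. Since $\{\lambda_i\}$ is nondecreasing and $\lambda_i\to\infty$, the sequence $\{\rho_i\}$ is nondecreasing (I would then pass to a strictly increasing enumeration when stating "increasing sequence", consistent with the rest of the paper) and divergent to $\infty$. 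Finally, the normalization $\rho_1=\cdots=\rho_n=1$ should follow from $\lambda_1=\cdots=\lambda_n=1$ together with the identity $c_0+(n-2k)=1$, i.e.\ $c_0=1-(n-2k)$; I would double-check that the explicit value of $c_0$ coming from \eqref{eq-expression-q} indeed equals $1-(n-2k)=2k-n+1$. (This normalization is exactly the degree-$1$ spherical harmonic contribution that underlies Theorem D, so it must come out right.)

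The only genuine obstacle is the bookkeeping: one must pin down $e^{-n\xi}$ and $h$ for the constant solution precisely enough to evaluate the constant in front of $\lambda_i$ in \eqref{eq-expression-q}, and then confirm both its positivity and its exact value. Once that single constant is identified, everything else is the elementary theory of the constant-coefficient ODE $\psi''+q_i\psi=0$ together with the known behavior of the eigenvalues $\lambda_i$ of the Laplacian on $\mathbb{S}^{n-1}$, so there is no deeper difficulty. I would present the computation of the structural constant first, then read off parts (i) and (ii) and the normalization $\rho_1=\cdots=\rho_n=1$ as immediate consequences.
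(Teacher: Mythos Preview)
Your approach is exactly the paper's: for constant $\xi$ one has $p\equiv 0$, so each $L_i$ reduces to $\psi''+q_i\psi$ with constant $q_i$, and the result is read off from elementary ODE theory. However, you have a sign error that propagates. From \eqref{eq-k-Yamabe-radial-equiv} with $\xi_t\equiv 0$ one gets $e^{-2k\xi}=(n-2k)/n$, and the first integral \eqref{eq-first-integral} gives $e^{-n\xi}+h=e^{(2k-n)\xi}$, hence $e^{-n\xi}/(e^{-n\xi}+h)=e^{-2k\xi}=(n-2k)/n$. Substituting into \eqref{eq-expression-q} with $\lambda_0=0$ yields $q_0=n-2k>0$, not $-(n-2k)$. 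The operator is therefore $L_0\psi=\psi''+(n-2k)\psi$, which is the oscillatory case; your equation $\psi''-(n-2k)\psi=0$ would have exponential, not trigonometric, solutions.

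With the correct sign one finds
\[
q_i=(n-2k)-\frac{n-2k+1}{n-1}\,\lambda_i,
\]
so $\rho_i=\sqrt{-q_i}=\sqrt{\frac{n-2k+1}{n-1}\lambda_i-(n-2k)}$. Also note $\lambda_1=\cdots=\lambda_n=n-1$ (not $1$, as you wrote); plugging this in gives $\rho_1=\cdots=\rho_n=\sqrt{(n-2k+1)-(n-2k)}=1$, which is the normalization you wanted. Once these two slips are corrected, your argument coincides with the paper's proof line for line.
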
 

\begin{proof} We note that there is only one constant solution of \eqref{eq-k-Yamabe-radial-equiv}. 
Let $\xi$ be such a solution. Then, $p=0$ and 
$$q_i=-\frac{(n-k)\lambda_i}{k(n-1)}-\frac{n(k-1)\lambda_i}{k(n-1)}
(n-2k)+(n-2k).$$
Hence, $q_0=n-2k$, and $q_i<0$ for $i\ge 1$. In particular, $q_1=\cdots=q_n=-1$. 
We have the desired result. 
\end{proof} 


\begin{lemma}\label{lemma-kernel-L-j-small-k2} 
Suppose $2\le k<n/2$ and  $\xi$ is a positive nonconstant periodic solution. 

$\mathrm{(i)}$  For $i=0$, $\mathrm{Ker}(L_0)$ has a basis  
$\psi_0^+=p_0^+$ and $\psi_0^-=atp_0^++p_0^-$, for some smooth periodic functions $p_0^+$ and $p_0^-$ on $\mathbb R$,  
and some constant $a$. 

$\mathrm{(ii)}$ There exists an increasing sequence of positive constants $\{\rho_i\}_{i\ge 1}$, 
divergent to $\infty$, such that  
for any $i\ge 1$, 
$\mathrm{Ker}(L_i)$ has a basis 
$\psi_i^+=e^{-\rho_i t}p_i^+$ and $\psi_i^-=e^{\rho_i t}p_i^-$, for some smooth periodic functions 
$p_i^+$ and $p_i^-$  on $\mathbb R$. Moreover, $\rho_1=\cdots=\rho_n=1$. 

In both cases, the Wronskian $W(\psi_i^+, \psi_i^-)$ is a periodic function with a fixed sign. 
In addition, all periodic functions in $\mathrm{(i)}$ and $\mathrm{(ii)}$ have the same period as $\xi$.
\end{lemma}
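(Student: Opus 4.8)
The plan is to analyze the scalar ODE operator $L_i(\psi)=\psi''+p\,\psi'+q_i\psi$ using classical Floquet theory, exploiting that $\xi$ is periodic (Lemma \ref{lemma-xi-t-small-k}), hence $p$ and $q_i$ given by \eqref{eq-expression-p}--\eqref{eq-expression-q} are periodic with the same period $T$ as $\xi$. The first step is to remove the first-order term: writing $\psi = e^{-\frac12\int_0^t p}\,\tilde\psi$ we may assume $p\equiv 0$ after conjugation, but in fact it is cleaner to keep $L_i$ as is and work directly with the monodromy map $M_i$ on the two-dimensional solution space, since $\int_0^T p$ need not vanish in general — one should check whether $\int_0^T p\,dt=0$ using the first integral \eqref{eq-first-integral} and the periodicity of $\xi$ (indeed $\xi_t$ integrates to zero over a period, and the bracket multiplying $\xi_t$ in \eqref{eq-expression-p} is a function of $\xi$, so $\int_0^T p\,dt$ is an integral of an odd-in-$\xi_t$-type quantity; this should give $\int_0^T p=0$, so the Wronskian is genuinely periodic). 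Granting $\int_0^T p=0$, Abel's formula gives $W(\psi_i^+,\psi_i^-)(t)=W(0)\,e^{-\int_0^t p}$, which is periodic with a fixed sign — establishing the Wronskian claim in both parts.

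Next, for part (i), the $i=0$ case: one identifies an explicit element of $\mathrm{Ker}(L_0)$. The standard trick is that differentiating the radial equation \eqref{eq-k-Yamabe-radial-equiv} in $t$ shows $\xi_t$ solves the linearized radial equation; however $L_0$ is the $\lambda_0=0$ projection of the full linearization $\mathcal L$ from \eqref{eq-linear-operator-form}, and one must check this matches the $t$-derivative of \eqref{eq-k-Yamabe-radial-equiv-z} after dividing by $2^{1-k}\binom{n-1}{k-1}(1-\xi_t^2)^{k-1}$ — it does, by Lemma \ref{lemma-linearized-operator}. So $p_0^+ := \xi_t$ (a nonconstant periodic function, since $\xi$ is nonconstant periodic) spans a one-dimensional periodic subspace of $\mathrm{Ker}(L_0)$; equivalently one sets $\psi_0^+=p_0^+$. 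For the second solution, Floquet theory for a periodic second-order ODE with one periodic solution forces the monodromy $M_0$ to be either diagonalizable with eigenvalues $\{1,1\}$ (in which case there is a second independent periodic solution and $a=0$) or a single Jordan block, giving a second solution of the form $t\,p_0^+ + p_0^-$ with $p_0^\pm$ periodic and $a\neq 0$; either way the stated form holds. A reduction-of-order computation, $\psi_0^- = \psi_0^+\int \frac{W}{(\psi_0^+)^2}$, makes the constant $a$ explicit as $a=\frac{1}{T}\int_0^T \frac{W}{(\xi_t)^2}$, with the caveat that $\xi_t$ vanishes where $\xi$ achieves its extrema, so one must argue the combination stays smooth — this is automatic since $\psi_0^-$ is a bona fide ODE solution.

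For part (ii), $i\ge 1$: here $q_i = c - a\lambda_i$ and as $\lambda_i\to\infty$ the coefficient $q_i\to-\infty$ uniformly, since $a(t)>0$ is bounded below (from \eqref{eq-expressions-bc}, $a\ge \frac{1}{n-1}(\frac nk - 1)>0$ for $k<n/2$). Thus for large $i$ the operator $L_i$ has no bounded solutions and the Floquet exponents are real; by a Sturm-oscillation / standard ODE argument, $\mathrm{Ker}(L_i)$ has a basis $e^{-\rho_i t}p_i^+$, $e^{\rho_i t}p_i^-$ with $p_i^\pm$ periodic and $\rho_i>0$. The monotonicity and divergence of $\{\rho_i\}$ follows from monotonicity of the principal Floquet exponent in the parameter $-a\lambda_i$ (a Rayleigh-quotient / comparison argument: larger $\lambda_i$ makes $q_i$ more negative, pushing $\rho_i$ up), together with the asymptotic $\rho_i\sim \sqrt{a\lambda_i}$ (WKB). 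Finally $\rho_1=\cdots=\rho_n=1$ is pinned down by the same geometric mechanism as in the Yamabe case: the degree-one spherical harmonics correspond to translations of $\mathbb R^n$ (or the conformal Killing fields), producing explicit kernel elements of $\mathcal L$ decaying like $e^{-t}$; concretely, differentiating the family of radial solutions under the $n$-parameter group of conformal motions and projecting onto $X_1,\dots,X_n$ yields $\psi=e^{-t}p^+$ with $p^+$ periodic, forcing $\rho_1=\cdots=\rho_n=1$. Reference \cite{HanLi2010}, \cite{KorevaarMPS1999}, \cite{MazzeoP1999}, and \cite{MazzeoDU1996} cover these computations in the analogous settings, so the proof can cite them for the translation-generated kernel elements.

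The main obstacle I anticipate is not any single step but the bookkeeping around the degenerate point where $\xi_t=0$: verifying that the reduction-of-order formula for $\psi_0^-$ in part (i) produces a genuinely smooth (and correctly-structured) function across the zeros of $\xi_t$, and confirming the dichotomy on the monodromy $M_0$ is the correct one (i.e.\ that there really is a nontrivial Jordan block generically, so the $t\,p_0^+$ term is present). A secondary technical point is rigorously establishing the monotonicity of $\rho_i$ in $\lambda_i$ when $p\not\equiv 0$ — after the conjugation $\psi=e^{-\frac12\int p}\tilde\psi$ the operator becomes $\tilde\psi'' + (q_i - \frac14 p^2 - \frac12 p')\tilde\psi$, and one must check the $\lambda_i$-dependent part $-a\lambda_i$ survives with a fixed sign so that Sturm comparison applies cleanly; this is immediate since $a>0$, but worth stating.
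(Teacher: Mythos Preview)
Your proposal is correct and follows essentially the same route as the paper: Floquet theory for the kernel structure (which the paper simply cites to \cite{HanLi2010}) together with Abel's formula for the Wronskian. The only notable difference is in the Wronskian step. You argue abstractly that $\int_0^T p\,dt=0$ because $p=F(\xi)\xi_t$ is an exact differential in $t$; the paper instead integrates explicitly to obtain
\[
e^{-\int p}=(h+e^{-n\xi})^{-\frac{k-1}{k}}e^{-(2-\frac{n}{k})\xi},
\]
which is visibly a periodic function of $t$ since $\xi$ is. These are two views of the same computation, and the explicit formula has the minor advantage of being reused later (see \eqref{eq-Wronskian} and the proof of Lemma~\ref{lemma-kernel-L-0-large-k}). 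Your additional details --- the identification $p_0^+=\xi_t$, the Sturm comparison for monotonicity of $\rho_i$, and the translation/conformal-Killing source of $\rho_1=\cdots=\rho_n=1$ --- are exactly the content behind the citation to \cite{HanLi2010}, so your sketch is a faithful unpacking of what the paper leaves implicit.
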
 

\begin{proof} 
Since $\xi$ is periodic, then $p$ and $q_i$ are also periodic. 
For a proof of the characterization of $\mathrm{Ker}(L_i)$, refer to \cite{HanLi2010}, which is based on the Floquet theory. 
We now provide a simple computation concerning the Wronskian. 

For each $i\ge 0$, the Wronskian $W_i$ of  $\psi_i^+$ and $\psi_i^-$ satisfies 
$W'_i=-p W_i,$
where $p$ is given by \eqref{eq-expression-p}. 
A simple integration yields
\begin{align}\label{eq-integral-p}
e^{-\int p}=(h+e^{-n\xi})^{-\frac{k-1}{k}}e^{-(2-\frac{n}{k})\xi},
\end{align}
and hence 
\begin{align}\label{eq-Wronskian}
W_i=c_i(h+e^{-n\xi})^{-\frac{k-1}{k}}e^{-(2-\frac{n}{k})\xi},
\end{align}
for some nonnegative constant $c_i$. 
\end{proof}

We now consider $n/2< k<n$. 
By \eqref{eq-estimate-xi-t-large-k}, 
$\xi_t\to 1$ as $t\to\infty$. Hence, as $t\to\infty$, 
\begin{align*}
p\to 2-\frac{n}{k},\quad 
q_i\to-\frac{(n-k)\lambda_i}{k(n-1)}.
\end{align*}
Replacing $p$ and $q_i$ in $L_i$ given by \eqref{eq-linearization-j}
by their limits as $t\to\infty$,  we have the following operator with 
constant coefficients: 
\begin{equation}\label{eq-linearization-j-infty}
L^\infty_i(\psi)=\psi_{tt}+\Big(2-\frac{n}{k}\Big)\psi_t-\frac{(n-k)\lambda_i}{k(n-1)}\psi.
\end{equation}
For each $i\ge 0$, solutions of $L_i^\infty(\psi)=0$ are given by linear combinations of 
\begin{equation}\label{eq-solutions-constant-ODE}
e^{-\rho_it}\quad\text{and}\quad 
e^{\tau_it},\end{equation}
where 
\begin{equation}\label{eq-rho-j}
\rho_i=\Big[\Big(\frac{n}{2k}\Big)^2+\frac{n-k}{k(n-1)}(\lambda_i-n+1)\Big]^{\frac12}+\Big(1-\frac{n}{2k}\Big),
\end{equation}
and 
\begin{equation}\label{eq-beta-j}
\tau_i=\Big[\Big(\frac{n}{2k}\Big)^2+\frac{n-k}{k(n-1)}(\lambda_i-n+1)\Big]^{\frac12}-\Big(1-\frac{n}{2k}\Big).
\end{equation}
Both $\{\rho_i\}$ and $\{\tau_i\}$ are nonnegative and increasing sequences and diverge to $\infty$, with 
\begin{equation}\label{eq-rho-0}
\rho_0=2-\frac{n}{k}, \,\, \tau_0=0,
\end{equation} 
and
\begin{equation}\label{eq-rho-1}\rho_i=1, \,\, \tau_i=\frac{n}{k}-1\quad\text{for }i=1, \cdots, n.\end{equation}
We note that $\rho_0$ in \eqref{eq-rho-0} is the same as $\rho_0$ in \eqref{eq-definition-rho}. 
We also have, for $i\ge 0$,  
\begin{equation}\label{eq-relation-rho-tau-j-000}
\rho_i-\tau_i=2-\frac{n}{k}. 
\end{equation}

For $k=n/2$, the above computation still holds, although $\xi_t$ has a different limit as $t\to\infty$. 
In this case, \eqref{eq-linearization-j-infty}, \eqref{eq-rho-j}, and \eqref{eq-beta-j} reduce to 
$$L^\infty_i(\psi)=\psi_{tt}-\frac{\lambda_i}{n-1}\psi,$$
and 
\begin{equation}\label{eq-relation-rho-tau-j}\rho_i=
\tau_i=\Big[\frac{\lambda_i}{n-1}\Big]^{1/2}.
\end{equation}
For $i=0$, solutions of $L_0^\infty(\psi)=0$ are given by linear combinations of 
$1$ and 
$t$. 
For each $i\ge 1$, solutions of $L_i^\infty(\psi)=0$ are given by linear combinations of 
$e^{-\rho_it}$ and 
$e^{\rho_it}$. 

We will prove that solutions of 
$L_i(\psi)=0$ behave similarly as those of $L_i^\infty(\psi)=0$ 
for $n/2\le k<n$.

For simplicity of presentations, we introduce a space of functions. 
Let $\{\nu_i\}_{i=0}^\infty$ be an  increasing sequence of nonnegative constants. 

\begin{definition}\label{def-space-E} 
Let $\psi$ be a function defined on $\mathbb R_+$. We say $\psi\in \mathcal E$ if 
there exists a sequence of contants $\{c_i\}_{\{i\ge 0\}}$ with $c_0\neq 0$ such that
for any integer $l\ge 1$ and any $t\ge 1$,
\begin{equation}\label{eq-expnsion-definition}
\Big|\psi(t)-\sum_{i=0}^lc_ie^{-\nu_it}\Big|\leq Ce^{-\nu_{l+1}t}.
\end{equation}
\end{definition} 

We emphasize that leading terms of functions in $\mathcal E$ are nonzero constants. 
In particular, functions in $\mathcal E$ are bounded on $\mathbb R_+$. 

We first characterize $\mathrm{Ker}(L_0)$. 

\begin{lemma}\label{lemma-kernel-L-0-large-k} 
The space $\mathrm{Ker}(L_0)$ has a basis $\psi_0^+$ and $\psi_0^-$ such that

$\mathrm{(i)}$ for $k=n/2$, $\psi_0^+\in \mathcal E$, 
and $\psi_0^-=at\psi_0^++\eta$ for some nonzero constant $a$ and some function $\eta\in \mathcal E$, 
and $W(\psi_0^+, \psi_0^-)\in \mathcal E$; 

$\mathrm{(ii)}$ for $n/2<k\le n$, $e^{(2-\frac{n}{k}) t}\psi_0^+, \psi_0^-\in \mathcal E$, 
and $e^{(2-\frac{n}{k}) t}W(\psi_0^+, \psi_0^-)\in \mathcal E$.\end{lemma}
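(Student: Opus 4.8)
The plan is to analyze the ODE $L_0\psi = 0$ directly, where
\[
L_0\psi = \psi_{tt} + p(t)\psi_t + q_0(t)\psi,
\]
using the asymptotic expansions of the coefficients $p$ and $q_0$ that come from Lemma \ref{lemma-expansion-xi-t}. First I would record that, since $\xi(t) = t + a_0 + O(e^{-\rho_0 t})$ for $n/2 < k \le n$ (and $\xi(t) = \sqrt{1-\sqrt[k]{h}}\,t + a_0 + O(e^{-\nu_1 t})$ for $k = n/2$), the formulas \eqref{eq-expression-p} and \eqref{eq-expression-q} for $p$ and $q_0$ show that $e^{-n\xi}/(e^{-n\xi}+h) \to 0$ at an exponential rate governed by the index set $\mathcal I_0$, and likewise $1-\xi_t^2 \to 0$ (when $k>n/2$). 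Hence $p(t) - p_\infty$ and $q_0(t) - q_0^\infty$ both lie in a class of functions admitting asymptotic expansions in $e^{-\nu_i t}$ with zero leading constant, where $p_\infty = 2-\tfrac nk$, $q_0^\infty = 0$ for $k > n/2$, and $p_\infty = 0$, $q_0^\infty = 0$ for $k = n/2$. For $k = n/2$ one must be slightly careful: the coefficients decay toward their limits at the rate $e^{-\nu_1 t}$ with $\nu_1 = \sqrt{1-\sqrt[k]{h}}$, and $q_0^\infty = 0$ exactly because $\lambda_0 = 0$.

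Next I would set up the comparison with the constant-coefficient operator $L_0^\infty$ whose kernel is spanned by $\{1, e^{-(2-n/k)t}\}$ for $n/2 < k \le n$ and by $\{1, t\}$ for $k = n/2$. The key point is that $L_0$ is a perturbation of $L_0^\infty$ by coefficients decaying exponentially, so I would invoke the general ODE machinery of Appendix \ref{sec-Linear-Equations} — specifically the asymptotic-expansion results (Lemma \ref{lemma-decay-particular-solution-ODE} and Corollary \ref{cor-decay--sol-Ljw=f-ODE}) — applied via a Levinson-type or successive-approximation argument. Concretely, for $n/2 < k \le n$: writing $L_0\psi = L_0^\infty\psi + (p - p_\infty)\psi_t + (q_0 - q_0^\infty)\psi$, I would construct the solution $\psi_0^+$ that decays like $e^{-(2-n/k)t}$ by treating the lower-order terms as a source and solving iteratively, obtaining $e^{(2-n/k)t}\psi_0^+ \in \mathcal E$; the constant leading coefficient is nonzero because the iteration starts from the exact solution $e^{-(2-n/k)t}$ of $L_0^\infty$. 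The bounded solution $\psi_0^-$ is handled similarly, starting from the constant solution $1$ of $L_0^\infty$; one shows $\psi_0^- \in \mathcal E$ with nonzero leading constant. For the Wronskian, $W(\psi_0^+,\psi_0^-)$ satisfies $W' = -pW$, so $W = C e^{-\int_1^t p}$, and since $\int_1^t p = (2-\tfrac nk)t + (\text{bounded with expansion in } \mathcal I_0)$ — using \eqref{eq-integral-p} and the expansion of $\xi$ — we get $e^{(2-n/k)t}W \in \mathcal E$ directly, without needing the explicit basis.

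For $k = n/2$, the situation $\rho_0 = 0$ means the second solution of $L_0^\infty$ is $t$ rather than an exponential, producing the logarithmic-type degeneracy. Here I would again start the iteration from $\{1, t\}$: for the bounded solution $\psi_0^+$, the source terms $(p - 0)\psi_t + (q_0 - 0)\psi$ decay exponentially (note $p$ itself decays since $p_\infty = 0$), so $\psi_0^+ \in \mathcal E$ with nonzero leading constant. For $\psi_0^-$, one seeks it in the form $at\psi_0^+ + \eta$; plugging in and using $L_0\psi_0^+ = 0$ reduces the equation for $\eta$ to $L_0\eta = -a(2\psi_0^+{}' + p\,t\,\psi_0^+ + \cdots)$, whose right-hand side decays exponentially once $a$ is chosen to kill the leading constant contribution, yielding $\eta \in \mathcal E$. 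The Wronskian identity $W = Ce^{-\int p}$ with $\int p$ bounded and admitting an $\mathcal I_0$-expansion gives $W(\psi_0^+, \psi_0^-) \in \mathcal E$.

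The main obstacle I anticipate is making the successive-approximation (or reduction-of-order) argument rigorous enough to guarantee that the leading constants are genuinely nonzero and that the error terms have the full asymptotic expansion required by Definition \ref{def-space-E}, rather than merely a single-order estimate. This is precisely where the machinery of Appendix \ref{sec-Linear-Equations} must be quoted carefully — one needs that solutions of $L_0\psi = f$ with $f$ admitting an expansion in $\{e^{-\nu_i t}\}$ themselves admit such an expansion, which is the content of Corollary \ref{cor-decay--sol-Ljw=f-ODE}; the bookkeeping is complicated slightly by the fact that the index set $\mathcal I_0$ may have "resonances" (integer combinations of $\rho_0$ colliding with $n$, as noted after \eqref{eq-definition-rho}), but these only introduce powers of $t$ into the expansion and do not affect the statement of the lemma, which already allows the factor $t$ in $\psi_0^-$ for $k = n/2$ and otherwise only claims membership in $\mathcal E$ modulo the exponential prefactor.
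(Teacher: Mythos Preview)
Your approach is correct but takes a different route from the paper. The paper exploits a key observation you miss: differentiating the radial ODE \eqref{eq-k-Yamabe-radial-equiv} in $t$ shows that $\xi_t$ itself is an explicit element of $\mathrm{Ker}(L_0)$, and by Lemma~\ref{lemma-expansion-xi-t} one has $\xi_t\in\mathcal E$ directly (leading constant $1$ for $n/2<k\le n$, or $\sqrt{1-\sqrt[k]{h}}$ for $k=n/2$). The paper then sets $\psi_0^-=\xi_t$ and obtains $\psi_0^+$ by the classical reduction-of-order formula
\[
\psi_0^+(t)=\xi_t(t)\int_t^\infty \xi_t^{-2}\,e^{-\int p}\,d\tau,
\]
using the closed form \eqref{eq-integral-p} for $e^{-\int p}$; the leading behavior of the integral is read off immediately and gives a nonzero coefficient, while the Wronskian claim is just \eqref{eq-Wronskian}. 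This bypasses the iterative perturbation machinery entirely.

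Your perturbative (Levinson-type) argument is exactly the method the paper does use for Lemma~\ref{lemma-kernel-L-j-large-k} when $i\ge n+1$, where no explicit kernel element is available, so it is certainly valid here too. The trade-off: your approach is uniform across all $i\ge 0$ but requires more bookkeeping---convergence of the successive approximations, and checking that the leading constants produced by the iteration are genuinely nonzero---whereas the paper's approach for $i=0$ is shorter because translation invariance of the radial problem hands you one kernel element for free, and reduction of order then gives the second with its leading coefficient computed explicitly. One small slip in your $k=n/2$ sketch: $L_0(t\psi_0^+)=2(\psi_0^+)'+p\,\psi_0^+$, with no factor of $t$ on the $p$ term.
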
 

\begin{proof} 
We only prove (ii). 
By a simple differentiation of \eqref{eq-k-Yamabe-radial-equiv} with respect to $t$, we obtain 
$L_0(\xi_t)=0$. With $\{a_i\}$ given in Lemma \ref{lemma-expansion-xi-t} with $a_1\neq 0$ by 
\eqref{eq-nonzero-a1}, we have 
$\xi_t\in \mathcal E$ by \eqref{eq-expnsion-xi-t}. 
We set $\psi_0^-=\xi_t$. 
Recall that $\rho_0$ is given by \eqref{eq-definition-rho}. 

Next, we recall a well-known fact. Since $\xi_t$ is a solution of $L_0(\psi)=0$, then 
a linearly independent solution can be given by 
$$\psi_0^+(t)=\xi_t(t)\int_t^\infty \xi_t^{-2}e^{-\int p}d\tau.$$
By \eqref{eq-integral-p}, we have 
$$\psi_0^+(t)=\xi_t(t)\int_t^\infty \xi_t^{-2}(h+e^{-n\xi})^{-\frac{k-1}{k}}e^{-\rho_0\xi}d\tau.$$
Note that the leading term in $\xi_t$ is $1$. Then, the leading term in $\psi_0^+$ is given by 
$$h^{-\frac{k-1}{k}}\int_t^\infty e^{-\rho_0 \tau}d\tau=\frac{1}{\rho_0}h^{-\frac{k-1}{k}}e^{-\rho_0 t}.$$
The coefficient is not zero. 
By \eqref{eq-expnsion-xi-t}, 
we have $e^{\rho_0 t}\psi_0^+\in \mathcal E$. 

The assertion for the Wronskian follows from \eqref{eq-Wronskian}.
\end{proof}

By Proposition 2 in \cite{HanLi2010}, 
there is a pair of linearly independent elements in $\mathrm{Ker}(L_i)$, $i\geq 1$, 
one of which is bounded and another unbounded on $\mathbb R_+$. 
We now characterize 
these two functions precisely. Recall that $\rho_i$ and $\tau_i$ are defined by 
\eqref{eq-expression-p}
and 
\eqref{eq-expression-q}, respectively.

\begin{lemma}\label{lemma-kernel-L-j-large-k} For $n/2\le k<n$ and any $i\ge 1$, 
the space $\mathrm{Ker}(L_i)$ has a basis $\psi_i^+$ and $\psi_i^-$ such that 
$e^{\rho_i t}\psi_i^+$, $e^{-\tau_i t}\psi_i^-\in \mathcal E$, and 
$e^{\rho_0t}W(\psi_i^+, \psi_i^-)\in \mathcal E$. 
\end{lemma}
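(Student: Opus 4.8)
The plan is to mirror the structure of the proof of Lemma \ref{lemma-kernel-L-0-large-k}(ii), but now for $i\ge 1$, where both indicial roots of the limiting operator $L_i^\infty$ are simple and distinct (since $\rho_i>0$ and $\tau_i\ge 0$, and $\rho_i\neq -\tau_i$ always, with $\rho_i=\tau_i$ only for $k=n/2$ but then both are positive for $i\ge 1$). First I would record that the coefficients $p$ and $q_i$ of $L_i$ in \eqref{eq-linearization-j}–\eqref{eq-expression-q} are, by Lemma \ref{lemma-expansion-xi-t} together with $\xi_t\in\mathcal E$ (shown in the proof of Lemma \ref{lemma-kernel-L-0-large-k}), of the form ``constant plus an element that decays like $e^{-\nu_1 t}$ and admits a full asymptotic expansion in the powers $e^{-\nu_j t}$'': namely $p(t)=\rho_0+O(e^{-\nu_1 t})$ with a convergent-type expansion, and $q_i(t)=-\tfrac{(n-k)\lambda_i}{k(n-1)}+O(e^{-\nu_1 t})$, again expandable. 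Here one uses that $e^{-n\xi}/(e^{-n\xi}+h)\to 0$ exponentially because $\xi(t)\sim t$, so this quotient lies in the natural exponential-expansion class. This puts $L_i$ into the exact hypotheses of the linear ODE asymptotic machinery of Appendix \ref{sec-Linear-Equations} (Lemma \ref{lemma-decay-particular-solution-ODE}, Corollary \ref{cor-decay--sol-Ljw=f-ODE}, and the surrounding results on perturbations of constant-coefficient operators with exponentially decaying error): the operator $L_i$ is an exponentially small perturbation of $L_i^\infty$, whose solutions are spanned by $e^{-\rho_i t}$ and $e^{\tau_i t}$.

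Next I would produce the decaying solution $\psi_i^+$. By Proposition 2 of \cite{HanLi2010} there is a bounded (in fact decaying, since $\rho_i\ge 1$) solution of $L_i\psi=0$ on $\mathbb R_+$; normalize it to have leading coefficient $1$ against $e^{-\rho_i t}$. The appendix results then give a full expansion $\psi_i^+(t)=e^{-\rho_i t}\bigl(1+\sum c_j e^{-\nu_j t}+\cdots\bigr)$, i.e. $e^{\rho_i t}\psi_i^+\in\mathcal E$; the leading constant is $1\neq 0$, which is exactly the nonvanishing requirement in Definition \ref{def-space-E}. For the unbounded solution I would use reduction of order exactly as in Lemma \ref{lemma-kernel-L-0-large-k}: set
\begin{equation*}
\psi_i^-(t)=\psi_i^+(t)\int_{t_0}^{t}\bigl(\psi_i^+(\tau)\bigr)^{-2}e^{-\int^{\tau}p}\,d\tau,
\end{equation*}
valid once $t_0$ is large enough that $\psi_i^+$ has no zeros there (guaranteed by the asymptotics). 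Using \eqref{eq-integral-p}, $e^{-\int p}=(h+e^{-n\xi})^{-(k-1)/k}e^{-\rho_0\xi}$, whose leading behavior is $h^{-(k-1)/k}e^{-\rho_0 t}$, while $(\psi_i^+)^{-2}\sim e^{2\rho_i t}$; since $2\rho_i-\rho_0=2\rho_i-(\rho_i-\tau_i)=\rho_i+\tau_i>0$, the integrand grows like $e^{(\rho_i+\tau_i)t}$, the integral like $e^{(\rho_i+\tau_i)t}$ with a nonzero leading coefficient, and hence $\psi_i^-\sim (\text{const})\,e^{\tau_i t}$ with the constant nonzero. Feeding the expansions of $\psi_i^+$ and of $e^{-\int p}$ through, one gets a full $\mathcal E$-expansion for $e^{-\tau_i t}\psi_i^-$.

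Finally, the Wronskian: $W(\psi_i^+,\psi_i^-)$ satisfies $W'=-pW$, so $W_i=c_i\,e^{-\int p}=c_i(h+e^{-n\xi})^{-(k-1)/k}e^{-\rho_0\xi}$ exactly as in \eqref{eq-Wronskian}, with $c_i\neq 0$ because $\psi_i^+,\psi_i^-$ are independent by construction. Since $\xi-t\to a_0$ with an $\mathcal E$-type expansion (Lemma \ref{lemma-expansion-xi-t}(ii)) and $e^{-n\xi}$ is exponentially small with the same kind of expansion, $e^{\rho_0 t}W_i=c_i e^{-\rho_0(\xi-t)}(h+e^{-n\xi})^{-(k-1)/k}$ has leading term $c_i h^{-(k-1)/k}e^{-\rho_0 a_0}\neq 0$ and lies in $\mathcal E$. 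The case $k=n/2$ is identical since there $\rho_i=\tau_i>0$ for $i\ge 1$ and $\rho_0=0$, so $e^{\rho_0 t}W_i=W_i$ and the same computation applies verbatim. The main obstacle I anticipate is bookkeeping: making sure the error terms generated by the exponentially small perturbation $L_i-L_i^\infty$ and by the reduction-of-order integral genuinely organize into the additive semigroup of exponents governed by $\{\nu_j\}$ and the pair $(\rho_i,\tau_i)$, so that Definition \ref{def-space-E} applies cleanly at every order; this is a routine but careful invocation of the appendix lemmas rather than a new idea.
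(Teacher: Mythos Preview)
Your proposal is correct and the overall strategy (treat $L_i$ as an exponentially small perturbation of $L_i^\infty$ and read off $\mathcal E$-expansions) matches the paper, but you diverge from the paper in two concrete places. First, for $i=1,\dots,n$ the paper does not argue abstractly: it writes down the explicit solutions $\psi_i^+=(1+\xi_t)e^{-t}$ and $\psi_i^-=(1-\xi_t)e^{t}$ and verifies $e^{\rho_i t}\psi_i^+\in\mathcal E$ directly, while for $e^{-\tau_i t}\psi_i^-$ it rewrites $1-\xi_t$ via the first integral \eqref{eq-first-integral} to exhibit the correct leading behavior $e^{(\frac{n}{k}-1)t}$. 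These explicit formulas are exactly what feeds into the order-$1$ term $e^{-t}(1+\xi')Y$ in Theorem~\ref{thrm-approximation-order-1}, so the paper's choice is not just cosmetic. Second, for $\psi_i^-$ with $i\ge n+1$ the paper does \emph{not} use your reduction-of-order integral; instead it takes an arbitrary unbounded solution, runs the same $L_i^\infty$-perturbation iteration as for $\psi_i^+$ to get $|\psi_i^- - b_0 e^{\tau_i t} - c_0 e^{-\rho_i t}|\le Ce^{(\tau_i-\rho_0)t}$, and then simply replaces $\psi_i^-$ by $\psi_i^- - c_0\psi_i^+$ to kill the $e^{-\rho_i t}$ contamination. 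Your integral formula has the advantage of being uniform in $i$ and of automatically producing a $\psi_i^-$ with the clean leading term (no subtraction needed); the paper's approach avoids having to justify that $\psi_i^+$ is eventually nonvanishing and keeps the argument parallel to the $\psi_i^+$ case. The Wronskian computation via \eqref{eq-Wronskian} is identical in both.
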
 

\begin{proof} We consider only the case $n/2< k<n$. 

For $i=1, \cdots, n$, $\rho_i$ and $\tau_i$ are given by 
\eqref{eq-rho-1}. We take $\psi_i^+=(1+\xi_t)e^{-t}$ and $\psi_i^-=(1-\xi_t)e^t$. 
Then, $\psi^+_i$  and $\psi^-_i$ are two linearly independent solutions of $L_i(\psi)=0$. 
It is easy to get 
$e^{\rho_i t}\psi_i^+\in \mathcal E$. 
Next, by  \eqref{eq-first-integral}, we have 
\begin{equation*}
\psi_i^-=(1-\xi_t)e^t
=\frac{\sqrt[k]{h+e^{-n\xi}}}{1+\xi_t}\cdot e^{(-2+\frac{n}{k})(\xi-t)} \cdot e^{(\frac{n}{k}-1)t}.
\end{equation*}
Hence, $e^{-\tau_i t}\psi_i^-\in \mathcal E$. 

Next, we consider $i\ge n+1$. 
By (\ref{eq-linearization-j}) and (\ref{eq-linearization-j-infty}), we rewrite $L_{i}(\psi)=0$ as
\begin{equation*}
L^{\infty}_{i}(\psi)=f, 
\end{equation*}
where 
\begin{equation*}
f=(2-\frac{n}{k})(1-\xi_t)\psi_t
+\left[\frac{n(k-1)}{k}\xi_t\psi_t-\frac{n(k-1)\lambda_i}{k(n-1)}\psi+n(1-\xi_t^2)\psi\right]\frac{e^{-n\xi}}{e^{-n\xi}+h}.
\end{equation*}
In the first term in $f$, the factor $1-\xi_t$ contributes a factor $e^{-\rho_0 t}$, and in the second term, the 
factor $e^{-n \xi}$ contributes a factor $e^{-n t}$. 
Recall that solutions of $L^{\infty}_{i}(\psi)=0$ are given by linear combinations of $e^{-\rho_it}$ and $e^{\tau_it}$. 

Let $\psi^+_i\in \mathrm{Ker}(L_i)$ be a bounded function on $\mathbb R_+$. Arguing as in the proof of 
Lemma \ref{lemma-expansion-xi-t}, there exists a constant $a_0$ such that, for any $t\ge 1$,  
$$|\psi_i^+(t)-a_0e^{-\rho_it}|\le Ce^{-(\rho_i+\rho_0)t}.$$
Similarly, we get expansions of higher orders. Hence, $e^{\rho_i t}\psi_i^+\in \mathcal E$. 

Let $\psi^-_i\in \mathrm{Ker}(L_i)$ be an unbounded function on $\mathbb R_+$. Then, 
there exist constants $b_0$ and $c_0$ such that, for any $t\ge 1$,  
$$|\psi_i^-(t)-b_0e^{\tau_i t}-c_0e^{-\rho_it}|\le Ce^{(\tau_i-\rho_0)t}.$$
By considering $\psi_i^--c_0\psi_i^+$ instead of $\psi_i^-$, we may assume $c_0=0$ and 
$$|\psi_i^-(t)-b_0e^{\tau_i t}|\le Ce^{(\tau_i-\rho_0)t}.$$
Similarly, we get expansions of higher orders. Hence, $e^{-\tau_i t}\psi_i^-\in \mathcal E$. 
\end{proof} 

We now make an important remark concerning the sequence $\{L_i\}_{i\ge 0}$.

\begin{remark}\label{remark-application-linear-L} 
Suppose $2\le k<n$ and let $\{L_i\}_{i\ge 0}$ be given by \eqref{eq-linearization-j}. 
By Lemma \ref{lemma-kernel-L-j-small-k1}, Lemma \ref{lemma-kernel-L-j-small-k2}, Lemma \ref{lemma-kernel-L-0-large-k}, 
and Lemma \ref{lemma-kernel-L-j-large-k}, the sequence $\{L_i\}_{i\ge 0}$
satisfies Assumption \ref{assum-kernel-linear-equation}. As a consequence, 
Lemma \ref{lemma-nonhomogeneous-linearized-eq} is applicable to the operator $\mathcal L$
given by \eqref{eq-linear-operator-form}.
\end{remark} 

We point out that the sequences $\{\rho_i\}$ and $\{\tau_i\}$ 
in Lemma \ref{lemma-kernel-L-j-small-k1}, Lemma \ref{lemma-kernel-L-j-small-k2}, Lemma \ref{lemma-kernel-L-0-large-k}, 
and Lemma \ref{lemma-kernel-L-j-large-k}
are different for different $k$. 
For $2\le k< n/2$, $\rho_i=\tau_i$ for each $i\ge 0$, $\rho_0=0$, and $\{\rho_i\}_{i\ge 1}$ is determined 
by the Floquet theory and hence depends on the radial solution $\xi$ where the linearization was computed. 
For $n/2\le k<n$, $\{\rho_i\}$ and $\{\tau_i\}$ are given by \eqref{eq-rho-j}
and \eqref{eq-beta-j}, respectively, and are determined only by $n$ and $k$. Moreover, 
$\rho_0=0$ for $k=n/2$, and $0<\rho_0<1$ for $n/2<k<n$.  In all cases, we have $\rho_1=\cdots=\rho_n=1$. 

The solutions $\psi_0^+$ and $\psi_0^-$ of $L_0(\psi)=0$ also behave differently for different $k$. 
For $2\le k\le n/2$, $\psi_0^+$ is bounded and $\psi_0^-$ has at most a linear growth as $t\to\infty$. 
For $n/2<k<n$, $\psi_0^+$ decays exponentially as $t\to\infty$ and $\psi_0^-$ is bounded.


\section{Asymptotic Expansions}\label{sec-proof-main}

In this section, we discuss asymptotic expansions of solutions of \eqref{eq-k-Yamabe-w}. 
We first describe our strategy, which is already adopted in the proof of Theorem \ref{thrm-Asymptotic-uk}. 

Let $w=w(t, \theta)$ be a solution of \eqref{eq-k-Yamabe-w}, and $\xi$ be a radial solution. 
By setting $\varphi=w-\xi$, we write \eqref{eq-k-Yamabe-w} as \eqref{eq-nonlinear-linear-form}, i.e., 
$$\mathcal L\varphi=\mathcal R(\varphi),$$
where $\mathcal L$ and $\mathcal R$ are given by 
\eqref{eq-linear-operator-form} and \eqref{eq-nonlinear-operaor-form}, respectively. 
According to Remark \ref{remark-application-linear-L}, we can apply Lemma 
\ref{lemma-nonhomogeneous-linearized-eq} to the linear operator $\mathcal L$. 
Since $\mathcal R(\varphi)$ is nonlinear in $\varphi$, we will apply Lemma 
\ref{lemma-nonhomogeneous-linearized-eq} successively. In each step, we aim to 
get a decay estimate of $\mathcal R(\varphi)$, with a decay rate better than that of $\varphi$. 
Then, we can subtract expressions generated by Lemma 
\ref{lemma-nonhomogeneous-linearized-eq} with the lower decay rates to improve the decay rate of $\varphi$.

We now examine $\mathcal R(\varphi)$, 
which is given by \eqref{eq-nonlinear-operaor-form}, i.e., 
$$\mathcal R(\varphi) 
= \frac{ne^{-n\xi}}{2k(e^{-n\xi}+h)}(1-\xi_t^2)(e^{-2k\varphi}-1+2k\varphi)
+Q_2(\varphi)+\sum_{l=2}^{k}(1-\xi_t^2)^{1-l}P_{l}(\varphi).
$$
We note that negative powers of $1-\xi_t^2$ appear in the third term in the expression of $\mathcal R(\varphi)$. 
For $2\le k\le n/2$, $1-\xi_t^2$ has a positive lower bound, and hence negative powers of $1-\xi_t^2$ do not 
cause any trouble. 
However, for $n/2<k<n$, $1-\xi_t^2$ decays exponentially at the order of $\rho_0$, as $t\to\infty$, 
as hence negative powers of $1-\xi_t^2$ grow exponentially as $t\to\infty$.  Therefore, 
in order to estimate $\mathcal R(\varphi)$, we need to use the decay of  
$P_l(\varphi)$ to counterbalance the growth of $(1-\xi_t^2)^{1-l}$. 

We explain the initial step in slightly more details. 
For $n/2<k<n$, 
according to an estimate proved by Gursky and Viacolvsky \cite{Gursky-Viacolvsky2006}, and Li \cite{Li2006}, 
$\varphi=w-\xi$ decays as $e^{-\rho_0 t}$, as $t\to\infty$. 
For $l=2$ in $\mathcal R(\varphi)$, $(1-\xi_t^2)^{-1}$ grows as $e^{\rho_0 t}$, and the corresponding 
$P_2(\varphi)$ is at least quadratic in $\varphi$ and hence decays as $e^{-2\rho_0 t}$. As a result, the product 
$(1-\xi_t^2)^{-1}P_2(\varphi)$ decays as $e^{-\rho_0 t}$. There is no improvement in the decay rates, comparing 
those of $\mathcal R(\varphi)$ over $\varphi$. As the initial step in the successive improvement of decay rates, 
we prove $\varphi$ decays better than $e^{-\rho_0 t}$.  
This is crucial in proving Theorem \ref{thrm-approximation-order-1}.

Let $w=w(t, \theta)$ be a solution of \eqref{eq-k-Yamabe-w}, and set 
\begin{equation}
    \gamma(t)=\fint_{S^{n-1}} w(t,\theta) d\theta.
\end{equation}
By Theorem E and Proposition 1 in \cite{HanLi2010}, we have, for any $t\ge 0$,  
\begin{equation}\label{eq-decay-average}
    |w(t,\theta)-\gamma(t)|\leq Ce^{-t},
\end{equation}
and, for any $\delta>0$ small, there exists a constant $C=C(\delta)>0$ such that, for $l=1,2$, 
\begin{equation}\label{eq-decay-average-derivative}
    |\nabla^{l}_{t,\theta}(w(t,\theta)-\gamma(t))|\leq Ce^{-(1-\delta)t}. 
\end{equation}

We now derive an identity which plays a key role in our study of asymptotic expansions of $w$, 
for the case $n/2<k<n$. 
Compare \eqref{eq-first int for average} below with (48) in \cite{HanLi2010}. 

\begin{lemma}\label{first integral for average} 
For $n\ge 3$ and ${n}/{2}< k < n$, 
let $w(t,\theta)$ be a smooth solution of \eqref{eq-k-Yamabe-w} on $(0,\infty)\times \mathbb S^{n-1}$ 
in the $\Gamma_k^+$ class, with a nonremovable singularity at infinity. Then, there exists a constant $h>0$ such that
\begin{equation}\label{eq-first int for average}
    e^{(2k-n)\gamma}\Big((1-\gamma_t^2)^k(1+\eta)
    +\sum_{l=1}^{k}(1-\gamma_t^2)^{k-l}\eta_{l}\Big)=h+e^{-n\gamma}(1+\eta),
\end{equation}
where $\eta=O(e^{-2t})$ and
$\eta_l= O(e^{-l(1-\delta)t})$, for $l= 1, \cdots, k$, and 
any $\delta$ small. 
\end{lemma}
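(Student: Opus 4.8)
The plan is to derive \eqref{eq-first int for average} by averaging the $\sigma_k$-Yamabe equation \eqref{eq-k-Yamabe-w} over $\mathbb S^{n-1}$ and identifying the result as a perturbation of the radial first integral \eqref{eq-first-integral-z}. First I would recall the decomposition established in Section \ref{sec-identities}: with $\xi$ replaced by $\gamma=\gamma(t)$ and $\varphi=w-\gamma$, the matrix $\Lambda(w)=\Lambda(\gamma+\varphi)$ satisfies the expansion \eqref{eq-sigma-expansion-012}, and from \eqref{eq-sigma-k-decomposition} together with \eqref{eq-sigma k of bar(Lambda)-phi} we can write $\sigma_k(\Lambda(w))$ explicitly as $\Lambda_{11}\sigma_{k-1}(\bar\Lambda)+\sigma_k(\bar\Lambda)+\sum_{l=2}^k(1-\gamma_t^2)^{k-l}P_l$. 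The crucial point is that when we integrate $\sigma_k(\Lambda(w))=c_ke^{-2kw}$ over $\mathbb S^{n-1}$ in $\theta$, every term that is linear in derivatives of $\varphi$ in the $\theta$-directions — in particular the $\Delta_\theta\varphi$ term and the $\xi_t\varphi_t$ term on the slice — integrates to zero (the former by the divergence theorem on $\mathbb S^{n-1}$, the latter because $\fint\varphi\,d\theta\equiv 0$ by definition of $\gamma$). What remains after averaging is the radial expression in $\gamma$, namely $2^{1-k}\binom{n}{k}(1-\gamma_t^2)^{k-1}\big(\tfrac{k}{n}\gamma_{tt}+\tfrac{n-2k}{2n}(1-\gamma_t^2)\big)$, plus a collection of error terms: the averages of $Q_2(\varphi)$, of the $(1-\gamma_t^2)^{k-l}P_l(\varphi)$ pieces, of the quadratic-and-higher remainders in $\sigma_{k-1}(\bar\Lambda)$ and $\sigma_k(\bar\Lambda)$, and of $e^{-2kw}-e^{-2k\gamma}$ on the right-hand side.

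Next I would organize these error terms. Using the decay estimates \eqref{eq-decay-average} and \eqref{eq-decay-average-derivative}, we have $\varphi=O(e^{-t})$ and $|\nabla_{t,\theta}^l\varphi|\le Ce^{-(1-\delta)t}$ for $l=1,2$. Therefore: $Q_2(\varphi)$, being quadratic in first derivatives $\varphi_a$, is $O(e^{-2(1-\delta)t})$; each $P_l(\varphi)$, being homogeneous of degree $l$ in $\varphi_{ab}$, $\gamma_t\varphi_a$, and $\varphi_a\varphi_b$, is $O(e^{-l(1-\delta)t})$; the remainder $e^{-2k\varphi}-1$ contributes $O(e^{-t})$ relative to the leading $e^{-2k\gamma}$ factor. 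I would then group terms by how they multiply powers of $(1-\gamma_t^2)$: write the averaged equation as $(1-\gamma_t^2)^{k-1}\big(\tfrac{k}{n}\gamma_{tt}+\tfrac{n-2k}{2n}(1-\gamma_t^2)\big)(1+\text{error}) + \sum_{l=1}^k (1-\gamma_t^2)^{k-l}\cdot O(e^{-l(1-\delta)t}) = 2^{-k}\binom{n}{k}e^{-2k\gamma}(1+O(e^{-t}))$, where the $l=1$ term is allowed because the averaged $\sigma_{k-1}$ expansion in \eqref{eq-sigma k of bar(Lambda)-phi} contributes, after averaging, only quadratic-or-higher error at order $(1-\gamma_t^2)^{k-2}$ times $O(e^{-2(1-\delta)t})$, which is $(1-\gamma_t^2)^{k-1}\cdot O(e^{-2(1-\delta)t}/(1-\gamma_t^2))$, matching an $\eta_1$ contribution once one tracks the shift of index. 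This bookkeeping is where care is needed: one must make sure the powers of $(1-\gamma_t^2)$ combine so that the final identity has exactly the form $(1-\gamma_t^2)^k(1+\eta)+\sum_{l=1}^k(1-\gamma_t^2)^{k-l}\eta_l$ on the geometric side.

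Finally, the passage from the averaged second-order ODE to the first-order integral form mirrors the derivation of \eqref{eq-first-integral-z} for the exact radial equation. I would multiply the averaged equation by the appropriate integrating factor — for the clean radial equation the identity $\tfrac{d}{dt}\big(e^{(2k-n)\gamma}(1-\gamma_t^2)^k - e^{-n\gamma}\big)=0$ comes from multiplying \eqref{eq-k-Yamabe-radial-equiv-z} by $-n e^{(2k-n)\gamma}(1-\gamma_t^2)^{k-1}\gamma_t$ or a similar factor — and then observe that the error terms, after multiplication by the integrating factor and integration in $t$ from $\infty$ back to $t$, converge (since each decays at least like $e^{-(1-\delta)t}$ against at worst the $e^{(2k-n)\gamma}\sim e^{(2k-n)t}$ growth, which for $n/2<k<n$ means $2k-n<n$ and the delicate exponents work out — in fact $\eta_l = O(e^{-l(1-\delta)t})$ is exactly what survives). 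The integration constant is $h$; positivity $h>0$ follows because $\gamma$ inherits the nonremovable singularity, so $w$ and hence $\gamma$ cannot equal the bounded spherical solution, exactly as in the argument that \eqref{eq-first-integral-z} has $h>0$ when the singularity is nonremovable (cited to \cite{ChangHanYang2005}). The main obstacle I anticipate is the precise tracking in the middle step of how the various homogeneous-polynomial error terms $P_l$ and the averaged subleading pieces of $\sigma_{k-1}(\bar\Lambda)$ and $\sigma_k(\bar\Lambda)$ redistribute among the coefficients $(1-\gamma_t^2)^{k-l}$ — in particular confirming that nothing worse than $(1-\gamma_t^2)^{k-l}\cdot O(e^{-l(1-\delta)t})$ appears for $1\le l\le k$, so that dividing through to reach the stated form does not introduce uncontrolled negative powers of the exponentially small quantity $1-\gamma_t^2$. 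Establishing the $l=1$ term with the correct $O(e^{-(1-\delta)t})$ bound (rather than merely $O(e^{-2(1-\delta)t})$) requires checking that the cross term between $\gamma_t$ and a single $\varphi$-derivative in the off-diagonal blocks $\Lambda_{1i}$ and in the expansion of $\Lambda_{11}$ does not vanish upon averaging but is nonetheless controlled.
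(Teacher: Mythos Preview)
Your approach has a genuine gap at the final step. You propose to average the equation $\sigma_k(\Lambda(w))=c_ke^{-2kw}$ over $\mathbb S^{n-1}$ to obtain a perturbed second-order ODE for $\gamma$, and then pass to a first integral by multiplying by an integrating factor and integrating in $t$. The problem is that the error terms in the averaged equation do not form an exact $t$-derivative: after multiplying by the integrating factor you would get
\[
\frac{d}{dt}\Big(e^{(2k-n)\gamma}(1-\gamma_t^2)^k-e^{-n\gamma}\Big)=F(t),
\]
and integrating gives $\int F(s)\,ds$ on the right, not the pointwise structure $\sum_l(1-\gamma_t^2)^{k-l}\eta_l$ claimed in \eqref{eq-first int for average}. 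Moreover, integrating ``from $\infty$ back to $t$'' presupposes that the left-hand side has a limit as $t\to\infty$, which is precisely the asymptotic information the lemma is meant to supply; this is circular.

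The paper avoids this entirely by starting not from the equation itself but from the Pohozaev-type identity \eqref{eq-Pohozaev type identity}, due to Viaclovsky: for any solution $w$ of the full PDE, the quantity
\[
\fint_{\mathbb S^{n-1}}\Big[\frac{n}{2kc_k}e^{(2k-n)w}\sum_{a=1}^n T_{1a}w_{ta}-e^{-nw}\Big]d\theta
\]
is an exact constant in $t$, equal to $\tfrac{2k-n}{2k}h$. No integration in $t$ is needed; the constant $h$ is built in. The work is then purely algebraic: expand the Newton tensor entries $T_{1a}$ via \eqref{eq-element-T-11}--\eqref{eq-element-T-1i}, use the decomposition \eqref{eq-sigma-k-decomposition} to eliminate $\gamma_{tt}$ in favor of $\sigma_k(\Lambda)=c_ke^{-2kw}$, and collect the resulting terms by powers of $(1-\gamma_t^2)$. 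The errors are then automatically pointwise in $t$, and the bounds $\eta=O(e^{-2t})$, $\eta_l=O(e^{-l(1-\delta)t})$ follow directly from \eqref{eq-decay-average}--\eqref{eq-decay-average-derivative} together with $\fint\hat w\,d\theta=0$. The Pohozaev identity is the missing idea in your plan.
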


Some computations below already appeared in the proof of Lemma \ref{lemma-linearized-operator}. 
\begin{proof} 
Set
\begin{equation}\label{eq-hat-w}
    \hat{w}(t,\theta)= w(t,\theta)-\gamma(t).
\end{equation}
In the following, we always denote 
by $P_l$ a homogeneous polynomial of degree $l$ in terms of $\hat w_{ab}$, $\gamma_t\hat w_{a}$, 
and $\hat w_a\hat w_b$. They may change from line to line. 

Our proof starts with a Pohozaev type identity for the solution $w(t,\theta)$. 
Let $\Lambda=\Lambda(w)$ and $T$ be the Newton tensor associated with $\sigma_k(\Lambda)$ defined by 
\begin{equation}\label{eq-Newton-tensor}
T=T_{k-1}[w]=T_{k-1}(\Lambda)=\sum_{j=0}^{k-1}(-1)^{j}\sigma_{k-1-j}(\Lambda)\Lambda^j.\end{equation}
Write $T=(T_{ab})$. Then, for some $h>0$, 
\begin{equation}\label{eq-Pohozaev type identity}
   \fint_{\mathbb S^{n-1}}\Big[\frac{n}{2kc_k}e^{(2k-n)w}\sum_{a=1}^{n}T_{1a}w_{ta}
    -e^{-nw}\Big]d\theta=\frac{2k-n}{2k}h.
\end{equation}
The identity  \eqref{eq-Pohozaev type identity} was first derived by Viaclovsky in \cite{Viaclovsky2000}
and the present form is (60) in \cite{HanLi2010}.

By Lemma A \cite{Reilly1973},  we have
\begin{equation}\label{eq-identity-Newton tensor}
    \frac{d}{dt}\Big|_{t=0}\sigma_{k}(\Lambda+tB)=\mathrm{tr}(TB), 
\end{equation}
for any diagonalizable matrix $B$. We now express $T_{1a}$ in terms of $\Lambda_{ab}$. 
Let $\bar \Lambda_a$ be the $(n-1)\times (n-1)$ matrix by deleting the first row and the $a$-th column 
from $\Lambda$. The matrix $\bar\Lambda_1$ is symmetric, but not other $\bar \Lambda_i$ 
for $i=2, \cdots, n$. Take $B$ in \eqref{eq-identity-Newton tensor} 
to be the matrix $E^0$, whose $(1,1)$ component is 1 and all other 
components are zero. Then, 
\begin{equation}\label{eq-element-T-11}
    T_{11}=\sigma_{k-1}(\bar{\Lambda}_1). 
\end{equation}
Next, for any $i=2, \cdots, n$, take $B$ in \eqref{eq-identity-Newton tensor} to be the matrix $E^i$ 
whose $(1,i)$ and $(i,1)$ components are 1 and all others are 0.
A straightforward computation yields 
\begin{equation}\label{eq-element-T-1i}
    T_{1i}=(-1)^{1+i}\hat\sigma_{k-1}(\bar{\Lambda}_i),
\end{equation}
where $\hat\sigma_{k-1}(\bar{\Lambda}_i)$ is the sum of all $(k-1)\times(k-1)$ minors of $\bar{\Lambda}_i$. 

As \eqref{eq-expression-w-phi}, we have 
\begin{align}\label{eq-expression-w-hat}
\begin{split}
\Lambda_{11}&=\gamma_{tt}-\frac{1}{2}(1-\gamma_t^2)+\hat{w}_{tt}
+\gamma_t\hat{w}_t-\frac{1}{2}|\nabla_{\theta}\hat{w}|^2+\frac{1}{2}\hat{w}_t^2,\\
\Lambda_{1i}&=\hat{w}_{ti}+(\gamma_t+\hat{w}_t)\hat{w}_i\quad\text{for } 2\le i\le n,\\
{\Lambda}_{ii}&=\frac12(1-\gamma_t^2)+\hat{w}_{ii}-\gamma_t\hat{w}_t+\hat{w}_i^2
-\frac{1}{2}|\nabla_{\theta}\hat{w}|^2-\frac{1}{2}\hat{w}_t^2\quad\text{for } 2\le i\le n,\\
{\Lambda}_{ij}&=\hat{w}_{ij}+\hat{w}_i\hat{w}_j\quad\text{for } 2\le i\neq j\le n.
\end{split}
\end{align}
We now expand $T_{1a}$ according to the powers of $1-\gamma_t^2$. 
For $2\le i\le n$, it is easy to see that 
$\hat\sigma_{k-1}(\bar{\Lambda}_i)$ is a polynomial of $1-\gamma_t^2$ of 
degree $k-2$, and specifically
$$T_{1i}=\sum_{l=1}^{k-1}(1-\gamma_t^2)^{k-1-l}P_{l}.$$ 
By \eqref{eq-element-T-11} and \eqref{eq-element-T-1i}, we obtain
\begin{equation}\label{eq-summation-2-to-n}\sum_{a=1}^{n}T_{1a}w_{ta}=
(\gamma_{tt}+\hat w_{tt})\sigma_{k-1}(\bar \Lambda_1)+\sum_{l=2}^{k}(1-\gamma_t^2)^{k-l}P_{l},
\end{equation}
where $\bar \Lambda_1$ is the $(n-1)\times (n-1)$ matrix by deleting the first row and the first column 
from $\Lambda$. 

We now eliminate $\gamma_{tt}$ with the help of the equation  \eqref{eq-k-Yamabe-w}. 
Similar as \eqref{eq-sigma-k-decomposition}, we have 
$$\sigma_k(\Lambda)=\Lambda_{11}\sigma_{k-1}(\bar\Lambda_1)
+\sigma_{k}(\bar\Lambda_1)+\sum_{l=2}^{k}(1-\gamma_t^2)^{k-l}P_{l}.$$
By replacing $\Lambda_{11}$ by the first formula in \eqref{eq-expression-w-hat} and then substituting in 
\eqref{eq-summation-2-to-n}, we obtain 
\begin{align}\label{eq-summation-2-to-n-another}\begin{split}
\sum_{a=1}^{n}T_{1a}w_{ta}&=
\sigma_k(\Lambda)+\frac12(1-\gamma_t^2)\sigma_{k-1}(\bar \Lambda_1)-\sigma_{k}(\bar\Lambda_1)\\
&\qquad -\Big(\gamma_t\hat{w}_t+\frac{1}{2}\hat{w}_t^2-\frac{1}{2}|\nabla_{\theta}\hat{w}|^2\Big)
\sigma_{k-1}(\bar \Lambda_1)
+\sum_{l=2}^{k}(1-\gamma_t^2)^{k-l}P_{l}.
\end{split}\end{align}
By the definition of $\bar\Lambda_1$, it is clear that 
\begin{equation}\label{eq-sigma k of bar(Lambda)}
\begin{split}
\sigma_{k-1}(\bar\Lambda_1)&=\Big(\begin{matrix}n-1\\k-1\end{matrix}\Big)(\frac{1}{2}(1-\gamma_t^2))^{k-1}
        +\sum_{l=1}^{k-1}(1-\gamma_t^2)^{k-1-l}P_{l}.
    \end{split}
\end{equation}
A similar expression holds for $\sigma_{k}(\bar\Lambda_1)$. 
Substituting \eqref{eq-sigma k of bar(Lambda)} and the corresponding expression for $\sigma_{k}(\bar\Lambda_1)$
in \eqref{eq-summation-2-to-n-another}, we obtain 
\begin{equation*}
\begin{split}
    \sum_{a=1}^{n} T_{1a}w_{ta}=\sigma_{k}(\Lambda)
    +\frac{2k-n}{k}\Big(\begin{matrix}n-1\\k-1\end{matrix}\Big)(\frac{1}{2}(1-\gamma_t^2))^k
        +\sum_{l=1}^{k}(1-\gamma_t^2)^{k-l}P_{l}. 
\end{split}
\end{equation*}
Recall that $\sigma_{k}(\Lambda)=c_ke^{-2kw}=c_ke^{-2k\gamma}e^{-2k\hat{w}}$, by \eqref{eq-k-Yamabe-w}.
Therefore, 
\begin{equation*}
\begin{split}
    \frac{n}{2kc_k}\sum_{a=1}^{n} T_{1a}w_{ta}
    =\frac{n}{2k}e^{-2k\gamma}e^{-2k\hat{w}}+\frac{2k-n}{2k}(1-\gamma_t^2)^k
        +\sum_{l=1}^{k}(1-\gamma_t^2)^{k-l}P_{l}. 
\end{split}
\end{equation*}
Integrating over $\mathbb S^{n-1}$, we have 
\begin{equation*}
\begin{split}
    &\fint_{\mathbb S^{n-1}}\Big[\frac{n}{2kc_k}e^{(2k-n)w}\sum_{a=1}^{n} T_{1a}w_{ta}-e^{-nw}\Big]d\theta\\
    &\quad=-\frac{2k-n}{2k}e^{-n\gamma}\fint_{\mathbb S^{n-1}}e^{-n\hat{w}}d\theta
    +\frac{2k-n}{2k}e^{(2k-n)\gamma}(1-\gamma_t^2)^k\fint_{\mathbb S^{n-1}}e^{(2k-n)\hat{w}}d\theta\\
    &\qquad+\sum_{l=1}^{k}e^{(2k-n)\gamma}(1-\gamma_t^2)^{k-l}
    \fint_{\mathbb S^{n-1}}e^{(2k-n)\hat{w}}P_{l}d\theta.
\end{split}
\end{equation*}
By  \eqref{eq-decay-average}, we have, for any $l$, 
\begin{equation*}
    e^{l\hat{w}}= 1+ l\hat{w}+O(e^{-2t}),
\end{equation*}
and thus, with $\int_{\mathbb S^{n-1}}\hat{w}d\theta=0$, 
\begin{equation*}
\fint_{\mathbb S^{n-1}}e^{(2k-n)\hat{w}}d\theta=1+\eta.
\end{equation*}
Substituting these into \eqref{eq-Pohozaev type identity}, 
we obtain the desired identity with the help of \eqref{eq-decay-average-derivative}. 
\end{proof}

Now, we prove an  approximation by radial solutions, better than the known approximation, for the case $n/2<k<n$. 
Recall that $\rho_0=2-n/k$ as in \eqref{eq-definition-rho}.

\begin{lemma}\label{lemma-Improved-decay-estimate}
For $n\ge 3$ and ${n}/{2}< k < n$, 
let $w(t,\theta)$ be a smooth solution of \eqref{eq-k-Yamabe-w} on $\mathbb R_+\times \mathbb S^{n-1}$ 
in the $\Gamma_k^+$ class, with a nonremovable singularity at infinity. 
Then, there exist a constant $0<\varepsilon<\min\{1-\rho_0, \rho_0\}$ 
and a
radial solution $\xi(t)$ of \eqref{eq-k-Yamabe-w} on $\mathbb R\times \mathbb S^{n-1}$ 
in the $\Gamma_k^+$ class such that, for any $t>1$, 
\begin{equation}\label{eq-approximation-00-improved}
|w(t,\theta)-\xi(t)|\le Ce^{-(\rho_0+\varepsilon)t},
\end{equation} 
where $C$ is a positive constant.
\end{lemma}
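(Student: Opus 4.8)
The plan is to pass to the spherical average $\gamma(t)=\fint_{\mathbb S^{n-1}}w(t,\theta)\,d\theta$ and to exploit the approximate first integral of Lemma~\ref{first integral for average}. First, by \eqref{eq-approximation-0} (which holds with $\alpha=\rho_0$ for $n/2<k<n$, by \cite{Gursky-Viacolvsky2006} and \cite{Li2006}) there is a radial solution $\xi^{(0)}$ with $|w-\xi^{(0)}|\le Ce^{-\rho_0 t}$, so by \eqref{eq-decay-average} also $|\gamma-\xi^{(0)}|\le Ce^{-\rho_0 t}$; together with Lemma~\ref{lemma-xi-t-large-k} and interior gradient estimates this gives $\gamma(t)=t+a_0^{(0)}+O(e^{-\rho_0 t})$, $\gamma_t(t)=1+O(e^{-\rho_0 t})$, and $1-\gamma_t^2\asymp e^{-\rho_0 t}$. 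Since $\rho_0<1$, it is enough to find a radial solution $\xi$ and a constant $\varepsilon\in(0,\min\{1-\rho_0,\rho_0\})$ with $|\gamma-\xi|\le Ce^{-(\rho_0+\varepsilon)t}$, for then $|w-\xi|\le|w-\gamma|+|\gamma-\xi|\le Ce^{-t}+Ce^{-(\rho_0+\varepsilon)t}\le Ce^{-(\rho_0+\varepsilon)t}$.

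Next I fix the radial solution to be used. Let $h>0$ be the constant provided by Lemma~\ref{first integral for average} applied to $w$, and let $\xi$ be the solution of \eqref{eq-k-Yamabe-radial-equiv} in the $\Gamma_k^+$ class whose first integral \eqref{eq-first-integral} has exactly this constant $h$, translated in $t$ so that $\xi(t)-t\to a_0^{(0)}$ (Lemma~\ref{lemma-expansion-xi-t} guarantees such an offset exists). Since any two radial solutions with the same asymptotic offset differ by $O(e^{-\rho_0 t})$, we still have $\psi:=\gamma-\xi=O(e^{-\rho_0 t})$ and, by construction, $\psi(t)\to 0$. Now I solve both first integrals for $1-(\cdot)_t^2$: from \eqref{eq-first-integral}, $1-\xi_t^2=\Psi(\xi)$ with $\Psi(s):=\bigl(he^{-(2k-n)s}+e^{-2ks}\bigr)^{1/k}$, whereas Lemma~\ref{first integral for average}, after dividing out $e^{(2k-n)\gamma}(1+\eta)$, yields $1-\gamma_t^2=\Psi(\gamma)\bigl(1+O(e^{-(1-\delta-\rho_0)t})\bigr)$ — here the $l$-th error term $(1-\gamma_t^2)^{k-l}\eta_l\asymp e^{-(k-l)\rho_0 t}e^{-l(1-\delta)t}$ contributes only a \emph{relative} error $O(e^{-l(1-\delta-\rho_0)t})$, and $1-\delta-\rho_0>0$ once $\delta$ is small. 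Subtracting, and using $\Psi(\gamma)-\Psi(\xi)=\Psi'(\zeta)\psi$ with $-\Psi'(\zeta)=\rho_0 h^{1/k}e^{-\rho_0 t}(1+o(1))$ together with $1-\gamma_t^2-(1-\xi_t^2)=-(\gamma_t+\xi_t)\psi_t$, I arrive at a scalar first-order linear ODE $\psi_t=c(t)\psi+g(t)$ with $c(t)=\tfrac{\rho_0}{2}h^{1/k}e^{-\rho_0 t}(1+o(1))$ and $g(t)=O(e^{-(1-\delta)t})$. (It is important to solve for the \emph{squares} of the derivatives before differencing; expanding the first integral directly in $\psi_t$ would produce terms quadratic in $\psi_t$ multiplied by the growing factor $(1-\xi_t^2)^{1-l}$, hence of size $O(1)$.)

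It remains to integrate this ODE. Because $c\asymp e^{-\rho_0 t}$, we have $\int^\infty c<\infty$, so the homogeneous solution $\psi_h(t)=\exp\bigl(\int_1^t c\bigr)$ tends to a nonzero constant, and the particular solution $\psi_p(t)=-\psi_h(t)\int_t^\infty \psi_h(s)^{-1}g(s)\,ds$ satisfies $\psi_p=O(e^{-(1-\delta)t})$. Thus $\psi=C\psi_h+\psi_p$ for some constant $C$, and $\lim_{t\to\infty}\psi(t)=C\lim_{t\to\infty}\psi_h(t)$; since $\psi(t)\to 0$ while $\lim\psi_h\ne 0$, we get $C=0$, so $\psi=\psi_p=O(e^{-(1-\delta)t})$. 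Writing $1-\delta=\rho_0+\varepsilon_1$ with $\varepsilon_1=1-\delta-\rho_0>0$ and setting $\varepsilon:=\min\{\varepsilon_1,\tfrac12(1-\rho_0),\tfrac12\rho_0\}$ completes the proof.

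The crux is the second step — producing a first-order ODE for $\psi=\gamma-\xi$ whose forcing $g$ decays strictly faster than $e^{-\rho_0 t}$. Two features must cooperate. First, $\xi$ must be the radial solution whose first integral constant equals the $h$ of Lemma~\ref{first integral for average}: with a mismatched constant the discrepancy between the two first integrals contains a term proportional to that mismatch, and dividing by the $\xi_t$-derivative of the first integral (which is $\asymp e^{\rho_0 t}$) only demotes it to order $e^{-\rho_0 t}$, giving no gain. Second, one needs the sharp sizes of the error terms $\eta=O(e^{-2t})$ and $\eta_l=O(e^{-l(1-\delta)t})$ of Lemma~\ref{first integral for average} to beat the growing weights $(1-\gamma_t^2)^{k-l}$ that multiply them — this is exactly the counterbalancing of decay against negative powers of $1-\xi_t^2$ anticipated in the discussion preceding Lemma~\ref{first integral for average}, with $l=1$ the borderline case. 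The remaining ingredients — the square-root and $1/k$-power expansions, the a priori bounds on $\gamma$ and $\gamma_t$ (including $0<\gamma_t<1$ for large $t$), and the existence in $\Gamma_k^+$ of a radial solution with prescribed $h$ and offset — are routine given Lemma~\ref{lemma-xi-t-large-k} and Lemma~\ref{lemma-expansion-xi-t}.
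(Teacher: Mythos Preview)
Your overall strategy is close to the paper's and the ODE argument in your third paragraph is a nice variant, but there is a genuine gap in the second step: you assert $1-\gamma_t^2\asymp e^{-\rho_0 t}$ ``by interior gradient estimates'' and then use this two-sided bound to control the relative errors $(1-\gamma_t^2)^{-l}\eta_l$. The \emph{lower} bound $1-\gamma_t^2\ge ce^{-\rho_0 t}$ is not available a priori: Theorem~C only gives $|w-\xi^{(0)}|\le Ce^{-\rho_0 t}$ in $C^0$, and even if one could upgrade this to $\gamma_t-\xi^{(0)}_t=O(e^{-\rho_0 t})$, combining with $1-\xi^{(0)}_t\asymp e^{-\rho_0 t}$ yields only $1-\gamma_t=O(e^{-\rho_0 t})$, not a lower bound. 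Without the lower bound your division by $(1-\gamma_t^2)^k$ fails; indeed for $k>(n+1)/2$ one has $k\rho_0=2k-n>1$, so the trivial estimate $(1-\gamma_t^2)^{k-l}|\eta_l|\le Ce^{-(1-\delta)t}$ is \emph{larger} than the main term $\Psi(\gamma)^k\asymp e^{-k\rho_0 t}$, and no relative smallness can be read off.

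The paper handles exactly this point with a Young/H\"older trick: writing
\[
|(1-\gamma_t^2)^{k-l}\eta_l|\le (1-\gamma_t^2)^k e^{-\delta' t}+C|\eta_l|^{k/l}e^{\frac{k-l}{l}\delta' t}
\]
absorbs the first piece into the main term $(1-\gamma_t^2)^k$ on the left, while the second piece is $O(e^{-(k(1-\delta)-(k-1)\delta')t})$, which beats $e^{-k\rho_0 t}$ precisely because $k<n$. This gives $(1-\gamma_t^2)^k=\Psi(\gamma)^k(1+O(e^{-\varepsilon t}))$ \emph{without} any a priori bound on $1-\gamma_t^2$, and the two-sided comparison $1-\gamma_t^2\asymp e^{-\rho_0 t}$ falls out as a consequence. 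Once you have this, the remainder of your argument --- subtracting the two first integrals to get $\psi_t=c(t)\psi+g(t)$ with integrable $c$ and $g=O(e^{-(1-\delta)t})$, then using $\psi(t)\to0$ to kill the constant --- is correct and gives a clean alternative to the paper's explicit matching of expansion coefficients via Lemma~\ref{lemma-expansion-xi-t}.
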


\begin{proof}
Gursky and Viacolvsky \cite{Gursky-Viacolvsky2006}, and Li \cite{Li2006} proved that
the corresponding solution $u$ to \eqref{eq-k-Yamabe-u} satisfies 
$u\in C^{\rho_0}(B_1)$. This implies that we can take $\alpha=\rho_0$ in 
Theorem C. Hence, there exists a
radial solution $\xi(t)$ of \eqref{eq-k-Yamabe-w} on $\mathbb R\times \mathbb S^{n-1}$ 
in the $\Gamma_k^+$ class such that, for any $t>1$, 
\begin{equation}\label{eq-approximation-00}
|w(t,\theta)-\xi(t)|\le Ce^{-\rho_0 t},
\end{equation} 
where $C$ is a positive constant. In the following, $t$ is always at least 1.

Set $a_0=(\xi-t)(0)$. 
By \eqref{eq-expnsion-xi-t} and \eqref{eq-approximation-00}, we have
$$|w(t,\theta)-t-a_0|\leq Ce^{-\rho_0 t}.$$
A simple integration over $\mathbb S^{n-1}$ yields
\begin{equation}\label{eq-expansion-gamma-0 order}
    |\gamma(t)-t-a_0|\leq C e^{-\rho_0 t}.
\end{equation}
By combining \eqref{eq-expansion-gamma-0 order} and \eqref{eq-first int for average}, we get 
\begin{equation}\label{eq-first int for average 2}
   e^{(2k-n)t}\Big((1-\gamma_t^2)^k(1+\eta)+\sum_{l=1}^{k}(1-\gamma_t^2)^{k-l}\eta_{l}\Big)
   =he^{-(2k-n)a_0}+O(e^{-\rho_0 t}).
\end{equation}
For each $l=1, \cdots, k-1$, we write, for $\delta'>0$ small, 
$$(1-\gamma_t^2)^{k-l}\eta_{l}=(1-\gamma_t^2)^{k-l}e^{-\frac {k-l}k\delta't}\cdot \eta_{l}e^{\frac {k-l}k\delta't}.$$
By the H\"older inequality, we have 
$$|(1-\gamma_t^2)^{k-l}\eta_{l}|\le (1-\gamma_t^2)^{k}e^{- \delta't}
+C \eta_{l}^{\frac{k}{l}}e^{\frac {k-l}{l}\delta't}.$$
Note that
$$\eta_{l}^{\frac{k}{l}}=O(e^{-k(1-\delta)t}).$$ 
Hence, 
$$|(1-\gamma_t^2)^{k-l}\eta_{l}|\le (1-\gamma_t^2)^{k}e^{- \delta't}
+C e^{-k(1-\delta)t}e^{(k-1)\delta't}.$$
For $l=k$, we have 
$$(1-\gamma_t^2)^{k-l}\eta_{l}=\eta_k=O(e^{-k(1-\delta)t}).$$
By substituting these estimates in \eqref{eq-first int for average 2}, we obtain 
$$e^{(2k-n)t}(1-\gamma_t^2)^k\big(1+O(e^{- \delta't})\big)+O\big(e^{(k-n+k\delta+(k-1)\delta')t}\big)
   =he^{-(2k-n)a_0}+O(e^{-\rho_0 t}).$$
Since $k<n$, we choose $\delta$ and $\delta'$ sufficiently small such that, 
for some $\varepsilon>0$, 
$$e^{(2k-n)t}(1-\gamma_t^2)^k\big(1+O(e^{- \delta't})\big)=he^{-(2k-n)a_0}+O(e^{-\varepsilon t}),$$
and hence, 
\begin{equation}\label{ineq in improved decay estimate}e^{(2k-n)t}(1-\gamma_t^2)^k=he^{-(2k-n)a_0}+O(e^{-\varepsilon t}).
\end{equation}
This implies 
$$e^{\rho_0 t}(1-\gamma_t^2)\to \sqrt[k]{h}e^{-\rho_0 a_0}\quad\text{as }t\to\infty.$$

Set 
\begin{equation}\label{eq-expression-beta}\beta(t)=1-\gamma_t^2-\sqrt[k]{h}e^{-\rho_0(t+a_0)}.\end{equation}
Then, 
\begin{equation}\label{eq-beta-convergence}e^{\rho_0 t}\beta(t)\to 0\quad\text{as }t\to\infty.\end{equation}
By combining \eqref{ineq in improved decay estimate} and \eqref{eq-expression-beta}, we have 
$$\big(e^{\rho_0 t}\beta(t)+\sqrt[k]{h}e^{-\rho_0 a_0}\,\big)^k=he^{-(2k-n)a_0}+O(e^{-\varepsilon t}).$$
We now arrange the left-hand side according to powers of $\beta$. Note that the 
zero-order term is $he^{-(2k-n)a_0}$, which is the same as the first term in the right-hand side, and that 
$e^{\rho_0 t}\beta$ is a common factor of the rest terms in the left-hand side. Hence, 
$$e^{\rho_0 t}\beta(t)\sum_{j=1}^k
\Big(\,\begin{matrix} k\\j\end{matrix}\,\Big)\big(e^{\rho_0 t}\beta(t)\big)^{j-1}\big(\sqrt[k]{h}e^{-\rho_0a_0}\,\big)^{k-j}
=O(e^{-\varepsilon t}).$$
In the summation above, the term corresponding to $j=1$ is a positive constant and all other terms 
converge to 0 as $t\to\infty$ by \eqref{eq-beta-convergence}. 
Therefore, 
$$e^{\rho_0 t}\beta(t)
=O(e^{-\varepsilon t}),$$
or 
$$\beta(t)
=O(e^{-(\rho_0+\varepsilon) t}).$$
By \eqref{eq-expression-beta}, we obtain 
$$\gamma_t^2=1-\sqrt[k]{h}e^{-\rho_0(t+a_0)}+O(e^{-(\rho_0+\varepsilon) t}),$$
and hence 
\begin{equation}\label{eq-approximation-gamma-t-1}
\gamma_t=1-\frac12\sqrt[k]{h}e^{-\rho_0(t+a_0)}+O(e^{-(\rho_0+\varepsilon) t}).\end{equation}
A simple integration yields 
\begin{equation*}
\gamma(t)=t+a_0+\frac{k}{2(2k-n)}\sqrt[k]{h}e^{-\rho_0(t+a_0)}+O(e^{-(\rho_0+\varepsilon)t}),
\end{equation*}
where  $a_0$ is the constant from (\ref{eq-expansion-gamma-0 order}).
We always take $\varepsilon<\min\{1-\rho_0, \rho_0\}$.

By Lemma \ref{lemma-expansion-xi-t} and, in particular, \eqref{eq-nonzero-a1}, we have 
$$|\gamma(t)-\xi(t)|\le Ce^{-(\rho_0+\varepsilon)t}.$$
Hence, with \eqref{eq-decay-average}, 
\begin{equation*}
\begin{split}
    |w(t,\theta)-\xi(t)|\leq |w(t,\theta)-\gamma(t)|+|\gamma(t)-\xi(t)|
    \leq Ce^{-(\rho_0+\varepsilon)t}.
\end{split}
\end{equation*}
This is the desired result. \end{proof}

We are ready to prove Theorem \ref{thrm-approximation-order-1}.

\begin{proof}[Proof of Theorem \ref{thrm-approximation-order-1}]
Let $\varepsilon$ be as in Lemma \ref{lemma-Improved-decay-estimate}. 
By \eqref{eq-expnsion-xi-t} and \eqref{eq-approximation-gamma-t-1}, we have, 
for all $t> 0$ and $l=0,1,2$,
\begin{equation*}
    |\nabla^{l}_{t,\theta}(\gamma(t)-\xi(t))|\leq Ce^{-(\rho_0+\varepsilon)t}. 
\end{equation*}
Combining with Lemma \ref{lemma-Improved-decay-estimate} and \eqref{eq-decay-average-derivative}, 
we obtain, for all $t> 0$ and $l=0,1,2$,
\begin{equation}\label{eq-estimate-step0}
    |\nabla^{l}_{t,\theta}(w(t,\theta)-\xi(t))|\leq Ce^{-(\rho_0+\varepsilon)t}. 
\end{equation}

Set $\varphi=w-\xi$. By \eqref{eq-nonlinear-linear-form}, we can write \eqref{eq-k-Yamabe-w} as 
\begin{equation*}\mathcal L\varphi=\mathcal R(\varphi),\end{equation*}
where $\mathcal L\varphi$ and $\mathcal R(\varphi)$ are given by 
\eqref{eq-linear-operator-form} and \eqref{eq-nonlinear-operaor-form}, respectively. 
In $\mathcal R(\varphi)$, 
each $P_l(\varphi)$ is a homogeneous polynomial of degree $l$ in terms of $\varphi_{ab}$, $\xi_t\varphi_{a}$, 
and $\varphi_a\varphi_b$, for $l=2, \cdots, k$.

By \eqref{eq-estimate-step0}, we have, for any $t>1$ 
and $l=0,1,2$, 
\begin{equation}\label{eq-estimate-step1}
|\nabla^l_{(t,\theta)}\varphi(t,\theta)|\leq C e^{-(\rho_0+\varepsilon)t}.\end{equation}
By the expression of $\mathcal R(\varphi)$ in \eqref{eq-nonlinear-operaor-form}, 
the leading term in $\mathcal R(\varphi)$ is $(1-\xi_t^2)^{-1}P_2(\varphi)$, given by $l=2$. Hence, 
\begin{equation}\label{RHS-linearized-eq-0}
    |\mathcal R(\varphi)|\leq C e^{-(\rho_0+2\varepsilon) t}.
\end{equation}
If $\rho_0+2\varepsilon<1$, by Lemma \ref{lemma-nonhomogeneous-linearized-eq}, we have 
$$ |\mathcal \varphi|\leq C e^{-(\rho_0+2\varepsilon) t}.$$
Then, as in getting \eqref{RHS-linearized-eq-0}, we obtain 
\begin{equation}\label{RHS-linearized-eq-1}
    |\mathcal R(\varphi)|\leq C e^{-(\rho_0+4\varepsilon) t}.
\end{equation}
By adjusting $\varepsilon$, we assume $\rho_0+2^{l-1}\varepsilon<1<\rho_0+2^{l}\varepsilon$, 
for some positive integer $l$. Then, after finitely many steps, we obtain 
\begin{equation}\label{RHS-linearized-eq-l}
    |\mathcal R(\varphi)|\leq C e^{-(\rho_0+2^l\varepsilon) t}.
\end{equation}
For $\rho_0+2^l\varepsilon>1$,  by Lemma \ref{lemma-nonhomogeneous-linearized-eq}, 
there is a spherical harmonic $Y_1$ of degree 1 such that 
\begin{equation*}
    |\varphi(t,\theta)-(1+\xi_t)e^{-t}Y_1|\leq C e^{-(\rho_0+2^l\varepsilon)t}.
\end{equation*}
This is the desired result. 
\end{proof}

We now discuss higher order expansions and make two preparations for the proof of 
Theorem \ref{thrm-approximation-order-arbitrary}. 

In the first preparation, we introduce the {\it index set} $\mathcal I$. 
Let $\xi$ be a solution of \eqref{eq-k-Yamabe-radial-equiv-z} on $\mathbb R$ 
in the $\Gamma_k^+$ class, satisfying \eqref{eq-first-integral-z}  for some $h>0$, 
and let $\{\rho_i\}$ be the sequence of nonnegative  
constants as in Lemma \ref{lemma-kernel-L-j-small-k1}, Lemma \ref{lemma-kernel-L-j-small-k2}, 
Lemma \ref{lemma-kernel-L-0-large-k}, 
and Lemma \ref{lemma-kernel-L-j-large-k}. We note that $\rho_0=0$ for $2\le k\le n/2$ 
and $\rho_0=2-{n}/{k}\in (0,1)$ for $n/2<k<n$, and $\rho_1=1$. 
We also note that $\{\rho_i\}$ is determined by $\xi$ for $2\le k< n/2$, and by only $n$ and $k$ 
for  $n/2\le k<n$. In the following, we denote by $\mathbb Z_+$ the collection of nonnegative integers. 

We first consider $2\le k< n/2$. 
Define the index set $\mathcal I$ by  
\begin{align}\label{eq-def-index-sigma-k-small}
\mathcal I=\Big\{\sum_{i\ge 1} m_i\rho_i;\, m_i\in \mathbb Z_+\text{ with finitely many }m_i>0\Big\}.
\end{align}
This is the same as the index set defined in \eqref{eq-def-index} for $k=1$. 
For $k\ge 2$, the nonlinear term $\mathcal R(\varphi)$ in \eqref{eq-nonlinear-linear-form} 
involves $\xi$ and $\xi_t$. 
For $2\le k<n/2$, $\xi$ is periodic. 
Hence, $\xi$ and $\xi_t$ in $\mathcal R(\varphi)$ do not contribute extra decay orders. 

Next, we modify $\mathcal I$ in \eqref{eq-def-index-sigma-k-small} to construct index sets for 
$k=n/2$ and $n/2<k<n$. 

For $k=n/2$, we define 
\begin{align}\label{eq-def-index-sigma-k-middle}
\mathcal I=\Big\{m_0n\sqrt{1-\sqrt[k]{h}}+\sum_{i\ge 1}m_i\rho_i;\, m_0, m_i\in\mathbb Z_+
\text{ with finitely many }m_i>0\Big\}.
\end{align}
For $k=n/2$, $n\sqrt{1-\sqrt[k]{h}}$ is 
the coefficient of $t$  in the asymptotic expansion of $\xi$ as in \eqref{eq-estimate-xi-t-middle-k}. 
Hence, the factor $e^{-n\xi}$ 
in the first term of $\mathcal R(\varphi)$ contributes an extra decay order 
$n\sqrt{1-\sqrt[k]{h}}$. We note that $e^{-n\xi}$ does not appear by itself and is always coupled with 
nonlinear factors of $\varphi$. Hence, we need to add positive integer multiple of $n\sqrt{1-\sqrt[k]{h}}$
to the collection in \eqref{eq-def-index-sigma-k-small}. This results in the index set in 
\eqref{eq-def-index-sigma-k-middle}. 

Last, we consider $n/2<k<n$. With $\rho_0=2-{n}/{k}$, define   
\begin{align}\label{eq-def-index-sigma-k-large}
\mathcal I=[1,\infty)\cap \Big\{
m_0\rho_0+\sum_{i\ge 1}m_i\rho_i;\, m_0\in \mathbb Z, m_i\in\mathbb Z_+
\text{ with finitely many }m_i>0\Big\}.
\end{align}
For $n/2<k<n$, the factor $1-\xi_t^2$ in the first term of $\mathcal R(\varphi)$ contributes an extra decay order 
$\rho_0=2-{n}/{k}$. Similar as $k=n/2$, this factor does not appear by itself and is always coupled with 
nonlinear factors of $\varphi$. The other factor $e^{-n\xi}$ 
in the first term of $\mathcal R(\varphi)$ contributes a decay order $n$, 
which is simply $n\rho_1$. In the third term of $\mathcal R(\varphi)$, we have 
functions of the form $e^{(l-1)\rho_0t}P_l(\varphi)$, for $l=2, \cdots, k$. The positive exponent 
$(l-1)\rho_0$ means we need to subtract $(l-1)\rho_0$ from the decay order of $P_l(\varphi)$. 
This allows the coefficient of $\rho_0$ to be negative. However, the entire 
linear combination has to be at least 1. 
This results in the index set in 
\eqref{eq-def-index-sigma-k-large}. 

We note that $\rho_1=\cdots=\rho_n=1$ in all cases.

\smallskip 

In the second preparation, we examine the nonlinear term $\mathcal R(\varphi)$. 
For the Yamabe equation \eqref{eq-U}, the nonlinear term 
$\mathcal R(\varphi)$ is given by  \eqref{eq-Asymptotic-Uk-3} and involves $\varphi$ only. 
Hence, in the proof of Theorem \ref{thrm-Asymptotic-uk}, 
we need to employ Lemma \ref{lemma-Asymptotics-U2a} to write the product 
of spherical harmonics in terms of a linear combination of finitely many spherical harmonics. 
For the $\sigma_k$-Yamabe equation, the nonlinear term 
$\mathcal R(\varphi)$ is given by  \eqref{eq-nonlinear-operaor-form} and involves $\varphi$ and its 
derivatives up to order 2. In order to follow the proof of Theorem \ref{thrm-Asymptotic-uk}, 
we need to demonstrate that certain derivatives of spherical harmonics are 
linear combinations of finitely many spherical harmonics.

To address this issue, we introduce the following space. 

\begin{definition}\label{def-expansion-spherical} 
For each integer $d\ge 1$, denote by $\s_d$  the collection of finite sums  
$$\sum{c_i}(t)Y_i,$$
where $Y_i$ is a spherical harmonic on $\mathbb S^{n-1}$ of degree not exceeding $d$, and 
$c_i$ is a smooth function on an interval $I\subset \mathbb R$ with bounded derivatives, for each $i$. 
\end{definition}

In other words, functions in $\s_d$ can be expressed as a finite linear combination of spherical harmonics of degree 
up to $d$. 
For our purpose, we take the interval $I=[1,\infty)$.

We need the following crucial lemma. 

\begin{lemma}\label{lemma-Pk-spherical} 
For each $l=2, \cdots, k$, 
let $P_l(\varphi)$ be the homogeneous polynomial of degree $l$ 
in terms of $\varphi_{ab}$, $\xi_t\varphi_{a}$, 
and $\varphi_a\varphi_b$, as in the expression of $\mathcal R(\varphi)$ in 
\eqref{eq-nonlinear-operaor-form}. 
If $\varphi\in \s_d(t)$, then $P_l(\varphi)\in \s_{2ld}(t)$. 
\end{lemma}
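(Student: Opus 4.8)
The plan is to reduce the claim to two facts: first, that $\varphi \in \s_d$ together with its derivatives behaves well under the operations appearing in $P_l$, and second, that products of spherical harmonics stay within a controlled degree range, which is precisely Lemma \ref{lemma-SphericalHarmonics}. First I would record what building blocks appear in $P_l(\varphi)$: it is a homogeneous polynomial of degree $l$ whose monomials are products of $l$ factors, each factor being one of $\varphi_{ab}$ (a second derivative, either $\varphi_{tt}$, $\varphi_{ti}$, or $\varphi_{ij}$), $\xi_t\varphi_a$ (a first derivative times the bounded function $\xi_t$, which for $2\le k<n/2$ is periodic and bounded and for $n/2\le k<n$ lies in $\mathcal E$ hence is bounded), or $\varphi_a\varphi_b$ (a product of two first derivatives). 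So $P_l(\varphi)$ is a sum of products of between $l$ and $2l$ derivatives of $\varphi$ (at most second order), with smooth bounded coefficient functions of $t$ coming from $\xi_t$ and from the combinatorial constants.

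The key step is to check that differentiating an element of $\s_d$ keeps us in $\s_d$. If $\varphi = \sum_i c_i(t) Y_i$ with $\deg Y_i \le d$, then $\partial_t\varphi = \sum_i c_i'(t) Y_i$, and since $c_i$ has bounded derivatives of all orders on $I=[1,\infty)$ (which follows from Definition \ref{def-expansion-spherical}; in our applications the $c_i$ are built from periodic functions and exponentials, all smooth with bounded derivatives), we still have $\partial_t\varphi \in \s_d$. For the angular derivatives, $\partial_{\theta_j} Y_i$ is again a (finite sum of) spherical harmonic(s): a spherical harmonic of degree $e$ is the restriction to $\mathbb S^{n-1}$ of a homogeneous harmonic polynomial of degree $e$, whose partial derivative is homogeneous of degree $e-1$, and by the decomposition result used in the proof of Lemma \ref{lemma-SphericalHarmonics} it decomposes into spherical harmonics of degree $\le e-1$. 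Hence $\partial_{\theta_j}\varphi \in \s_d$ as well, and iterating, $\varphi_{ab} \in \s_d$ for any $a,b$. Similarly $\xi_t\varphi_a \in \s_d$ since multiplying by the bounded smooth function $\xi_t(t)$ only changes the coefficients $c_i(t)$ to $\xi_t c_i$, still smooth with bounded derivatives (on $[1,\infty)$, where $\xi_t$ and all its derivatives are bounded).

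Now for a product: if $f\in\s_{d_1}$ and $g\in\s_{d_2}$, writing $f=\sum c_i Y_i$, $g=\sum \tilde c_j \tilde Y_j$ with $\deg Y_i\le d_1$, $\deg\tilde Y_j\le d_2$, we have $fg = \sum_{i,j} c_i\tilde c_j\, Y_i\tilde Y_j$, and by Lemma \ref{lemma-SphericalHarmonics} each $Y_i\tilde Y_j$ is a sum of spherical harmonics of degree $\le d_1+d_2$; the coefficients $c_i\tilde c_j$ are smooth with bounded derivatives, so $fg\in\s_{d_1+d_2}$. Applying this to a monomial of $P_l(\varphi)$, which is a product of at most $2l$ factors each lying in $\s_d$ (the factors $\varphi_{ab}$, $\xi_t\varphi_a$ each contribute degree $\le d$, and a factor $\varphi_a\varphi_b$ contributes degree $\le 2d$ but is itself a product of two $\s_d$ elements), the total degree is at most $2l\cdot d = 2ld$. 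Summing the finitely many monomials of $P_l(\varphi)$ keeps us in $\s_{2ld}$. I expect the main obstacle to be purely bookkeeping: one must be careful that "at most second-order derivative" factors and the quadratic $\varphi_a\varphi_b$ factors together never use more than $2l$ total $\s_d$-factors in a degree-$l$ monomial, and that the coefficient functions genuinely have bounded derivatives on $[1,\infty)$ — the latter is where one invokes boundedness of $\xi_t$ (and its derivatives) from Lemma \ref{lemma-xi-t-small-k}, Lemma \ref{lemma-xi-t-large-k}, and Lemma \ref{lemma-expansion-xi-t}.
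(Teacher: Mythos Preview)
Your proposal has a genuine gap at the step ``$\partial_{\theta_j}\varphi\in\s_d$.'' You argue that a spherical harmonic $Y$ of degree $e$ extends to a homogeneous harmonic polynomial $u$ in $\mathbb R^n$, and that $\partial_{x_j}u$ is homogeneous harmonic of degree $e-1$. That is correct for the \emph{Euclidean} partial derivative, but the quantities $\varphi_i$, $\varphi_{ij}$ appearing in $P_l(\varphi)$ are covariant derivatives on $\mathbb S^{n-1}$ (equivalently, partial derivatives in normal coordinates at a point), not restrictions of Euclidean derivatives. These are different objects: a single spherical covariant derivative $\nabla_{\theta_j}Y$ is not, in general, a finite combination of spherical harmonics. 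For a concrete counterexample on $\mathbb S^2$ in the usual $(\theta,\phi)$ coordinates, take $Y=\cos\theta$ (degree~1); then $\partial_\theta Y=-\sin\theta=-\sqrt{x^2+y^2}$, which is not the restriction of any polynomial to the sphere, hence not a finite sum of spherical harmonics. So the claim ``$\varphi_{ab}\in\s_d$ for any $a,b$'' fails, and with it the term-by-term product argument.

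The underlying issue is that individual entries $\varphi_i$, $\varphi_{ij}$ are coordinate-dependent; only invariant contractions such as $\Delta_\theta\varphi$, $\langle\nabla_\theta\varphi,\nabla_\theta\psi\rangle$, $\mathrm{tr}((\nabla_\theta^2\varphi)^k)$, and the $\sigma_l$ themselves are globally defined scalars for which membership in $\s_d$ makes sense. The paper's proof works entirely at this invariant level: it introduces the symmetrized quantities $\mathcal H_k$, $\mathcal Q_k$, $\mathcal P^1_k$, $\mathcal P^2_k$ and derives recursive identities (using the Ricci identity on the round sphere) expressing each in terms of lower-order ones and quantities already known to lie in the appropriate $\s$-space; see Lemmas~\ref{lemma-gradient-spherical}--\ref{lemma-P-1,2-k} and Proposition~\ref{prop-appendix-main}. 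The paper's closing remark in Appendix~\ref{sec-appendix1} in fact notes that the ``pass to homogeneous polynomials'' idea you are gesturing at can be made to work for $k=2,3$ but runs into trouble for $k\ge4$, which is why the more elaborate inductive machinery is needed.
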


We postpone the proof of Lemma \ref{lemma-Pk-spherical} to Appendix \ref{sec-appendix1}, 
which involves some lengthy computations.  

We can prove Theorem \ref{thrm-approximation-order-arbitrary} by a similar method as 
in the proof of Theorem \ref{thrm-Asymptotic-uk}, with Lemma \ref{lemma-Asymptotics-U2a} 
replaced by Lemma \ref{lemma-Pk-spherical}. We omit details.

\appendix 

\section{Asymptotic Behaviors of Solutions of Linear Equations}\label{sec-Linear-Equations}

In this section, we consider  linear equations on 
$\mathbb R_+\times \mathbb S^{n-1}$ and study asymptotic behaviors of their solutions as $t\to\infty$. 
Linear equations in this section are modeled after the linearization of the Yamabe equation 
and the $\sigma_k$-Yamabe equation \eqref{eq-k-Yamabe-w}.  
Results in this section are well-known. For completeness, we include proofs, which are adapted from 
\cite{HanLi2010},  \cite{KorevaarMPS1999}, 
\cite{MazzeoP1999},
and \cite{MazzeoDU1996}. 

We first consider equations on $\mathbb R_+$. 
Let $p,q$ be smooth functions on $\mathbb R_+$, with bounded derivatives of arbitrary orders. 
Consider the linear operator 
$L$  given by 
\begin{equation}\label{eq-linear-operaor-form-ODE}
L\psi=\psi_{tt}+p(t)\psi_t
+q(t)\psi.\end{equation}
We will discuss the linear equation 
\begin{equation}\label{eq-linearization-ODE}
L\psi=f\quad\text{on }[t_0, \infty),
\end{equation}
for some $t_0>0$. 

We now examine a simple case that $p=0$ and $q=-\lambda$ for some constant $\lambda\ge 0$. 
If $\lambda>0$,
there is a pair of linearly independent elements in $\mathrm{Ker}(L)$
given by $e^{-\sqrt{\lambda}t}$ and $e^{\sqrt{\lambda}t}$,
one of which decays exponentially on $\mathbb R_+$ and another grows exponentially  on $\mathbb R_+$. 
If $\lambda=0$,
there is a pair of linearly independent elements in $\mathrm{Ker}(L)$
given by $1$ and $t$,
one of which is bounded on $\mathbb R_+$ and another unbounded on $\mathbb R_+$. 

We introduce an assumption for the general $L$ with a similar property. 

\begin{assumption}\label{assum-kernel-linear-equation-ODE} Let $L$ be the operator given by 
\eqref{eq-linear-operaor-form-ODE}. 
The space $\mathrm{Ker}(L)$ has a basis $\psi^+$ and $\psi^-$ such that, for some nonnegative constants $\rho\ge \tau$, 

(i)  if $\rho>0$, then $e^{\rho t}\psi^+$ and $e^{-\tau t}\psi^-$ 
are bounded on $\mathbb R_+$, 
and $\psi^-$ does not converge to 0 as $t\to\infty$; 

(ii)  if $\rho=\tau=0$, then $\psi^+$ is bounded on $\mathbb R_+$, $\psi^-=\eta+at\psi^+$ 
for some constant $a$ and bounded function $\eta$ on  $\mathbb R_+$, 
and $\psi^+, \psi^-$ do not converge to 0 as $t\to\infty$.

In both cases,  the Wronskian $W$ of $\{\psi^+,\psi^-\}$ is assumed to satisfy, for any $t\in \mathbb R_+$ 
\begin{equation}\label{eq-Wonskian-ODE}c_1\le |W(t)|e^{(\rho-\tau) t}\le c_2,\end{equation}
for some positive constants $c_1$ and $c_2$. 
\end{assumption}

We point out that the constant $a$ in Assumption \eqref{assum-kernel-linear-equation-ODE}(ii) may be zero, 
in which case $\psi^+$ and $\psi^-$ are bounded but do not converge to 0 as $t\to\infty$.

We first construct a special solution of \eqref{eq-linearization-ODE}. 

\begin{lemma}\label{lemma-decay-particular-solution-ODE}
Suppose Assumption \ref{assum-kernel-linear-equation-ODE} holds. 
Let $\gamma>0$ be a constant,  $m\geq 0$ be an integer, 
and $f$ be a smooth function on $[t_0,\infty)$ for some $t_0>0$, satisfying, 
for all $t\in[t_0,\infty)$, 
$$|f(t)|\leq C t^me^{-\gamma t}.$$
Then, there is a solution $\psi_*$ of $L\psi=f$ on $(t_0,\infty)$ such that, for all $t\in(t_0,\infty)$, 
\begin{equation*}
|\psi_*(t)|\leq\left\{
\begin{aligned}
&Ct^me^{-\gamma t}\hspace{1.4cm} \gamma\neq \rho,\\
&Ct^{m+1}e^{-\gamma t}\hspace{1cm} \gamma=\rho.
\end{aligned}
\right.
\end{equation*}
\end{lemma}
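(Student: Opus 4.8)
The plan is to produce $\psi_*$ by variation of parameters against the basis $\{\psi^+,\psi^-\}$ of $\mathrm{Ker}(L)$ supplied by Assumption \ref{assum-kernel-linear-equation-ODE}, and then to read off the decay from elementary one–variable integral estimates, organized according to the position of $\gamma$ relative to $\rho\ge\tau\ge 0$. Denoting by $W=W(\psi^+,\psi^-)$ the Wronskian (nonvanishing on $\mathbb R_+$), I would set
\[
\psi_*(t)=\psi^+(t)\int_{a_+}^{t}\frac{\psi^-(s)f(s)}{W(s)}\,ds-\psi^-(t)\int_{a_-}^{t}\frac{\psi^+(s)f(s)}{W(s)}\,ds,
\]
which solves $L\psi_*=f$ by the classical identities for the coefficient derivatives; the lower limits $a_\pm\in\{t_0,\infty\}$ will be chosen in each case so that both integrals converge and the product with $\psi^\pm$ decays at the claimed rate. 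Throughout I will use the elementary bounds $\int_t^\infty s^m e^{-\delta s}\,ds\le C t^m e^{-\delta t}$ and $\int_{t_0}^{t}s^m e^{\delta s}\,ds\le C t^m e^{\delta t}$ for $\delta>0$ and $t\ge 1$, together with $\int_{t_0}^{t}s^m\,ds\le C t^{m+1}$, all of which follow from repeated integration by parts.

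For case (i), where $\rho>0$, Assumption \ref{assum-kernel-linear-equation-ODE} gives $|\psi^+(s)|\le Ce^{-\rho s}$, $|\psi^-(s)|\le Ce^{\tau s}$, and $|W(s)|^{-1}\le c_1^{-1}e^{(\rho-\tau)s}$, so the first integrand is $O\!\big(s^m e^{-(\gamma-\rho)s}\big)$ and the second is $O\!\big(s^m e^{-(\gamma+\tau)s}\big)$. Since $\gamma+\tau>0$, the second integrand is integrable at $\infty$; taking $a_-=\infty$ yields a second term of size $O\!\big(e^{\tau t}\cdot t^m e^{-(\gamma+\tau)t}\big)=O(t^m e^{-\gamma t})$. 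For the first term I would split into subcases: if $\gamma>\rho$, take $a_+=\infty$ to get $O\!\big(e^{-\rho t}\cdot t^m e^{-(\gamma-\rho)t}\big)=O(t^m e^{-\gamma t})$; if $\gamma<\rho$, take $a_+=t_0$ to get $O\!\big(e^{-\rho t}\cdot t^m e^{(\rho-\gamma)t}\big)=O(t^m e^{-\gamma t})$; and if $\gamma=\rho$, take $a_+=t_0$, in which case the integrand is $O(s^m)$ and one loses a power of $t$, $O\!\big(e^{-\rho t}\cdot t^{m+1}\big)=O(t^{m+1}e^{-\gamma t})$. This reproduces the dichotomy in the statement.

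The main obstacle is case (ii), $\rho=\tau=0$, where $\psi^-$ need not be bounded but only obeys $\psi^-=\eta+at\psi^+$ with $\eta$ bounded, and yet (since $\gamma>0=\rho$) the clean bound $t^m e^{-\gamma t}$ is required. The crude estimate of $\psi^-(t)\int_t^\infty \psi^+f/W$ only gives $O(t^{m+1}e^{-\gamma t})$ because of the linear growth of $\psi^-$. To remove the spurious power of $t$, I would substitute $\psi^-=\eta+at\psi^+$ into the formula for $\psi_*$, take $a_+=a_-=\infty$, and observe that the terms carrying an explicit $t$ assemble into
\[
a\,\psi^+(t)\int_t^\infty \frac{(t-s)\,\psi^+(s)f(s)}{W(s)}\,ds .
\]
Since $|\psi^+f/W|\le C s^m e^{-\gamma s}$ and, by the substitution $s=t+u$, $\int_t^\infty (s-t)\,s^m e^{-\gamma s}\,ds\le C t^m e^{-\gamma t}$ (the weight $s-t=u$ contributing only an $O(1)$ factor), this combined term is $O(t^m e^{-\gamma t})$; the remaining pieces involve only the bounded functions $\eta$ and $\psi^+$ and are estimated exactly as in the $a=0$ subcase. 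It then remains only to record the routine points: smoothness of $\psi_*$ on $(t_0,\infty)$ (immediate, since $W$ is nonvanishing and all data are smooth), the identity $L\psi_*=f$ by differentiation under the integral sign, and that the constant $C$ depends only on $m$, $\gamma$, $t_0$, and the constants appearing in Assumption \ref{assum-kernel-linear-equation-ODE}.
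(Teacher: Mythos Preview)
Your proposal is correct and follows essentially the same approach as the paper: variation of parameters against $\{\psi^+,\psi^-\}$, choosing the integration limits $t_0$ or $\infty$ according to the sign of $\gamma-\rho$, and in the case $\rho=\tau=0$ substituting $\psi^-=\eta+at\psi^+$ to isolate the troublesome linear-growth piece. Your combined term $a\,\psi^+(t)\int_t^\infty (t-s)\,\psi^+ f/W\,ds$ is exactly what the paper obtains (the paper rewrites it as the double integral $a\,\psi^+(t)\int_t^\infty\!\int_s^\infty \psi^+ f/W\,d\tau\,ds$, which by Fubini is the same expression), and your direct estimate via $s=t+u$ is equivalent to the paper's bound.
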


\begin{proof}
Let $\psi^+$ and $\psi^-$ be the two solutions of $L\psi=0$ 
as in Assumption \ref{assum-kernel-linear-equation-ODE}, 
and $W$ be their Wronskian. 

We first consider the case $\rho>0$; namely, $e^{\rho t}\psi^+$ and $e^{-\tau t}\psi^-$ 
are bounded on $(t_0,\infty)$. 
Then, by \eqref{eq-Wonskian-ODE}, 
$$\left|\frac{\psi^+ f}{W}\right|\le Ct^me^{(-\tau-\gamma)t},\quad 
\left|\frac{\psi^- f}{W}\right|\le 
Ct^m e^{(\rho-\gamma)t}.$$
Thus, a particular solution $\psi_*$ can be given by the following expressions: 
for $\gamma\leq \rho$,
$$
\psi_*(t)=-\psi^+(t)\int_{t_0}^t\frac{\psi^-(s)f(s)}{W(s)}ds-\psi^-(t)\int_t^{\infty}\frac{\psi^+(s)f(s)}{W(s)}ds,$$ 
and, for $\gamma> \rho$,
\begin{equation}\label{eq-expression-gamma-large}
\psi_*(t)=\psi^+(t)\int_{t}^{\infty}\frac{\psi^-(s)f(s)}{W(s)}ds-\psi^-(t)\int_t^{\infty}\frac{\psi^+(s)f(s)}{W(s)}ds.
\end{equation}
Hence, for $\gamma\neq \rho$,
\begin{equation}\label{eq-decay-particular}
|\psi_*(t)|\leq C t^me^{-\gamma t},
\end{equation}
and, for $\gamma =\rho$,
\begin{equation*}
|\psi_*(t)|\leq C t^{m+1}e^{-\gamma t}. 
\end{equation*}
This is the desired result. 

Next, we consider the case $\rho=\tau=0$; namely, 
$\psi^+$ is bounded on $(t_0,\infty)$, and $\psi^-=\eta+at\psi^+$ 
for some bounded function $\eta$ on  $(t_0,\infty)$.  
A particular solution $\psi_*$ can be given by \eqref{eq-expression-gamma-large}. 
Note that $|W|$ has positive upper and lower bounds by \eqref{eq-Wonskian-ODE}. 
If both $\psi^+$ and $\psi^-$ are bounded,  then \eqref{eq-decay-particular}
follows easily. Let $\psi^-$ be as assumed. Then, 
\begin{align*}\psi_*(t)&=\psi^+(t)\int_{t}^\infty\frac{\eta(s)}{W(s)}f(s)ds-
\eta(t)\int_{t}^\infty\frac{\psi^+(s)}{W(s)}f(s)ds\\
&\qquad+a\psi^+(t)\int_{t}^\infty\frac{s\psi^+(s)}{W(s)}f(s)ds-
at\psi^+(t)\int_{t}^\infty\frac{\psi^+(s)}{W(s)}f(s)ds\\
&=\psi^+(t)\int_{t}^\infty\frac{\eta(s)}{W(s)}f(s)ds-
\eta(t)\int_{t}^\infty\frac{\psi^+(s)}{W(s)}f(s)ds\\
&\qquad+a\psi^+(t)\int_{t}^\infty\int_s^\infty\frac{\psi^+(\tau)}{W(\tau)}f(\tau)d\tau ds.\end{align*}
Hence, \eqref{eq-decay-particular} follows. 
\end{proof}

\begin{corollary}\label{cor-decay--sol-Ljw=f-ODE}
Suppose Assumption \ref{assum-kernel-linear-equation-ODE} holds. 
Let $\gamma>0$ be a constant, $m\geq 0$ be an integer, and $f$ be a smooth function on $[t_0,\infty)$ for some $t_0>0$, 
satisfying, for all $t\in[t_0,\infty)$,  
$$|f(t)|\leq C t^me^{-\gamma t}.$$
Let $\psi$ be a solution of $L\psi=f$ on $(t_0,\infty)$ such that 
$\psi(t)\rightarrow 0$ as $t\rightarrow \infty$.
Then, for all $t\in[t_0,\infty)$,  
\begin{equation*}
|\psi(t)|\leq\left\{
\begin{aligned}
&Ct^me^{-\gamma t}\hspace{1cm}\text{if } \gamma< \rho,\\
&Ct^{m+1}e^{-\gamma t}\hspace{0.6cm}\text{if } \gamma=\rho,
\end{aligned}
\right.
\end{equation*}
and there exists a constant $c$ such that
\begin{equation*}
|\psi(t)-c\psi^+|\leq
Ct^me^{-\gamma t}\hspace{0.6cm}\text{if } \gamma> \rho.
\end{equation*}
Moreover, $c=0$ if  $\rho=0$. 
\end{corollary}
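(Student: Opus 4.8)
The plan is to deduce this from Lemma~\ref{lemma-decay-particular-solution-ODE} by peeling off a particular solution. First I would let $\psi_*$ be the particular solution of $L\psi=f$ on $(t_0,\infty)$ furnished by Lemma~\ref{lemma-decay-particular-solution-ODE}, so that $\psi_*$ already satisfies the stated bounds and, since $\gamma>0$, $\psi_*(t)\to 0$ as $t\to\infty$. Then $\phi:=\psi-\psi_*$ solves $L\phi=0$ on $(t_0,\infty)$, hence $\phi=c_+\psi^++c_-\psi^-$ for some constants $c_\pm$, where $\psi^\pm$ is the basis of $\mathrm{Ker}(L)$ from Assumption~\ref{assum-kernel-linear-equation-ODE}; moreover $\phi(t)\to 0$ since both $\psi$ and $\psi_*$ do. Everything then reduces to showing $c_-=0$ in all cases, and $c_+=0$ when $\rho=0$, after which the three bounds follow by combining the estimate for $\psi_*$ with $|\psi^+(t)|\le Ce^{-\rho t}$ and, when $\gamma\le\rho$, absorbing $e^{-\rho t}$ into $t^m e^{-\gamma t}$ or $t^{m+1}e^{-\gamma t}$ (with $c:=c_+$ in the case $\gamma>\rho$).

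When $\rho>0$ this is straightforward: Assumption~\ref{assum-kernel-linear-equation-ODE}(i) gives $|\psi^+(t)|\le Ce^{-\rho t}\to 0$, so $c_-\psi^-=\phi-c_+\psi^+\to 0$; since $\psi^-$ does not converge to $0$, necessarily $c_-=0$, and I am left with $\phi=c_+\psi^+$, which plugs directly into the three cases above and, since here $\rho>0$, needs no further claim.

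The genuine difficulty is the degenerate case $\rho=\tau=0$ (which always falls under $\gamma>\rho$), where $\psi^+$ and $\psi^-$ are merely bounded (or bounded times a linear factor), so decay no longer singles out one of them. Here I would argue via the Wronskian. The interior gradient estimate for $L\phi=0$ with bounded coefficients (the same estimate used in the proof of Lemma~\ref{lemma-expansion-xi-t}) gives $|\phi'(t)|\le C\sup_{[t-1,t+1]}|\phi|\to 0$, while $\psi^+$ and $(\psi^+)'$ stay bounded. Hence $W(\psi^+,\phi)=\psi^+\phi'-(\psi^+)'\phi\to 0$; on the other hand $W(\psi^+,\phi)=c_-\,W(\psi^+,\psi^-)$, and $|W(\psi^+,\psi^-)|\ge c_1>0$ by \eqref{eq-Wonskian-ODE}, forcing $c_-=0$. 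Then $\phi=c_+\psi^+\to 0$, and since $\psi^+$ does not converge to $0$, $c_+=0$; thus $\phi\equiv 0$, $\psi=\psi_*$, and $|\psi(t)|\le Ct^m e^{-\gamma t}$, i.e. the assertion with $c=0$.

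I expect the $\rho=\tau=0$ step to be the only part requiring an actual idea beyond bookkeeping, precisely because the two kernel solutions cannot be distinguished by their growth there; coupling the lower bound on the Wronskian with the gradient estimate is what makes the $\psi^-$-component vanish. The rest, namely verifying that the bounds for $\psi_*$ and $c_+\psi^+$ combine into the claimed form on $[t_0,\infty)$, is routine.
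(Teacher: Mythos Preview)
Your proposal is correct and follows the same decomposition as the paper's proof: write $\psi=c^+\psi^++c^-\psi^-+\psi_*$ with $\psi_*$ from Lemma~\ref{lemma-decay-particular-solution-ODE}, then use $\psi\to 0$ to kill the homogeneous coefficients. The paper's proof is in fact much terser than yours---it simply asserts that $c^-=0$, and that $c^+=0$ when $\rho=0$, without further argument---so your Wronskian-plus-gradient-estimate reasoning in the degenerate case $\rho=\tau=0$ is a genuine (and correct) elaboration of a step the paper leaves to the reader.
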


\begin{proof}
Any solution $\psi$ can be written as
$$\psi=c^+\psi^++c^-\psi^-+\psi_*,$$
for some constants $c^+, c^-$ 
and the function $\psi_*$ constructed in Lemma \ref{lemma-decay-particular-solution-ODE}.
Since $\psi\rightarrow 0$ as $t\rightarrow \infty$, we must have $c^-=0$, 
and $c^+=0$ if $\rho=0$. Thus, the desired conclusion 
follows from Lemma \ref{lemma-decay-particular-solution-ODE}.
\end{proof}


We are interested in two cases, $e^{\rho t}\psi^+$ and $e^{-\tau t}\psi^-$ are either periodic 
or in the space $\mathcal E$  introduced in Definition \ref{def-space-E}. In the latter case, 
we can integrate term by term. For the former case, we need the following simple calculus lemma. 
All periodic functions involved here have the same period. 

\begin{lemma}\label{lemma-calculus-integral} Let $p(t)$ be a smooth periodic function on $\mathbb R$. 
Then, for any nonnegative integer $m$ and any positive constant $\alpha$, 
\begin{align}\label{eq-identity-monomial}
\int_0^ts^mp(s)ds=at^{m+1}+\sum_{i=0}^mt^ip_i(t),\end{align}
and 
\begin{align}
\label{eq-identity-positive-exp}
\int_0^ts^me^{\alpha s}p(s)ds&=\sum_{i=0}^mt^ie^{\alpha t} p_i(t)+a,\\
\label{eq-identity-negative-exp}
\int_{t}^\infty s^me^{-\alpha s}p(s)ds&=\sum_{i=0}^mt^ie^{-\alpha t} p_i(t),
\end{align}
where $a$ is a constant, and $p_0, p_1, \cdots, p_m$ (different from line to line)
are smooth periodic functions on $\mathbb R$.
\end{lemma}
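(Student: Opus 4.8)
The plan is to prove the three identities \eqref{eq-identity-monomial}, \eqref{eq-identity-positive-exp}, and \eqref{eq-identity-negative-exp} by induction on $m$, using integration by parts at each step to reduce the power of $t$. First I would record the base case $m=0$. For \eqref{eq-identity-monomial} with $m=0$, write $p(s) = \bar p + (p(s)-\bar p)$, where $\bar p$ is the average of $p$ over one period; then $\int_0^t (p(s)-\bar p)\,ds$ is itself periodic (since the integrand has zero average and is periodic, its antiderivative is periodic up to the constant determined at $0$), which gives $\int_0^t p(s)\,ds = \bar p\, t + p_0(t)$ with $a = \bar p$. For \eqref{eq-identity-positive-exp} with $m=0$, integration by parts gives $\int_0^t e^{\alpha s} p(s)\,ds = \tfrac1\alpha e^{\alpha t} p(t) - \tfrac1\alpha p(0) - \tfrac1\alpha \int_0^t e^{\alpha s} p'(s)\,ds$; iterating this (or invoking the standard fact that the operator $\tfrac{d}{ds} + \alpha$ is invertible on smooth periodic functions, since $\alpha\neq 0$) produces a periodic $q(t)$ with $(\tfrac{d}{ds}+\alpha)q = p$, and then $\int_0^t e^{\alpha s} p(s)\,ds = e^{\alpha t} q(t) - q(0)$, which is of the claimed form with $p_0 = q$ and $a = -q(0)$. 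The case \eqref{eq-identity-negative-exp} with $m=0$ is entirely analogous with $\alpha$ replaced by $-\alpha$; here the convergence of $\int_t^\infty e^{-\alpha s} p(s)\,ds$ is immediate from boundedness of $p$, and solving $(\tfrac{d}{ds}-\alpha)q = -p$ for periodic $q$ yields $\int_t^\infty e^{-\alpha s} p(s)\,ds = e^{-\alpha t} q(t)$ with no constant term (the boundary term at $+\infty$ vanishes).

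For the inductive step, assume the result for all exponents up to $m-1$. For \eqref{eq-identity-monomial}, integrate by parts writing $s^m p(s) = \tfrac{d}{ds}\big(\tfrac{1}{m+1}s^{m+1}\big) p(s)$, or more conveniently write $s^m p(s) = s^m(\bar p + (p-\bar p))$ and handle the $\bar p s^m$ piece directly (contributing $\tfrac{\bar p}{m+1} t^{m+1}$) while integrating by parts on the piece with the periodic antiderivative of $p - \bar p$; this lowers the power of $t$ by one and the inductive hypothesis finishes it. For \eqref{eq-identity-positive-exp} and \eqref{eq-identity-negative-exp}, integration by parts in the form $\int s^m e^{\pm\alpha s} p\,ds = \pm\tfrac{1}{\alpha} s^m e^{\pm\alpha s} p(t) \mp \tfrac{m}{\alpha}\int s^{m-1} e^{\pm\alpha s} p\,ds \mp \tfrac{1}{\alpha}\int s^m e^{\pm\alpha s} p'\,ds$ reduces to integrals with a lower power of $s$ (hence covered by the inductive hypothesis) plus a boundary term of the correct shape $s^m e^{\pm\alpha s}\cdot(\text{periodic})$; collecting terms and relabeling the periodic functions $p_i$ gives the stated form. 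The arithmetic of collecting coefficients is routine and I would not grind through it.

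I expect the only genuinely substantive point — and the one I would state carefully as a preliminary observation before the induction — is the base-case fact that for a smooth periodic $p$ and $\alpha \neq 0$ the equation $(\tfrac{d}{dt} + \alpha) q = p$ has a (unique) smooth periodic solution $q$, together with the companion fact that $\int_0^t(p(s) - \bar p)\,ds$ is periodic. Both are elementary: the first follows since $e^{\alpha t}$ is not periodic so the homogeneous solution cannot interfere, or directly by Fourier series (divide the $n$-th Fourier coefficient of $p$ by $\tfrac{2\pi i n}{T} + \alpha$, which never vanishes); the second is the definition of $\bar p$. Everything else is bookkeeping. One caveat to flag in the writeup: in \eqref{eq-identity-positive-exp} and \eqref{eq-identity-negative-exp} the periodic functions $p_0,\dots,p_m$ genuinely change from line to line as the induction unwinds, which is why the statement says so; I would make sure the inductive step explicitly absorbs the new boundary terms into freshly-named $p_i$'s of the same period.
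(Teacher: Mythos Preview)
The paper omits the proof entirely (``The proof is elementary and hence omitted''), so there is nothing to compare against; I can only comment on correctness. Your base cases and the inductive step for \eqref{eq-identity-monomial} are fine. However, the inductive step you write for \eqref{eq-identity-positive-exp} and \eqref{eq-identity-negative-exp} does not close as stated: the integration by parts
\[
\int_0^t s^m e^{\alpha s}p(s)\,ds=\tfrac{1}{\alpha}t^m e^{\alpha t}p(t)-\tfrac{m}{\alpha}\int_0^t s^{m-1}e^{\alpha s}p(s)\,ds-\tfrac{1}{\alpha}\int_0^t s^m e^{\alpha s}p'(s)\,ds
\]
leaves the last integral with the \emph{same} power $s^m$; only the periodic function has changed from $p$ to $p'$. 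Iterating this produces derivatives $p^{(N)}$ with no reason to terminate or converge, so ``reduces to integrals with a lower power of $s$'' is not accurate.

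The fix is exactly the preliminary observation you already isolated: take the periodic $q$ solving $q'+\alpha q=p$, so that $e^{\alpha s}p(s)=\tfrac{d}{ds}\bigl(e^{\alpha s}q(s)\bigr)$, and integrate by parts against $s^m$ to get
\[
\int_0^t s^m e^{\alpha s}p(s)\,ds=t^m e^{\alpha t}q(t)-m\int_0^t s^{m-1}e^{\alpha s}q(s)\,ds,
\]
which \emph{is} a clean reduction to power $m-1$ with a new periodic function $q$. The same device (with $q'-\alpha q=-p$) handles \eqref{eq-identity-negative-exp}. With this correction your outline is complete.
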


The proof is elementary and hence omitted.

\begin{remark}\label{remark-ODE-period}  
We now strengthen Assumption \ref{assum-kernel-linear-equation-ODE}. 
Assume that $e^{-\rho t}\psi^+(t), e^{\tau t}\psi^-(t)$ in (i) are periodic, 
or $\psi^+, \eta$ in (ii) are periodic, 
and 
$$f(t)=t^me^{-\gamma t}q(t),$$
for some periodic function $q$, 
some positive constant $\gamma$, and some nonnegative  integer $m$. Then, the particular solution 
$\psi_*$ in Lemma \ref{lemma-decay-particular-solution-ODE} has the form, for $\gamma\neq \rho$,  
$$\psi_*(t)=\sum_{i=0}^mt^ie^{-\gamma t} r_i(t),$$
and for $\gamma= \rho$,  
$$\psi_*(t)=\sum_{i=0}^{m+1}t^ie^{-\gamma t} r_i(t),$$
where $r_0, \cdots, r_{m+1}$ are periodic functions. 
\end{remark}

We now turn to linear elliptic equations on 
$\mathbb R_+\times \mathbb S^{n-1}$. 
Denote by $\theta$ the local coordinates on $\mathbb S^{n-1}$. 
Let $a, b, c$ be smooth functions on $\mathbb R$, with bounded derivatives of arbitrary orders. 
Consider the linear operator 
$\mathcal L$  given by 
\begin{equation}\label{eq-linear-operator-form-PDE}
\mathcal L\varphi=\varphi_{tt}+a(t)\Delta_\theta\varphi+b(t)\varphi_t
+c(t)\varphi.\end{equation}
The operator $\mathcal L$ reduces to the standard Laplacian on $\mathbb R\times \mathbb S^{n-1}$ if 
$a=1$ and $b=c=0$. 
We will discuss the linear equation 
\begin{equation}\label{eq-linearization-PDE}
\mathcal L\varphi=f\quad\text{on }[t_0, \infty)\times \mathbb S^{n-1},
\end{equation}
for some $t_0>0$. In the following, we assume 
\begin{equation}\label{eq-lower-bound-a}a\ge a_0,\end{equation}
for some positive constant $a_0$. 

We now project the operator $\mathcal L$ to spherical harmonics. 

Let $\{\lambda_i\}$ be the sequence of eigenvalues of $-\Delta_{\theta}$ on $\mathbb S^{n-1}$, 
arranged in an increasing order
with $\lambda_i\to\infty$ as $i\to\infty$, and let  $\{X_i\}$ be 
a sequence of the corresponding normalized eigenfunctions of 
$-\Delta_\theta$ on $L^2(\mathbb S^{n-1})$; namely, for each $i\ge 0$, 
\begin{equation}\label{eq-Laplacian-eigenpair}
-\Delta_{\theta}X_i=\lambda_iX_i.\end{equation}
Here, the multiplicity is considered. Hence, 
$$\lambda_0=0, \,\, \lambda_1=\cdots=\lambda_n=n-1, \,\,\lambda_{n+1}=2n, \,\,\cdots.$$
Note that each $X_i$ is a spherical harmonic of certain degree. In the following, 
we fix such a sequence $\{X_i\}$. 

For a fixed $i\ge 0$ and any $\psi\in C^2(\mathbb R)$, 
we can write 
\begin{equation*}
\mathcal L(\psi X_i)=(L_i\psi) X_i,
\end{equation*}
where $L_i$ is given by  
\begin{equation}\label{eq-linearization-j-ODE}
L_i\psi=\psi_{tt}+b(t)\psi_t+(c(t)-a(t)\lambda_i)\psi.
\end{equation}
We point out that $b$ is a fixed function, independent of $i$. 

We now examine $\mathrm{Ker}(L_i)$ for a special case $a=1$ and $b=c=0$, where $L_i$ is given by 
$$L_i\psi=\psi_{tt}-\lambda_i\psi.$$
For $i=0$, there is a pair of linearly independent elements in $\mathrm{Ker}(L_i)$
given by $1$ and $t$,
one of which is bounded on $\mathbb R_+$ and another is unbounded  on $\mathbb R_+$. 
For $i\geq 1$, there is a pair of linearly independent elements in $\mathrm{Ker}(L_i)$
given by $e^{-\sqrt{\lambda_i}t}$ and $e^{\sqrt{\lambda_i}t}$,
one of which decays exponentially on $\mathbb R_+$ and another grows exponentially  on $\mathbb R_+$.

We introduce an assumption for the general $\mathcal L$ with a similar property. 

\begin{assumption}\label{assum-kernel-linear-equation} Let $L_i$ be the operator given by 
\eqref{eq-linearization-j-ODE}. 
For each $i\ge 0$, the space $\mathrm{Ker}(L_i)$ has a basis $\psi_i^+$ and $\psi_i^-$ with the following properties: 

(i)   there exist two sequences of scalars $\{\rho_i\}_{i=0}^\infty$ and $\{\tau_i\}_{i= 0}^\infty$,  
nonnegative, increasing, and divergent to $\infty$, with $\tau_0=0$ and, for each $i\ge 1$,
\begin{equation}\label{eq-relation-rho-tau-j-ODE}
\rho_i-\tau_i=\rho_0;
\end{equation}

(ii) if $i\ge 1$ or $i=0$ with $\rho_0>0$, then $e^{\rho_i t}\psi_i^+$ and $e^{-\tau_i t}\psi_i^-$ 
are bounded on $(0,\infty)$, 
and $\psi_i^-$ does not converge to 0 as $t\to\infty$; 
if  $\rho_0=0$, then $\psi_0^+$ is bounded on $(0,\infty)$, $\psi_0^-=\eta+at\psi_0^+$ 
for some constant $a$ and bounded function $\eta$ on  $(0,\infty)$, 
and $\psi_0^+$ does not converge to 0 as $t\to\infty$;

(iii)  the Wronskian $W_0$ of $\{\psi^+_0,\psi^-_0\}$ satisfies 
\begin{equation}\label{eq-Wonskian-ODE-i}c_1\le |W_0(t)|e^{\rho_0t}\le c_2\quad\text{for any }t\in \mathbb R_+,\end{equation}
for some positive constants $c_1$ and $c_2$. 
\end{assumption}

Roughly speaking, for each $i\ge 1$, $\psi_i^+$ decays exponentially at the order $\rho_i$, and $\psi_i^-$ grows 
exponentially at the order $\tau_i$, as $t\to\infty$. For $i=0$, if $\rho_0>0$, then $\psi_0^+$ decays exponentially 
at the order $\rho_0$, as $t\to\infty$, and $\psi_0^-$ is bounded;
if $\rho_0=0$, then $\psi_0^+$ is bounded but not convergent to 0 as $t\to\infty$, and $\psi_0^-$ is unbounded. 

We now make a remark concerning the Wronskian for $i\ge 1$. 

\begin{remark}\label{remark-Wronskian}
Let $\psi_i^+$ and $\psi_i^-$ be the two solutions of $L_i\psi=0$ 
as in Assumption \ref{assum-kernel-linear-equation}, 
and $W_i$ be their Wronskian. Then,
\begin{equation*}
W_i'=-b W_i,
\end{equation*}
where $b$ is the coefficient of $t$-derivative term in \eqref{eq-linearization-j-ODE}, 
which is independent of $i$. Hence, $W_i$ differs from $W_0$ by a nonzero constant factor
for any $i\ge 1$. Thus, $W_i$ satisfies \eqref{eq-Wonskian-ODE-i}, or 
by \eqref{eq-relation-rho-tau-j-ODE}, 
\begin{equation*}c_1\le |W_i(t)|e^{(\rho_i-\tau_i)t}\le c_2\quad\text{for any }t\in \mathbb R_+.\end{equation*}
As a consequence, Lemma \ref{lemma-decay-particular-solution-ODE} and 
Corollary \ref{cor-decay--sol-Ljw=f-ODE} are applicable to $L_i$, for each $i\ge 0$. 
\end{remark}


Next we analyze solutions of \eqref{eq-linearization-PDE} 
on $(t_0,\infty)\times \mathbb S^{n-1}$. 
In the following,  $\{X_i\}$ is a fixed orthonormal basis of $L^2(\mathbb S^{n-1})$, 
formed by eigenfunctions of $-\Delta_\theta$ as in 
\eqref{eq-Laplacian-eigenpair}, and $\{\psi^+\}$ is as in Assumption \ref{assum-kernel-linear-equation}. 

\begin{lemma}\label{lemma-nonhomogeneous-linearized-eq}
Suppose Assumption \ref{assum-kernel-linear-equation} holds. 
Let $\gamma>0$ be a constant, $m\ge 0$ be an integer, and $f$ be a smooth function in 
$[t_0,\infty)\times \mathbb S^{n-1}$ for some $t_0>0$, 
satisfying, for all $(t,\theta)\in [t_0,\infty)\times \mathbb S^{n-1}$, 
$$|f(t, \theta)|\leq C t^me^{-\gamma t}.$$
Let $\varphi$ be a solution of \eqref{eq-linearization-PDE}  in $(t_0,\infty)\times\mathbb S^{n-1}$ 
such that $\varphi(t,\theta)\rightarrow 0$ as $t\rightarrow \infty$ uniformly for $\theta\in \mathbb S^{n-1}$.

$\mathrm{(i)}$ If $\gamma\le \rho_0$, then, 
for any $(t,\theta)\in (t_0, \infty)\times\mathbb S^{n-1}$, 
$$|\varphi(t,\theta)|\le \begin{cases} Ct^me^{-\gamma t} &\text{if }\gamma<\rho_0,\\ 
Ct^{m+1}e^{-\gamma t}&\text{if }\gamma=\rho_0.\end{cases}$$

$\mathrm{(ii)}$ If $\rho_l<\gamma\le \rho_{l+1}$ for some nonnegative integer $l$, 
then, 
for any $(t,\theta)\in (t_0, \infty)\times\mathbb S^{n-1}$, 
$$\Big|\varphi(t,\theta)-\sum_{i=0}^lc_i\psi^+_{i}(t)X_i(\theta)\Big|\le 
\begin{cases} Ct^me^{-\gamma t} &\text{if }\rho_l<\gamma<\rho_{l+1},\\ 
Ct^{m+1}e^{-\gamma t}&\text{if }\gamma=\rho_{l+1},\end{cases}$$
for some constant $c_i$, for 
$i=0,1, \cdots, l$. 

Moreover, if $\rho_0=0$, then $\mathrm{(i)}$ does not appear 
and the summation in $\mathrm{(ii)}$ starts from $i=1$. 
\end{lemma}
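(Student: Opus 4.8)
The plan is to separate variables and reduce to the one--dimensional results of the appendix. Write $\varphi(t,\theta)=\sum_{i\ge0}\varphi_i(t)X_i(\theta)$ and $f(t,\theta)=\sum_{i\ge0}f_i(t)X_i(\theta)$, with $\varphi_i(t)=\int_{\mathbb S^{n-1}}\varphi(t,\cdot)X_i$ and $f_i(t)=\int_{\mathbb S^{n-1}}f(t,\cdot)X_i$; then \eqref{eq-linearization-PDE} decouples into the family $L_i\varphi_i=f_i$ on $(t_0,\infty)$, with $L_i$ as in \eqref{eq-linearization-j-ODE}. Since $\varphi(t,\cdot)\to0$ uniformly, each $\varphi_i(t)\to0$ as $t\to\infty$; and since $\|f(t,\cdot)\|_{L^2(\mathbb S^{n-1})}\le Ct^me^{-\gamma t}$, one has $|f_i(t)|\le Ct^me^{-\gamma t}$ with $C$ independent of $i$. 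By Remark~\ref{remark-Wronskian}, Lemma~\ref{lemma-decay-particular-solution-ODE} and Corollary~\ref{cor-decay--sol-Ljw=f-ODE} apply to every $L_i$, with indicial exponent $\rho_i$.

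Next I would dispose of the finitely many ``low'' modes. Let $l$ be the largest index with $\rho_l<\gamma$ (so $l=-1$, i.e.\ no low modes, precisely in case~(i)). For $i=0,\dots,l$, Corollary~\ref{cor-decay--sol-Ljw=f-ODE} in the regime $\gamma>\rho_i$ yields a constant $c_i$, with $c_i=0$ whenever $\rho_i=0$, such that $|\varphi_i(t)-c_i\psi_i^+(t)|\le Ct^me^{-\gamma t}$; this is the origin of the subtracted terms in~(ii), and the vanishing of $c_i$ when $\rho_i=0$ gives the final ``moreover'' clause. Now fix an integer $N\ge l$ so large that $\lambda_{N+1}$ exceeds the threshold needed below; in particular $\rho_i>\gamma$ for all $i>N$. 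For the finitely many intermediate modes $l<i\le N$ apply Corollary~\ref{cor-decay--sol-Ljw=f-ODE} again: in the regime $\gamma<\rho_i$ it gives $|\varphi_i(t)|\le Ct^me^{-\gamma t}$, and in the resonant regime $\gamma=\rho_i$ (which can occur only when $\gamma=\rho_{l+1}$) it gives $|\varphi_i(t)|\le Ct^{m+1}e^{-\gamma t}$. Summing the finitely many modes $i\le N$ gives $\big|\sum_{i\le N}\varphi_iX_i-\sum_{i\le l}c_i\psi_i^+X_i\big|\le Ct^{m'}e^{-\gamma t}$, with $m'=m$ if $\rho_l<\gamma<\rho_{l+1}$ and $m'=m+1$ if $\gamma=\rho_{l+1}$; the same bookkeeping covers case~(i) with $\gamma\le\rho_0$.

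It remains to control the tail $u:=\varphi_{>N}=\sum_{i>N}\varphi_iX_i$, which solves $\mathcal Lu=f_{>N}$ (the high--mode part of $f$, still bounded by $Ct^me^{-\gamma t}$), satisfies $u(t,\cdot)\to0$, and whose $\theta$--Fourier support lies on eigenvalues $\ge\lambda_{N+1}$. Here I would work directly with $\mathcal L$ rather than mode by mode. Set $h(t)=\int_{\mathbb S^{n-1}}u(t,\theta)^2\,d\theta$. Using $\mathcal Lu=f_{>N}$, integration by parts on $\mathbb S^{n-1}$, the Poincar\'e inequality $\int|\nabla_\theta u|^2\ge\lambda_{N+1}\int u^2$, the lower bound $a\ge a_0$, and the boundedness of $b$ and $c$, one obtains a differential inequality of the form $h''+bh'-\mu h\ge -Ct^{2m}e^{-2\gamma t}$ with $\mu=2a_0\lambda_{N+1}-O(1)$. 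Choosing $N$ so that $\mu>4\gamma^2$ makes $At^{2m}e^{-2\gamma t}$ (corrected by a constant on the compact interval $[t_0,T_0]$) a supersolution dominating $h$ at $t_0$ and at infinity, whence $h(t)\le Ct^{2m}e^{-2\gamma t}$, i.e.\ $\|u(t,\cdot)\|_{L^2(\mathbb S^{n-1})}\le Ct^me^{-\gamma t}$. Since $\mathcal L$ is uniformly elliptic with bounded coefficients and $f_{>N}$ is bounded, the interior local maximum estimate (De Giorgi--Nash--Moser) on the unit cylinders $(t-1,t+1)\times\mathbb S^{n-1}$ upgrades this to $\sup_\theta|u(t,\theta)|\le C\big(\sup_{|s-t|\le1}\|u(s,\cdot)\|_{L^2}+\sup_{|s-t|\le1}\|f_{>N}(s,\cdot)\|_{L^\infty}\big)\le Ct^me^{-\gamma t}$. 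Adding this to the low/intermediate--mode estimate of the previous paragraph gives the stated conclusion in both cases.

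The main obstacle is exactly this last step. Corollary~\ref{cor-decay--sol-Ljw=f-ODE} only furnishes constants depending on the mode, so passing to a bound uniform over the infinitely many high modes forces the separate energy--plus--maximum--principle argument, together with the passage from $L^2$ to $L^\infty$ control via interior elliptic regularity. The reason the modes must be split into a finite ``low/intermediate'' block (treated mode by mode) and a ``high'' block (treated by the energy estimate) is that the energy argument only closes once the Poincar\'e constant $\lambda_{N+1}$ is large enough to beat the bounded lower--order coefficients and the weight $e^{-2\gamma t}$.
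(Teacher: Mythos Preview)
Your proposal is correct and follows essentially the same route as the paper: project onto spherical harmonics, treat the finitely many modes $i\le N$ via Corollary~\ref{cor-decay--sol-Ljw=f-ODE}, and control the high-mode tail by an energy/comparison argument followed by an interior $L^\infty$ estimate. The only cosmetic difference is that you work with $h(t)=\|u(t,\cdot)\|_{L^2}^2$ (which forces a Young-inequality step to absorb the cross term $\int f_{>N}u\le\|f_{>N}\|_{L^2}\sqrt{h}$ into the form $-Ct^{2m}e^{-2\gamma t}$), whereas the paper works with $y(t)=\|u(t,\cdot)\|_{L^2}$ and obtains directly $L_{N+1}y\ge -Ct^me^{-\gamma t}$, making the comparison with $z=Ct^me^{-\gamma t}$ immediate.
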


\begin{proof}
For each $i\ge 0$, 
set 
$$\varphi_{i}(t)=\int_{\mathbb S^{n-1}}\varphi(t, \theta)X_{i}(\theta)\dif\theta,\quad 
f_{i}(t)=\int_{\mathbb S^{n-1}}f(t, \theta)X_{i}(\theta)\dif\theta.$$
Then, for any 
$t\in (t_0, \infty)$,
$$|f_{i}(t)|\le C_0t^me^{-\gamma t}.$$
By multiplying \eqref{eq-linearization-PDE} by $X_{i}$ and integrating over $\mathbb S^{n-1}$, we obtain 
$$L_i\varphi_{i}= f_{i}\quad\text{on }(t_0,\infty).$$
Since $\varphi(t,\theta)\to 0$ as $t\to\infty$ uniformly for $\theta\in\mathbb S^{n-1}$, 
then $\varphi_{i}(t)\to 0$ as $t\to\infty$, for each $i$. 
We now apply Corollary \ref{cor-decay--sol-Ljw=f-ODE} to $L_i$. 
For $i\ge 0$, we have, 
for any $t\in(t_0,\infty)$,
\begin{equation}\label{eq-U2-03-0c}|\varphi_{i}(t)|\le  \begin{cases} Ct^me^{-\gamma t} &\text{if }\gamma<\rho_i,\\ 
Ct^{m+1}e^{-\gamma t}&\text{if }\gamma=\rho_i,\end{cases}\end{equation} 
and there exists a constant $c_{i}$ such that
\begin{equation}\label{eq-U2-03-0b}
|\varphi_{i}(t)-c_{i}\psi_i^+(t)|\leq Ct^me^{-\gamma t}\quad\text{if } \gamma> \rho_i.
\end{equation}

For some positive integer $l_*$ to be determined, set 
\begin{equation}\label{eq-def-hat-w}\widehat \varphi(t,\theta)=\varphi(t,\theta)
-\sum_{i=0}^{l_*}\varphi_{i}(t)X_{i}(\theta),\end{equation}
and 
$$\widehat f(t,\theta)=f(t,\theta)
-\sum_{i=0}^{l_*}f_{i}(t)X_{i}(\theta).$$
A simple calculation yields 
\begin{equation}\label{eq-U2-03-1}\mathcal L\widehat \varphi=\widehat f.\end{equation}
For simplicity, we write $\widehat \varphi(t)=\widehat \varphi(t,\cdot)$ and $\widehat f(t)=\widehat f(t,\cdot)$.
By multiplying \eqref{eq-U2-03-1} by $\widehat \varphi(t)$ and integrating over $\mathbb S^{n-1}$, we obtain 
\begin{align*}&\int_{\mathbb S^{n-1}}\widehat \varphi_{tt}(t)\widehat \varphi(t)\dif\theta
+b(t)\int_{\mathbb S^{n-1}}\widehat \varphi_{t}(t)\widehat \varphi(t)\dif\theta
+c(t)\int_{\mathbb S^{n-1}}\widehat{\varphi}^2(t)\dif\theta\\
&\qquad-a(t) \int_{\mathbb S^{n-1}}|\nabla_\theta\widehat \varphi(t)|^2\dif\theta
=\int_{\mathbb S^{n-1}}\widehat f(t)\widehat \varphi(t)\dif\theta.
\end{align*}
Set
$$y(t)=\bigg[\int_{\mathbb S^{n-1}}\widehat \varphi^2(t)\dif\theta\bigg]^{1/2}.$$
Then, 
$$y(t)y'(t)=\int_{\mathbb S^{n-1}}\widehat \varphi(t)\widehat \varphi_t(t)\dif\theta,$$
and 
$$y(t)y''(t)+[y'(t)]^2=\int_{\mathbb S^{n-1}}
\big[\widehat \varphi(t)\widehat \varphi_{tt}(t)+\widehat \varphi_t^2(t)\big]\dif\theta.$$
The Cauchy inequality implies that, if  $y(t)>0$, then
$$[y'(t)]^2\le \int_{\mathbb S^{n-1}}\widehat \varphi_t^2(t)\dif\theta,$$
and hence
$$y(t)y''(t)\ge \int_{\mathbb S^{n-1}}\widehat \varphi(t)\widehat \varphi_{tt}(t)\dif\theta.$$
Note that 
$$\int_{\mathbb S^{n-1}}|\nabla_\theta\widehat \varphi(t)|^2\dif\theta\ge \lambda_{l_*+1}
\int_{\mathbb S^{n-1}}\widehat \varphi^2(t)\dif\theta.$$
Therefore, with $a\ge a_0>0$ by \eqref{eq-lower-bound-a}, 
$$y(t)y''(t)+b(t)y(t)t'(t)+(c(t)-a(t)\lambda_{l_*+1})y^2(t)\ge -y(t)\|\widehat f(t)\|_{L^2(\mathbb S^{n-1})}.$$
If $y(t)>0$, we obtain 
$$L_{l_*+1}y\ge -C_0t^me^{-\gamma t}.$$
For $i\ge 0$,  by \eqref{eq-lower-bound-a}, we have
\begin{align*}
c-a\lambda_i \le c-a_0\lambda_i.
\end{align*}
Hence, $c-a\lambda_i$ is negative on $(t_0, \infty)$ for large $i$, since $\lambda_i\to \infty$. Next, 
\begin{align*}L_i(t^me^{-\gamma t})=\left(c-a\lambda_i+\gamma^2-p\gamma
+mpt^{-1}-2m\gamma t^{-1}+m(m-1)t^{-2}\right)t^me^{-\gamma t}.\end{align*}
We take $l_*$ large such that $c-a\lambda_{l_*+1}<0$ and 
\begin{align}\label{eq-super-solution}L_{l_*+1}(t^me^{-\gamma t})
\le -t^me^{-\gamma t}\quad\text{on }(t_0, \infty),\end{align}
and also require $l_*\ge l$ if $\gamma\in (\rho_l, \rho_{l+1}]$, for some nonnegative integer $l$. 
Set 
$$z(t)=Ct^me^{-\gamma t}.$$
Then, for some constant $C$ sufficiently large, we have $z(t_0)\ge y(t_0)$ and, if $y(t)>0$,  
\begin{equation}\label{eq-U2-03-2}L_{l_*+1}(z-y)(t)\le 0.\end{equation}
Note that $y(t)\to 0$ as $t\to\infty$. If $z-y$ is negative somewhere in $(t_0,\infty)$, then the minimum of $z-y$ 
is negative and is attained at some $t_*\in (t_0,\infty)$. Hence, $y(t_*)>z(t_*)>0$ and \eqref{eq-U2-03-2}
holds at $t_*$. However, $(z-y)(t_*)<0$, $(z-y)'(t_*)=0$, 
and $(z-y)''(t_*)\ge0$, contradicting \eqref{eq-U2-03-2}. Therefore, for any $t>t_0$,
$y(t)\le z(t)$, and hence
$$\|\widehat \varphi(t,\cdot)\|_{L^2(\mathbb S^{n-1})}\le Ct^me^{-\gamma t}.$$
For any fixed $(t,\theta)\in (t_0+1, \infty)\times\mathbb S^{n-1}$, by applying the interior $L^\infty$-estimate 
to the equation  \eqref{eq-U2-03-1} in $(t-1,t+1)\times\mathbb S^{n-1}$, we obtain 
\begin{equation}\label{eq-U2-03-0d}|\widehat \varphi(t,\theta)|\le 
C\big\{\|\widehat \varphi\|_{L^2((t-1,t+1)\times\mathbb S^{n-1})}
+\|\widehat f\|_{L^2((t-1,t+1)\times\mathbb S^{n-1})}\big\}\le Ct^me^{-\gamma t}.\end{equation}

First, we consider the case $0<\gamma\le \rho_0$. By the definition of $\widehat \varphi$ in \eqref{eq-def-hat-w},
we write 
$$\varphi(t,\theta)=\widehat \varphi(t,\theta)+\sum_{i=0}^{l_*}\varphi_{i}(t)X_{i}(\theta),$$
By combining \eqref{eq-U2-03-0c} and \eqref{eq-U2-03-0d}, 
we obtain the desired result. 

Next, we consider the case $\rho_l<\gamma\le \rho_{l+1}$ for some nonnegative integer $l$. 
We write
\begin{align}\label{eq-expansion-order-l}\begin{split}
\varphi(t,\theta)&=\sum_{i=0}^lc_{i}\psi_i^+(t)X_{i}(\theta)
+\sum_{i=0}^l\big(\varphi_{i}(t)-c_{i}\psi_i^+(t)\big)X_{i}(\theta)  \\
&\qquad
+\sum_{i=l+1}^{l_*}\varphi_{i}(t)X_{i}(\theta)+\widehat \varphi(t,\theta).\end{split}\end{align}
By combining \eqref{eq-U2-03-0c}, \eqref{eq-U2-03-0b}, and \eqref{eq-U2-03-0d}, 
we obtain the desired result. 

Last, we consider the case $\rho_0=0$.  First,  (i) does not appear since $\gamma$ is assumed 
to be positive. Next, in case (ii) the summation starts from $i=1$ since $\psi_0^+$ does not converge to 
0 as $t\to\infty$. 
\end{proof}

We now make a remark concerning the proof. In order to expand $\varphi$ up to order $l$, 
we may need to expand up to a much higher order  $l_*$ to ensure that $t^me^{-\gamma t}$ is a supersolution as in 
\eqref{eq-super-solution}. In the expansion of $\varphi$ up to order $l_*$, the part up to order $l$ has a nice form 
as shown in \eqref{eq-expansion-order-l}.

\section{Spherical Harmonics}\label{sec-appendix1} 

In this section, we discuss expansions of functions on $I\times S^{m}$ in terms of spherical harmonics on $S^{m}$, 
where $I$ is an interval in $\mathbb R$, with $m=n-1$.  
Recall the space $\s_d$ introduced in Definition \ref{def-expansion-spherical}. 
Our primary goal is to prove that
for some smooth function $\varphi\in \s_d$ 
certain combinations of derivatives of $\varphi$ are in $\s_{\tilde d}$ for some $\tilde d$. 

We first recall the following result. 
If $\varphi\in \s_p$ and $\psi\in\s_q$ for some $p$ and $q$, then $\varphi\psi\in \s_{p+q}$. 
This follows from Lemma \ref{lemma-SphericalHarmonics}. 

In the following, we denote by $\nabla_{\theta}$ the induced connection on  $S^{m}$.

\begin{lemma}\label{lemma-gradient-spherical} 
$\mathrm{(i)}$ If $\varphi\in \s_p$ for some $p$, then $\Delta_\theta \varphi\in \s_{p}$. 

$\mathrm{(ii)}$ If $\varphi\in \s_p$ and $\psi\in\s_q$ for some $p$ and $q$, then 
$\langle \nabla_\theta\varphi, \nabla_\theta\psi\rangle \in \s_{p+q}$. 
\end{lemma}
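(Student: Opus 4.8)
The plan is to reduce both statements to the multiplication property of $\s_d$ (if $\varphi\in\s_p$ and $\psi\in\s_q$ then $\varphi\psi\in\s_{p+q}$, already recorded) together with two elementary facts about spherical harmonics: that $-\Delta_\theta$ acts as the scalar $\lambda=j(j+m-1)$ on each spherical harmonic of degree $j$, and that the gradient of a degree-$j$ spherical harmonic, while not itself a scalar multiple of a spherical harmonic, has components that lie in $\s_{j}$ after a suitable bookkeeping. For part (i), I would simply write $\varphi=\sum_i c_i(t)Y_i$ with $\deg Y_i\le p$, apply $\Delta_\theta$ term by term, and use $\Delta_\theta Y_i=-\lambda_{(i)}Y_i$ to conclude $\Delta_\theta\varphi=-\sum_i\lambda_{(i)}c_i(t)Y_i\in\s_p$; the coefficient functions $-\lambda_{(i)}c_i(t)$ still have bounded derivatives on $I=[1,\infty)$, so this is immediate.

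For part (ii), the cleanest route is the polarization identity
\begin{equation*}
2\langle\nabla_\theta\varphi,\nabla_\theta\psi\rangle
=\Delta_\theta(\varphi\psi)-(\Delta_\theta\varphi)\psi-\varphi(\Delta_\theta\psi),
\end{equation*}
valid on any Riemannian manifold, here $S^m$. By the multiplication property, $\varphi\psi\in\s_{p+q}$; by part (i) applied to this product, $\Delta_\theta(\varphi\psi)\in\s_{p+q}$. By part (i) again, $\Delta_\theta\varphi\in\s_p$ and $\Delta_\theta\psi\in\s_q$, and then the multiplication property gives $(\Delta_\theta\varphi)\psi\in\s_{p+q}$ and $\varphi(\Delta_\theta\psi)\in\s_{p+q}$. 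Hence the right-hand side lies in $\s_{p+q}$, and dividing by $2$ gives $\langle\nabla_\theta\varphi,\nabla_\theta\psi\rangle\in\s_{p+q}$, as claimed. One should check that the coefficient functions produced by these operations remain smooth with bounded derivatives on $I$: each step only multiplies coefficient functions and takes finite linear combinations, so boundedness of derivatives is preserved, and smoothness is clear.

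I do not expect a serious obstacle here, since everything is built from the already-granted multiplication lemma (itself a consequence of Lemma \ref{lemma-SphericalHarmonics}) and the eigenvalue property of $\Delta_\theta$. The only point requiring a moment's care is the polarization identity: one should note that $\Delta_\theta(\varphi\psi)=(\Delta_\theta\varphi)\psi+2\langle\nabla_\theta\varphi,\nabla_\theta\psi\rangle+\varphi(\Delta_\theta\psi)$ is the product rule for the Laplace–Beltrami operator, which holds verbatim on $S^m$ with the round metric. An alternative, if one prefers to avoid invoking the product rule, is to work in the polar model $u_\varphi(x)=r^{\deg}\varphi(x/|x|)$ of Lemma \ref{lemma-SphericalHarmonics}: extend degree-$j$ spherical harmonics to homogeneous harmonic polynomials, compute the Euclidean gradient, and restrict; but the polarization argument above is shorter and I would present that.
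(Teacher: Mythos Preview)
Your proposal is correct and follows essentially the same route as the paper: part (i) uses the eigenvalue property $\Delta_\theta Y_i=-\lambda_{(i)}Y_i$ term by term, and part (ii) uses the product rule $\Delta_\theta(\varphi\psi)=\varphi\Delta_\theta\psi+\psi\Delta_\theta\varphi+2\langle\nabla_\theta\varphi,\nabla_\theta\psi\rangle$ together with (i) and Lemma~\ref{lemma-SphericalHarmonics}. The paper's proof is slightly terser but the argument is the same.
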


\begin{proof} (i) If $X$ is a spherical harmonic of degree $i$, then $\Delta_\theta X=-\lambda_iX$, 
where $\lambda_i$ is an eigenvalue of $-\Delta_\theta$ on $S^{m}$.

(ii) We note that 
$$\Delta_\theta(\varphi\psi)=\varphi\Delta_\theta \psi+\psi\Delta_\theta \varphi+2\langle \nabla_\theta \varphi, \nabla_\theta \psi\rangle.$$
This implies the desired result with the help of (i) and Lemma \ref{lemma-SphericalHarmonics}. 
\end{proof} 

Now we start to study more complicated expressions of derivatives. 
Let $\varphi^1,\cdots, \varphi^k$ be $k$ smooth functions on $I\times S^{m}$. Define $\nabla_{\theta}^2\varphi^1\circ\dots\circ\nabla_{\theta}^2\varphi^k\in\Gamma(T^2(I\times S^{m}))$ by contracting from the 
second to the $(2k-1)^{\rm{th}}$ indices. 
That is, if we we choose normal coordinates $\{t,\theta^2,\cdots,\theta^n\}$ at $x\in I\times S^{m}$, then
\begin{equation*}
    \nabla_\theta^2\varphi^1\circ\dots\circ\nabla_\theta^2\varphi^k(p)
    =\varphi^1_{i_1i_2}\varphi^2_{i_2i_3}\cdots\varphi^k_{i_{k-1}i_k}d\theta^{i_1}\otimes d\theta^{i_k},
\end{equation*}
where the lower indices of $\varphi^l_{ij}$ are covariant derivatives and we take summation whenever the indices are repeated. 
If $A^p=(g^{ij}\varphi^p_{jl})$ is the symmetric matrix associated with $\nabla_\theta^2\varphi^p$, 
then the matrix associated with $\nabla_\theta^2\varphi^1\circ\dots\circ\nabla_\theta^2\varphi^k$ is 
the product $A^1A^2\cdots A^k$. 

In the following, we always choose normal coordinates $\{t,\theta^2,\cdots,\theta^n\}$ 
at arbitrarily given point $x\in I\times S^{m}$ for computation. 
Denote by $\mathfrak{S}_k$ the collection of all permutations of $\{1, \cdots, k\}$. 

Set 
\begin{equation}\label{eq-Hk-no-permuation}
H_k(\varphi^1, \cdots, \varphi^k)=\mathrm{tr}(\nabla_{\theta}^2\varphi^1 \circ \dots \circ \nabla_{\theta}^2\varphi^k).\end{equation} 
The function $H_k$ in \eqref{eq-Hk-no-permuation} is symmetric for $k=2, 3$. 
However, the symmetry breaks down for $k\ge 4$. 
For example, there may be 3 different values of $H_4(\varphi^{\tau(1)}, \cdots, \varphi^{\tau(4)})$ 
if $\tau$ runs over the set of 24 elements of $\mathfrak{S}_4$. 

One primary objective in this section is to prove 
$H_k(\varphi, \cdots, \varphi)=\mathrm{tr}((\nabla_{\theta}^2\varphi)^k)\in \s_{kd}$ if $\varphi\in \s_d$. 
We intend to employ the induction on $k$ to prove this. The proof of Lemma \ref{lemma-formula-trace} 
below shows that even if we start with identical functions $\varphi^1=\cdots=\varphi^k=\varphi$, 
a different function $|\nabla_\theta \varphi|^2$ will emerge in the step from $H_k$ to $H_{k-1}$ 
in the term $H_{k-1}(|\nabla_\theta\varphi|^2, \varphi, \cdots, \varphi)$. In the process in reducing $k$, 
more different functions will appear. When the symmetry breaks down, we cannot continue. 
For a remedy,  we need to consider all possible combinations of
$H_k(\varphi^{\tau(1)}, \cdots, \varphi^{\tau(k)})$ and let $\tau$ run over all 
elements of $\mathfrak{S}_k$.


Set, for $k\ge 1$, 
\begin{align}\label{eq-definitionH-k}
\mathcal H_k(\varphi^1,\cdots,\varphi^k)&=\sum_{\tau\in\mathfrak{S}_k}\mathrm{tr}(\nabla_{\theta}^2\varphi^{\tau(1)} \circ \cdots \circ \nabla_{\theta}^2\varphi^{\tau(k)}),
\end{align}
and, for $k\ge 2$, 
\begin{align}
\label{eq-Q-k}
\mathcal Q_k(\varphi^1, \cdots, \varphi^{k})&=  \sum_{\tau\in\mathfrak{S}_{k}}\nabla^2_{\theta}\varphi^{\tau(1)}\circ\cdots\circ\nabla^{2}_{\theta}\varphi^{\tau(k-2)}
   \big(\nabla_{\theta}\varphi^{\tau(k-1)},\nabla_{\theta}\varphi^{\tau(k)}\big).
   \end{align}
Note $\mathcal{H}_1(\varphi^1)=\Delta_\theta \varphi^1$ and 
$\mathcal Q_2(\varphi^1, \varphi^2)=2\langle\nabla_\theta\varphi^1, \nabla_\theta\varphi^2\rangle$. 
For convenience, we set $\mathcal{H}_0=\mathrm{tr}(I_{m})=m$. We emphasize that no $t$-derivatives are involved.

\smallskip

We first study the function $\mathcal H_k$. 

\begin{lemma}\label{lemma-formula-trace}
For $k\geq 2$, let $\varphi^1, \cdots, \varphi^k$ be smooth functions on $I\times S^{m}$. Then, 
\begin{equation*}
\begin{split}
\mathcal H_k(\varphi^1,\cdots,\varphi^k)
    &=\frac{1}{2}\sum_{p\neq q}\mathcal H_{k-1}(\langle\nabla_\theta\varphi^p, \nabla_\theta\varphi^q\rangle, \varphi^1, \cdots,
    \widehat{\varphi^p}, \cdots, \widehat{\varphi^q}, \cdots, \varphi^k)\\
    &\qquad-\frac{1}{k-1}\sum_{p=1}^{k}\langle\nabla_\theta \varphi^{p},\nabla_\theta \mathcal H_{k-1}(\varphi^1, \cdots
    \widehat{\varphi^p}, \cdots, \varphi^k)\rangle\\
    &\qquad-\sum_{p\neq q}\langle \nabla_{\theta} \varphi^{p},\nabla_{\theta} \varphi^{q}\rangle \mathcal{H}_{k-2}(\varphi^1, \cdots,
    \widehat{\varphi^p}, \cdots, \widehat{\varphi^q}, \cdots, \varphi^k)\\
    &\qquad+\mathcal{Q}_{k}(\varphi^1, \cdots, \varphi^k).
\end{split}
\end{equation*}
\end{lemma}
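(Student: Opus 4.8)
The plan is to work at an arbitrary point $x \in I \times S^m$ in normal coordinates $\{\theta^2,\dots,\theta^n\}$ for $S^m$, so that at $x$ the metric is Euclidean and Christoffel symbols vanish, and then to manipulate the contracted expression using the Ricci identity on $S^m$ and integration-by-parts-style index rearrangements. Write $A^p$ for the Hessian matrix $(\varphi^p_{ab})$ of $\varphi^p$ in the $\theta$-directions. The starting object is $\mathcal H_k = \sum_{\tau \in \mathfrak S_k} \operatorname{tr}(A^{\tau(1)}\cdots A^{\tau(k)})$, i.e. $\sum_\tau \varphi^{\tau(1)}_{i_1 i_2}\varphi^{\tau(2)}_{i_2 i_3}\cdots \varphi^{\tau(k)}_{i_k i_1}$, with all indices tangential to $S^m$ and summed.

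First I would fix one factor, say the one carrying indices $(i_1,i_2)$, and rewrite a typical summand by "moving a derivative off" that Hessian. Concretely, $\varphi_{i_1 i_2} \varphi'_{i_2 i_3} = \partial_{i_2}(\varphi_{i_1}\varphi'_{i_2 i_3}) - \varphi_{i_1}\varphi'_{i_2 i_2 i_3}$ (at the chosen point, where derivatives of the metric vanish). The first term, after summation over the cyclic chain, produces a divergence that can be reorganized into $\langle \nabla_\theta \varphi, \nabla_\theta(\text{shorter chain})\rangle$ — this is the source of the $-\frac1{k-1}\sum_p \langle \nabla_\theta\varphi^p, \nabla_\theta \mathcal H_{k-1}(\dots)\rangle$ term, where the combinatorial factor $\frac{1}{k-1}$ records how many of the $(k-1)!$ permutations of the remaining indices reassemble into a given $\mathcal H_{k-1}$. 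The second term has a third derivative $\varphi'_{i_2 i_2 i_3} = \Delta_\theta(\nabla_\theta \varphi')_{i_3}$-type object, which I would then commute using the Ricci identity on the round sphere: $\varphi'_{i_2 i_2 i_3} = \varphi'_{i_3 i_2 i_2} + (\text{curvature})\cdot \nabla_\theta\varphi' = \nabla_{i_3}\Delta_\theta\varphi' + c\,\varphi'_{i_3}$ for the appropriate constant $c$ from $\operatorname{Ric}_{S^m}$. The $\nabla_{i_3}\Delta_\theta\varphi'$ piece, contracted against $\varphi_{i_1}$ and the remaining chain, again regroups — together with the symmetrization over $\tau$ — partly into $\mathcal H_{k-1}(\langle\nabla_\theta\varphi^p,\nabla_\theta\varphi^q\rangle,\dots)$ (the factor $\tfrac12$ and the sum $\sum_{p\ne q}$ coming from the choice of which two factors get merged and that each unordered pair is hit twice), and partly into the $\mathcal Q_k$ term where two gradients sit on two different factors of a Hessian chain. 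The curvature term $c\,\varphi_{i_1}\varphi'_{i_3}(\text{chain})$ contracts to $\langle\nabla_\theta\varphi^p,\nabla_\theta\varphi^q\rangle \cdot \mathcal H_{k-2}(\dots)$, giving the third line; I would check that the round-sphere Einstein constant makes the coefficient exactly $-1$ after summing over $\mathfrak S_k$.

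The bookkeeping of combinatorial coefficients is where the main obstacle lies: one must verify that after symmetrizing over all $k!$ permutations, the four families of terms aggregate with precisely the coefficients $\tfrac12$, $-\tfrac1{k-1}$, $-1$, $+1$ stated, with no leftover terms. I expect to do this by a careful orbit-counting argument — classifying, for each summand produced in the manipulation above, how many elements of $\mathfrak S_k$ yield it and how it matches a term in $\mathcal H_{k-1}$, $\mathcal Q_k$, or the curvature contribution — rather than by brute-force expansion. A useful sanity check along the way is to specialize to $k=2$ and $k=3$, where $\mathcal H_k$ is genuinely symmetric and the identity can be verified directly (for $k=2$ it reduces to the product rule for $\Delta_\theta(\langle\nabla_\theta\varphi^1,\nabla_\theta\varphi^2\rangle)$ together with the Bochner formula on $S^m$), and to confirm the claimed constants there before trusting the general count. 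Once the identity is established, the intended application — that $\varphi \in \s_d$ forces the relevant derivative combinations into $\s_{\tilde d}$ — follows by induction on $k$, using Lemma \ref{lemma-gradient-spherical} and Lemma \ref{lemma-SphericalHarmonics} to control each term on the right-hand side.
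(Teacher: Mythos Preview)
Your plan has the right ingredients --- normal coordinates on $S^m$, the product rule, the Ricci identity, and symmetrization over $\mathfrak S_k$ --- but the paper runs the computation in the opposite direction, and this makes the combinatorics essentially automatic. Rather than starting from $\mathcal H_k$ and peeling a derivative off one Hessian factor, the paper starts from the \emph{first term on the right}, writing
\[
\sum_{\tau\in\mathfrak S_k} H_{k-1}\big(\langle\nabla_\theta\varphi^{\tau(1)},\nabla_\theta\varphi^{\tau(2)}\rangle,\varphi^{\tau(3)},\dots,\varphi^{\tau(k)}\big)
= \sum_{\tau} \big(\varphi^{\tau(1)}_j\varphi^{\tau(2)}_j\big)_{i_1i_2}\varphi^{\tau(3)}_{i_2i_3}\cdots\varphi^{\tau(k)}_{i_{k-1}i_1},
\]
and expands the Hessian $\big(\varphi^{\tau(1)}_j\varphi^{\tau(2)}_j\big)_{i_1i_2}$ by the product rule into four terms. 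The two cross terms $\varphi^{\tau(1)}_{ji_1}\varphi^{\tau(2)}_{ji_2}$ and $\varphi^{\tau(1)}_{ji_2}\varphi^{\tau(2)}_{ji_1}$ contract with the chain to give exactly $2\sum_\tau H_k = 2\mathcal H_k$. The remaining two terms combine (after the $\tau(1)\leftrightarrow\tau(2)$ symmetry) into $2\sum_\tau \varphi^{\tau(1)}_j\varphi^{\tau(2)}_{ji_1i_2}\varphi^{\tau(3)}_{i_2i_3}\cdots$, and the Ricci identity $\varphi_{ji_1i_2}=\varphi_{i_1i_2j}+\delta_{i_1i_2}\varphi_j-\delta_{ji_1}\varphi_{i_2}$ splits this into three pieces that are \emph{already} the remaining three terms of the lemma. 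The only counting needed is the one-line observation
\[
\sum_\tau \varphi^{\tau(1)}_j\,\varphi^{\tau(2)}_{i_1i_2j}\varphi^{\tau(3)}_{i_2i_3}\cdots\varphi^{\tau(k)}_{i_{k-1}i_1}
=\frac{1}{k-1}\sum_\tau \varphi^{\tau(1)}_j\big(\varphi^{\tau(2)}_{i_1i_2}\cdots\varphi^{\tau(k)}_{i_{k-1}i_1}\big)_j,
\]
which produces the coefficient $1/(k-1)$; the $\tfrac12$ and the $\pm 1$ fall out with no further bookkeeping.

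Your direction would require real work exactly where you anticipate it. The identity you wrote has the third-derivative indices in the wrong order (the product rule gives $\varphi'_{i_2i_3i_2}$, not $\varphi'_{i_2i_2i_3}$; the difference is itself a curvature term), and more seriously, the ``divergence'' $\partial_{i_2}(\varphi_{i_1}\varphi'_{i_2i_3})$ multiplied by the rest of the chain $\varphi''_{i_3i_4}\cdots\varphi^{(k)}_{i_ki_1}$ does \emph{not} reorganize into $\langle\nabla_\theta\varphi,\nabla_\theta H_{k-1}\rangle$: the $\partial_{i_2}$ hits only two of the $k$ factors, not the full product. To recover the correct terms you would need further Leibniz steps and then the orbit-counting you flagged as the main obstacle. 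Starting from $\mathcal H_{k-1}(\langle\nabla\varphi^p,\nabla\varphi^q\rangle,\dots)$ instead dissolves that obstacle entirely.
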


Here and hereafter, $\widehat{\varphi^p}$ means $\varphi^p$ is deleted. 

\begin{proof} Let $H_k$ be defined by \eqref{eq-Hk-no-permuation}. 
We will prove 
\begin{equation*}\label{eq-trace-power of hessians}
\begin{split}
    \sum_{\tau\in\mathfrak{S}_k}H_k(\varphi^{\tau(1)},\cdots,\varphi^{\tau(k)})
    &=\sum_{\tau\in\mathfrak{S}_k}\Big\{\frac{1}{2}H_{k-1}
    (\langle\nabla_\theta \varphi^{\tau(1)},\nabla_\theta \varphi^{\tau(2)}\rangle,\varphi^{\tau(3)},\cdots,\varphi^{\tau(k)})\\
    &\qquad-\frac{1}{k-1}\langle\nabla_\theta \varphi^{\tau(1)},\nabla_\theta H_{k-1}(\varphi^{\tau(2)},\cdots,\varphi^{\tau(k)})\rangle\\
    &\qquad-\langle \nabla_{\theta} \varphi^{\tau(1)},\nabla_{\theta} \varphi^{\tau(2)}\rangle H_{k-2}(\varphi^{\tau(3)},\cdots,\varphi^{\tau(k)})\\
    &\qquad+\nabla^2_{\theta}\varphi^{\tau(3)}\circ\cdots\circ\nabla^2_{\theta}\varphi^{\tau(k)}(\nabla_{\theta}  \varphi^{\tau(1)},\nabla_{\theta} \varphi^{\tau(2)})\Big\}.
\end{split}
\end{equation*}
Note
$$H_k(\varphi^1,\cdots,\varphi^k)=\varphi^1_{i_1i_2}\cdots \varphi^k_{i_ki_1}.$$
By 
$$\Big(\varphi^{\tau(1)}_j\varphi^{\tau(2)}_j\Big)_{i_1i_2}=\varphi^{\tau(1)}_{ji_1i_2}\varphi^{\tau(2)}_j+\varphi^{\tau(1)}_{ji_1}\varphi^{\tau(2)}_{ji_2}+\varphi^{\tau(1)}_{j}\varphi^{\tau(2)}_{ji_1i_2}+\varphi^{\tau(1)}_{ji_2}\varphi^{\tau(2)}_{ji_1},$$
we get
\begin{equation*}
\begin{split}
    \sum_{\tau\in\mathfrak{S}_k}&H_{k-1}(\langle\nabla_\theta \varphi^{\tau(1)},\nabla_\theta \varphi^{\tau(2)}\rangle,\varphi^{\tau(3)},\cdots,\varphi^{\tau(k)})\\
    &=\sum_{\tau\in\mathfrak{S}_k}\Big(\varphi^{\tau(1)}_j\varphi^{\tau(2)}_j\Big)_{i_1i_2}
    \varphi^{\tau(3)}_{i_2i_3}\cdots \varphi^{\tau(k)}_{i_{k-1}i_1}\\
    &=2\sum_{\tau\in\mathfrak{S}_k}\varphi^{\tau(1)}_{ji_1}\varphi^{\tau(2)}_{ji_2}\varphi^{\tau(3)}_{i_2i_3}\cdots \varphi^{\tau(k)}_{i_{k-1}i_1}
    +2\sum_{\tau\in\mathfrak{S}_k}\varphi^{\tau(1)}_j\varphi^{\tau(2)}_{ji_1i_2}\varphi^{\tau(3)}_{i_2i_3}\cdots \varphi^{\tau(k)}_{i_{k-1}i_1}\\
     &=2\sum_{\tau\in\mathfrak{S}_k}H_k(\varphi^{\tau(1)},\cdots,\varphi^{\tau(k)})
     +2\sum_{\tau\in\mathfrak{S}_k}\varphi^{\tau(1)}_j\varphi^{\tau(2)}_{ji_1i_2}\varphi^{\tau(3)}_{i_2i_3}\cdots \varphi^{\tau(k)}_{i_{k-1}i_1}.
\end{split}
\end{equation*}
For the second term, we first apply the Ricci identity. Since $S^{m}$ has constant curvature $1$, we have
\begin{align*}
    \varphi^{\tau(2)}_{ji_1i_2}&=\varphi^{\tau(2)}_{i_1i_2j}+R_{i_2j l i_1}\varphi^{\tau(2)}_{l}
    =\varphi^{\tau(2)}_{i_1i_2j}+(\delta_{i_2i_1}\delta_{jl}-\delta_{i_2l}\delta_{ji_1})\varphi^{\tau(2)}_l\\
    &=\varphi^{\tau(2)}_{i_1i_2j}+\delta_{i_1i_2}\varphi^{\tau(2)}_{j}-\delta_{ji_1}\varphi^{\tau(2)}_{i_2}.
\end{align*}
Corresponding to the three terms above for $\varphi^{\tau(2)}_{ji_1i_2}$, we write 
$$\sum_{\tau\in\mathfrak{S}_k}\varphi^{\tau(1)}_j\varphi^{\tau(2)}_{ji_1i_2}\varphi^{\tau(3)}_{i_2i_3}\cdots \varphi^{\tau(k)}_{i_{k-1}i_1}=
I_1+I_2+I_3,$$ 
where
\begin{equation*}
\begin{split}
I_1&=\sum_{\tau\in\mathfrak{S}_k}\varphi^{\tau(1)}_j \varphi^{\tau(2)}_{i_1i_2j}\varphi^{\tau(3)}_{i_2i_3}\cdots \varphi^{\tau(k)}_{i_{k-1}i_1},\\
I_2&=\sum_{\tau\in\mathfrak{S}_k}\varphi^{\tau(1)}_{j}\varphi^{\tau(2)}_{j}\varphi^{\tau(3)}_{i_2i_3}\cdots \varphi^{\tau(k)}_{i_{k-1}i_2},\\
I_3&=-\sum_{\tau\in\mathfrak{S}_k}\varphi^{\tau(1)}_{i_1}\varphi^{\tau(2)}_{i_2}\varphi^{\tau(3)}_{i_2i_3}\cdots \varphi^{\tau(k)}_{i_{k-1}i_1}.
\end{split}
\end{equation*}
For $I_1$, we have 
\begin{equation*}
\begin{split}
    I_1
    &=\sum_{l=1}^{k}\sum_{\substack{\tau\in\mathfrak{S}_{k}\\ \tau(1)=l}}\varphi^{l}_j\varphi^{\tau(2)}_{i_1i_2j}\varphi^{\tau(3)}_{i_2i_3}\cdots \varphi^{\tau(k)}_{i_{k-1}i_1}\\
    &=\frac{1}{k-1}\sum_{l=1}^{k}\sum_{\substack{\tau\in\mathfrak{S}_{k}\\ \tau(1)=l}}\varphi^{l}_j(\varphi^{\tau(2)}_{i_1i_2}\varphi^{\tau(3)}_{i_2i_3}\cdots \varphi^{\tau(k)}_{i_{k-1}i_1})_j\\
    &=\frac{1}{k-1}\sum_{\tau\in\mathfrak{S}_{k}}\Big\langle\nabla_\theta \varphi^{\tau(1)},\nabla_\theta H_{k-1}(\varphi^{\tau(2)},\cdots, \varphi^{\tau(k)})\Big\rangle.
\end{split}
\end{equation*}
For $I_2$ and $I_3$, we simply note
\begin{equation*}
I_2=
    \sum_{\tau\in\mathfrak{S}_k}\langle \nabla_{\theta} \varphi^{\tau(1)},\nabla_{\theta} \varphi^{\tau(2)}\rangle H_{k-2}(\varphi^{\tau(3)},\cdots,\varphi^{\tau(k)}),
\end{equation*}
and 
\begin{equation*}
I_3
    =-\sum_{\tau\in\mathfrak{S}_k}\nabla^2_{\theta}\varphi^{\tau(3)}\circ\cdots\circ\nabla^2_{\theta}\varphi^{\tau(k)}(\nabla_{\theta}  \varphi^{\tau(1)},\nabla_{\theta} \varphi^{\tau(2)}).
\end{equation*}
We get the desired result by a simple substitution. 
\end{proof}

Lemma \ref{lemma-formula-trace} demonstrates that $\mathcal H_k$ can be expressed in terms of 
$\mathcal H_{k-1}$, $\mathcal H_{k-2}$, and $\mathcal Q_k$. We next study $\mathcal Q_k$. 

\begin{lemma}\label{lemma-Q-k}
For $k\geq 2$, assume $\varphi^i\in\s_{d_i}$ for each $i=1, \cdots, k$. Then, 
$\mathcal Q_k(\varphi^1, \cdots, \varphi^{k})\in \s_{d_1+\cdots+d_k}$. 
\end{lemma}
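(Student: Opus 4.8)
\textbf{Proof proposal for Lemma~\ref{lemma-Q-k}.}
The plan is to reduce the claim to the already-established multiplicative behavior of $\s_d$ under products of functions, Laplacians, and gradient pairings (the remark that $\varphi\in\s_p$, $\psi\in\s_q$ imply $\varphi\psi\in\s_{p+q}$, together with Lemma~\ref{lemma-gradient-spherical}), combined with an induction on $k$. First I would unwind the definition \eqref{eq-Q-k}: a single term of $\mathcal Q_k$ has the shape $\nabla^2_\theta\varphi^{\tau(1)}\circ\cdots\circ\nabla^2_\theta\varphi^{\tau(k-2)}\bigl(\nabla_\theta\varphi^{\tau(k-1)},\nabla_\theta\varphi^{\tau(k)}\bigr)$, i.e. a full contraction of $k-2$ Hessians with two gradient vectors. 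Since $\mathcal Q_k$ is a sum over all $\tau\in\mathfrak S_k$ and $d_1+\cdots+d_k$ is symmetric in its arguments, it suffices to bound the degree of one such term; and since the degree count $d_1+\cdots+d_k$ is insensitive to which function sits in which slot, I may as well write it with $\varphi^1,\dots,\varphi^k$ in the natural order.

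The main step is an induction on the number $k-2$ of Hessians, with base case $k=2$: there $\mathcal Q_2(\varphi^1,\varphi^2)=2\langle\nabla_\theta\varphi^1,\nabla_\theta\varphi^2\rangle$, which lies in $\s_{d_1+d_2}$ by Lemma~\ref{lemma-gradient-spherical}(ii). For the inductive step the clean way is to "peel off" the outermost Hessian by introducing an auxiliary function and using $\nabla^2_\theta\varphi^1(\nabla_\theta\psi,\,\cdot\,)=\tfrac12\nabla_\theta\bigl(\langle\nabla_\theta\varphi^1,\nabla_\theta\psi\rangle\bigr)-\tfrac12(\text{terms with }\nabla^2_\theta\psi\text{ and with Ricci})$; on $S^m$ the curvature is constant, so the Ricci terms just reproduce lower-order expressions of the same type. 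Concretely, in normal coordinates a generic term $\varphi^1_{i_1 j}\,\varphi^2_{j i_2}\varphi^3_{i_2 i_3}\cdots\varphi^{k-2}_{i_{k-3}i_{k-2}}\,\varphi^{k-1}_{i_{k-2}}\,\varphi^{k}_{i_1}$ can be rewritten, after commuting a derivative past $\varphi^1$ using the Ricci identity (exactly as in the proof of Lemma~\ref{lemma-formula-trace}), as a combination of: $\partial_{i_1}\bigl(\varphi^1_j\,\varphi^2_{ji_2}\varphi^3_{i_2i_3}\cdots\varphi^{k}_{i_1}\bigr)$-type total-derivative pieces, and genuinely lower-complexity pieces where either one Hessian has been replaced by a gradient (reducing the Hessian count) or a factor $\langle\nabla_\theta\varphi^a,\nabla_\theta\varphi^b\rangle$ has been split off. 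Each of these is covered either by the induction hypothesis for $\mathcal Q_{k-1}$ (with one argument of degree $d_a+d_b$ replacing $\varphi^a,\varphi^b$), by Lemma~\ref{lemma-formula-trace}'s treatment of $\mathcal H_{k-1}$ together with Lemma~\ref{lemma-gradient-spherical}, or by the product rule $\s_p\cdot\s_q\subset\s_{p+q}$; in every case the total degree budget $d_1+\cdots+d_k$ is preserved because each derivative $\nabla_\theta$ or $\Delta_\theta$ applied to an element of $\s_d$ stays in $\s_d$ (Lemma~\ref{lemma-gradient-spherical}(i) and the obvious fact that $\nabla_\theta$ does not raise degree), and each pairing or product adds degrees additively.

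I expect the main obstacle to be bookkeeping rather than anything deep: one must track carefully that commuting covariant derivatives on $S^m$ only introduces constant-coefficient lower-order terms (no degree inflation), and that every term generated in the recursion can be matched to one of the three "safe" operations ($\s$-multiplication, $\Delta_\theta$, $\langle\nabla_\theta\cdot,\nabla_\theta\cdot\rangle$) so that the induction on the number of Hessians actually closes. A convenient device to avoid combinatorial sprawl is to prove simultaneously, by a single induction on $k$, the two statements "$\mathcal H_k(\varphi^1,\dots,\varphi^k)\in\s_{d_1+\cdots+d_k}$" and "$\mathcal Q_k(\varphi^1,\dots,\varphi^k)\in\s_{d_1+\cdots+d_k}$": Lemma~\ref{lemma-formula-trace} expresses $\mathcal H_k$ through $\mathcal H_{k-1},\mathcal H_{k-2},\mathcal Q_k$, and the peeling computation above expresses $\mathcal Q_k$ through $\mathcal Q_{k-1}$, $\mathcal H_{k-1}$, and $\s$-products, so the two recursions feed each other and terminate at $\mathcal H_0=m\in\s_0$, $\mathcal H_1=\Delta_\theta$, $\mathcal Q_2=2\langle\nabla_\theta\cdot,\nabla_\theta\cdot\rangle$. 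This packaging makes the degree accounting essentially automatic.
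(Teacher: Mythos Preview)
Your overall strategy---induction on $k$ using the Ricci identity on $S^m$, anchored at the base case $\mathcal Q_2=2\langle\nabla_\theta\cdot,\nabla_\theta\cdot\rangle$---is the right idea, and it is what the paper does. But two points deserve correction.

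First, the sentence ``it suffices to bound the degree of one such term; \dots I may as well write it with $\varphi^1,\dots,\varphi^k$ in the natural order'' is a genuine gap. The symmetrization over $\mathfrak S_k$ is not a bookkeeping convenience: for $k\ge 4$ an \emph{individual} contraction $\nabla^2_\theta\varphi^{1}\circ\cdots\circ\nabla^2_\theta\varphi^{k-2}(\nabla_\theta\varphi^{k-1},\nabla_\theta\varphi^{k})$ need not lie in any $\s_d$, and your peeling identity $\nabla^2_\theta\varphi(\nabla_\theta\psi,\cdot)+\nabla^2_\theta\psi(\nabla_\theta\varphi,\cdot)=\nabla_\theta\langle\nabla_\theta\varphi,\nabla_\theta\psi\rangle$ only controls symmetric combinations. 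The recursion closes precisely because one sums over all permutations; the paper's final remark in Appendix~\ref{sec-appendix1} makes exactly this point. So you must work with the full symmetrized $\mathcal Q_k$ from the outset, not a representative summand.

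Second, the paper's route differs from your proposed simultaneous $(\mathcal H_k,\mathcal Q_k)$ induction. The paper first derives a \emph{self-contained} recursion \eqref{eq-expression-Qk} expressing $(k-3)\mathcal Q_k$ purely through $\mathcal Q_{k-1}$, $\mathcal Q_{k-2}$, products $\mathcal Q_{l+1}\cdot\mathcal Q_{k-l-1}$, and gradient pairings---no $\mathcal H$'s appear. This is obtained by expanding $\sum_q\langle\nabla_\theta\mathcal Q_{k-1}(\dots,\widehat{\varphi^q},\dots),\nabla_\theta\varphi^q\rangle$ and commuting third covariant derivatives. Because the leading coefficient is $k-3$, the recursion is vacuous at $k=3$, so $k=3$ must be treated as a second base case (the paper does this directly via $\mathcal Q_3=\tfrac12\sum_\tau\langle\nabla_\theta\langle\nabla_\theta\varphi^{\tau(1)},\nabla_\theta\varphi^{\tau(2)}\rangle,\nabla_\theta\varphi^{\tau(3)}\rangle$). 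Only after Lemma~\ref{lemma-Q-k} is established does the paper prove the $\mathcal H_k$ statement (Lemma~\ref{lemma-express-H-k}) via Lemma~\ref{lemma-formula-trace}. Your coupled induction could in principle be made to work, but you would need to produce an explicit recursion for $\mathcal Q_k$ and verify that the $\mathcal H$-terms you claim appear really are of strictly lower index; as written, the ``peeling'' step is too vague to confirm this.
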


\begin{proof}
We prove by induction on $k$. We consider $k=2,3$ separately. 
Note
$$\mathcal{Q}_2(\varphi^1, \varphi^2)=2\langle\nabla_{\theta}\varphi^1,\nabla_{\theta}\varphi^2\rangle,$$
and
\begin{equation*}
\begin{split}
    \mathcal{Q}_3(\varphi^1,\varphi^2,\varphi^3)&=\sum_{\tau\in\mathfrak{S}_3}\varphi^{\tau(1)}_{i_1i_2}\varphi^{\tau(2)}_{i_2}\varphi^{\tau(3)}_{i_1}=\frac{1}{2}\sum_{\tau\in\mathfrak{S}_3}(\varphi^{\tau(1)}_{i_2}\varphi^{\tau(2)}_{i_2})_{i_1}\varphi^{\tau(3)}_{i_1}\\
    &=\frac{1}{2}\sum_{\tau\in\mathfrak{S}_3}\langle\nabla_{\theta}\langle\nabla_{\theta}\varphi^{\tau(1)},\nabla_{\theta}\varphi^{\tau(2)}\rangle,\nabla_{\theta}\varphi^{\tau(3)}\rangle.
\end{split}
\end{equation*}
Hence, $\mathcal{Q}_2(\varphi^1, \varphi^2)\in \s_{d_1+d_2}$ and $\mathcal{Q}_3(\varphi^1, \varphi^2, \varphi^3)\in \s_{d_1+d_2+d_3}$ by Lemma \ref{lemma-gradient-spherical}(ii).

We next consider $k\ge 4$. Assume the desired result holds for any collection of $l$ such functions, for any $l\leq k-1$. 
Let $\varphi^1, \cdots, \varphi^{k}$ be functions as given and denote by $\mathcal{Q}_k$ the quantity in \eqref{eq-Q-k} for simplicity, i.e., $\mathcal{Q}_k=\mathcal{Q}_k(\varphi^1,\cdots,\varphi^k)$.
Consider the following quantity:
\begin{align*}\mathcal R&=\sum_{q=1}^{k}\langle \nabla_\theta\mathcal{Q}_{k-1}(\varphi^1, \cdots, \widehat {\varphi^q}, \cdots, \varphi^{k}),\nabla_\theta\varphi^q\rangle\\
&=\sum_{\tau\in\mathfrak{S}_{k}} \Big\langle\nabla_\theta\big(\nabla_\theta^2\varphi^{\tau(1)}
\circ\cdots\circ\nabla_\theta^{2}\varphi^{\tau(k-3)}(\nabla_\theta\varphi^{\tau(k-2)},\nabla_\theta\varphi^{\tau(k-1)})\big),\nabla_\theta\varphi^{\tau(k)}\Big\rangle.\end{align*}
Then, $\mathcal R\in \s_{d_1+\cdots+d_k}$ by the induction hypothesis. By 
\begin{equation*}\begin{split}
\Big(\prod_{p=1}^{k-3}\varphi^{\tau(p)}_{i_pi_{p+1}}\varphi^{\tau(k-2)}_{i_{k-2}}\varphi^{\tau(k-1)}_{i_1}\Big)_j
&=\sum_{l=1}^{k-3}\varphi^{\tau(l)}_{i_li_{l+1}j}\prod_{\substack{p=1\\p\neq l}}^{k-3}\varphi^{\tau(p)}_{i_pi_{p+1}}\varphi^{\tau(k-2)}_{i_{k-2}}\varphi^{\tau(k-1)}_{i_1}\\
&\quad+\prod_{p=1}^{k-3}\varphi^{\tau(p)}_{i_pi_{p+1}}\varphi^{\tau(k-2)}_{i_{k-2}j}\varphi^{\tau(k-1)}_{i_1}
+\prod_{p=1}^{k-3}\varphi^{\tau(p)}_{i_pi_{p+1}}\varphi^{\tau(k-2)}_{i_{k-2}}\varphi^{\tau(k-1)}_{i_1j},
\end{split}
\end{equation*}
we have
\begin{equation*}
\begin{split}
\mathcal R &=\sum_{\tau\in\mathfrak{S}_{k}}\Big(\prod_{p=1}^{k-3}\varphi^{\tau(p)}_{i_pi_{p+1}}\varphi^{\tau(k-2)}_{i_{k-2}}\varphi^{\tau(k-1)}_{i_1}\Big)_j\varphi^{\tau(k)}_j\\
    &=\sum_{\tau\in\mathfrak{S}_{k}}\sum_{l=1}^{k-3}\varphi^{\tau(l)}_{i_li_{l+1}j}\prod_{\substack{p=1\\p\neq l}}^{k-3}\varphi^{\tau(p)}_{i_pi_{p+1}}\varphi^{\tau(k-2)}_{i_{k-2}}\varphi^{\tau(k-1)}_{i_1}\varphi^{\tau(k)}_j
    +2\mathcal{Q}_k.
\end{split}
\end{equation*}
As in the proof of Lemma \ref{lemma-formula-trace}, we get 
$$\varphi^{\tau(l)}_{i_li_{l+1}j}=\varphi^{\tau(l)}_{ji_li_{l+1}}+\delta_{ji_l}\varphi^{\tau(l)}_{i_{l+1}}-\delta_{i_li_{l+1}}\varphi^{\tau(l)}_{j},$$ 
and write accordingly 
\begin{equation*}
\mathcal R =\mathcal{I}_1+\mathcal{I}_2+\mathcal{I}_3
    +2\mathcal{Q}_k,
\end{equation*}
where 
\begin{equation*}
\begin{split}
    \mathcal{I}_1&=\sum_{\tau\in\mathfrak{S}_{k}}\sum_{l=1}^{k-3}\varphi^{\tau(l)}_{ji_li_{l+1}}\prod_{\substack{p=1\\p\neq l}}^{k-3}\varphi^{\tau(p)}_{i_pi_{p+1}}\varphi^{\tau(k-2)}_{i_{k-2}}\varphi^{\tau(k-1)}_{i_1}\varphi^{\tau(k)}_j,\\
    \mathcal{I}_2&=\sum_{l=1}^{k-3}\sum_{\tau\in\mathfrak{S}_{k}}\delta_{ji_l}\varphi^{\tau(l)}_{i_{l+1}}\prod_{\substack{p=1\\p\neq l}}^{k-3}\varphi^{\tau(p)}_{i_pi_{p+1}}\varphi^{\tau(k-2)}_{i_{k-2}}\varphi^{\tau(k-1)}_{i_1}\varphi^{\tau(k)}_j,\\
    \mathcal{I}_3&=-\sum_{l=1}^{k-3}\sum_{\tau\in\mathfrak{S}_{k}}\delta_{i_li_{l+1}}\varphi^{\tau(l)}_{j}\prod_{\substack{p=1\\p\neq l}}^{k-3}\varphi^{\tau(p)}_{i_pi_{p+1}}\varphi^{\tau(k-2)}_{i_{k-2}}\varphi^{\tau(k-1)}_{i_1}\varphi^{\tau(k)}_j.
\end{split}
\end{equation*}
For $\mathcal{I}_1$, we first have
\begin{equation*}
\begin{split}
    \mathcal{I}_1
    &=\frac{1}{2}\sum_{\tau\in\mathfrak{S}_{k}}\sum_{l=1}^{k-3}(\varphi^{\tau(l)}_{j}\varphi^{\tau(k)}_j)_{i_li_{l+1}}\prod_{\substack{p=1\\p\neq l}}^{k-3}\varphi^{\tau(p)}_{i_pi_{p+1}}\varphi^{\tau(k-2)}_{i_{k-2}}\varphi^{\tau(k-1)}_{i_1}\\
    &\qquad-\sum_{\tau\in\mathfrak{S}_{k}}\sum_{l=1}^{k-3}\varphi^{\tau(l)}_{ji_l}\varphi^{\tau(k)}_{ji_{l+1}}\prod_{\substack{p=1\\p\neq l}}^{k-3}\varphi^{\tau(p)}_{i_pi_{p+1}}\varphi^{\tau(k-2)}_{i_{k-2}}\varphi^{\tau(k-1)}_{i_1}.
\end{split}
\end{equation*}    
Then, 
\begin{equation*}
\begin{split}
    \mathcal{I}_1    &=\frac{1}{2}\Big(\sum_{\tau\in\mathfrak{S}_{k}}\sum_{l=1}^{k-3}(\varphi^{\tau(l)}_{j}\varphi^{\tau(k)}_j)_{i_li_{l+1}}\prod_{\substack{p=1\\p\neq l}}^{k-3}\varphi^{\tau(p)}_{i_pi_{p+1}}\varphi^{\tau(k-2)}_{i_{k-2}}\varphi^{\tau(k-1)}_{i_1}\\
    &\quad+\frac{2}{k-3}\sum_{\tau\in\mathfrak{S}_{k}}\sum_{l=1}^{k-3}\varphi^{\tau(k-2)}_{i_li_{l+1}}\prod_{\substack{p=1\\p\neq l}}^{k-3}\varphi^{\tau(p)}_{i_pi_{p+1}}(\varphi^{\tau(l)}_{j}\varphi^{\tau(k)}_j)_{i_{k-2}}\varphi^{\tau(k-1)}_{i_1}\Big)-2\mathcal{Q}_{k}-(k-3)\mathcal{Q}_k\\
    &=\sum_{\substack{p,q=1\\p\neq q}}^{k}\mathcal{Q}_{k-1}(\langle\nabla_{\theta}\varphi^p,\nabla_{\theta}\varphi^q\rangle, \varphi^1, \cdots, \widehat{\varphi^p}, \cdots, \widehat{\varphi^q}, \cdots, \varphi^k)-(k-1)\mathcal{Q}_k.
\end{split}
\end{equation*}
For $\mathcal{I}_2$ and $\mathcal{I}_3$, we simply have 
\begin{equation*}
\begin{split}
\mathcal{I}_2 
&=\sum_{l=1}^{k-3}\sum_{\tau\in\mathfrak{S}_{k}}(\prod_{p=1}^{l-1}\varphi^{\tau(p)}_{i_pi_{p+1}}\varphi^{\tau(k-1)}_{i_1}\varphi^{\tau(k)}_{i_l})(\prod_{p=l+1}^{k-3}\varphi^{\tau(p)}_{i_pi_{p+1}}\varphi^{\tau(l)}_{i_{l+1}}\varphi^{\tau(k-1)}_{i_{k-1}})\\
    &=\sum_{l=1}^{k-3}c_{k,l}\sum_{\tau\in\mathfrak{S}_{k}}\mathcal{Q}_{l+1}(\varphi^{\tau(1)}, \cdots, \varphi^{\tau(l+1)})\mathcal{Q}_{k-l-1}(\varphi^{\tau(l+2)}, \cdots, \varphi^{\tau(k)}),
\end{split}
\end{equation*}
for some constant $c_{k,l}$, and 
\begin{equation*}
\begin{split}
\mathcal{I}_3
    &=-\sum_{l=1}^{k-3}\sum_{\tau\in\mathfrak{S}_{k}}\varphi^{\tau(1)}_{i_1i_{2}}\cdots\varphi^{\tau(l-1)}_{i_{l-1}i_{l+1}}\varphi^{\tau(l+1)}_{i_{l+1}i_{l+2}}\cdots\varphi^{\tau(k-1)}_{i_{k-1}i_{k}}\varphi^{\tau(k-1)}_{i_{k-1}}\varphi^{\tau(k-1)}_{i_1}\varphi^{\tau(k)}_j\varphi^{\tau(l)}_j\\
    &=-(k-3)\sum_{p\neq q}\langle\nabla_\theta\varphi^{p},\nabla_\theta\varphi^{q}\rangle\mathcal{Q}_{k-2}(\varphi^1, \cdots, \widehat{\varphi^p},\cdots, \widehat{\varphi^q}, \cdots, \varphi^k).
\end{split}
\end{equation*}
By a simple substitution, we have, for $k\ge 4$,  
\begin{equation}\label{eq-expression-Qk}
\begin{split}
(k-3)\mathcal Q_k&=\sum_{\substack{p,q=1\\p\neq q}}^{k}\mathcal{Q}_{k-1}(\langle\nabla_{\theta}\varphi^p,\nabla_{\theta}\varphi^q\rangle, \varphi^1, \cdots, \widehat{\varphi^p}, \cdots, \widehat{\varphi^q}, \cdots, \varphi^k)\\
    &\quad-\sum_{q=1}^{k}\langle \nabla_\theta\mathcal{Q}_{k-1}(\varphi^1, \cdots, \widehat {\varphi^q}, \cdots, \varphi^{k}),\nabla_\theta\varphi^q\rangle\\
    &\quad-(k-3)\sum_{p\neq q}\langle\nabla_\theta\varphi^{p},\nabla_\theta\varphi^{q}\rangle\mathcal{Q}_{k-2}(\varphi^1, \cdots, \widehat{\varphi^p},\cdots, \widehat{\varphi^q}, \cdots, \varphi^k)\\
    &\quad+\sum_{l=1}^{k-3}c_{k,l}\sum_{\tau\in\mathfrak{S}_{k}}\mathcal{Q}_{l+1}(\varphi^{\tau(1)}, \cdots, \varphi^{\tau(l+1)})\mathcal{Q}_{k-l-1}(\varphi^{\tau(l+2)}, \cdots, \varphi^{\tau(k)}).
\end{split}
\end{equation}
In other words, $\mathcal{Q}_{k}(\varphi^1, \cdots, \varphi^{k})$ can be expressed as a combination of $\mathcal{Q}_l$ for $l\leq k-1$. 
Now the desired result follows by an induction.
\end{proof}

For $k=3$, \eqref{eq-expression-Qk} reduces to an identity. In fact, the first two summations in the right-hand side 
are the same, the last two summations in the right-hand side are absent, and the expression in the left-hand side is zero.

\begin{lemma}\label{lemma-express-H-k}
For $k\geq 1$, assume $\varphi^i\in\s_{d_i}$ for each $i=1, \cdots, k$. Then, 
$\mathcal H_k(\varphi^1, \cdots, \varphi^{k})\in \s_{d_1+\cdots+d_k}$. 
\end{lemma}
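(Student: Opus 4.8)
The plan is to prove Lemma~\ref{lemma-express-H-k} by a straightforward induction on $k$, feeding the reduction identity of Lemma~\ref{lemma-formula-trace} into the induction hypothesis and invoking Lemma~\ref{lemma-Q-k} for the residual term $\mathcal Q_k$. The supporting facts are the elementary closure properties of the spaces $\s_d$: closure under finite sums, the inclusion $\s_{d'}\subset\s_d$ for $d'\le d$, the product rule $\s_p\cdot\s_q\subset\s_{p+q}$ (which follows from Lemma~\ref{lemma-SphericalHarmonics} and is recorded at the start of this appendix), and the behaviour of $\Delta_\theta$ and of $\langle\nabla_\theta\cdot,\nabla_\theta\cdot\rangle$ described in Lemma~\ref{lemma-gradient-spherical}. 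Write $D=d_1+\cdots+d_k$.

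For the base of the induction I would handle $k=1$ and $k=2$. When $k=1$, $\mathcal H_1(\varphi^1)=\Delta_\theta\varphi^1\in\s_{d_1}$ by Lemma~\ref{lemma-gradient-spherical}(i). When $k=2$, Lemma~\ref{lemma-formula-trace} expresses $\mathcal H_2$ as a finite sum of terms of the forms $\mathcal H_1(\langle\nabla_\theta\varphi^p,\nabla_\theta\varphi^q\rangle)$, $\langle\nabla_\theta\varphi^p,\nabla_\theta\mathcal H_1(\varphi^q)\rangle$, $m\langle\nabla_\theta\varphi^p,\nabla_\theta\varphi^q\rangle$ (recall $\mathcal H_0=m$), and $\mathcal Q_2(\varphi^1,\varphi^2)$, each of which sits in $\s_{d_1+d_2}$ by Lemma~\ref{lemma-gradient-spherical}, the product rule, and Lemma~\ref{lemma-Q-k}.

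For the inductive step, fix $k\ge 3$ and assume the conclusion for all $\mathcal H_l$ with $1\le l\le k-1$. I would then read off the four groups of terms on the right-hand side of Lemma~\ref{lemma-formula-trace} and check that each lies in $\s_D$: in the first group the inner pairing $\langle\nabla_\theta\varphi^p,\nabla_\theta\varphi^q\rangle$ replaces $\varphi^p,\varphi^q$ and contributes degree $d_p+d_q$, so the degree budget for $\mathcal H_{k-1}$ is exactly $D$ and the induction hypothesis applies; in the second group $\mathcal H_{k-1}$ of the $\varphi^i$ with $i\neq p$ lies in $\s_{D-d_p}$ by the hypothesis and pairing with $\nabla_\theta\varphi^p$ restores $D$ via Lemma~\ref{lemma-gradient-spherical}(ii); in the third group one multiplies $\langle\nabla_\theta\varphi^p,\nabla_\theta\varphi^q\rangle\in\s_{d_p+d_q}$ by $\mathcal H_{k-2}(\cdots)\in\s_{D-d_p-d_q}$ and uses the product rule; and the fourth group is $\mathcal Q_k(\varphi^1,\dots,\varphi^k)\in\s_D$ by Lemma~\ref{lemma-Q-k}. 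Closure of $\s_D$ under finite sums then gives $\mathcal H_k\in\s_D$, completing the induction.

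I do not expect any genuine obstacle at this stage: the proof is bookkeeping of degree bounds, and all the real content has already been isolated in Lemma~\ref{lemma-formula-trace} and in the self-referential identity \eqref{eq-expression-Qk} underlying Lemma~\ref{lemma-Q-k}. The one point I would be careful about is that the $t$-coefficients generated en route arise from the coefficients of the $\varphi^i$ purely through products and $\theta$-differentiation, so they keep bounded derivatives on $I=[1,\infty)$ and membership in $\s_D$ is legitimate at each step; this is already implicit in the cited lemmas.
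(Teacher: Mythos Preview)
Your proposal is correct and follows exactly the same approach as the paper: induction on $k$ with base case $\mathcal H_1=\Delta_\theta$ handled by Lemma~\ref{lemma-gradient-spherical}(i), and the inductive step handled by feeding Lemma~\ref{lemma-formula-trace} into the induction hypothesis together with Lemma~\ref{lemma-Q-k} for the $\mathcal Q_k$ term. Your version simply spells out the degree bookkeeping that the paper leaves implicit.
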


\begin{proof}
We prove by induction on $k$. 
By $\mathcal{H}_1(\varphi^1)=\Delta_\theta\varphi^1$, the desired result follows from Lemma \ref{lemma-gradient-spherical}(i). 
Assume the desired result holds for any collection of $l$ functions, for any $l\leq k-1$. Then by Lemma \ref{lemma-formula-trace}, Lemma \ref{lemma-Q-k} and the induction hypothesis, we have the desired result for $k$. 
\end{proof}

Similar to \eqref{eq-Q-k}, we define, for $k\ge 2$, 
\begin{align} 
\label{eq-P-1-k}
     \mathcal{P}^1_{k}(\varphi^1, \cdots, \varphi^{k})&=\sum_{\tau\in\mathfrak{S}_{k}}\nabla^2_{\theta}\varphi^{\tau(1)}\circ\cdots\circ\nabla^{2}_{\theta}\varphi^{\tau(k-2)}(\nabla_{\theta}\varphi^{\tau(k-1)},\nabla_{\theta}\varphi_t^{\tau(k)}),\\
\label{eq-P-2-k}
     \mathcal{P}^2_{k}(\varphi^1, \cdots, \varphi^{k})&=\sum_{\tau\in\mathfrak{S}_{k}}\nabla^2_{\theta}\varphi^{\tau(1)}\circ\cdots\circ\nabla^{2}_{\theta}\varphi^{\tau(k-2)}(\nabla_{\theta}\varphi_t^{\tau(k-1)},\nabla_{\theta}\varphi_t^{\tau(k)}),
\end{align}
where the sub-index $t$ in \eqref{eq-P-1-k} and \eqref{eq-P-2-k} denotes the $t$-derivative.
There is one factor of $t$-derivative in \eqref{eq-P-1-k} and two factors of $t$-derivatives in \eqref{eq-P-2-k}.
We point out that the second derivatives in $t$ are not present.

\begin{lemma}\label{lemma-P-1,2-k}
For $k\geq 2$, assume $\varphi^i\in\s_{d_i}$ for each $i=1, \cdots, k$. Then, 
$\mathcal{P}^1_k(\varphi^1, \cdots, \varphi^{k})$, $\mathcal{P}^2_k(\varphi^1, \cdots, \varphi^{k})\in \s_{d_1+\cdots+d_k}$. 
\end{lemma}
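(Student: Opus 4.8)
The plan is to run an induction on $k$ that parallels the proof of Lemma~\ref{lemma-Q-k}, after first noting that differentiation in the $t$-variable does no harm. The elementary fact I would record is that $\s_d$ is closed under $\partial_t$: if $\varphi=\sum_ic_i(t)Y_i\in\s_d$, then $\varphi_t=\sum_ic_i'(t)Y_i$, the coefficients $c_i'$ are again smooth with bounded derivatives, and on the product manifold $I\times S^m$ the field $\partial_t$ commutes with the spherical connection $\nabla_\theta$, so $\nabla_\theta(\varphi_t)=(\nabla_\theta\varphi)_t$ and $\nabla^2_\theta(\varphi_t)=(\nabla^2_\theta\varphi)_t$. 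Hence, if $\varphi^i\in\s_{d_i}$, then $\varphi^i_t\in\s_{d_i}$, and $\mathcal P^1_k,\mathcal P^2_k$ in \eqref{eq-P-1-k}--\eqref{eq-P-2-k} are instances of the same algebraic pattern as $\mathcal Q_k$ in \eqref{eq-Q-k}: an $\mathfrak{S}_k$-symmetrized trace of a composition of spherical Hessians contracted against two spherical gradients, the sole difference being that one or two of the slots now carry the $t$-derivative of the function occupying that slot. It is convenient to prove the statement for the larger family $\mathcal T_k^S(\varphi^1,\dots,\varphi^k)$, defined for $S\subseteq\{1,\dots,k\}$ exactly as $\mathcal Q_k$ but with $\varphi^{\tau(j)}$ replaced by $\varphi^{\tau(j)}_t$ whenever $j\in S$; thus $\mathcal Q_k=\mathcal T_k^{\emptyset}$, $\mathcal P^1_k=\mathcal T_k^{\{k\}}$, $\mathcal P^2_k=\mathcal T_k^{\{k-1,k\}}$. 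Since at most one $\partial_t$ ever sits on any factor and no second $t$-derivatives are produced, this family is closed under the manipulations below.

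For the base cases $k=2,3$ I would argue directly, as for $\mathcal Q_2,\mathcal Q_3$ in Lemma~\ref{lemma-Q-k}. When $k=2$ there are no Hessian factors and $\mathcal T_2^S$ is a sum of terms $\langle\nabla_\theta a,\nabla_\theta b\rangle$ with $a,b\in\{\varphi^i,\varphi^i_t\}$, which lie in the appropriate $\s$ by Lemma~\ref{lemma-gradient-spherical}(ii) and the closure under $\partial_t$. When $k=3$ I would apply the Hessian polarization identity
$$\varphi_{ab}\psi_a\chi_b=\tfrac12\big(\langle\nabla_\theta\langle\nabla_\theta\varphi,\nabla_\theta\psi\rangle,\nabla_\theta\chi\rangle+\langle\nabla_\theta\langle\nabla_\theta\varphi,\nabla_\theta\chi\rangle,\nabla_\theta\psi\rangle-\langle\nabla_\theta\langle\nabla_\theta\psi,\nabla_\theta\chi\rangle,\nabla_\theta\varphi\rangle\big),$$
whose remaining correction terms cancel on $S^m$, to rewrite each term of $\mathcal T_3^S$ as a triple iterated inner product of functions drawn from $\{\varphi^i,\varphi^i_t\}$; two applications of Lemma~\ref{lemma-gradient-spherical}(ii) then close this case.

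For the inductive step ($k\ge4$) I would reproduce the computation behind \eqref{eq-expression-Qk}: starting from $\mathcal R=\sum_q\langle\nabla_\theta\mathcal T_{k-1}^{S'}(\varphi^1,\dots,\widehat{\varphi^q},\dots,\varphi^k),\nabla_\theta\varphi^q\rangle$ with $S'$ chosen so that the distinguished term of the expansion is the desired $\mathcal T_k^S$, expand by the product rule, and commute the resulting third spherical covariant derivative using the Ricci identity on $S^m$ (constant curvature $1$), exactly as in the proof of Lemma~\ref{lemma-formula-trace}. The $t$-derivatives ride along on fixed factors and never meet the curvature term, so the output is a recursion expressing $(k-3)\,\mathcal T_k^S(\varphi^1,\dots,\varphi^k)$ as a finite combination of: (i) $\mathcal T_{k-1}^{\bullet}$- and $\mathcal T_{k-2}^{\bullet}$-type quantities evaluated on collections obtained from $\{\varphi^i,\varphi^i_t\}$ by adjoining functions $\langle\nabla_\theta\varphi^p,\nabla_\theta\varphi^q\rangle\in\s_{d_p+d_q}$ (Lemma~\ref{lemma-gradient-spherical}(ii)); (ii) products $\langle\nabla_\theta\varphi^p,\nabla_\theta\varphi^q\rangle\cdot\mathcal T_{k-2}^{\bullet}(\cdots)$; and (iii) products $\mathcal T_{l+1}^{\bullet}(\cdots)\cdot\mathcal T_{k-l-1}^{\bullet}(\cdots)$ over complementary argument lists. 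By the induction hypothesis together with Lemma~\ref{lemma-Q-k}, Lemma~\ref{lemma-express-H-k}, the product rule $\s_p\cdot\s_q\subseteq\s_{p+q}$ (Lemma~\ref{lemma-SphericalHarmonics}), and the closure of $\s_d$ under $\partial_t$, every term on the right-hand side lies in $\s_{d_1+\cdots+d_k}$; dividing by $k-3\ge1$ closes the induction, and specializing $S$ to $\{k\}$ and $\{k-1,k\}$ gives the lemma.

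The step I expect to be the main obstacle is the combinatorial bookkeeping rather than any analytic point: one must verify that the enlarged family $\{\mathcal T_k^S\}$ is genuinely closed under the Ricci-identity recursion—so that, although a $t$-decorated gradient slot may turn into a $t$-decorated Hessian slot when an extra $\nabla_\theta$ lands on it, one never generates a factor carrying a second $t$-derivative or a derivative pattern outside the scheme—and that the degree counts balance exactly at every stage. None of this is deep, but correctly tracking which of the $2k$ tensor indices carries $\partial_t$ through the expansion requires some care, which is the reason the full proof is placed in this appendix.
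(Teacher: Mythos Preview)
Your proposal is correct and follows essentially the same route as the paper: induction on $k$ via the Ricci-identity computation from the proof of Lemma~\ref{lemma-Q-k}, after recording that $\varphi^i_t\in\s_{d_i}$. The paper organizes the induction a bit differently---rather than passing to the enlarged family $\mathcal T_k^S$, it derives separate recursions for $\mathcal P^1_k$ and $\mathcal P^2_k$ that remain within $\{\mathcal Q,\mathcal P^1,\mathcal P^2\}$ and carry leading coefficients $(k-2)$ and $(k-1)$ respectively (not $(k-3)$), so only the base case $k=2$ is required; aside from this bookkeeping the argument is the same.
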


\begin{proof} The proof is based on induction and 
similar as that of Lemma \ref{lemma-Q-k}. We omit details and only record some crucial identities. 
It is clear that $\varphi^i_t\in\s_{d_i}$, for $i=1, \cdots, k$. 
 
We first consider $\mathcal{P}^1_k=\mathcal{P}^1_k(\varphi^1, \cdots, \varphi^{k})$. For $k=2$, we have 
$$\mathcal{P}^1_2(\varphi^1, \varphi^{2})=\langle\nabla_{\theta}\varphi^1,\nabla_{\theta}\varphi^2_t\rangle+\langle\nabla_{\theta}\varphi^2,\nabla_{\theta}\varphi^1_t\rangle.$$ 
Hence, $\mathcal{P}^1_2(\varphi^1, \varphi^2)\in \s_{d_1+d_2}$ by Lemma \ref{lemma-gradient-spherical}(ii). 
For $k\ge3$, we have 
\begin{align*}
(k-2)\mathcal{P}^1_k    
&=\sum_{p\neq q}\mathcal{P}^{1}_{k-1}(\langle\nabla_{\theta}\varphi^p,\nabla_{\theta}\varphi^q\rangle, \varphi^1, \cdots, \widehat{\varphi^p}, \cdots, \widehat{\varphi^q}, \cdots, \varphi^k)\\
&\qquad -\sum_{q=1}^{k}\langle \nabla_{\theta}\mathcal{P}^{1}_{k-1}(\varphi^1, \cdots, \widehat{\varphi^q}, \cdots, \varphi^k),\nabla_{\theta}\varphi^q\rangle\\
    &\qquad-(k-3)\sum_{p\neq q}\langle\nabla_{\theta}\varphi^{p},\nabla_{\theta}\varphi^{q}\rangle\mathcal{P}^1_{k-2}(\varphi^1, \cdots, \widehat{\varphi^p}, \cdots, \widehat{\varphi^q}, \cdots, \varphi^k)\\
&\qquad 
+\sum_{l=1}^{k-3}c_{k,l}\sum_{\tau\in\mathfrak{S}_{k}}\mathcal{P}_{l+1}^{1}(\varphi^{\tau(1)}, \cdots, \varphi^{\tau(l+1)})\mathcal{Q}_{k-l-1}(\varphi^{\tau(l+2)}, \cdots, \varphi^{\tau(k)}).
\end{align*}
The proof is similar as that of \eqref{eq-expression-Qk}. 
Thus by Lemma \ref{lemma-Q-k} and the induction hypothesis, we have $\mathcal{P}^1_k\in \s_{d_1+\cdots+d_k}$.

We next consider $\mathcal{P}_k^2=\mathcal{P}^2_k(\varphi^1, \cdots, \varphi^{k})$. For $k=2$, we have 
$$\mathcal{P}_2^2(\varphi^1,\varphi^2)=2\langle\nabla_{\theta}\varphi^1_t,\nabla_{\theta}\varphi^2_t\rangle.$$ 
Hence, $\mathcal{P}^1_2(\varphi^1, \varphi^2)\in \s_{d_1+d_2}$  by Lemma \ref{lemma-gradient-spherical}(ii). 
For $k\ge 3$, we have 
\begin{align*}
(k-1)\mathcal{P}^2_k 
    &=\sum_{p\neq q}\mathcal{P}^{2}_{k-1}(\langle\nabla_{\theta}\varphi^p,\nabla_{\theta}\varphi^q\rangle, \varphi^1, \cdots, \widehat{\varphi^p}, \cdots, \widehat{\varphi^q}, \cdots, \varphi^k)\\
    &\qquad -\sum_{q=1}^{k}\langle \nabla_{\theta}\mathcal{P}^{1}_{k-1}(\varphi^1, \cdots, \widehat{\varphi^q}, \cdots, \varphi^k),\nabla_{\theta}\varphi^q\rangle\\
    &\qquad-(k-3)\sum_{p\neq q}\langle\nabla_{\theta}\varphi^{p},\nabla_{\theta}\varphi^{q}\rangle\mathcal{P}^{2}_{k-2}(\varphi^1, \cdots, \widehat{\varphi^p}, \cdots, \widehat{\varphi^q}, \cdots, \varphi^k)\\
&\qquad +\sum_{l=1}^{k-3}c_{k,l}\sum_{\tau\in\mathfrak{S}_{k}}\mathcal{P}_{l+1}^{1}(\varphi^{\tau(1)}, \cdots, \varphi^{\tau(l+1)})\mathcal{P}^{1}_{k-l-1}(\varphi^{\tau(l+2)}, \cdots, \varphi^{\tau(k)}).
\end{align*}
The proof is also similar as that of \eqref{eq-expression-Qk}. 
Thus by Lemma \ref{lemma-Q-k} and the first part of this lemma, we conclude $\mathcal{P}^2_k\in \s_{d_1+\cdots+d_k}$.
\end{proof}

For a function $\varphi$ on $I\times S^{n-1}$, we define $\Lambda(\varphi)\in\Omega^1(T^*(I\times S^{n-1}))$ by 
\begin{equation*}
    \Lambda(\varphi)=g_0^{-1}\big\{A_{g_0}+\nabla^2\varphi+d\varphi\otimes d\varphi-\frac{1}{2}|\nabla\varphi|^2g_0\big\},
\end{equation*}
where $g_0=dt^2+d\theta^2$ is the standard cylinder metric. Using local normal coordinates $\{t,\theta^2,\cdots,\theta^n\}$ at $x\in I\times S^{n-1}$, we can write $\Lambda(\varphi)$ as a matrix:
\begin{align*}
\Lambda_{11}&=\xi_{tt}-\frac{1}{2}(1-\xi_t^2)+\varphi_{tt}
+\xi_t\varphi_t-\frac{1}{2}|\nabla_{\theta}\varphi|^2+\frac{1}{2}\varphi_t^2,\\
\Lambda_{1i}&=\varphi_{ti}+(\xi_t+\varphi_t)\varphi_i\quad\text{for } 2\le i\le n,\\
\Lambda_{ii}&=\frac12(1-\xi_t^2)+\varphi_{ii}-\xi_t\varphi_t+\varphi_i^2
-\frac{1}{2}|\nabla_{\theta}\varphi|^2-\frac{1}{2}\varphi_t^2\quad\text{for } 2\le i\le n,\\
\Lambda_{ij}&=\varphi_{ij}+\varphi_i\varphi_j\quad\text{for } 2\le i\neq j\le n.
\end{align*}

The main result is the following proposition.

\begin{prop}\label{prop-appendix-main}
Assume $\varphi\in \mathcal S_d$. Then for any $l\geq 0$, $\sigma_l(\Lambda(\varphi))\in \s_{2ld}$.
\end{prop}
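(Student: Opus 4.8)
The plan is to expand $\sigma_l(\Lambda(\varphi))$ into a finite linear combination of products of the quantities $\mathcal H_j$, $\mathcal Q_j$, $\mathcal P^1_j$, $\mathcal P^2_j$ evaluated at copies of $\varphi$, with coefficients that are smooth functions of $t$ with bounded derivatives, and then to apply Lemmas~\ref{lemma-express-H-k}, \ref{lemma-Q-k}, and \ref{lemma-P-1,2-k} together with the elementary facts that $\s_p\cdot\s_q\subseteq\s_{p+q}$ and $\s_p\subseteq\s_q$ whenever $p\le q$. Throughout one uses that $\xi_t$, $\xi_{tt}$, $1-\xi_t^2$ are smooth functions of $t$ with bounded derivatives of all orders (by the analysis of radial solutions in Section~\ref{sec-Radial-Solutions}), hence lie in $\s_0$, and that $\varphi\in\s_d$ forces $\varphi_t\in\s_d$.

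First I would record the block structure of $\Lambda=\Lambda(\varphi)$: its $(1,1)$ entry is the scalar $\Lambda_{11}=\xi_{tt}-\frac12(1-\xi_t^2)+\varphi_{tt}+\xi_t\varphi_t-\frac12|\nabla_\theta\varphi|^2+\frac12\varphi_t^2\in\s_{2d}$ (using Lemma~\ref{lemma-gradient-spherical}(ii) for $|\nabla_\theta\varphi|^2$), its first column is the $1$-form $b=(\Lambda_{1i})_{i=2}^{n}=\nabla_\theta\varphi_t+(\xi_t+\varphi_t)\nabla_\theta\varphi$ on $S^{n-1}$, and its lower $(n-1)\times(n-1)$ block is $\bar\Lambda=\alpha\,\mathrm{Id}+\nabla_\theta^2\varphi+\nabla_\theta\varphi\otimes\nabla_\theta\varphi$ with $\alpha=\frac12(1-\xi_t^2)-\xi_t\varphi_t-\frac12(|\nabla_\theta\varphi|^2+\varphi_t^2)\in\s_{2d}$. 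I would then use the identity $\sigma_l(\Lambda)=\sigma_l(\bar\Lambda)+\Lambda_{11}\sigma_{l-1}(\bar\Lambda)-b^{\mathsf T}T_{l-2}(\bar\Lambda)\,b$, where $T_{l-2}(\bar\Lambda)$ is the $(l-2)$-th Newton tensor of $\bar\Lambda$; this follows by sorting the $l\times l$ principal minors of $\Lambda$ according to whether they contain the first index, and coincides (after reorganization) with the decomposition \eqref{eq-sigma-k-decomposition}.

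Next I would set $M=\nabla_\theta^2\varphi+\nabla_\theta\varphi\otimes\nabla_\theta\varphi$ and pull the scalar $\alpha$ out of $\bar\Lambda=\alpha\,\mathrm{Id}+M$, so that $\sigma_j(\bar\Lambda)$ and $T_{l-2}(\bar\Lambda)$ become, respectively, polynomials in $\sigma_i(M)$ and in the matrices $M^q$ ($q\le l-2$) with scalar coefficients in $\s_{\le 2(l-2)d}$ built from $\alpha$. Newton's identities express $\sigma_i(M)$ as a polynomial in the power sums $\mathrm{tr}(M^r)$, $r\le i$; expanding $M^r$ and using that a rank-one factor $\nabla_\theta\varphi\otimes\nabla_\theta\varphi$ inside a cyclic product splits the trace, one writes $\mathrm{tr}(M^r)$ as a finite sum of products of the scalars $\mathrm{tr}\big((\nabla_\theta^2\varphi)^{a}\big)$ and $\langle\nabla_\theta\varphi,(\nabla_\theta^2\varphi)^{a}\nabla_\theta\varphi\rangle$, which are constant multiples of $\mathcal H_{a}(\varphi,\dots,\varphi)$ and $\mathcal Q_{a+2}(\varphi,\dots,\varphi)$, respectively. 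Likewise, substituting $b=\nabla_\theta\varphi_t+(\xi_t+\varphi_t)\nabla_\theta\varphi$ on both sides of $b^{\mathsf T}M^q b$ and expanding $M^q$ produces, besides the above, the scalars $\langle\nabla_\theta\varphi_t,(\nabla_\theta^2\varphi)^{a}\nabla_\theta\varphi\rangle$ and $\langle\nabla_\theta\varphi_t,(\nabla_\theta^2\varphi)^{a}\nabla_\theta\varphi_t\rangle$, which are constant multiples of $\mathcal P^1_{a+2}(\varphi,\dots,\varphi)$ and $\mathcal P^2_{a+2}(\varphi,\dots,\varphi)$. By Lemmas~\ref{lemma-express-H-k}, \ref{lemma-Q-k}, and \ref{lemma-P-1,2-k}, every such quantity with $j$ slots filled by $\varphi$ lies in $\s_{jd}$.

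It then remains to carry out the degree bookkeeping. Assigning weight $1$ to each of $\nabla_\theta^2\varphi$, $\varphi_{tt}$, $\xi_t\varphi_t$ and weight $2$ to each of $\nabla_\theta\varphi\otimes\nabla_\theta\varphi$, $|\nabla_\theta\varphi|^2$, $\varphi_t^2$, one checks that every entry of $\Lambda$ has weight at most $2$, hence every $l\times l$ minor, and therefore every term produced in the expansion above, has total weight at most $2l$; since $\mathcal H_j,\mathcal Q_j,\mathcal P^1_j,\mathcal P^2_j$ carry weight $j$ and lie in $\s_{jd}$, while $\alpha,\Lambda_{11}\in\s_{2d}$ carry weight $2$ and $\xi_t+\varphi_t\in\s_d$ carries weight $1$, each resulting term lies in $\s_{wd}$ for some $w\le 2l$, hence in $\s_{2ld}$; summing the finitely many terms gives $\sigma_l(\Lambda(\varphi))\in\s_{2ld}$. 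The main obstacle is the combinatorial reduction just described: turning $\sigma_l(\Lambda)$, with its inhomogeneous first row and column and the rank-one contributions $d\varphi\otimes d\varphi$, into the symmetrized quantities $\mathcal H_j$, $\mathcal Q_j$, $\mathcal P^1_j$, $\mathcal P^2_j$ in a bookkeeping-consistent manner, so that Lemmas~\ref{lemma-express-H-k}, \ref{lemma-Q-k}, and \ref{lemma-P-1,2-k} apply term by term; once this has been organized, the weight count is routine.
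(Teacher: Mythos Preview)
Your proposal is correct and follows essentially the same approach as the paper. The only organizational difference is that you isolate the contribution of the first row/column via the identity $\sigma_l(\Lambda)=\sigma_l(\bar\Lambda)+\Lambda_{11}\sigma_{l-1}(\bar\Lambda)-b^{\mathsf T}T_{l-2}(\bar\Lambda)\,b$, whereas the paper sets $\widetilde\Lambda$ to be $\Lambda$ with the $(1,1)$ entry replaced by $0$ and writes $\sigma_l(\Lambda)=\Lambda_{11}\sigma_{l-1}(\bar\Lambda)+\sigma_l(\widetilde\Lambda)$; these are equivalent since $\sigma_l(\widetilde\Lambda)=\sigma_l(\bar\Lambda)-b^{\mathsf T}T_{l-2}(\bar\Lambda)\,b$, and both routes then reduce, via Newton's identities and the rank-one splitting, to the scalars $\mathcal H_j$, $\mathcal Q_j$, $\mathcal P^1_j$, $\mathcal P^2_j$ handled by Lemmas~\ref{lemma-Q-k}--\ref{lemma-P-1,2-k}.
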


\begin{proof}
Let $\bar\Lambda$ be the $(n-1)\times(n-1)$ matrix obtained by deleting the first row and the 
first column from the matrix $\Lambda$, and $\widetilde\Lambda$ be the $n\times n$ matrix obtained by replacing $\Lambda_{11}$ by $0$ in the matrix $\Lambda$. Then,
\begin{equation*}
    \sigma_l(\Lambda)=\Lambda_{11}\sigma_{l-1}(\bar{\Lambda})+\sigma_{l}(\widetilde{\Lambda}).
\end{equation*}
Note $\Lambda_{11}\in \s_{2d}$. 

First, we prove $\mathrm{tr}(\bar{\Lambda}^l)\in \s_{2ld}$ for any $l\geq 0$ by induction on $l$. For $l=1$, we have 
$$\mathrm{tr}(\bar{\Lambda})=\frac{n-1}2(1-\xi_t^2)+\Delta_\theta\varphi-(n-1)\xi_t\varphi_t
-\frac{n-3}{2}|\nabla_{\theta}\varphi|^2-\frac{n-1}{2}\varphi_t^2.$$
Then, $\mathrm{tr}(\bar{\Lambda})\in \s_{2d}$. 
For some $l\ge 2$, assume the assertion holds for all $j\leq l-1$. 
Set
\begin{equation*}
    \eta=\frac{1}{2}(1-\xi_t^2)-\xi_t\varphi_t-\frac{1}{2}|\nabla_{\theta}\varphi|^2-\frac{1}{2}\varphi_t^2.
\end{equation*}
Then, $\eta\in \s_{2d}$. Note 
$$\bar{\Lambda}_{ij}=\varphi_{ij}+\varphi_{i}\varphi_{j}+\eta\delta_{ij}.$$
Hence, 
\begin{equation*}
\begin{split}
    \mathrm{tr}(\bar{\Lambda}^{l})
    =\sum_{j=0}^{l}\eta^j\mathrm{tr}((\nabla^2 \varphi)^{l-j})
    +\sum_{p+j_1+\cdots+j_p=l}c_{p,j_1,\cdots,j_p}\eta^p\mathcal{Q}_{j_1}\cdots\mathcal{Q}_{j_p},
\end{split}
\end{equation*}
where each $c_{p,j_1,\cdots,j_p}$ is a nonnegative integer and 
each $\mathcal{Q}_j$ is defined by (\ref{eq-Q-k}) for $\varphi^1=\cdots=\varphi^j=\varphi$. Thus, $\mathrm{tr}(\bar\Lambda^l)\in \s_{2ld}$ by Lemma \ref{lemma-Q-k} and Lemma \ref{lemma-express-H-k}.  

Now observe that 
\begin{equation*}
    l\sigma_{l}(\bar{\Lambda})=\sum_{j=0}^{l-1}(-1)^{j}\sigma_{l-1-j}(\bar{\Lambda})\mathrm{tr}(\bar{\Lambda}^{j+1}).
\end{equation*}
This follows from (\ref{eq-identity-Newton tensor}) by taking $\Lambda=B=\bar{\Lambda}$. 
Therefore, $\sigma_{l}(\bar{\Lambda})\in \s_{2ld}$ for any $l\geq 0$, by another induction on $l$.

Next, we consider $\widetilde\Lambda$. Note, for $l\ge 0$,  
\begin{equation*}
\begin{split}
    \mathrm{tr}(\widetilde{\Lambda}^l)=\mathrm{tr}(\bar\Lambda^l)+\sum_{p+q+j_1+\cdots+j_p=l}c_{p,q,j_1,\cdots,j_p}\eta^p|\nabla_{\theta}\varphi_t+(\xi_t+\varphi_t)\nabla_{\theta}\varphi|^{2q}\mathcal{T}_{j_1}\cdots\mathcal{T}_{j_p},
\end{split}
\end{equation*}
where each $c_{p,q,j_1,\cdots,j_p}$ is a nonnegative integer and 
each $\mathcal{T}_j$ is one of  $\{\mathcal{Q}_j\}_{j=2}^{l-2}$, $\{\mathcal{P}^1_j\}_{j=2}^{l}$, and $\{\mathcal{P}^2_j\}_{j=2}^{l}$, defined by (\ref{eq-Q-k}), (\ref{eq-P-1-k}), and (\ref{eq-P-2-k}), respectively, for $\varphi^1=\cdots=\varphi^j=\varphi$.
Thus by Lemma \ref{lemma-Q-k} and Lemma \ref{lemma-P-1,2-k}, we conclude $\mathrm{tr}(\widetilde\Lambda^l)\in \s_{2ld}$ for any $l\geq 0$. As a consequence, we have $\sigma_l(\widetilde{\Lambda})\in \s_{2ld}$ for $l\ge 0$. 
\end{proof}

We note that Lemma \ref{lemma-Pk-spherical} follows from Proposition \ref{prop-appendix-main}.

To end this section, we make a final remark concerning the proof of 
Lemmas \ref{lemma-formula-trace}
-\ref{lemma-P-1,2-k}. As mentioned earlier, one objective is to prove 
$H_k(\varphi, \cdots, \varphi)\in \s_{kd}$ if $\varphi\in \s_d$, 
for the $H_k$ defined in \eqref{eq-Hk-no-permuation}. 
Due to the lack of the symmetry of $H_k(\varphi^1, \cdots, \varphi^{k})$ 
for $k\ge 4$, we first prove a more general result that
$\mathcal H_k(\varphi^1, \cdots, \varphi^{k})\in \s_{d_1+\cdots+d_k}$
if $\varphi^i\in\s_{d_i}$ for $i=1, \cdots, k$, 
and then take $\varphi^1=\cdots=\varphi^{k}=\varphi$. For $k=2, 3$, we can prove directly that 
$H_k(\varphi^1, \cdots, \varphi^{k})\in \s_{d_1+\cdots+d_k}$. 
In fact, other methods are available to prove such a result for small $k$. 
For example, for each function $\varphi_i\in\s_{d_i}$, we can 
consider the corresponding homogeneous polynomial $u_i$ by restoring an appropriate power 
of $|x|$ for each spherical harmonic in $\varphi_i$,
and then attempt to prove that $H_k(\varphi^1, \cdots, \varphi^{k})$ is the restriction of 
some homogeneous polynomial of degree $d_1+\cdots+d_k$ to the unit ball. 
Then, we can use the decomposition of homogeneous polynomials 
as in the proof of Lemma \ref{lemma-SphericalHarmonics} to get the desired result. 
This process can be carried out for small $k$ such as $k=2,3$. 
However, we encounter similar difficulties for $k\ge 4$ in the induction.


\begin{thebibliography}{DG}


\bibitem{CaffarelliGS1989} L. Caffarelli, B. Gidas, J. Spruck, 
\emph{Asymptotic symmetry and local behavior of semilinear elliptic
equations with critical Sobolev growth}, Comm. Pure Appl. Math., 42(1989), 271-297.

\bibitem{CaffarelliJSX2014} L. Caffarelli, T. Jin, Y. Sire, J. Xiong,
\emph{Local analysis of solutions of fractional semi-linear elliptic
equations with isolated singularities}, Arch. Ration. Mech. Anal., 213(2014), 245-268.

\bibitem{ChangHanYang2005} S.-Y. A. Chang, Z.-C. Han, P. Yang, 
\emph{Classification of singular radial solutions to the 
$\sigma_k$ Yamabe equation on annular domains}, J. Diff. Eq., 216(2005), 482-501. 

\bibitem{Chang-Hang-Yang2004} S.-Y. A. Chang, F. Hang, P. Yang, 
{\it On a class of locally conformally flat manifolds}, Int.
Math. Res. Not., 2004(2004), 185-209.


\bibitem{CoddingtonL1955} 
E. A. Coddington, N. Levinson, \emph{Theory of Ordinary Differential Equations}, McGraw-Hill, 
New York-Toronto-London, 1955. 

\bibitem{Gonzalez2005} M. Gonz\'{a}lez, {\it Singular sets of a class of locally conformally flat manifolds}, Duke Math.
J., 129(2005), 551-572. 

\bibitem{Guan-Lin-Wang2005} P. Guan, C.-S. Lin, G. Wang, 
{\it Schouten tensor and some topological properties},  Comm.
Anal. Geom., 13(2005), 887-902. 

\bibitem{Gursky-Viacolvsky2006} 
M. Gursky, J. Viacolvsky,  \emph{Convexity and singularities of curvature equations in conformal
geometry}, Int. Math. Res. Not. (2006), Art. ID 96890, 43 pp. 

\bibitem{Han2006} Z.-C. Han,
\emph{A Kazdan-Warner type identity for the $\sigma_k$ curvature}, C. R. Math., 347(2006), 475-478. 

\bibitem{HanLi2010} Z.-C. Han, Y.-Y. Li, E. V. Teixeira, 
\emph{Asymptotic behavior of solutions to the $k$-Yamabe equation
near isolated singularities}, Invent. Math., 182(2010),  635-684.

\bibitem{KorevaarMPS1999} N. Korevaar, R. Mazzeo, F. Pacard and R. Schoen, 
\emph{Refined asymptotics for constant scalar curvature
metrics with isolated singularities},  Invent. Math., 135(1999),  233-272.

\bibitem{Li2006} Y.-Y. Li, 
\emph{Conformally invariant fully nonlinear elliptic equations and isolated singularities}, J. Funct.
Anal., 233(2006), 380-425.




\bibitem{Marques2008} F. C. Marques, \emph{Isolated singularities of solutions to the Yamabe equation}, 
Calc. Var. \& P. D. E., 32(2008), 349-371.

\bibitem{MazzeoP1999} R. Mazzeo, F. Pacard, 
\emph{Constant scalar curvature metrics with isolated singularities}, Duke
Math. J., 99(1999), 353-418.

\bibitem{MazzeoDU1996} R. Mazzeo, D. Pollack, K. Uhlenbeck, 
\emph{Moduli spaces of singular Yamabe
metrics}, J. Amer. Math. Soc., 9(1996), 303-344.


\bibitem{Mazzieri-Ndiaye} L. Mazzieri, C. B. Ndiaye, \emph{Existence of solutions for the singular $\sigma_k$-Yamabe problem}, 
preprint.

\bibitem{Mazzieri-Segatti2012} L. Mazzieri, A. Segatti, \emph{Constant $\sigma_k$-curvature metrics with Delaunay type ends}, 
Adv. Math., 229(2012),  3147-3191. 

\bibitem{Reilly1973} R. C. Reilly, 
\emph{Variational properties of functions of the mean curvatures for hypersurfaces in space forms}, 
J. Diff. Geom., 8(1973), 465-477.

\bibitem{Schoen1988}
R. Schoen,
\emph{The existence of weak solutions with prescribed singular behavior for a conformally
invariant scalar equation}, Comm. Pure Appl. Math., 41(1988), 317-392.

\bibitem{SchoenYau1988} R. Schoen, S.-T. Yau, 
\emph{Conformally flat manifolds, Kleinian groups and scalar curvature}, Invent.
Math., 92(1988), 47-71.

\bibitem{Stein1971} E. Stein, G. Weiss, \emph{Fourier Analysis on Euclidean Spaces}, 
Princeton University Press, 1971. 

\bibitem{Viaclovsky2000} J. Viaclovsky,
\emph{Some fully nonlinear equations in conformal geometry}, In: Differential Equations and Mathematical Physics, Birmingham, AL, 1999, AMS/IP Stud. Adv. Math., vol. 16, 425-433, Amer. Math. Soc., Providence, 2000.

\bibitem{Wang2013} Y. Wang, \emph{Asymptotic behaviors of solutions to the conformal quotient equation}, 
Ph. D. Thesis, Rutgers University, 2013. 

\end{thebibliography}
\end{document}